\documentclass[a4paper,twoside]{article}
\usepackage{a4}
\usepackage{amssymb}
\usepackage{amsmath}
\usepackage{upref}
\usepackage{url}
\usepackage{bbm}
\usepackage[active]{srcltx}
\usepackage[dvipsnames]{xcolor} 
\usepackage[pagebackref,colorlinks,citecolor=blue,linkcolor=blue,urlcolor=blue]{hyperref}
%
%
%
\newcount\minutes \newcount\hours
\hours=\time
\divide\hours 60
\minutes=\hours
\multiply\minutes -60
\advance\minutes \time
\newcommand{\klockan}{\the\hours:{\ifnum\minutes<10 0\fi}\the\minutes}
\newcommand{\tid}{\today\ \klockan}
\newcommand{\prtid}{\smash{\raise 10mm \hbox{\LaTeX ed \tid}}}
%
%
\makeatletter
\pagestyle{headings}
\headheight 10pt
\def\sectionmark#1{} 
\def\subsectionmark#1{}
\newcommand{\sectnr}{\ifnum \c@secnumdepth >\z@
                 \thesection.\hskip 1em\relax \fi}
\def\@evenhead{\footnotesize\rm\thepage\hfil\leftmark\hfil\llap{\prtid}}
\def\@oddhead{\footnotesize\rm\rlap{\prtid}\hfil\rightmark\hfil\thepage}
\def\tableofcontents{\section*{Contents} 
 \@starttoc{toc}}
\makeatother
%
%
\makeatletter
\def\@biblabel#1{#1.}
\makeatother
%
%
%
\makeatletter
\let\Thebibliography=\thebibliography
\renewcommand{\thebibliography}[1]{\def\@mkboth##1##2{}\Thebibliography{#1}
\addcontentsline{toc}{section}{References}
\frenchspacing 
\setlength{\@topsep}{0pt}
\setlength{\itemsep}{0pt}%
\setlength{\parskip}{0pt plus 2pt}%
}
\makeatother
%
%
\makeatletter
\def\mdots@{\mathinner.\nonscript\!.%
 \ifx\next,.\else\ifx\next;.\else\ifx\next..\else
 \nonscript\!\mathinner.\fi\fi\fi}
\let\ldots\mdots@
\let\cdots\mdots@
\let\dotso\mdots@
\let\dotsb\mdots@
\let\dotsm\mdots@
\let\dotsc\mdots@
\def\vdots{\vbox{\baselineskip2.8\p@ \lineskiplimit\z@
    \kern6\p@\hbox{.}\hbox{.}\hbox{.}\kern3\p@}}
\def\ddots{\mathinner{\mkern1mu\raise8.6\p@\vbox{\kern7\p@\hbox{.}}%
    \raise5.8\p@\hbox{.}\raise3\p@\hbox{.}\mkern1mu}}
\makeatother
%
%
\makeatletter
\let\Enumerate=\enumerate
\renewcommand{\enumerate}{\Enumerate%
\setlength{\@topsep}{0pt}
\setlength{\itemsep}{0pt}%
\setlength{\parskip}{0pt plus 1pt}%
\renewcommand{\theenumi}{\textup{(\alph{enumi})}}%
\renewcommand{\labelenumi}{\theenumi}%
}
\let\endEnumerate=\endenumerate
\renewcommand{\endenumerate}{\endEnumerate\unskip}
\makeatother
%
%
\makeatletter
\def\@seccntformat#1{\csname the#1\endcsname.\quad}
\makeatother
%
%
\newcommand{\authortitle}[2]{\author{#1}\title{#2}\markboth{#1}{#2}}
%
%
\newcommand{\auth}[2]{{#1, #2.}}
\newcommand{\idxauth}[2]{{#1, #2.}}
\newcommand{\art}[6]{{\sc #1, \rm #2, \it #3 \bf #4 \rm (#5), \mbox{#6}.}}
\newcommand{\artin}[3]{{\sc #1, \rm #2,  in #3.}}
\newcommand{\artprep}[3]{{\sc #1, \rm #2, #3.}}

\newcommand{\book}[3]{{\sc #1, \it #2, \rm #3.}}
\newcommand{\AND}{{\rm and }}

%
%
\RequirePackage{amsthm}
\newtheoremstyle{descriptive}%
  {\topsep}   
  {\topsep}   
  {\rmfamily} 
  {}          
  {\bfseries} 
  {.}         
  { }         
  {}          
\newtheoremstyle{propositional}%
  {\topsep}   
  {\topsep}   
  {\itshape}  
  {}          
  {\bfseries} 
  {.}         
  { }         
  {}          
\newtheoremstyle{remarkstyle}%
  {\topsep}   
  {\topsep}   
  {\rmfamily}  
  {}          
  {\itshape} 
  {.}         
  { }         
  {}          
\theoremstyle{propositional}
\newtheorem{thm}{Theorem}[section]
\newtheorem{prop}[thm]{Proposition}
\newtheorem{lem}[thm]{Lemma}
\newtheorem{cor}[thm]{Corollary}
\theoremstyle{descriptive}
\newtheorem{deff}[thm]{Definition}
\newtheorem{example}[thm]{Example}
\newtheorem{remark}[thm]{Remark}
%
%
%
%
%
\makeatletter
\renewenvironment{proof}[1][\proofname]{\par
  \pushQED{\qed}%
  \normalfont
  \trivlist
  \item[\hskip\labelsep
        \itshape
    #1\@addpunct{.}]\ignorespaces
}{%
  \popQED\endtrivlist\@endpefalse
}
\makeatother
%
%
\newcommand{\setm}{\setminus}
\renewcommand{\emptyset}{\varnothing}
%
%
%
%
%
\def\vint{\mathop{\mathchoice%
          {\setbox0\hbox{$\displaystyle\intop$}\kern 0.22\wd0%
           \vcenter{\hrule width 0.6\wd0}\kern -0.82\wd0}%
          {\setbox0\hbox{$\textstyle\intop$}\kern 0.2\wd0%
           \vcenter{\hrule width 0.6\wd0}\kern -0.8\wd0}%
          {\setbox0\hbox{$\scriptstyle\intop$}\kern 0.2\wd0%
           \vcenter{\hrule width 0.6\wd0}\kern -0.8\wd0}%
          {\setbox0\hbox{$\scriptscriptstyle\intop$}\kern 0.2\wd0%
           \vcenter{\hrule width 0.6\wd0}\kern -0.8\wd0}}%
          \mathopen{}\int}
{\catcode`p =12 \catcode`t =12 \gdef\eeaa#1pt{#1}}      
\def\accentadjtext#1{\setbox0\hbox{$#1$}\kern   
                \expandafter\eeaa\the\fontdimen1\textfont1 \ht0 }
\def\accentadjscript#1{\setbox0\hbox{$#1$}\kern 
                \expandafter\eeaa\the\fontdimen1\scriptfont1 \ht0 }
\def\accentadjscriptscript#1{\setbox0\hbox{$#1$}\kern   
                \expandafter\eeaa\the\fontdimen1\scriptscriptfont1 \ht0 }
\def\accentadjtextback#1{\setbox0\hbox{$#1$}\kern       
                -\expandafter\eeaa\the\fontdimen1\textfont1 \ht0 }
\def\accentadjscriptback#1{\setbox0\hbox{$#1$}\kern     
                -\expandafter\eeaa\the\fontdimen1\scriptfont1 \ht0 }
\def\accentadjscriptscriptback#1{\setbox0\hbox{$#1$}\kern 
                -\expandafter\eeaa\the\fontdimen1\scriptscriptfont1 \ht0 }
\def\itoverline#1{{\mathsurround0pt\mathchoice
        {\rlap{$\accentadjtext{\displaystyle #1}
                \accentadjtext{\vrule height1.593pt}
                \overline{\phantom{\displaystyle #1}
                \accentadjtextback{\displaystyle #1}}$}{#1}}
        {\rlap{$\accentadjtext{\textstyle #1}
                \accentadjtext{\vrule height1.593pt}
                \overline{\phantom{\textstyle #1}
                \accentadjtextback{\textstyle #1}}$}{#1}}
        {\rlap{$\accentadjscript{\scriptstyle #1}
                \accentadjscript{\vrule height1.593pt}
                \overline{\phantom{\scriptstyle #1}
                \accentadjscriptback{\scriptstyle #1}}$}{#1}}
        {\rlap{$\accentadjscriptscript{\scriptscriptstyle #1}
                \accentadjscriptscript{\vrule height1.593pt}
                \overline{\phantom{\scriptscriptstyle #1}
                \accentadjscriptscriptback{\scriptscriptstyle #1}}$}{#1}}}}
\def\itunderline#1{{\mathsurround0pt\mathchoice
        {\rlap{$\underline{\phantom{\displaystyle #1}
                \accentadjtextback{\displaystyle #1}}$}{#1}}
        {\rlap{$\underline{\phantom{\textstyle #1}
                \accentadjtextback{\textstyle #1}}$}{#1}}
        {\rlap{$\underline{\phantom{\scriptstyle #1}
                \accentadjscriptback{\scriptstyle #1}}$}{#1}}
        {\rlap{$\underline{\phantom{\scriptscriptstyle #1}
                \accentadjscriptscriptback{\scriptscriptstyle #1}}$}{#1}}}}
%
%
\newcommand{\Cp}{{C_p}}
\DeclareMathOperator{\diam}{diam}
\DeclareMathOperator{\supp}{supp}
\DeclareMathOperator{\dist}{dist}
\DeclareMathOperator{\Div}{div}
\DeclareMathOperator{\capp}{cap}
\DeclareMathOperator{\Capp}{Cap}
\newcommand{\cp}{\capp_p}
\newcommand{\cpt}{\widetilde{\capp}_p}
\newcommand{\cpbar}{\overline{\capp}_p}
\newcommand{\cpLip}{\capp_p^{\Lip}}
\newcommand{\cpmu}{\capp_{p,\mu}}
\newcommand{\grad}{\nabla}
\DeclareMathOperator{\Lip}{Lip}
\newcommand{\Lipc}{{\Lip_c}}
\DeclareMathOperator*{\essliminf}{ess\,lim\,inf}
\DeclareMathOperator*{\essinf}{ess\,inf}
\newcommand{\bdry}{\partial}
\newcommand{\bdy}{\bdry}
\newcommand{\loc}{_{\rm loc}}
\DeclareMathOperator{\Mod}{Mod}
\newcommand{\Modp}{\Mod_p}
%
%
\newcommand{\alp}{\alpha}
\newcommand{\be}{\beta}
\newcommand{\ga}{\gamma}
\newcommand{\de}{\delta}
\newcommand{\eps}{\varepsilon}
\newcommand{\la}{\lambda}
\newcommand{\Om}{\Omega}
\newcommand{\K}{{\cal K}}
\newcommand{\uhat}{\hat{u}}
\newcommand{\etah}{\hat{\eta}}
\newcommand{\Ghat}{\widehat{G}}
\renewcommand{\phi}{\varphi}
\newcommand{\p}{{$p\mspace{1mu}$}}
\newcommand{\R}{\mathbf{R}}
\newcommand{\eR}{{\overline{\R}}}
%
%
\def\cprime{{\mathsurround0pt$'$}}
%
%
%
%
%
%
%
\newcommand{\limplus}{{\mathchoice{\vcenter{\hbox{$\scriptstyle +$}}}
  {\vcenter{\hbox{$\scriptstyle +$}}}
  {\vcenter{\hbox{$\scriptscriptstyle +$}}}
  {\vcenter{\hbox{$\scriptscriptstyle +$}}}
}}
\newcommand{\limminus}{{\mathchoice{\vcenter{\hbox{$\scriptstyle -$}}}
  {\vcenter{\hbox{$\scriptstyle -$}}}
  {\vcenter{\hbox{$\scriptscriptstyle -$}}}
  {\vcenter{\hbox{$\scriptscriptstyle -$}}}
}}
%
%
\newcommand{\Np}{N^{1,p}}
\newcommand{\Dp}{D^{p}}
\newcommand{\Nploc}{N^{1,p}\loc}
\newcommand{\ut}{\tilde{u}}
%
%
\newcommand{\al}{\alpha}
\newcommand{\Ga}{\Gamma}
\newcommand{\Lploc}{L^p\loc}
\newcommand{\clB}{\itoverline{B}}
\newcommand{\LL}{\mathcal{L}}
\newcommand{\UU}{\mathcal{U}}
\newcommand{\uP}{\itoverline{P}}     
\newcommand{\lP}{\itunderline{P}} 
\newcommand{\cpDp}{\capp_{\Dp}}
\newcommand{\bdystar}{\partial^*}
\newcommand{\Xstar}{X^*}
\newcommand{\vt}{\tilde{v}}
\newcommand{\pot}[2]{Q^{#1}_{#2}}
\newcommand{\PsiQ}[2]{\Psi^{#1}_{#2}}
\newcommand{\Qhat}{\widehat{Q}}
\newcommand{\chione}{\mathbbm{1}}
\newcommand{\clE}{\itoverline{E}}

\newcommand{\clU}{\overline{U}}
\newcommand{\clOm}{{\overline{\Om}}}
%
%
\numberwithin{equation}{section}
\newcommand{\eqv}{\ensuremath{
\mathchoice{\quad \Longleftrightarrow \quad}{\Leftrightarrow}
                {\Leftrightarrow}{\Leftrightarrow}}}
\newcommand{\imp}{\ensuremath{\Rightarrow}}
\newenvironment{ack}{\medskip{\it Acknowledgement.}}{}

\newcounter{saveenumi}

\begin{document}

\authortitle{Anders Bj\"orn and Jana Bj\"orn}
{Condenser capacities,  capacitary potentials 
and global \p-harmonic Green functions}
\title{Condenser capacities and capacitary potentials \\
for unbounded sets, and global \p-harmonic \\ Green functions on metric spaces}
\author{
Anders Bj\"orn and Jana Bj\"orn}

\author{
Anders Bj\"orn \\
\it\small Department of Mathematics, Link\"oping University, SE-581 83 Link\"oping, Sweden\\
\it \small anders.bjorn@liu.se, ORCID\/\textup{:} 0000-0002-9677-8321
\\
\\
Jana Bj\"orn \\
\it\small Department of Mathematics, Link\"oping University, SE-581 83 Link\"oping, Sweden\\
\it \small jana.bjorn@liu.se, ORCID\/\textup{:} 0000-0002-1238-6751
}

\date{}

\maketitle

\noindent{\small
{\bf Abstract}. 
We study the condenser capacity $\cp(E,\Om)$ on \emph{unbounded} open sets $\Om$  in 
a proper connected metric space $X$
equipped with a locally doubling measure 
supporting a local \p-Poincar\'e inequality, where $1<p<\infty$.
Using a new definition of capacitary potentials, 
we show that $\cp$ is countably subadditive and that it is a
Choquet capacity.
We next obtain formulas  for the capacity
of superlevel sets for the capacitary potential.
These are then used to show that
\p-harmonic Green functions exist in an unbounded domain $\Om$ if and only if
either $X$ is \p-hyperbolic or the Sobolev capacity $\Cp(X\setm \Om)>0$.
As an application, we  deduce new results for Perron solutions and boundary regularity
for the Dirichlet boundary value problem for \p-harmonic functions
in unbounded open sets.
}

\medskip

\noindent {\small \emph{Key words and phrases}:
boundary regularity,
condenser  capacity,
capacitary potential,
Dirichlet problem,
locally doubling measure,
global \p-harmonic Green function,
metric space,
Perron solution,
\p-harmonic function,
\p-hyperbolic space,
local Poincar\'e inequality,
singular function.
}

\medskip

\noindent {\small \emph{Mathematics Subject Classification} (2020):
Primary: 31C45; 
Secondary:  
30L99, 
31C12, 
31C15, 
31E05, 
35J08, 
35J92, 
46E36, 
49Q20. 
}

\section{Introduction}

It is well known that every domain  $\Om\subset\R^n$, $n\ge3$, has
a \emph{Green function},
i.e\  a solution of 
the Poisson equation $-\Delta u= \delta_{x_0}$ in $\Om$
with zero boundary data on $\bdy\Om$ and
at $\infty$ when $\Om$ is unbounded. 
Here $\de_{x_0}$ is the Dirac distribution at $x_0\in\Om$.
For $n=2$, Green functions exist only in domains $\Om$   
that are \emph{Greenian}, 
a property equivalent to the complement $\R^2\setm\Om$ being a nonpolar set,
see e.g.\ Armitage--Gardiner~\cite[Theorem~5.3.8]{AG} or
Doob~\cite[Theorems~1.V.6 and~1.VII.7]{Doob}.
In particular, the plane itself is not Greenian.

This distinction between the plane and higher-dimensional spaces is 
reflected in many other phenomena, such as the (non)existence of nonnegative
nonconstant superharmonic functions or
the recurrent/transient behaviour of the Brownian motion, and can be 
captured by the notions of \emph{parabolic} and \emph{hyperbolic} spaces.
An analogue in the setting of nonlinear PDEs, such as for \p-harmonic functions,
is that for all $n > p>1$, the \p-Green function 
\[
u(x)=C_{n,p} |x-x_0|^{(p-n)/(p-1)},
\]
with a suitable constant $C_{n,p}>0$, satisfies 
\[
\Delta_p u := \Div( |\grad u|^{p-2} \grad u) = -\delta_{x_0}  \quad \text{in }  \R^n
\qquad \text{and} \qquad \lim_{x\to\infty} u(x)=0, 
\]
while there is no such \p-Green function for $n\le p$.
Green functions are useful tools for  PDEs 
and together with potentials play an important role 
in classical potential theory.

In this paper, we study the above notions for unbounded sets
in rather general metric measure spaces.
More precisely, let for the rest of the introduction (unless
said otherwise)
$X$ be 
a proper connected metric space
equipped with a locally doubling measure $\mu$ 
supporting a local \p-Poincar\'e inequality, where $1<p<\infty$.
$\Om$ will always denote a nonempty open set.
Weighted $\R^n$, 
equipped with a \p-admissible weight
as in Hei\-no\-nen--Kil\-pe\-l\"ai\-nen--Martio~\cite{HeKiMa},
and (sub)Riemannian manifolds, as in 
Coulhon--Holo\-pai\-nen--Saloff-Coste~\cite{CoHoSC} 
and Holo\-pai\-nen~\cite{Ho},~\cite{HoDuke}, are included as special cases.
So are 
Heisenberg and Carnot groups, 
vector fields satisfying the H\"ormander condition,
and many other situations,
see Haj\l asz--Koskela~\cite[Sections~10--13]{HaKo}. 

In the generality of metric spaces, \emph{\p-harmonic} functions are 
defined as continuous minimizers of the \p-energy integral
\begin{equation}  \label{eq-energy-int-intro}
\int g_u^p\,d\mu,
\end{equation}
where $g_u$ is the minimal \p-weak upper gradient of $u$.
The associated superminimizers then give rise to \emph{\p-superharmonic} functions, 
see Section~\ref{sect-pharm} for the precise definitions.
Our results and proofs 
also apply if the \p-harmonicity is replaced by
Cheeger \p-harmonicity, cf.\ 
Cheeger~\cite[Remark~7.19 and pp.~434, 462]{Cheeg},
Bj\"orn--Bj\"orn~\cite[Appendix~B]{BBbook} and
Bj\"orn--Bj\"orn--Lehrb\"ack~\cite[Section~13]{BBLehGreen}.
In that case, the \p-weak upper gradient $g_u$ is replaced
by the norm $\|Du\|$ of the Cheeger gradient $Du$ 
provided by~\cite[Theorem~4.38 and Definition~4.42]{Cheeg}
(or \cite[Theorem~13.4.4 and (13.4.6)]{HKSTbook}).
By the discussion in~\cite[p.\ 460]{Cheeg}, $\|\cdot\|$ can be chosen to be 
an inner product norm, 
in which case the minimizers of the \p-energy integral
\eqref{eq-energy-int-intro} actually satisfy a \p-Laplace type equation in a weak sense.
Our approach does not require Cheeger's differentiable structure and is based solely on 
minimizing the \p-energy integral \eqref{eq-energy-int-intro}.

To obtain our results on \p-singular and \p-Green functions, we first
study the condenser capacity $\cp(E,\Om)$ and its properties 
for unbounded open sets $\Om \subset X$.
This we do
in Sections~\ref{sect-cap} and \ref{sect-potentials}--\ref{sect-level}.
In particular, we obtain capacity formulas for superlevel sets 
of capacitary potentials, 
which later lead to the normalization  in~\eqref{eq-normalized-Green-intro}.
The condenser capacity $\cp(E,\Om)$
 has been studied and used in the metric setting for more than 20 years
and is well understood for bounded $\Om$, see e.g.\  \cite[Section~6.3]{BBbook}.

For unbounded~$\Om$, we propose a modification 
(through the limit \eqref{eq-cp-lim-deff})
of the earlier definition~\eqref{eq-deff-cp} of the condenser capacity
in metric spaces from Holo\-pai\-nen--Shan\-mu\-ga\-lin\-gam~\cite{HoSh},
as otherwise it would not be countably subadditive in \p-parabolic
spaces with infinite measure, see Proposition~\ref{prop-cpt}.
Our proof of countable subadditivity 
(Theorem~\ref{thm-cp}\ref{cp-subadd})
is therefore considerably more
involved for unbounded $\Om$ than for bounded $\Om$,
and also requires stronger assumptions on the measure $\mu$.
In particular, an
important ingredient is Lemma~\ref{lem-reflex-conv}, which ultimately
relies on the reflexivity of Newtonian spaces,
which is a highly nontrivial property proved by 
Cheeger~\cite[Theorem~4.48]{Cheeg}
and Ambrosio--Colombo--Di Marino~\cite[Corollary~41]{AmbCD}.
Also the theory of \p-harmonic functions is used extensively.

The results in weighted $\R^n$ in
  Hei\-no\-nen--Kil\-pe\-l\"ai\-nen--Martio~\cite[Theorem~2.2]{HeKiMa}
  do cover unbounded $\Om$, but as their condenser capacity 
is defined differently 
their proofs do not easily
carry over to metric spaces.
The condenser capacity $\cp(E,\Om)$ 
has been designed to reflect the normalization~\eqref{eq-normalized-Green-intro}
of Green functions in unbounded domains.
We point out that it 
differs for unbounded $\Om$ from other condenser capacities used 
e.g.\ in Hei\-no\-nen--Koskela~\cite{HeKo98}, 
Kallunki--Shan\-mu\-ga\-lin\-gam~\cite{KaSh},
Bj\"orn--Bj\"orn--Shan\-mu\-ga\-lin\-gam~\cite{BBShypend}
and Eriksson-Bique--Poggi-Corradini~\cite{EB-PC},
see Section~\ref{sect-cap} and Example~\ref{ex-cap-not-same}.
Further discussion of the condenser capacity 
$\cp(E,\Om)$
is at the end of the introduction.

We now present some 
of our main results for \p-Green and \p-singular functions
associated with \p-harmonic functions in metric spaces.
The following theorem is proved at the end of Section~\ref{sect-Green}.
It does not seem to appear in the literature even for unweighted $\R^n$,
even though the
equivalence \ref{m-superh}\eqv\ref{m-hyp-cp}
essentially follows from Theorems~9.22 and~9.23 in 
Hei\-no\-nen--Kil\-pe\-l\"ai\-nen--Martio~\cite{HeKiMa}
when $X$ is $\R^n$ equipped with a \p-admissible weight as in~\cite{HeKiMa}.

\begin{thm} \label{thm-main}
Assume that $1<p<\infty$ and that  $X$ is a proper connected metric space
equipped with a locally doubling measure $\mu$ 
supporting a local \p-Poincar\'e inequality.
Let $\Om \subset X$ be a domain, i.e.\ a connected
open set, and let $x_0 \in\Om$.
Then the following are equivalent\/\textup{:}
\begin{enumerate}
\item \label{m-Green}
There is a \p-Green function in $\Om$ with singularity at $x_0$.
\item \label{m-sing}
There is a \p-singular function in $\Om$ with singularity at $x_0$.
\item \label{m-superh}
There is a nonconstant bounded \p-superharmonic function in $\Om$.
\item \label{m-hyp-cp}
$X$ is \p-hyperbolic or $\Cp(X \setm \Om)>0$, where $\Cp(\cdot)$ is
the Sobolev capacity defined in~\eqref{eq-Cp-deff}.
\end{enumerate}
\end{thm}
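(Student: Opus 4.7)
I would proceed via the cycle
\ref{m-Green}\imp\ref{m-sing}\imp\ref{m-superh}\imp\ref{m-hyp-cp}\imp\ref{m-Green}.

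The first two implications should be essentially formal. A \p-Green function is by definition a \p-singular function satisfying a specific normalization (through the capacities of its superlevel sets), so \ref{m-Green}\imp\ref{m-sing} is immediate. For \ref{m-sing}\imp\ref{m-superh}, a \p-singular function $u$ is nonnegative, nonconstant and \p-superharmonic in $\Om$ with a pole at $x_0$. Truncating $v=\min(u,k)$ for a positive $k$ strictly below $\lim_{x\to x_0}u(x)$ preserves \p-superharmonicity (the minimum of two \p-superharmonic functions being \p-superharmonic) and yields a bounded function; it is nonconstant because $u>k$ near $x_0$, while $u$ is close to $0$ near $\bdy\Om$ and at infinity.

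For \ref{m-superh}\imp\ref{m-hyp-cp}, I would argue by contrapositive. Suppose $X$ is \p-parabolic and $\Cp(X\setm\Om)=0$. A removable-singularity result for bounded \p-superharmonic functions across sets of zero Sobolev capacity extends any bounded \p-superharmonic function $u$ on $\Om$ to a bounded \p-superharmonic function $\ut$ on all of $X$. The \p-parabolicity of $X$, together with the local doubling and Poincar\'e assumptions, implies a Liouville property for bounded \p-superharmonic functions, forcing $\ut$ and hence $u$ to be constant.

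The main work lies in \ref{m-hyp-cp}\imp\ref{m-Green}. My plan is to construct the \p-Green function as a limit of properly rescaled capacitary potentials. Fix a decreasing sequence of balls $B_j=B(x_0,r_j)$ with $r_j\to 0$ and an exhaustion $\Om_j\Subset\Om_{j+1}\Subset\Om$ with $B_j\subset\Om_j$. Let $u_j$ be the capacitary potential of the condenser $(\clB_j,\Om_j)$ provided by the machinery of Sections~\ref{sect-potentials}--\ref{sect-level}, and set $v_j=u_j/\la_j$ with $\la_j$ the scaling constant dictated by the superlevel-set capacity formulas so that $v_j$ satisfies the Green-function normalization. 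The reflexivity-based compactness of Lemma~\ref{lem-reflex-conv}, together with standard regularity of \p-(super)minimizers, produces a locally uniform subsequential limit $v$; one then checks that $v$ is \p-harmonic in $\Om\setm\{x_0\}$, has the prescribed pole at $x_0$, and vanishes on $\bdy\Om$ (and at infinity when $\Om$ is unbounded).

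The main obstacle I expect is ensuring that the normalizing constants $\la_j$ do not degenerate, so that the limit $v$ is nontrivial and the correct capacity normalization passes through the limit. This is exactly where hypothesis \ref{m-hyp-cp} is essential: either \p-hyperbolicity of $X$ provides a uniform positive lower bound for $\cp(\clB_j,\Om)$ away from $x_0$, or $\Cp(X\setm\Om)>0$ supplies such a lower bound via monotonicity and the countable subadditivity of $\cp$ from Theorem~\ref{thm-cp}\ref{cp-subadd}. A second delicate point is controlling the boundary behavior of $v$ on $\bdy\Om$ and at infinity, which must be compatible with the Green-function boundary condition; this should be handled by combining the superlevel-set capacity formulas of Section~\ref{sect-level} with the continuity of $\cp$ under the exhaustion $\Om_j\nearrow\Om$.
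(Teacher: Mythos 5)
Your cyclic structure essentially matches the paper's, with a cosmetic reordering: the paper goes \ref{m-Green}\imp\ref{m-superh}\imp\ref{m-hyp-cp}\imp\ref{m-sing}\imp\ref{m-Green}, whereas you insert \ref{m-sing} between \ref{m-Green} and \ref{m-superh}; both are fine, and your truncation argument for \ref{m-sing}\imp\ref{m-superh} is what the paper does for \ref{m-Green}\imp\ref{m-superh}. Your contrapositive for \ref{m-superh}\imp\ref{m-hyp-cp} is precisely the paper's Lemma~\ref{lem-parabolic-superh-X} (extend through the $\Cp$-null set $X\setm\Om$ and use a Liouville-type argument via the capacitary potential of a ball); one small omission is that $\neg$\ref{m-hyp-cp} allows $X$ bounded as well as \p-parabolic, and your sketch only addresses the parabolic case, though the bounded case is handled identically because $\cp(\clB,X)=0$ holds there too.

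The genuine gaps are in \ref{m-hyp-cp}\imp\ref{m-Green}. First, the paper does not use rescaled condenser potentials of shrinking balls $\clB_j$; it takes singular (Green) functions $u_j$ on bounded subdomains $\Om_j\nearrow\Om$ from~\cite{BBLehGreen} and passes to the limit, then normalizes \emph{afterwards} via Theorem~\ref{thm-Green-intro}. Your merged double limit (inner set shrinking, outer set growing) is considerably more delicate and is not reduced to a single clean normalization step. Second, your claim that \p-hyperbolicity ``provides a uniform positive lower bound for $\cp(\clB_j,\Om)$'' is incorrect as stated: when $\Cp(\{x_0\})=0$ these capacities tend to $\cp(\{x_0\},\Om)=0$ by Theorem~\ref{thm-cp}\ref{cp-Choq-E}, and your normalizing constants $\la_j$ must then tend to $0$. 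What hypothesis~\ref{m-hyp-cp} actually gives (and what the paper uses) is $\cp(B_r,\Om)>0$ for a \emph{fixed} $r$, so that $\cp(B_r,\Om_j)$ is bounded away from $0$ uniformly in $j$ by Proposition~\ref{prop-cp-inc-gen}; combined with the sphere estimate $u_j\simeq\cp(B_r,\Om_j)^{1/(1-p)}$ on $S_r$ (Theorem~7.1 of~\cite{BBLehIntGreen}, with constants independent of $j$), this yields a uniform bound on a fixed sphere. Third, the compactness you need is Harnack's convergence theorem for locally bounded \p-harmonic functions, not Lemma~\ref{lem-reflex-conv}: the latter is an energy/reflexivity statement and does not give local uniform convergence. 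Finally, ``vanishes on $\bdy\Om$ (and at infinity)'' is not the correct boundary requirement for a singular or Green function: condition~\ref{dd-P} is the Perron identity $u=P_{\Om\setm\clB}u$, and singular functions need not vanish pointwise at irregular boundary points. Checking~\ref{dd-P} in the limit requires a comparison argument as in the proof of Theorem~\ref{thm-exist-singular}, not a pointwise boundary-value check.
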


We use the following notions of \p-Green and \p-singular functions,
see also 
Definition~\ref{deff-sing} and Proposition~\ref{prop-sing-eqv}.
When $\Om$ is bounded, they are equivalent to the ones in
Bj\"orn--Bj\"orn--Lehrb\"ack~\cite[Definition~1.1]{BBLehGreen},
see Proposition~\ref{prop-sing-lika}.

A  nonconstant function $u:\Om \to (0,\infty]$, extended by zero outside $\Om$,
is a \emph{\p-singular function} in a domain $\Om\subset X$
with singularity at $x_0 \in \Om$ if it is \p-superharmonic
in $\Om$
and for every ball $B = B(x_0,r) \Subset \Om$ satisfies 
\begin{equation}  \label{eq-u=P-intro}
 u=P_{\Om \setm \clB} u \quad \text{in }  \Om \setm \clB.
\end{equation}
Here $P_{\Om \setm \clB} u$ is
the \p-harmonic \emph{Perron solution} in $\Om \setm \clB$ with boundary data $u$
on $\bdy B$ and 0 on the extended boundary
\[
\bdystar \Om:= \begin{cases}   
     \bdy\Om \cup \{\infty\},   &  \text{if $\Om$ is unbounded,}  \\ 
     \bdy \Om,  & \text{otherwise,}  
\end{cases}
\]
where $\infty$ is the point added in the \emph{one-point compactification} of $X$
when $X$ is unbounded.

A \emph{\p-Green function} 
is a \p-singular function which in addition satisfies
\begin{equation} \label{eq-normalized-Green-intro}
\cp(\{x\in \Om:u(x)\ge b\},\Om) = b^{1-p},
\quad \text{when }
   0  <b < u(x_0),
\end{equation}
where the condenser capacity~$\cp$ is as in Definition~\ref{deff-cp}.

Roughly speaking, condition~\eqref{eq-u=P-intro} means that
$u$ is \p-harmonic in $\Om\setm\{x_0\}$ and  has zero boundary values on $\bdystar \Om$
in a weak sense, while \eqref{eq-normalized-Green-intro} properly
normalizes \p-singular functions.
This close connection between \p-singular and \p-Green functions is captured 
in the following theorem, which is proved in Section~\ref{sect-Green}.
From now on we drop the prefix $p$ in our terminology, 
i.e.\ we write ``Green'' rather than ``\p-Green'' and so on,
but (unless said otherwise) it is implicitly assumed  that $1<p<\infty$ is fixed.

\begin{thm} \label{thm-Green-intro}
Assume that  $1<p<\infty$ and that $X$ is a proper connected metric space
equipped with a locally doubling measure $\mu$ 
supporting a local \p-Poincar\'e inequality.

Let $v$ be a singular function in a domain $\Om\subset X$ with singularity at $x_0$.
Then there is a unique $\alp>0$ such that $u=\alp v$
is a Green function.
Moreover,
for $0 \le a <b \le  u(x_0)$,
\[
    \cp(\Om^b,\Om_a)=(b-a)^{1-p}
\quad \text{and} \quad
      \int_{\Om_a \setm \Om^b} g_u^p\,d\mu = b-a,
\]
where $\Om_a=\{x\in \Om:u(x)>a\}$,  $\Om^b=\{x\in \Om:u(x)\ge b\}$
and we interpret $\infty^{1-p}$ as~$0$.
\end{thm}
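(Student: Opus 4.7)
The plan is first to show that for every singular function $v$ the map $t\mapsto t^{p-1}\cp(\{v\ge t\},\Om)$ is a positive finite constant $C=C(v)$ on $(0,v(x_0))$, and then to take $\alp=C$; this will give both existence and uniqueness of $\alp$. The substantive content lies in proving the constancy. Since $v$ is singular, the Perron identity $v=P_{\Om\setm\clB}v$ on every ball $B=B(x_0,r)\Subset\Om$ forces $v$ to be \p-harmonic on $\Om\setm\{x_0\}$, and hence each truncation $\min(v,t)/t$ (for $0<t<v(x_0)$) is a capacitary potential of $(\{v\ge t\},\Om)$ in the sense of Sections~\ref{sect-potentials}--\ref{sect-level}. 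The superlevel-set capacity identities of Section~\ref{sect-level}, applied to this potential, give
\[
\cp(\{v\ge t\},\Om)=t^{-p}\int_{\{v<t\}}g_v^p\,d\mu
\quad\text{and}\quad
\cp(\{v\ge t\},\{v>s\})=(t-s)^{-p}\int_{\{s<v<t\}}g_v^p\,d\mu
\]
for $0<s<t<v(x_0)$; combined with the series law for the nested condensers $\{v\ge t\}\subset\{v>s\}\subset\Om$, this forces $t\mapsto\int_{\{v<t\}}g_v^p\,d\mu$ to be linear in $t$, and thus $\cp(\{v\ge t\},\Om)=(Ct)^{1-p}$. Setting $\alp=C$ and $u=\alp v$, the rescaling $\{u\ge b\}=\{v\ge b/\alp\}$ gives $\cp(\{u\ge b\},\Om)=b^{1-p}$ for $0<b<u(x_0)$, so $u$ is a Green function; strict monotonicity of $t\mapsto\cp(\{v\ge t\},\Om)$ then gives uniqueness of $\alp$.

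To derive the ``moreover'' identities, I would fix $0\le a<b\le u(x_0)$ and consider the truncated and shifted function $\phi=(\min(u,b)-a)_+/(b-a)$. It equals $1$ on $\Om^b$, vanishes outside $\Om_a$, and hence is admissible for $\cp(\Om^b,\Om_a)$ with
\[
\int g_\phi^p\,d\mu=(b-a)^{-p}\int_{\Om_a\setm\Om^b}g_u^p\,d\mu.
\]
Because $u-a$ should inherit the singular/capacitary-potential property on the sub-domain $\Om_a$, the uniqueness of capacitary potentials from Section~\ref{sect-potentials} will identify $\phi$ as the actual capacitary potential of $(\Om^b,\Om_a)$, so the integral above equals $\cp(\Om^b,\Om_a)$. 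From the first paragraph applied to $u$ one has $\int_{\Om\setm\Om^b}g_u^p\,d\mu=b$ for $0<b\le u(x_0)$; the disjoint decomposition $\Om\setm\Om^b=(\Om\setm\Om_a)\cup(\Om_a\setm\Om^b)$, valid modulo the level set $\{u=a\}$ on which $g_u=0$ almost everywhere, then yields $\int_{\Om_a\setm\Om^b}g_u^p\,d\mu=b-a$, and hence $\cp(\Om^b,\Om_a)=(b-a)^{1-p}$. The case $v(x_0)=\infty$ is covered by the convention $\infty^{1-p}=0$.

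The main obstacle will be in the first paragraph: rigorously transferring the Section~\ref{sect-level} superlevel-set machinery (designed for capacitary potentials of condensers with a fixed nondegenerate inner set) to a singular function whose inner ``set'' has collapsed to the point $x_0$. The natural route is to approximate $v$ by capacitary potentials of the condensers $(\clB(x_0,r),\Om)$ and let $r\to 0$, controlling the passage to the limit via the countable subadditivity and Choquet property from Theorem~\ref{thm-cp} together with the reflexivity-based Lemma~\ref{lem-reflex-conv}. A secondary subtlety in the second paragraph is the identification of $\phi$ as a genuine minimizer (not merely an admissible function), which will require the limit-based definition~\eqref{eq-cp-lim-deff} of $\cp$ tailored to unbounded outer sets, as well as verifying that the sub-domain $\Om_a$ (which may be unbounded and is only defined through the singular function $u$ itself) fits within the paper's standing framework.
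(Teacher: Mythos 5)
Your core approach coincides with the paper's: identify the truncation $\min(v,t)/t$ with the capacitary potential $\pot{\Om}{\{v\ge t\}}$ using \ref{dd-P}, Lemma~\ref{lem-subset-Perron} and Proposition~\ref{prop-ex-pot-unbdd}, apply the superlevel-set identities of Section~\ref{sect-level}, and then rescale. The paper packages the constancy of $t\mapsto t^{p-1}\cp(\{v\ge t\},\Om)$ as Theorem~\ref{thm-sing}, obtained by applying Corollary~\ref{cor-cp-level-pot-ab} to $\pot{\Om}{\Om^m}$ (or to $\pot{\Om}{\{x_0\}}$ when $\Cp(\{x_0\})>0$) and noting that the resulting constant is $m$-independent; your ``linearity of $t\mapsto\int_{\{v<t\}}g_v^p\,d\mu$''/``series law'' phrasing is an equivalent way to read off the same homogeneity, and your second paragraph likewise matches the paper's derivation of the energy identity via $\pot{\Om_a}{\Om^b}$.

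Where you diverge is the flagged ``main obstacle'' at the end, which is a red herring, and the workaround you propose is both unnecessary and harder than what you already wrote. The Section~\ref{sect-level} machinery is never applied to the collapsed inner set $\{x_0\}$; it is applied to the nondegenerate superlevel sets $\{v\ge t\}$ for $t>0$, and your own first step already furnishes the correct potential. Approximating instead by $\pot{\Om}{\clB_r}$ as $r\to 0$ is problematic: when $\Cp(\{x_0\})=0$ those potentials tend to $0$, need a renormalization, and identifying the renormalized limit with the \emph{given} $v$ requires uniqueness of singular functions up to proportionality, which is not available (Proposition~\ref{prop-comparable} gives only comparability). The paper's route — restrict to $\Om\setm\Om^m$ via Lemma~\ref{lem-subset-Perron} and identify $v/m$ with $\pot{\Om}{\Om^m}$ there — sidesteps this entirely. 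One very minor point: the decomposition $\Om\setm\Om^b=(\Om\setm\Om_a)\cup(\Om_a\setm\Om^b)$ is exact, since it is $\{u\le a\}\cup\{a<u<b\}=\{u<b\}$, so no ``modulo the level set $\{u=a\}$'' caveat is needed.
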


Green functions associated with quasilinear elliptic equations 
of the \p-Laplacian type on Riemannian manifolds were 
introduced in Holo\-pai\-nen~\cite[Section~3]{Ho},
and subsequently studied by e.g.\ 
Coulhon--Holo\-pai\-nen--Saloff-Coste~\cite{CoHoSC},
Holo\-pai\-nen~\cite{HoDuke}
and Kura~\cite{Kura}.

For bounded domains $\Om$ in metric spaces, the existence of singular and Green functions 
(with our definition) was proved in
Bj\"orn--Bj\"orn--Lehrb\"ack~\cite[Theorem~1.3]{BBLehGreen}.
Under different
assumptions and with a somewhat different definition, 
singular functions on metric spaces
were introduced already in
Holo\-pai\-nen--Shan\-mu\-ga\-lin\-gam~\cite[Theorems~3.12 and 3.14]{HoSh}.
They considered singular functions both on bounded domains and on the entire space.
However, in the latter case the capacity estimates in 
\cite[pp.~325--326]{HoSh} do not seem to be fully justified 
and thus it is not evident
that the obtained limit function is always nonconstant,
see  Remark~\ref{rmk-HoSh} for more details.

A different type of global Green functions 
based on the Cheeger differentiable structure~\cite{Cheeg}
on metric spaces
was studied by Bonk--Capogna--Zhou~\cite{BCZ}.
Those Green functions are defined in $Q$-parabolic Ahlfors $Q$-regular
spaces and are
unbounded both from above and from below.
The uniqueness Theorem~1.2 in~\cite{BCZ} therefore does not apply to the 
global Green functions considered  in this paper, 
which only exist in \p-hyperbolic spaces.
When $X$ is Ahlfors $Q$-regular and $p=Q$,
they also obtained uniqueness for 
$Q$-Green functions and Cheeger $Q$-Green functions of 
our type
in bounded regular domains, see \cite[Theorem~1.1 and Remark~1.1]{BCZ}.
Local estimates near the singularity for Green and singular functions 
in metric spaces were obtained in
Bj\"orn--Bj\"orn--Lehrb\"ack~\cite{BBLehIntGreen} and
Danielli--Garofalo--Marola~\cite{DaGaMa}.

Theorem~\ref{thm-main} with $\Om=X$ provides a characterization of \p-hyperbolic spaces.
Additional characterizations in terms of capacity
are given in Proposition~\ref{prop-hyp-char-1}.
Moreover, under global assumptions on the measure $\mu$, we have the following
result, which is proved at the end of Section~\ref{sect-reg}.
All these characterizations extend and complement 
Theorem~9.22 in~\cite{HeKiMa}, 
and contain new statements even for weighted~$\R^n$.
Example~\ref{ex-R-half-weighted} shows that 
\ref{i-hyp}$\not\imp$\ref{i-reg}
if the
global assumptions are replaced by local ones.
On the other hand, the proof shows
that \ref{i-reg}\imp\ref{i-reg-x0}\imp\ref{i-hyp}\eqv\ref{i-cap(X,X)} 
hold in general.
See Definition~\ref{deff-cp} for the definition of the condenser capacity~$\cp$.

\begin{thm} \label{thm-char-hyp-intro}
Assume that $1<p<\infty$ and that  $X$ is an unbounded complete metric space
equipped with a globally doubling measure $\mu$ 
supporting a global \p-Poincar\'e inequality.
Let $x_0 \in X$. 
Then the following are equivalent\/\textup{:}
\begin{enumerate}
\item \label{i-hyp}
$X$ is \p-hyperbolic, i.e.\ there is a compact set $K$ with $\cp(K,X)>0$.
\item \label{i-cap(X,X)}
$\cp(X,X)=\infty$.
\item \label{i-reg}
$\infty$ is regular for every unbounded open set.
\item \label{i-reg-x0}
$\infty$ is regular for $X\setm\{x_0\}$.
\item \label{i-integral}
\begin{equation*} 
\int_1^\infty \biggl( \frac{\rho}{\mu(B(x_0,\rho))} \biggr)^{1/(p-1)} \,d\rho<\infty.
\end{equation*}
\end{enumerate}
\end{thm}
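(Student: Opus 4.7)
\emph{Strategy and local implications.}
My plan is to prove the cycle $\ref{i-reg}\imp\ref{i-reg-x0}\imp\ref{i-hyp}\eqv\ref{i-cap(X,X)}\eqv\ref{i-integral}\imp\ref{i-reg}$; only the last two implications will need the global doubling and \p-Poincar\'e assumptions, matching the remark after the statement that $\ref{i-reg}\imp\ref{i-reg-x0}\imp\ref{i-hyp}\eqv\ref{i-cap(X,X)}$ holds in general. The implication $\ref{i-reg}\imp\ref{i-reg-x0}$ is immediate since $X\setm\{x_0\}$ is unbounded and open. For $\ref{i-reg-x0}\imp\ref{i-hyp}$, I would pick a small ball $B=B(x_0,r)$, consider the Perron solution $u=P_{X\setm\clB} f$ with $f=1$ on $\bdy B$ and $f(\infty)=0$, and use regularity at~$\infty$ together with the automatic regularity of points of~$\bdy B$ (for small enough $r$, under the local assumptions) to force $0<u<1$ in $X\setm\clB$. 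Extending $u$ by~$1$ on~$\clB$ and truncating against the compact exhaustion in~\eqref{eq-cp-lim-deff} then produces admissible test functions with uniformly positive \p-energies, so $\cp(\clB,X)>0$ and~$X$ is \p-hyperbolic.

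\emph{Capacity at infinity via the Green function.}
For $\ref{i-hyp}\imp\ref{i-cap(X,X)}$ my plan is to apply Theorem~\ref{thm-main} (proved independently in Section~\ref{sect-Green}) with $\Om=X$ to obtain a Green function $u$ in~$X$ with singularity at~$x_0$. Since $X$ is connected and $u$ is a positive \p-superharmonic function, $\{u\ge b\}\nearrow X$ as $b\to 0^+$, and the normalization~\eqref{eq-normalized-Green-intro} combined with monotonicity of~$\cp$ in the first argument gives
\[
\cp(X,X) \;\ge\; \cp(\{u\ge b\},X) \;=\; b^{1-p} \;\to\; \infty \quad \text{as } b\to 0^+.
\]
The converse $\ref{i-cap(X,X)}\imp\ref{i-hyp}$ is a direct consequence of the definition~\eqref{eq-cp-lim-deff} of $\cp(X,X)$ through compact exhaustion, since a limit equal to~$\infty$ forces some compact term to be positive.

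\emph{Equivalence with the integral.}
For $\ref{i-hyp}\eqv\ref{i-integral}$ the key ingredient is the classical annular capacity estimate
\[
\cp\bigl(\clB(x_0,r),B(x_0,R)\bigr)^{1/(1-p)} \;\simeq\; \int_r^R \biggl(\frac{\rho}{\mu(B(x_0,\rho))}\biggr)^{1/(p-1)}\,d\rho,
\]
available, with constants depending only on the doubling constant and the Poincar\'e data, in any globally doubling \p-Poincar\'e space. Letting $R\to\infty$ in~\eqref{eq-cp-lim-deff} identifies positivity of $\cp(\clB(x_0,1),X)$ with finiteness of the integral in~\ref{i-integral}, and since any compact set is contained in some closed ball while compact balls have positive condenser capacity precisely in the \p-hyperbolic case, this yields the equivalence.

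\emph{The main obstacle: barrier at~$\infty$.}
The hardest step is $\ref{i-integral}\imp\ref{i-reg}$, where a barrier at~$\infty$ must be constructed and then used for arbitrary unbounded~$\Om$. I would start from
\[
w(x) \;:=\; \int_{d(x_0,x)}^{\infty} \biggl(\frac{\rho}{\mu(B(x_0,\rho))}\biggr)^{1/(p-1)}\,d\rho,
\]
well-defined on $X\setm\clB(x_0,r_0)$ for some $r_0$, and verify, via the annular estimate above together with chain-type arguments and the global doubling property, that $w$ is a bounded positive \p-superharmonic function there with $\lim_{x\to\infty}w(x)=0$. Given any Perron boundary function~$f$ on~$\bdystar\Om$ continuous at~$\infty$, I would then sandwich $P_\Om f$ between $f(\infty)\pm\eps\pm C_\eps w$ outside a sufficiently large compact set, with $C_\eps$ chosen to dominate the boundary values of~$f$ away from~$\infty$; letting $\eps\to 0$ gives $\lim_{x\to\infty}P_\Om f(x)=f(\infty)$, which is exactly regularity of~$\infty$ for~$\Om$. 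The main subtlety will be confirming that $w$ is genuinely \p-superharmonic and that the Perron comparison can be closed at~$\infty$ in the one-point compactification used for $\bdystar\Om$ throughout the paper.
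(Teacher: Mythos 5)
Your overall decomposition is sensible (the local implications $\ref{i-reg}\imp\ref{i-reg-x0}\imp\ref{i-hyp}\eqv\ref{i-cap(X,X)}$, then the integral, then back to regularity), but two of the steps have genuine gaps, and the more serious one concerns the barrier at $\infty$.

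For $\ref{i-integral}\imp\ref{i-reg}$ you propose the radial candidate
$w(x) = \int_{d(x_0,x)}^{\infty} (\rho/\mu(B(x_0,\rho)))^{1/(p-1)}\,d\rho$
and claim that the annular capacity estimate plus chain arguments make $w$ \p-superharmonic. That step fails in a general doubling \p-Poincar\'e space: $w$ is merely \emph{comparable} to the Green function (via Theorem~7.1 in \cite{BBLehIntGreen} and the annular estimate), but comparability does not give superharmonicity, and there is no analogue of the Euclidean radial ODE argument showing that a function of $d(x_0,\cdot)$ is a supersolution. There is not even a well-defined radial derivative of $\mu(B(x_0,\rho))$. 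This is precisely why the paper does not try to use $w$: it constructs a bona fide superharmonic barrier out of a \emph{singular function} $\ut$ in $X$ (Lemma~\ref{lem-global-u-lim=0}), using \ref{dd-inf} to get $\liminf_{x\to\infty}\ut=0$ and a uniform Harnack-on-spheres inequality (\cite[Proposition~4.4]{BBLehGreen}, which needs the \emph{global} doubling/Poincar\'e assumptions) to upgrade $\liminf$ to $\lim$, whence $\ut$ is a barrier at $\infty$ for any unbounded $\Om\ne X$. The Green-function construction you already invoke elsewhere in your own proof is, in fact, the tool you need here too.

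For $\ref{i-reg-x0}\imp\ref{i-hyp}$ your direct argument asks regularity of $\infty$ for $X\setm\clB$ (``consider $P_{X\setm\clB}f$ with $f(\infty)=0$ and use regularity at $\infty$''), but the hypothesis only gives regularity of $\infty$ for $X\setm\{x_0\}$, which is a \emph{larger} open set with a different extended boundary. Transferring regularity downward from $X\setm\{x_0\}$ to $X\setm\clB$ is not automatic and requires either a barrier for $X\setm\{x_0\}$ (which needs $\Cp(X\setm\Om)>0$ and hence $\Cp(\{x_0\})>0$ for the cited characterization) or a case split. The paper avoids this entirely by proving the contrapositive: if $X$ is \p-parabolic then $\infty$ is irregular for $X\setm K$ for \emph{every} nonempty compact $K$ (Proposition~\ref{prop-reg-par/hyp}), handling $\Cp(K)>0$ via Hansevi's \p-parabolic-set result and $\Cp(K)=0$ via Proposition~\ref{prop-Cp=0-para}. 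You should do the same. (Also, your parenthetical ``automatic regularity of points of $\bdy B$'' is false in metric spaces; the Kellogg property only gives q.e.\ regularity, though q.e.\ would in fact suffice for your energy lower bound.)

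The other pieces are fine but differ from the paper in efficiency. Your $\ref{i-hyp}\imp\ref{i-cap(X,X)}$ via a Green function and its superlevel-set normalization is correct; the paper instead gets $\ref{i-hyp}\eqv\ref{i-cap(X,X)}$ directly from Proposition~\ref{prop-hyp-char-1} using only $\pot{X}{X}\equiv 1$ and Proposition~\ref{prop-ex-pot-unbdd}, with no appeal to Green functions. The $\ref{i-cap(X,X)}\imp\ref{i-hyp}$ reading of~\eqref{eq-cp-lim-deff} together with properness is correct. Your $\ref{i-hyp}\eqv\ref{i-integral}$ is essentially the cited reference \cite[Theorem~5.5]{BBLehIntGreen}; the annular estimate you quote is the standard ingredient there, so this is aligned with the paper, modulo verifying that the estimate you call ``classical'' in fact holds in this generality (it does, under global doubling and \p-Poincar\'e).
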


In the literature on  Riemannian manifolds, there are various definitions 
and characterizations of \p-hyperbolicity,
which are equivalent under suitable assumptions, see e.g.\ 
Kesel\cprime man--Zorich~\cite{KeZor} ($p=n$), 
Holo\-pai\-nen~\cite[Proposition~1.7 and Corollary~4.12]{HoDuke}
and 
Coulhon--Holo\-pai\-nen--Saloff-Coste~\cite[Corollary~3.2 and Proposition~3.4]{CoHoSC}.
Similar notions in metric spaces
were considered in Holo\-pai\-nen--Koskela~\cite[Theorem~1.7]{HoKo},
Holo\-pai\-nen--Shan\-mu\-ga\-lin\-gam~\cite{HoSh}
and Bj\"orn--Bj\"orn--Lehrb\"ack~\cite{BBLehIntGreen}.

Another consequence of the existence of singular functions is the
following resolutivity result.
Here $f:\bdystar \Om \to \eR$ is \emph{resolutive}
if the upper and the lower Perron solutions 
of the Dirichlet boundary value problem coincide for the boundary
data~$f$.
As in Theorem~\ref{thm-main},  $\Cp(\cdot)$ is
the Sobolev capacity defined in~\eqref{eq-Cp-deff}.

\begin{thm} \label{thm-main-resolutive}
Assume that $1<p<\infty$ and that  $X$ is a complete metric space
equipped with a globally doubling measure $\mu$ 
supporting a global \p-Poincar\'e inequality.

Let $\Om\subset X$ be open.
If $X$ is \p-hyperbolic or $\Cp(X \setm \Om)>0$,
then every $f \in C(\bdystar \Om)$ is resolutive.
\end{thm}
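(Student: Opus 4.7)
The plan is to use the singular function guaranteed by Theorem~\ref{thm-main} as a barrier at the added point $\infty$, thereby reducing the resolutivity of $f$ on an unbounded domain to the already-known resolutivity of continuous boundary data on bounded open sets in this metric setting.

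First I would reduce to the case when $\Om$ is a domain. Perron solutions are computed componentwise: if $G$ is a component of $\Om$, then $\bdystar G=(\bdy G)\cup\{\infty\}$ when $G$ is unbounded and equals $\bdy G$ otherwise, $f|_{\bdystar G}$ is continuous, and upper/lower Perron solutions glue across components. The hypothesis is inherited by each component since $\Cp(X\setm G)\ge \Cp(X\setm \Om)$ and \p-hyperbolicity is a property of $X$ alone. Bounded components are handled by the standard resolutivity result for continuous boundary data on bounded open sets, so the only nontrivial case is that of an unbounded domain $G$ satisfying the hypothesis.

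In that case Theorem~\ref{thm-main} supplies a singular function $u$ in $G$ with singularity at a chosen $x_0\in G$. Then $u>0$, is \p-superharmonic in $G$, \p-harmonic in $G\setm\{x_0\}$, and satisfies the defining identity $u=P_{G\setm\clB}u$ in $G\setm\clB$ for every ball $B=B(x_0,r)\Subset G$; equivalently, $u$ has zero Perron boundary values on all of $\bdystar G$, in particular at $\infty$, and is bounded below by a positive constant on every sphere around $x_0$ lying in $G$. Given $f\in C(\bdystar G)$, set $c=f(\infty)$; since constants are trivially resolutive with $P_G c\equiv c$, subtracting $c$ reduces us to the case $f(\infty)=0$. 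Fix $\eps>0$; by continuity of $f$ at $\infty$ there is $R>0$ with $|f|<\eps$ on $\bdystar G\setm B(x_0,R)$. The one-point compactification of $X$ is compact Hausdorff (since $X$ is proper), so by Tietze's theorem $f$ extends to $\tilde f\in C(X\cup\{\infty\})$ with $\|\tilde f\|_\infty=\|f\|_\infty$.

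For each $j>R$ the bounded open set $G_j:=G\cap B(x_0,j)$ has, by the bounded-case resolutivity, a continuous Perron solution $h_j\in C(\itoverline{G_j})$ for the data $\tilde f|_{\bdy G_j}$. Choose $M>0$ so large that $Mu\ge\|f\|_\infty+\eps$ on the compact set $\bdy B(x_0,R)\cap G$; this is possible by the positive lower bound of $u$ there. Then $\tilde f+\eps+Mu$ and $\tilde f-\eps-Mu$ serve as upper and lower Perron admissible functions on $G$ for the data $f$ up to a $2\eps$-error, and combined with a normal-families argument for $\{h_j\}$ (using local H\"older estimates for \p-harmonic functions) they produce a \p-harmonic limit $H$ on $G$ with $\uP_G f-2\eps\le H\le\lP_G f+2\eps$ on $G$. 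Letting $\eps\to0$ forces $\uP_G f=\lP_G f$. The hard part will be the careful verification that the comparison functions built from $\tilde f$, $\eps$ and $Mu$ genuinely belong to the upper and lower Perron classes on the whole of $G$---i.e.\ that they are super-/subharmonic, bounded below/above, and achieve the correct semi-continuous boundary behaviour on the full extended boundary $\bdystar G$, including at $\infty$. The identity $u=P_{G\setm\clB}u$ from Theorem~\ref{thm-main} is exactly what secures the behaviour at $\infty$, and is the reason the singular-function definition was tailored to unbounded domains with the extended boundary $\bdystar$.
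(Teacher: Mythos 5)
Your proposal has two genuine gaps.

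The more serious one: a barrier argument at $\infty$ fundamentally cannot cover the \p-parabolic case, yet the theorem asserts resolutivity there too. When $X$ is \p-parabolic and $\Om$ is unbounded with $\Cp(X\setm\Om)>0$, Theorem~\ref{thm-main} does supply a singular function, but Proposition~\ref{prop-reg-par/hyp} shows that $\infty$ is \emph{irregular} for $\Om$ in this case; by Proposition~\ref{prop-barrier=>}, a barrier at $\infty$ would make $\infty$ regular, so no barrier exists and your construction collapses. The paper handles this case by an entirely different route, namely Hansevi~\cite[Theorem~7.8]{Hansevi2}, which proves resolutivity for \p-parabolic sets directly (via the Sobolev solution $Hf$ and an approximation argument, not a barrier). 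This is not something you can recover by tweaking your comparison functions: the obstruction is that your $u$ cannot have boundary value $0$ at $\infty$ there.

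The second gap is in the \p-hyperbolic case, which your argument is aimed at. You assert that the defining Perron identity $u=P_{G\setm\clB}u$ is ``equivalent'' to $u$ having zero boundary values at $\infty$, in particular $\lim_{x\to\infty}u(x)=0$. That implication is false in general: property (S5) only says $u$ arises as a Perron solution with $0$ imposed at $\bdystar G$, and the Perron solution attains an imposed boundary value only at \emph{regular} boundary points. The fact that $\lim_{x\to\infty}u(x)=0$ under the global doubling and Poincar\'e assumptions is precisely Lemma~\ref{lem-global-u-lim=0} of the paper, and Example~\ref{ex-R-half-weighted} shows it can fail under merely local assumptions. So the step you treat as automatic is the heart of the matter and must be established (as the paper does) using the global hypotheses. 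Once you do cite or prove that $\infty$ is regular, what you are really doing is the paper's route via Proposition~\ref{prop-unique}: known resolutivity on bounded truncations $G_j$ plus uniform control of the tail through $\eps$. Incidentally, your comparison functions $\tilde f\pm(\eps+Mu)$ are not literally in the upper/lower Perron classes, since a Tietze extension $\tilde f$ is merely continuous, not super/subharmonic; that part needs a clean restatement in terms of \p-harmonic solutions on $G_j$ glued to the tail, which is essentially the content of Proposition~\ref{prop-unique}.
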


Under global assumptions on $\mu$,
Theorem~\ref{thm-main-resolutive}
together with Example~\ref{ex-bdystar-one-pt} and 
Propositions~\ref{prop-Cp=0-para} and~\ref{prop-resolutive-cp=0}
gives a complete characterization of when all continuous functions
are resolutive.
In the setting of weighted $\R^n$, this was already shown by
Hei\-no\-nen--Kil\-pe\-l\"ai\-nen--Martio~\cite[p.~183]{HeKiMa}.
Proposition~\ref{prop-Cp=0-para}
shows that the condition $\Cp(X \setm \Om)>0$ cannot be dropped
from Theorem~\ref{thm-main-resolutive}.
In Proposition~\ref{prop-perturbation} we
obtain a perturbation result for Perron solutions, which is new
even in unweighted $\R^n$ for 
$2 \ne p <n$.

In addition to Perron solutions there are at least two
other approaches  that have been used to solve
the Dirichlet problem for \p-harmonic
functions in unbounded open sets $\Om$ in metric spaces
(with local or global assumptions as in 
Theorem~\ref{thm-main} or~\ref{thm-main-resolutive}).
Hansevi~\cite{Hansevi1} showed that when 
$f\in \Dp(\Om)$ (i.e.\  $g_f \in L^p(\Om)$),
there is a  unique solution $Hf$ with
boundary values~$f$ that 
minimizes the \p-energy~\eqref{eq-energy-int-intro} among all functions
with the same boundary values (in the Sobolev sense).
In the subsequent paper \cite{Hansevi2} 
he showed
that $Hf = Pf$ if $X$ is \p-parabolic (or more generally if
$\Om$ is a \p-parabolic set, see Definition~\ref{def-p-par-set}).
However, this equality can fail in the \p-hyperbolic case, e.g.\ for  
\[
\Om=\R^n\setm B(0,1),  \quad 1<p<n
\quad \text{and} \quad f=1 \text{ on } \bdy\Om, 
\]
where $Hf\equiv1$
and 
\begin{equation}   \label{eq-Pf-with-c}
Pf(x)=c+ (1-c)|x|^{(p-n)/(p-1)}, \quad \text{when $f(\infty)=c$},
\end{equation}
cf.\ also Example~\ref{ex-cap-not-same}.

Another approach is to use a quasiconformal mapping from $X$ to a bounded space
with one point corresponding to $\infty$, where the quasiconformal mapping
is tailored to preserve \p-harmonic functions. 
This also requires a suitable choice of measure on the target side.
With such an approach, the unbounded open set $\Om$ becomes 
bounded
and the point $\infty$ becomes a normal finite point, which can either be seen
as a boundary point of the transformed open set or (in some cases) as an inner point.
One can therefore apply the more developed theory for the Dirichlet
problem and boundary regularity on bounded open sets.
Such an approach (with $\infty$ as a boundary point)
was taken by Bj\"orn--Bj\"orn--Li~\cite{BBLi} 
for certain $p$ in Ahlfors $Q$-regular spaces under a suitable 
global $q$-Poincar\'e inequality, with $q$ depending on $Q$ and $p$. 

This type of approach, with a different quasiconformal mapping,
was used by
Gibara--Korte--Shan\-mu\-ga\-lin\-gam~\cite{GibaraKS24}
for
unbounded \emph{uniform} domains $\Om$ with bounded boundary $\bdy \Om$
in $X=\clOm$ and Besov  boundary values in a trace sense at $\bdy\Om$.
Such boundary values and solutions correspond in principle to Sobolev 
boundary values on the transformed bounded open set.
The requirements on $X$ are weaker in~\cite{GibaraKS24} than  in~\cite{BBLi}
(see \cite{GibaraKS24} for the precise assumptions),
but the uniformity assumption on $\Om$ also adds extra implicit assumptions on $X$,
as well as of course on~$\Om$.
For instance, $X$ can only have one end towards $\infty$.
Perron solutions $Pf$ are not discussed in~\cite{GibaraKS24},
but the uniqueness in \p-parabolic sets \cite[Theorem~9.1]{GibaraKS24} can be seen as 
an analogue of the equality $Hf = Pf$ in Hansevi~\cite{Hansevi1},
while the nonuniqueness in \p-hyperbolic sets reflects the boundary values at $\infty$
for the Perron solution $Pf$ as in \eqref{eq-Pf-with-c}.

As already mentioned, 
to obtain our results on singular and Green functions, we first
study the condenser capacity $\cp(E,\Om)$ and its properties.
It is defined for bounded $E$ in the usual way by minimizing the \p-energy integral 
in \eqref{eq-energy-int-intro} among functions satisfying $u=1$ on $E$ and $u=0$ outside $\Om$,
while  for unbounded $E$,  
\begin{equation*} 
\cp(E,\Om) :=\lim_{j \to \infty} \cp(E \cap B(x_0,j),\Om),
\end{equation*}
where $x_0 \in X$ is an arbitrary point,
see Definition~\ref{deff-cp}.

Capacitary potentials for $\cp(E,\Om)$
are a useful tool in our investigations.
When $\Om$ is bounded, they are defined as (regularized) 
admissible functions minimizing the \p-energy 
\eqref{eq-deff-cp} in Definition~\ref{deff-cp} of $\cp(E,\Om)$.
For unbounded $\Om$, such minimizing admissible functions need not exist, see 
Proposition~\ref{prop-not-admissible}.
We therefore give a new definition~\eqref{def-pot-unbdd} of capacitary 
potentials $\pot{\Om}{E}$ using superharmonic functions.
It coincides with the older definition
for bounded $\Om$  but exists also for unbounded~$E$ and~$\Om$.
A different construction for compact $E$ in Riemannian manifolds
using exhaustions by bounded sets
appears in Holo\-pai\-nen~\cite[p.~10]{Ho}.
We include noncompact $E$ in our investigations.

Even though the capacitary potential $\pot{\Om}{E}$ need not be an
admissible function for the capacity,
its energy is always equal to $\cp(E,\Om)$ 
provided that
$\cp(E,\Om)< \infty$,
see Propositions~\ref{prop-ex-pot-unbdd} and~\ref{prop-ex-pot-bdd}.
However, it can happen that the energy of $\pot{\Om}{E}$ is 
finite even when the capacity is infinite,
see Examples~\ref{ex-6.7} and~\ref{ex-R-half-weighted-cap}.
In fact, $\pot{X}{X} \equiv 1$ regardless of whether  $\cp(X,X)=0$ or $\cp(X,X)=\infty$
(these are the only possibilities by Proposition~\ref{prop-hyp-char-1}).

On (sub)Riemannian manifolds, condenser capacities were 
used in connection with Green functions in
Holo\-pai\-nen~\cite{Ho},~\cite{HoDuke} and
Coulhon--Holo\-pai\-nen--Saloff-Coste~\cite{CoHoSC}.
However, they did not study such properties as 
countable subadditivity.

The outline of the paper is as follows:
In Section~\ref{sect-ug} we introduce the necessary background  from
first-order analysis on metric spaces, while
in Section~\ref{sect-cap} we introduce and discuss basic properties for the condenser capacity
$\cp$  in rather general metric spaces.
The following is the main result in Section~\ref{sect-cap}.
Note that $p=1$ is included here.

\begin{thm} \label{thm-cp-loccomp}
Assume 
that $1 \le p<\infty$ and that
$X$ is a locally complete metric space equipped
with a positive complete  Borel  measure $\mu$
such that $0<\mu(B)<\infty$ for all 
balls $B \subset X$.
Let $\Om\subset X$ be open.
Then the following properties hold\/\textup{:}
\begin{enumerate}
\item \label{cp-outercap}
\textup{(}Outer capacity\/\textup)
If $E \subset \Om$ then
\begin{equation}   \label{eq-outer-cap}
  \cp(E,\Om) 
     = \inf_{\substack{G\text{ open} \\  E \subset G \subset \Om }} \cp(G,\Om).
\end{equation}
\item \label{cp-Choq-K}
If $K_1 \supset K_2 \supset \cdots $ are compact subsets 
of\/ $\Om$, then
\[
      \cp\biggl(\bigcap_{i=1}^\infty K_i,\Om\biggr) 
          = \lim_{i \to \infty} \cp(K_i,\Om).
\]
\end{enumerate}
\end{thm}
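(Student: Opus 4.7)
The plan is to prove part~\ref{cp-outercap} first and then derive part~\ref{cp-Choq-K} as a standard consequence. For part~\ref{cp-outercap}, the inequality $\cp(E,\Om)\le \inf_G \cp(G,\Om)$, where $G$ ranges over open sets with $E\subset G\subset\Om$, follows from the monotonicity of $\cp$ in its first argument: for bounded $E$ any function admissible for $\cp(G,\Om)$ is admissible for $\cp(E,\Om)$, and for unbounded $E$ the inequality passes to the truncation limit $\cp(\,\cdot\,,\Om)=\lim_j\cp(\,\cdot\,\cap B(x_0,j),\Om)$.

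For the reverse inequality I would first handle bounded $E$. Fix $\varepsilon>0$ and pick $u$ admissible for $\cp(E,\Om)$ with $\int_X g_u^p\,d\mu\le \cp(E,\Om)+\varepsilon$; truncating gives $0\le u\le 1$ without increasing the energy. Passing to a lower semi-continuous representative, the super-level set $V_\delta=\{u>1-\delta\}$ is open and satisfies $E\subset V_\delta\subset\Om$, while $v_\delta=\min(u/(1-\delta),1)$ is admissible for $\cp(V_\delta,\Om)$ with
\[
\int_X g_{v_\delta}^p\,d\mu \le \frac{1}{(1-\delta)^p}\int_X g_u^p\,d\mu.
\]
Letting $\delta\to 0^+$ yields open neighborhoods $G=V_\delta$ of $E$ whose capacities tend to $\cp(E,\Om)$. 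For unbounded $E$, I apply the bounded case to each truncation $E\cap B(x_0,j)$ to obtain open $G_j\subset\Om$ with $E\cap B(x_0,j)\subset G_j$ and $\cp(G_j,\Om)\le \cp(E\cap B(x_0,j),\Om)+\varepsilon$, set $G=\bigcup_j G_j$, and estimate $\cp(G,\Om)=\lim_n\cp(G\cap B(x_0,n),\Om)$ by combining admissible functions for the finitely many $G_j$'s needed to cover $G\cap B(x_0,n)$, using the monotone exhaustion in the definition of $\cp$ to preserve the bound $\cp(E,\Om)+\varepsilon$ in the limit.

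Part~\ref{cp-Choq-K} then follows from part~\ref{cp-outercap} by a standard compactness argument. Set $K=\bigcap_i K_i$; monotonicity gives $\cp(K,\Om)\le \lim_i\cp(K_i,\Om)$. Conversely, given $\varepsilon>0$, choose by part~\ref{cp-outercap} an open $G$ with $K\subset G\subset\Om$ and $\cp(G,\Om)\le \cp(K,\Om)+\varepsilon$. The sets $K_i\setm G$ are compact, decreasing, and satisfy $\bigcap_i(K_i\setm G)=K\setm G=\emptyset$, so by the finite intersection property some $K_{i_0}\setm G=\emptyset$, i.e.\ $K_{i_0}\subset G$. Hence $\lim_i\cp(K_i,\Om)\le \cp(K_{i_0},\Om)\le \cp(G,\Om)\le \cp(K,\Om)+\varepsilon$.

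The main obstacle is the reverse inequality in part~\ref{cp-outercap} for unbounded $E$: the open neighborhoods $G_j$ of the truncations $E\cap B(x_0,j)$ must be glued into a single open neighborhood of $E$ whose capacity stays controlled by $\cp(E,\Om)$. Since countable subadditivity of $\cp$ in the unbounded setting is a main result proved only later under stronger assumptions (and ultimately uses reflexivity of the Newtonian space via Lemma~\ref{lem-reflex-conv}), it is not available here, so this step must rely solely on the truncation definition and a careful finite-level analysis of admissible functions on each ball $B(x_0,n)$.
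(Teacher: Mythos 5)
Your reduction for bounded $E$ is essentially the standard one, but the phrase ``passing to a lower semi-continuous representative'' is not justified: a Newtonian function in $\Np(X)$ has a \emph{quasicontinuous} representative (this is what the paper cites from Eriksson-Bique--Poggi-Corradini for locally complete $X$), not a lower semicontinuous one, so $\{u>1-\delta\}$ is only quasi-open. The correct argument removes an open exceptional set $V$ of small Sobolev capacity on whose complement $u$ is continuous, takes $G=V\cup\{x\notin V : u(x)>1-\delta\}$, and corrects the admissible function near $V$; this is exactly how Theorem~6.19(vii) in \cite{BBbook} proceeds, and the paper simply invokes that argument for bounded $\Om$.

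The genuine gap is in the unbounded case. You pick open $G_j\supset E\cap B(x_0,j)$ all with the \emph{same} error $\varepsilon$, set $G=\bigcup_j G_j$, and then plan to control $\cp(G\cap B(x_0,n),\Om)$ by ``combining admissible functions for the finitely many $G_j$'s needed to cover $G\cap B(x_0,n)$.'' This does not give the bound you need: if you take a pointwise maximum of $n$ admissible functions, the energies add, and all you obtain is something like $n\bigl(\cp(E,\Om)+\varepsilon\bigr)$. You correctly identify that countable subadditivity is unavailable here, but the fix you sketch does not circumvent this. The paper's proof overcomes the difficulty by three devices that are all missing from your sketch: (i)~the errors are chosen summable, $\cp(G_j,\Om)<\cp(E\cap B_j,\Om)+2^{-j}\varepsilon$; (ii)~it estimates the partial unions $\Ghat_j=\bigcup_{i\le j}G_i$ by an inductive application of \emph{strong subadditivity} (Proposition~\ref{prop-cp}\ref{cp-strong-subadd}), where the telescoping works because $\Ghat_{j-1}\cap G_j\supset E\cap B_{j-1}$ allows one to subtract $\cp(E\cap B_{j-1},\Om)$ at each step, giving $\cp(\Ghat_j,\Om)\le\cp(E\cap B_j,\Om)+\sum_{i\le j}2^{-i}\varepsilon<\cp(E,\Om)+\varepsilon$; and (iii)~the final open set is \emph{not} $\bigcup_j G_j$ but rather $G=\bigcup_{j\ge2}(\Ghat_j\setminus\clB_{j-2})$, chosen so that $G\cap B_j\subset\Ghat_{j+1}$, which is precisely the set controlled by (ii). Plain unions do not have this truncation property and would force you back into summing over all $j$. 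So the strategy you outline would not close, and the missing idea is the combination of strong subadditivity with this ``staircase'' construction of $G$.

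Your proof of part~\ref{cp-Choq-K} from part~\ref{cp-outercap}, via compactness of the $K_i$, is correct and is the argument the paper uses as well.
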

\medskip

In Sections~\ref{sect-pharm} and~\ref{sect-bdyreg} we recall the definitions of
\p-harmonic functions, Perron
solutions and regular boundary points, and discuss results that will be 
needed in the sequel.

In Section~\ref{sect-potentials}, we turn to capacitary potentials.
Our definition is given in \eqref{def-pot-unbdd}, and the key properties are deduced.
This is a bit technical and relies quite heavily on the nonlinear potential theory associated
with \p-harmonic and superharmonic functions.
The capacitary potentials are then used in Section~\ref{sect-prop-cap}
to show  the following result.

\begin{thm} \label{thm-cp}
Assume that  $1<p<\infty$ and that $X$ is a proper connected metric space
equipped with a locally doubling measure $\mu$ 
supporting a local \p-Poincar\'e inequality.
Then the following properties hold also for unbounded $\Om$\/\textup{:}
\begin{enumerate}
\item \label{cp-Choq-E}
If   $E_1 \subset E_2 \subset \cdots \subset \Om$ 
 then
\[
      \cp\biggl(\bigcup_{i=1}^\infty E_i,\Om\biggr) 
          = \lim_{i \to \infty} \cp(E_i,\Om).
\]
\item \label{cp-subadd}
\textup(Countable subadditivity\/\textup)
If $E_1, E_2,\dots \subset \Om$ then
\[
      \cp\biggl(\bigcup_{i=1}^\infty E_i,\Om\biggr) 
          \le \sum_{i=1}^\infty \cp(E_i,\Om).
\]
\end{enumerate}
\medskip
\end{thm}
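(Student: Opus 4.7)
The plan is to reduce both parts to their analogues for \emph{bounded} subsets of $\Om$ via the ball-exhaustion definition $\cp(E,\Om)=\lim_{j\to\infty}\cp(E\cap B(x_0,j),\Om)$, after which the capacitary-potential machinery of Section~\ref{sect-potentials} and a standard max-of-admissibles construction take over. Monotonicity of $\cp$ disposes of the trivial inequalities, so in~\ref{cp-Choq-E} it remains to prove $\cp(\bigcup_iE_i,\Om)\le\lim_i\cp(E_i,\Om)$, and in~\ref{cp-subadd} I may assume $\sum_i\cp(E_i,\Om)<\infty$.

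For~\ref{cp-Choq-E}, writing $E_i^j=E_i\cap B(x_0,j)$, the definition gives
\[
\cp\biggl(\bigcup_iE_i,\Om\biggr)=\lim_j\cp\biggl(\bigcup_iE_i^j,\Om\biggr).
\]
Since the entries $\cp(E_i^j,\Om)$ are nondecreasing in each of $i,j$ separately, it is enough to establish the claim when every $E_i$ is bounded and then interchange two monotone limits. For bounded $E_i$, let $u_i=\pot{\Om}{E_i}$ be the capacitary potentials constructed in Section~\ref{sect-potentials}; the results there give $0\le u_i\le u_{i+1}\le 1$ and $\int g_{u_i}^p\,d\mu=\cp(E_i,\Om)$. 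Set $u=\lim_iu_i$. Lemma~\ref{lem-reflex-conv}, via the reflexivity of Newtonian spaces, produces convex combinations of the $g_{u_i}$ converging strongly in $L^p$ to $g_u$, so that $\int g_u^p\,d\mu\le\lim_i\cp(E_i,\Om)$. Identifying $u$ with (or showing it dominates) $\pot{\Om}{\bigcup_iE_i}$ through the characterization from Section~\ref{sect-potentials} then gives $\cp(\bigcup_iE_i,\Om)\le\int g_u^p\,d\mu$, completing this case.

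For~\ref{cp-subadd}, the same ball exhaustion reduces the task to bounding, for each fixed $j$, the quantity $\cp(\bigcup_iE_i^j,\Om)$ by $\sum_i\cp(E_i,\Om)$. Part~\ref{cp-Choq-E} applied to the bounded increasing family $F_N=\bigcup_{i\le N}E_i^j$ further reduces this to finite subadditivity $\cp(F_N,\Om)\le\sum_{i\le N}\cp(E_i,\Om)$. Because each $E_i^j$ is bounded, the classical definition~\eqref{eq-deff-cp} supplies admissible functions $\phi_i\in\Np(X)$ with $\phi_i=1$ on $E_i^j$, $\phi_i=0$ outside $\Om$, $0\le\phi_i\le 1$, and $\int g_{\phi_i}^p\,d\mu<\cp(E_i,\Om)+\eps/2^i$. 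Then $v_N=\max_{i\le N}\phi_i$ is admissible for $(F_N,\Om)$ and, by the locality of minimal upper gradients on the partition where $v_N=\phi_i$,
\[
\int g_{v_N}^p\,d\mu\le\sum_{i=1}^N\int g_{\phi_i}^p\,d\mu<\sum_i\cp(E_i,\Om)+\eps.
\]
Sending $\eps\to 0$, then $N\to\infty$, and finally $j\to\infty$ finishes the proof.

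The crux of the argument is the first step in~\ref{cp-Choq-E}: when $\Om$ has infinite measure, the potentials $u_i$ need not lie in $L^p(\Om)$, so one cannot simply extract a weakly convergent subsequence in $\Np(\Om)$. Lemma~\ref{lem-reflex-conv}, resting on the deep reflexivity of Newtonian spaces due to Cheeger and to Ambrosio--Colombo--Di~Marino, is precisely the tool that sidesteps this difficulty; once it is available, the remainder of the proof is essentially bookkeeping with the ball exhaustion and the max-of-admissibles construction.
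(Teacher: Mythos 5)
Your proof is correct and follows essentially the same route as the paper's: both reduce to bounded sets via the ball exhaustion, use Lemma~\ref{lem-pot-conv} to identify the limit of the potentials $\pot{\Om}{E_i}$, apply the weak lower semicontinuity of the Dirichlet energy (you invoke Lemma~\ref{lem-reflex-conv}, though the weaker Lemma~\ref{lem-Dp-6.3} already suffices there), and use Proposition~\ref{prop-ex-pot-bdd} to convert the energy bound back into a capacity bound; part~\ref{cp-subadd} is then finite subadditivity plus part~\ref{cp-Choq-E}. The only cosmetic difference is that you obtain finite subadditivity by the max-of-admissibles argument with locality of minimal upper gradients, whereas the paper cites the already-proved strong subadditivity (Proposition~\ref{prop-cp}\ref{cp-strong-subadd}) and induction.
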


In particular,  $\cp$ is a \emph{Choquet capacity},
i.e.\ it satisfies the properties in
Theorems~\ref{thm-cp-loccomp}\ref{cp-Choq-K} and~\ref{thm-cp}\ref{cp-Choq-E}
and is monotone in the first argument.
In Proposition~\ref{prop-cp-inc-gen}  we show that  
\begin{equation} \label{eq-Omj}
\cp(E,\Om_j) \to \cp(E,\Om), 
\quad \text{as } j \to \infty,
\end{equation}
when $E \subset \Om_1\subset \Om_2 \subset \dots \subset \Om = \bigcup_{j=1}^\infty \Om_j$
and $\cp(E,\Om_1)<\infty$.
At the same time, Example~\ref{ex-warning-ring} shows that in general
(even if $\Om$ is bounded), 
\[
\cp(E_j,\Om_j) \not\to \cp(E,\Om), 
\quad \text{as } j \to \infty,
\]
when $E_j\subset \Om_j$ are such that
$E_1\subset E_2 \subset \dots \subset E = \bigcup_{j=1}^\infty E_j$
and $\Om_1\subset \Om_2 \subset \dots \subset \Om = \bigcup_{j=1}^\infty \Om_j$,
even though the corresponding capacitary potentials 
always increase to the one for $\cp(E,\Om)$, by Lemma~\ref{lem-pot-conv}.
Proposition~\ref{prop-cp=HeKiMa} shows  that 
our condenser capacity $\cp$
coincides with the one in 
Hei\-no\-nen--Kil\-pe\-l\"ai\-nen--Martio~\cite{HeKiMa},
when $X$ is weighted~$\R^n$ as in~\cite{HeKiMa}.

Superlevel set identities are part of the normalization \eqref{eq-normalized-Green-intro}
of Green functions.
In Section~\ref{sect-level}, we study such identities for 
capacitary potentials.
These results are used in Section~\ref{sect-Green} to obtain
similar formulas  for singular functions leading to 
Theorems~\ref{thm-main} and~\ref{thm-Green-intro}.
A perhaps surprising fact is that even though we are always assuming that $\Om$ 
is open, we need to use a pasting lemma for fine superminimizers
on finely open sets to obtain our results in Section~\ref{sect-level},
see Remark~\ref{rmk-fine}.

In Sections~\ref{sect-singular} and~\ref{sect-Green} we use many of our results
about the condenser capacity $\cp$ 
to study singular and Green functions.
Finally, in Section~\ref{sect-reg} 
 we apply our results on Green functions  to 
obtain new results about  
boundary regularity and Perron solutions for the Dirichlet boundary value problem
in unbounded domains.
Theorems~\ref{thm-char-hyp-intro} and~\ref{thm-main-resolutive}
are proved therein.

Our primary focus in this paper is on unbounded $\Om$.
However, there are some new results also for bounded $\Om$ and even bounded $X$.
In particular, 
the convergence \eqref{eq-Omj} seems to be new
even when $\Om$ is bounded
and $X$ is unweighted $\R^n$.

\begin{ack}
A.~B. resp.\ J.~B. were supported by the Swedish Research Council,
  grants 2020-04011 resp.\ 2018-04106 and 2022-04048.
  
We would like to thank the two anonymous referees 
whose insightful comments helped us
improve the exposition and the historical discussion.
\end{ack}

\section{Upper gradients and Newtonian spaces}
\label{sect-ug}

\emph{We assume throughout the paper
that $1 \le p<\infty$ and that
$X=(X,d,\mu)$ is a metric space equipped
with a metric $d$ and a positive complete  Borel  measure $\mu$
such that $0<\mu(B)<\infty$ for all 
balls $B \subset X$.
We also fix a point $x_0 \in X$, write $B_r=B(x_0,r)$,
and assume that $\Om \subset X$ is a nonempty open set.
In this and the next section we do not impose any other assumptions
on the space or the measure.
Additional standing
assumptions are added at  the beginning of Section~\ref{sect-pharm}.}

\medskip

In this section, we introduce
the necessary metric space concepts used in this paper.
Proofs of the results in 
this section can be found in the monographs
Bj\"orn--Bj\"orn~\cite{BBbook} and
Hei\-no\-nen--Koskela--Shan\-mu\-ga\-lin\-gam--Tyson~\cite{HKSTbook}.
Note that it follows from the 
assumption $0<\mu(B)<\infty$ for all balls $B$
that $X$ is separable.

A \emph{curve} is a continuous mapping from an interval,
and a \emph{rectifiable} curve is a curve with finite length.
We will only consider curves which are nonconstant, compact and rectifiable.
A curve can thus be parameterized by its arc length $ds$.
A property holds for \emph{\p-almost every curve}
if the curve family $\Ga$ for which it fails has zero \p-modulus,
i.e.\ there is $\rho\in L^p(X)$ such that
$\int_\ga \rho\,ds=\infty$ for every $\ga\in\Ga$.
Following Hei\-no\-nen--Koskela~\cite{HeKo98} and Koskela--MacManus~\cite{KoMc}
we define  \p-weak upper gradients.

\begin{deff} \label{deff-ug}
A measurable function $g:X \to [0,\infty]$ is a \emph{\p-weak upper gradient}
of $f:X \to \eR:=[-\infty,\infty]$
if for \p-almost all curves
$\gamma: [0,l_{\gamma}] \to X$,
\begin{equation*} 
        |f(\gamma(0)) - f(\gamma(l_{\gamma}))| \le \int_{\gamma} g\,ds,
\end{equation*}
where the left-hand side is $\infty$
whenever at least one of the
terms therein is infinite.
\end{deff}

If $f$ has a \p-weak upper gradient in $\Lploc(X)$, then
it has a \emph{minimal \p-weak upper gradient}
$g_f \in \Lploc(X)$
in the sense that
$g_f \le g$ a.e.\
for every \p-weak upper gradient $g \in \Lploc(X)$ of $f$.
The Newtonian Sobolev space on a metric space $X$ was
introduced as follows by 
Shan\-mu\-ga\-lin\-gam~\cite{Sh-rev}.

\begin{deff} \label{deff-Np}
For measurable $f$, let
\[
        \|f\|_{\Np(X)} = \biggl( \int_X |f|^p \, d\mu
                + \inf_g  \int_X g^p \, d\mu \biggr)^{1/p},
\]
where the infimum is taken over all \p-weak upper gradients of $f$.
The \emph{Newtonian space} on $X$ is
\[
        \Np (X) = \{f: \|f\|_{\Np(X)} <\infty \}.
\]
\end{deff}
\medskip

The space $\Np(X)/{\sim}$, where  $f \sim h$ if and only if $\|f-h\|_{\Np(X)}=0$,
is a Banach space and a lattice.
The \emph{Dirichlet space} $\Dp(X)$ is the collection of all
measurable functions on $X$ 
that have a \p-weak upper gradient in $L^p(X)$.
In this paper, it is convenient to
assume that functions in $\Np$ and $\Dp$
 are defined everywhere (with values in $\eR$),
not just up to an equivalence class in the corresponding function space.

We say  that $f \in \Nploc(X)$ if
for every $x \in X$ there exists a ball $B_x\ni x$ such that
$f \in \Np(B_x)$. 
If $f,h \in \Nploc(X)$,
then $g_f=g_h$ a.e.\ in $\{x \in X : f(x)=h(x)\}$.
In particular $g_{\min\{f,c\}}=g_f \chione_{\{f < c\}}$ a.e.\ in $X$
for $c \in \R$, where $\chione$ denotes the characteristic function.
For a measurable set $E\subset X$, the 
spaces $\Np(E)$ and $\Nploc(E)$ are defined by
considering $(E,d|_E,\mu|_E)$ as a metric space in its own right.

The  \emph{Sobolev capacity} of an arbitrary set $E\subset X$ is
\begin{equation} \label{eq-Cp-deff}
\Cp(E) =\inf_{f}\|f\|_{\Np(X)}^p,
\end{equation}
where the infimum is taken over all $f \in \Np(X)$ such that
$f\geq 1$ on $E$.
It is easy to see that the Sobolev capacity is countably subadditive.
For further properties, see 
e.g.\ \cite{BBbook}, Eriksson-Bique--Poggi-Corradini~\cite{EB-PC}
and Hei\-no\-nen--Koskela--Shan\-mu\-ga\-lin\-gam--Tyson~\cite{HKSTbook}.

A property holds \emph{quasieverywhere} (q.e.)\
if the set of points  for which it fails
has Sobolev capacity zero.
The Sobolev capacity is the correct gauge
for distinguishing between two Newtonian functions:
$h \sim f$ if and only if $h=f$ q.e.
Moreover, if $f,h \in \Np(X)$ and $f= h$ a.e., then $f=h$ q.e.

For a ball $B=B(x,r)$ with centre $x$ and radius $r$, we let
$\lambda B = B(x, \lambda r)$. In metric spaces
it can happen that balls with different centres or
radii denote the same set.
We will, however,
make the convention that a ball $B$ comes with a predetermined
centre and radius.
All balls are assumed to be open in this paper.
By $\clB$ we mean the closure of the open ball $B=B(x,r)$,
not the (possibly larger) set $\{y:d(x,y)\le r\}$.

\begin{deff} 
The measure $\mu$ is \emph{doubling within  $\Om$}
if there is $C>0$  
such that $\mu(2B)\le C \mu(B)$ for all balls $B \subset \Om$.

Similarly, the
\emph{\p-Poincar\'e inequality holds within $\Om$}
if there are constants $C>0$ and $\lambda \ge 1$
such that for all balls $B\subset \Om$,
all integrable functions $u$ on $\la B$, and all 
\p-weak upper gradients $g$ of $u$ in $\la B$,

\begin{equation*}  
        \vint_{B} |u-u_B| \,d\mu
        \le C r_B \biggl( \vint_{\lambda B} g^{p} \,d\mu \biggr)^{1/p},
\end{equation*}
where $u_B:=\vint_B u \,d\mu := \int_B u\, d\mu/\mu(B)$.

Each of these properties is called \emph{local} if 
for every $x \in X$ there is $r>0$
such that the property holds within $B(x,r)$.
If a property holds within $\Om=X$, then it is called \emph{global}.
\end{deff}

In $\R^n$ equipped with a globally doubling measure $d\mu=w\,dx$,
the global \p-Poincar\'e inequality  with $1<p<\infty$
is equivalent to the \emph{\p-admissibility} of the weight~$w$ in the
sense of Hei\-no\-nen--Kil\-pe\-l\"ai\-nen--Martio~\cite{HeKiMa}, see
Corollary~20.9 in~\cite{HeKiMa}
and Proposition~A.17 in~\cite{BBbook}.
Moreover, in this case $g_u=|\nabla u|$ a.e.\ if $u \in \Np(\R^n)$,
where $\nabla u$ is the gradient from~\cite{HeKiMa}.

The space $X$ is \emph{proper} if  
all closed bounded subsets are compact.
A \emph{domain} is a nonempty connected open set.
As usual, we write $f_\limplus= \max\{f,0\}$.
In this  paper, a continuous function is always assumed
to be real-valued (as opposed to $\eR$-valued). 
As usual, by $E \Subset \Om$ we mean that $\itoverline{E}$
is a compact subset of $\Om$.

We will need the following $\Dp$-version of
Corollary~6.3 in~\cite{BBbook} several times.
See also Lemma~\ref{lem-reflex-conv} for a stronger statement
under additional assumptions.
Example~6.5 in~\cite{BBbook} shows that Lemma~\ref{lem-Dp-6.3}
fails for $p=1$.

\begin{lem}   \label{lem-Dp-6.3}
Assume that $1<p<\infty$.
  Let $u_j \in \Dp(X)$ be a 
sequence 
such that $u_j \to u$ q.e.\ in $X$.
Assume that  there is a constant $M$ such that 
\[
|u_j|\le M \text{ q.e.\ in } X \quad \text{and} \quad
\int_X g_{u_j}^p\,d\mu \le M
\qquad \text{for } j=1,2,\dots. 
\]
Then $u \in \Dp(X)$ and
\[
  \int_X g_{u}^p\,d\mu
  \le \liminf_{j \to \infty} \int_X g_{u_j}^p\,d\mu.
\]
\end{lem}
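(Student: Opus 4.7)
The plan is to reduce the statement to the $\Np$-version, Corollary~6.3 in~\cite{BBbook}, by localizing to balls. Fix a ball $B_k = B(x_0,k)$. Since $|u_j|\le M$ q.e.\ in $X$ and $\int_{B_k} g_{u_j}^p\,d\mu \le M$, the restrictions $u_j|_{B_k}$ lie in $\Np(B_k)$ with
\[
\|u_j\|_{\Np(B_k)}^p \le M^p \mu(B_k) + M,
\]
and $u_j \to u$ q.e.\ in $B_k$. Applying the $\Np$-version of the lemma to the metric measure space $(B_k,d|_{B_k},\mu|_{B_k})$ then gives $u|_{B_k} \in \Np(B_k)$ together with the local lower semicontinuity estimate
\[
\int_{B_k} g_u^p\,d\mu \le \liminf_{j\to\infty} \int_{B_k} g_{u_j}^p\,d\mu
\le \liminf_{j\to\infty} \int_X g_{u_j}^p\,d\mu.
\]

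Next I would glue the local minimal \p-weak upper gradients. Since minimal \p-weak upper gradients on open sets are consistent on overlaps (i.e., $g_u$ computed in $B_k$ and in $B_{k+1}$ agree a.e.\ on $B_k$), the function $g_u$ is unambiguously defined a.e.\ on $X = \bigcup_k B_k$ and is a \p-weak upper gradient of $u$ on $X$: any curve in $X$ is compact, hence contained in some $B_k$, so the upper gradient inequality there transfers to $X$, and the usual exceptional curve families on $B_k$ combine to one of zero \p-modulus in $X$.

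Finally I would pass from the $B_k$ to $X$ by monotone convergence: since $g_u \chione_{B_k} \nearrow g_u$ a.e.,
\[
\int_X g_u^p\,d\mu = \lim_{k\to\infty} \int_{B_k} g_u^p\,d\mu
\le \liminf_{j\to\infty} \int_X g_{u_j}^p\,d\mu \le M,
\]
so $g_u \in L^p(X)$ and hence $u \in \Dp(X)$, yielding both conclusions.

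The main obstacle I anticipate is the gluing step: one must check that the locally produced minimal \p-weak upper gradients really do patch together into a single \p-weak upper gradient on $X$ with the expected $L^p$-bound. This is a consistency property for minimal \p-weak upper gradients under restriction to open subsets, which is standard in the theory (and used repeatedly in \cite{BBbook}), but it is the only nontrivial ingredient beyond the $\Np$-version being quoted; the boundedness hypothesis $|u_j|\le M$ is essential here precisely to guarantee that the localized sequence satisfies the $L^p$-integrability required by that $\Np$-version, and this is exactly where the counterexample in Example~6.5 of~\cite{BBbook} for $p=1$ would obstruct the localization.
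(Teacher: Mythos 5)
Your proof is correct and takes essentially the same route as the paper: apply Corollary~6.3 of~\cite{BBbook} on each ball $B_k$ to get $u\in\Np(B_k)$ together with the local lower-semicontinuity inequality, and then pass to $X$ by monotone convergence. You merely make explicit the gluing of the local minimal \p-weak upper gradients (via their locality/restriction property), which the paper's proof uses tacitly when it identifies $g_u$ on $B_k$ with $g_u$ on $X$.
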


\begin{proof}
Recall that $x_0 \in X$ and $B_k=B(x_0,k)$.
It follows from Corollary~6.3 in~\cite{BBbook} that $u \in \Np(B_k)$ for
$k=1,2,\dots$   and 
\[
\int_X g^p_{u} \, d\mu
= \lim_{k \to \infty} \int_{B_k} g^p_{u} \, d\mu
\le \lim_{k\to \infty} \liminf_{j \to \infty} \int_{B_k} g^p_{u_j} \, d\mu 
\le \liminf_{j \to \infty} \int_{X} g^p_{u_j} \, d\mu.
\]
In particular, $u \in \Dp(X)$.
\end{proof}

\section{The condenser capacity}
\label{sect-cap}

Recall the standing assumptions from the beginning of Section~\ref{sect-ug}.
We define the condenser capacity in the following way.
It is also called ``variational'' (e.g.\ in \cite{BBbook})
and  ``relative'' in various parts of the literature.

\begin{deff} \label{deff-cp}
The \emph{condenser capacity} of a bounded set
$E \subset \Om$ with respect to (a possibly unbounded) $\Om$ is
\begin{equation}  \label{eq-deff-cp}
\cp(E,\Om) = \inf_u\int_{\Om} g_u^p\, d\mu,
\end{equation}
where the infimum is taken over all $u \in \Np(X)$ 
such that $\chione_E  \le u \le \chione_\Om$.
We call such $u$ \emph{admissible} for 
$\cp(E,\Om)$.
If no such function $u$ exists then $\cp(E,\Om)=\infty$.
We also let $\cp(\emptyset,\emptyset)=0$.
If $E \subset \Om$ is unbounded, we define
\begin{equation} \label{eq-cp-lim-deff}
\cp(E,\Om)=\lim_{j \to \infty} \cp(E \cap B_j,\Om).
\end{equation}
\end{deff}

It is easy to see that the limit in \eqref{eq-cp-lim-deff}
does not depend on the choice of $x_0$ in $B_j=B(x_0,j)$.

There are  other condenser capacities appearing in the 
nonlinear metric space literature,
such as $\cpDp$, which is defined for disjoint sets $E$ and $F$ by
\[
\cpDp(E,F)=\inf_u \int_X g_u^p\, d\mu,
\] 
with the infimum taken over all functions $u\in \Dp(X)$ with $u=1$ on $E$
and $u=0$ on~$F$.
Without the measurability requirement for $u\in \Dp(X)$, 
the corresponding capacity $\cpbar(E,F)$
was defined in 
Hei\-no\-nen--Koskela~\cite[(2.12)]{HeKo98}.
They showed that it
can equivalently be defined using the \p-modulus as
\begin{equation*} 
\cpbar(E,F) = \Modp(E,F) := \inf_\rho \int_X\rho^p\, d\mu,
\end{equation*}
where the infimum is taken over all 
Borel functions $\rho \ge 0$  such that 
$\int_\ga\rho\, ds\ge 1$ for every 
curve $\ga$ in $X$ with one end point in $E$ and the other in $F$,
see \cite[Proposition~2.17]{HeKo98}.
(Their proof of this identity holds for arbitrary $E$ and $F$.)
These condenser capacities have also been studied
by e.g.\ Kallunki--Shan\-mu\-ga\-lin\-gam~\cite{KaSh},
Bj\"orn--Bj\"orn--Shan\-mu\-ga\-lin\-gam~\cite{BBShypend}
and Eriksson-Bique--Poggi-Corradini~\cite{EB-PC}.

For closed $E$ and $F$ in a locally complete space $X$, 
it was shown in~\cite[Theorem~1.1]{EB-PC} that
\begin{equation*} 
  \cpbar(E,F)=\cpDp(E,F) = \cpLip(E,F),
\end{equation*}
where $\cpLip(E,F)$ is defined as $\cpDp(E,F)$ but 
using only locally Lipschitz functions $u$.
(The separability assumption in~\cite{EB-PC}
is redundant since they assume (as here) that balls have positive and
finite measure.)

The papers \cite{BBShypend}, \cite{EB-PC}, \cite{HeKo98}, \cite{KaSh}
only consider condenser capacities for closed sets, and do not discuss
such properties as those in Theorems~\ref{thm-cp-loccomp} and~\ref{thm-cp}
for condenser capacities.

When $\Om$ is bounded, we have $\cp(E,\Om)=\cpDp(E,X \setm \Om)$
for arbitrary $E\subset\Om$,
but this is not at all true for unbounded $\Om$, see Example~\ref{ex-cap-not-same} below.
In particular, if $\Om=X$ then 
$\cpDp(E,X \setm \Om)=\cpDp(E,\emptyset)=0$ for every $E$,  
so $\cpDp$ is not well adapted
for studying global Green functions, 
which are normalized by \eqref{eq-normalized-Green-intro}.
In the rest of the paper (except for Example~\ref{ex-cap-not-same}), 
we therefore do not consider such capacities.

The basic properties  of $\cp$ are quite well understood for bounded $\Om$,
see e.g.\ 
Bj\"orn--Bj\"orn~\cite[Section~6.3]{BBbook} and \cite{BBvarcap}, as well as
Costea~\cite{Costea09}, Lehrb\"ack~\cite{Lehr12} and
Martio~\cite{Martio21}
with slightly different definitions.
But there seem to be few such studies 
for unbounded $\Om$ in metric spaces.
In weighted $\R^n$ (with a somewhat different definition of $\cp$),
this was done in
Hei\-no\-nen--Kil\-pe\-l\"ai\-nen--Martio~\cite[Chapter~2]{HeKiMa}.

That the Sobolev capacity~\eqref{eq-Cp-deff} is 
a Choquet (and in particular outer) capacity 
has been shown under various assumptions, most recently by
Eriksson-Bique--Poggi-Corradini~\cite[Theorem~1.5 and Corollary~1.6]{EB-PC}
only assuming that $X$ is locally complete 
(in addition to the assumptions from the beginning of  Section~\ref{sect-ug}).

In general metric spaces, the zero sets for the 
Sobolev capacity~\eqref{eq-Cp-deff}
determine the equivalence classes in
the Newtonian space $\Np(X)$, see Section~\ref{sect-ug}.
Similarly, the condenser capacity $\cp(E,\Om)$ is defined so as to have
essentially the same zero sets as the Sobolev capacity,
see Proposition~\ref{prop-cp}\ref{cp-Cp} below.
In particular, it takes advantage of the fact that functions in $\Np(X)$ are pointwise
defined, and so the requirement $u=1$ makes sense on arbitrary sets $E$.
This is a reason why these definitions differ somewhat from some
classical definitions.

With less precisely defined functions, capacities are often defined by requiring 
$u=1$ in an
open neighbourhood or by a three step procedure (first for compact sets using sufficiently
smooth functions, then for open and finally for arbitrary sets $E$).
This 
directly makes them into outer capacities.
However, the other property necessary for Choquet capacities, 
namely \ref{cp-Choq-E}
in Theorem~\ref{thm-cp} (and consequently the countable
subadditivity), is more involved.
The elegant argument for this from Kinnunen--Martio~\cite{KiMaNov}
uses a uniform bound on the $L^p$ norms of the functions used for defining the 
capacities,
which for our condenser capacity with $\Np$ functions
can fail in unbounded $\Om$.

As far as we know, the condenser capacity with $\Np$ functions for unbounded $\Om$ in 
metric spaces has earlier only been defined by \eqref{eq-deff-cp}, without
the second step \eqref{eq-cp-lim-deff}.
For bounded $E$ this does not play 
any role, but for unbounded $E$, \eqref{eq-cp-lim-deff} is essential,
as we shall see.
For compact $E$ and unbounded $\Om$ in metric spaces,
$\cp$  seems 
to appear for the first time in Holo\-pai\-nen--Koskela~\cite{HoKo}.
In Holo\-pai\-nen--Shan\-mu\-ga\-lin\-gam~\cite[Definition~2.3]{HoSh},
it was defined by \eqref{eq-deff-cp} and used
in connection with singular functions, see Remark~\ref{rmk-HoSh}.
However, Proposition~\ref{prop-cpt} below shows that
if $X$ is a \p-parabolic space with $\mu(X)=\infty$ then the 
capacity defined only by \eqref{eq-deff-cp} is neither
countably subadditive nor a Choquet capacity.

The proofs of countable subadditivity and Choquet capacity 
for bounded $\Om$  in~\cite{BBbook}
(based  on Kinnunen--Martio~\cite{KiMaNov})
do not seem to generalize to unbounded $\Om$
because of the lack of a uniform $L^p$ bound for the 
admissible functions.
The results in weighted $\R^n$ in
Hei\-no\-nen--Kil\-pe\-l\"ai\-nen--Martio~\cite{HeKiMa}
  do cover unbounded $\Om$, but the condenser capacity 
therein is defined differently by the three step procedure.
However, Proposition~\ref{prop-cp=HeKiMa} shows  that 
our condenser capacity $\cp$ coincides with the one in 
Hei\-no\-nen--Kil\-pe\-l\"ai\-nen--Martio~\cite{HeKiMa},
when $X$ is weighted~$\R^n$ as in~\cite{HeKiMa}.
This in particular gives an alternative proof of
Theorem~2.2\,(v) in~\cite{HeKiMa}.

The following simple properties can be deduced
without any further assumptions on $X$.
(Note that $p=1$ is included in this section.)

\begin{prop} \label{prop-cp}
The following properties hold\/\textup{:}
\begin{enumerate}
\renewcommand{\theenumi}{\textup{(\roman{enumi})}}%
\item \label{cp-Cp}
If $E \subset \Om$
and $\Cp(E)=0$, then $\cp(E,\Om)=0$.
\item \label{cp-subset}
\textup{(}Monotonicity\/\textup{)}
If $E_1 \subset E_2 \subset \Om \subset \Om'$ and $\Om'$ is open,
 then 
\[ 
\cp(E_1,\Om') \le \cp(E_2,\Om).
\]
\item \label{cp-strong-subadd}
\textup{(}Strong subadditivity\/\textup{)}
If  $E_1,  E_2 \subset \Om$ 
then
\[ 
   \cp(E_1 \cup E_2,\Om) + \cp(E_1 \cap E_2,\Om) \le \cp(E_1,\Om)+\cp(E_2,\Om).
\]
\item \label{cp-F-bdyF}
If $F$ is a bounded closed set and $F \subset \Om$, 
then\/ $\cp(F,\Om)=\cp(\bdy F,\Om)$.
\end{enumerate}
\end{prop}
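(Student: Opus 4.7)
I would prove the four items in turn, handling the unbounded case of $E$ each time via the limit definition \eqref{eq-cp-lim-deff} and monotonicity of $\Cp$, so the real content is always in the bounded case.

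For \ref{cp-Cp}, my plan is to take $u=\chione_E$ directly as the admissible function. Since $\Cp(E)=0$ one has $\mu(E)=0$, and moreover the family of rectifiable curves intersecting $E$ has zero \p-modulus; for \p-a.e.\ curve $\gamma$, $\chione_E\circ\gamma\equiv 0$, so the zero function is a \p-weak upper gradient of $\chione_E$. Hence $\chione_E\in\Np(X)$ with $\|\chione_E\|_{\Np(X)}=0$, and because $E\subset\Om$ it satisfies $\chione_E\le\chione_\Om$ pointwise, making it admissible and forcing $\cp(E,\Om)=0$. For \ref{cp-subset} in the bounded case, any admissible $u$ for $\cp(E_2,\Om)$ obeys $\chione_{E_1}\le\chione_{E_2}\le u\le\chione_\Om\le\chione_{\Om'}$, hence is admissible for $\cp(E_1,\Om')$ with the same energy; taking the infimum yields the monotonicity.

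For \ref{cp-strong-subadd}, I would run the standard lattice argument. Given $\eps>0$, pick admissible $u_i$ for $\cp(E_i,\Om)$ with $\int_\Om g_{u_i}^p\,d\mu<\cp(E_i,\Om)+\eps$, and set $v=\max\{u_1,u_2\}$ and $w=\min\{u_1,u_2\}$. Then $v$ and $w$ are admissible for $\cp(E_1\cup E_2,\Om)$ and $\cp(E_1\cap E_2,\Om)$ respectively, and the pointwise formulas for minimal upper gradients of maxima and minima (Bj\"orn--Bj\"orn~\cite{BBbook}, Chapter~2) give $g_v^p+g_w^p=g_{u_1}^p+g_{u_2}^p$ a.e. Integrating over $\Om$ and sending $\eps\to0$ gives the bounded case; for unbounded $E_i$ I would use $(E_1\cup E_2)\cap B_j=(E_1\cap B_j)\cup(E_2\cap B_j)$, analogously for intersections, and pass to the limit in \eqref{eq-cp-lim-deff}.

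For \ref{cp-F-bdyF}, monotonicity gives $\cp(\bdy F,\Om)\le\cp(F,\Om)$. The key observation for the reverse inequality is that any admissible $u$ for $\cp(\bdy F,\Om)$ satisfies $\chione_{\bdy F}\le u\le\chione_\Om$ and $\bdy F\subset F\subset\Om$, which forces $u\equiv 1$ on $\bdy F$. I then set $\tilde u=u$ on $X\setminus F$ and $\tilde u\equiv 1$ on $F$ (equivalently, $\tilde u=\max\{u,\chione_F\}$). Pointwise, $\chione_F\le\tilde u\le\chione_\Om$, so $\tilde u$ is admissible for $\cp(F,\Om)$; the main obstacle is showing $\tilde u\in\Np(X)$ with $\int_\Om g_{\tilde u}^p\,d\mu\le\int_\Om g_u^p\,d\mu$. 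I would verify that $g_u\chione_{X\setminus F^\circ}$ is a \p-weak upper gradient of $\tilde u$ by splitting a \p-a.e.\ curve $\gamma$ at its entries into and exits from $F$; on maximal subcurves inside $F^\circ$ the function $\tilde u$ is constant $1$ and contributes $0$, on subcurves in $X\setminus F$ one has $\tilde u=u$ and the bound is inherited from $g_u$, and at every crossing time $t$ of $\bdy F$ the identity $u(\gamma(t))=1=\tilde u(\gamma(t))$ allows the two pieces to be pasted without any jump. This gives $\int g_{\tilde u}^p\le\int g_u^p$ and, after taking the infimum over $u$, the reverse inequality. The upper-gradient analysis at $\bdy F$ is the only delicate part, and it is made routine by the forced equality $u\equiv 1$ on $\bdy F$.
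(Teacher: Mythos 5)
Your arguments for \ref{cp-Cp}, \ref{cp-subset} and \ref{cp-strong-subadd} are correct and follow essentially the same route as the paper: admissibility of $\chione_E$ when $\Cp(E)=0$, inclusion of admissible classes for monotonicity, and the lattice $\max/\min$ argument applied to $E_i\cap B_j$ followed by $\eps\to0$, $j\to\infty$. For \ref{cp-F-bdyF} you take a genuinely different route. The paper truncates $\Om$ via Lemma~\ref{lem-cp-inc-1}, writing $\cp(E,\Om)=\lim_j\cp(E,\Om\cap B_j)$, and then cites the bounded-$\Om$ identity $\cp(F,\Om\cap B_j)=\cp(\bdy F,\Om\cap B_j)$ from \cite[Theorem~6.17]{BBbook}, so that (iv) costs essentially nothing beyond the earlier truncation lemma. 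You instead argue directly for the nontrivial inequality $\cp(F,\Om)\le\cp(\bdy F,\Om)$: given $u$ admissible for $\cp(\bdy F,\Om)$ you set $\tilde u=\max\{u,\chione_F\}$ and show that $g_u\chione_{X\setm F^\circ}$ is a \p-weak upper gradient of $\tilde u$ by a curve-splitting argument. That approach is valid but needs one more sentence of care than you give it: a curve can make countably many excursions into $F^\circ$, so you should invoke absolute continuity of $u\circ\gamma$ along \p-a.e.\ curve, note that $\gamma^{-1}(F^\circ)$ is open with at most countably many component intervals $(a_k,b_k)$, and use $\bdy F^\circ\subset\bdy F$ (valid since $F$ is closed) to conclude $u(\gamma(a_k))=u(\gamma(b_k))=1$ on each interior excursion, so the contributions from inside $F^\circ$ cancel. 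With that made explicit, your version of (iv) is self-contained and reproves the bounded case from scratch; the paper's reduction is shorter but relies on the external reference.
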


To prove \ref{cp-F-bdyF} we will use
the following simple lemma,
which will also be needed later on.
See Proposition~\ref{prop-cp-inc-gen} for a stronger result, under stronger
assumptions.

\begin{lem} \label{lem-cp-inc-1}
Assume that $E$ is bounded.
Then
\[
  \cp(E,\Om)=\lim_{j \to \infty} \cp(E,\Om \cap B_j).
\]
\end{lem}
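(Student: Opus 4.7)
My plan is to prove the two inequalities separately; the easier one follows from monotonicity, while the reverse uses a Lipschitz cutoff argument.

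Since $E$ is bounded, $E \subset B_N$ for some $N$, and for all $j \ge N$ the set $\Om \cap B_j$ contains $E$, so that $\cp(E,\Om \cap B_j)$ is defined. By the monotonicity property in Proposition~\ref{prop-cp}\ref{cp-subset}, $j \mapsto \cp(E,\Om \cap B_j)$ is decreasing in $j$ and bounded below by $\cp(E,\Om)$, so the limit exists and is at least $\cp(E,\Om)$.

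For the reverse inequality, I may assume $\cp(E,\Om) < \infty$. Given $\eps > 0$, pick admissible $u \in \Np(X)$ for $\cp(E,\Om)$ with $\int g_u^p\, d\mu < \cp(E,\Om)+\eps$, and set
\[
\eta_j(x) = \max\{0,\min\{1,j-d(x,x_0)\}\},
\]
a $1$-Lipschitz cutoff equal to $1$ on $B_{j-1}$ and vanishing off $B_j$, with $g_{\eta_j} \le \chione_{B_j \setm B_{j-1}}$ a.e. For $j>N+1$, the product $u_j := u\eta_j \in \Np(X)$ is admissible for $\cp(E,\Om \cap B_j)$. The Leibniz rule for minimal \p-weak upper gradients combined with Minkowski's inequality then gives
\[
\biggl(\int g_{u_j}^p\, d\mu\biggr)^{1/p} \le \biggl(\int_\Om g_u^p\, d\mu\biggr)^{1/p} + \biggl(\int_{B_j\setm B_{j-1}} u^p\, d\mu\biggr)^{1/p}.
\]

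The sole remaining point, which I view more as the crux than a serious obstacle, is to show that the tail integral $\int_{B_j\setm B_{j-1}} u^p\, d\mu$ tends to zero as $j\to\infty$. This holds precisely because $u \in \Np(X) \subset L^p(X)$, so dominated convergence applies. Taking $\limsup$ in $j$ and then $\eps \to 0$ yields $\lim_j \cp(E,\Om \cap B_j) \le \cp(E,\Om)$, finishing the proof. I would emphasize that this step also clarifies why the $\Np$ (rather than $\Dp$) convention in Definition~\ref{deff-cp} is what makes the argument work, and why the analogous identity for unbounded $E$ is delicate enough to require the two-step formulation in~\eqref{eq-cp-lim-deff}.
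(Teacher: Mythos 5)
Your proof is correct and follows essentially the same route as the paper: the easy direction by monotonicity, the other by multiplying an almost-minimizer by the standard $1$-Lipschitz cutoff $\eta_j$ supported in $B_j$ and equal to $1$ on $B_{j-1}$, observing $u\eta_j$ is admissible for $\cp(E,\Om\cap B_j)$, and letting $j\to\infty$. The only cosmetic difference is that the paper packages the limit as $u\eta_j\to u$ in $\Np(X)$, whereas you spell out the Leibniz estimate and the vanishing of the tail $\int_{B_j\setm B_{j-1}} u^p\,d\mu$, which is the same calculation.
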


\begin{proof}
Assume first that $\cp(E,\Om)< \infty$.
Let $u$ be admissible for $\cp(E,\Om)$ and
$k$ be so large that $E \subset B_k$.
For $j > k$, let $\eta_j(x)=(1-\dist(x,B_{j-1}))_\limplus$.
Then $u\eta_j$ is admissible for $\cp(E,\Om \cap B_{j})$.
Moreover,  $u\eta_j\to u$ in $\Np(X)$, as $j \to \infty$, and thus
\[
  \int_X g_{u}^p\,d\mu =  \lim_{j \to \infty}   \int_X g_{u\eta_j}^p\,d\mu
  \ge \lim_{j \to \infty} \cp(E,\Om \cap B_{j}).
\]
Taking the infimum over all $u$ admissible for $\cp(E,\Om)$ gives
\[
\cp(E,\Om) \ge \lim_{j \to \infty} \cp(E,\Om \cap B_j),
\]
which is trivial if $\cp(E,\Om)=\infty$.
The converse inequality follows directly from monotonicity.
\end{proof}

\begin{proof}[Proof of Proposition~\ref{prop-cp}]
  \ref{cp-Cp}
The characteristic function $\chione_E$ is admissible for $\cp(E,\Om)$
  and hence $\cp(E,\Om)=0$.

  \ref{cp-subset}
This follows directly from the definition since there are 
fewer admissible functions for the latter capacity.

\ref{cp-strong-subadd}
We may assume that the right-hand side is finite (otherwise
there is nothing to prove).
Fix $j \ge 1$ and $\eps >0$.
Then there are $u_k$  admissible for
$\cp(E_k \cap B_j,\Om)$
such that 
\[
\int_{\Om} g_{u_k}^p \,d\mu < \cp(E_k \cap B_j,\Om) + \eps, 
\quad k=1,2.
\]
It follows that $u:=\max\{u_1,u_2\}$ 
is  admissible for
$\cp((E_1 \cup E_2)\cap B_j,\Om)$ and
$v:=\min\{u_1,u_2\}$ for
$\cp((E_1 \cap E_2)\cap B_j,\Om)$.
Thus, by  Corollary~2.20 in~\cite{BBbook}, 
\begin{align*}
   \cp((E_1 \cup E_2) \cap B_j, \Om) & + \cp((E_1 \cap E_2)  \cap B_j, \Om) \\
     &  \le  \int_X ( g_u^p + g_v^p) \, d\mu \\
      & = \int_X ( g_{u_1}^p + g_{u_2}^p) \, d\mu\\
   &\le \cp(E_1 \cap B_j, \Om) + \cp(E_2  \cap B_j, \Om)
   + 2\eps \\
   &\le \cp(E_1, \Om) + \cp(E_2, \Om)
   + 2\eps.
\end{align*}
Letting $\eps \to 0$ and then $j \to \infty$ completes the proof
of~\ref{cp-strong-subadd}.

\ref{cp-F-bdyF}  
Let $j$ be so large that $F \subset B_j$.
Then $\cp(F,\Om \cap B_j)=\cp(\bdy F,\Om \cap B_j)$, by
Theorem~6.17 in~\cite{BBbook}.
The identity therefore follows from Lemma~\ref{lem-cp-inc-1} after
letting $j \to \infty$.
\end{proof}

\begin{proof}[Proof of Theorem~\ref{thm-cp-loccomp}]
\ref{cp-outercap}
For bounded $\Om$, 
this follows 
from the quasicontinuity of Newtonian functions (Theorem~1.10 in
Eriksson-Bique--Poggi-Corradini~\cite{EB-PC})
in the same way as in the proof of 
Theorem~6.19\,(vii) in~\cite{BBbook}.
(Observe that $X$ is separable, see Section~\ref{sect-ug}.)
So assume that $\Om$ is unbounded.

We may assume that $\cp(E,\Om)< \infty$.
Let $\eps >0$ and $E_j=E \cap B_j$, $j=1,2,\dots$\,.
Using Lemma~\ref{lem-cp-inc-1}, define 
$k_j > k_{j-1} \ge j$ recursively so that
\begin{equation*}  
\cp(E_j, \Om \cap B_{k_j}) < \cp(E_j,\Om) + 2^{-j} \eps \le \cp(E,\Om) + 2^{-j} \eps,
\quad j=1,2,\dots.
\end{equation*}
As $\Om_j:=\Om \cap B_{k_j}$ is bounded, there is an open set $G_j$ such that 
$E_j\subset G_j \subset \Om_j$ and
\begin{equation*}  
\cp(G_j, \Om) \le \cp(G_j, \Om_j) 
< \cp(E_j, \Om) + 2^{-j} \eps.
\end{equation*}
Let $\Ghat_j = \bigcup_{i=1}^j G_i$. 
We then have by induction 
\begin{equation}  \label{eq-cap-Ghat-j}
\cp(\Ghat_j, \Om) \le \cp(E_j, \Om) + \sum_{i=1}^{j} 2^{-i}\eps
< \cp(E, \Om) + \eps,
\quad j=1,2,\dots.
\end{equation}
Indeed, the strong subadditivity (Proposition~\ref{prop-cp}\ref{cp-strong-subadd})
gives
\begin{align*} 
\cp(\Ghat_j, \Om) 
&\le \cp(\Ghat_{j-1}, \Om) + \cp(G_j, \Om) - \cp(\Ghat_{j-1}\cap G_j, \Om) \\
&\le \cp(E_{j-1}, \Om) + \sum_{i=1}^{j-1} 2^{-i}\eps
             + \cp(E_j, \Om) + 2^{-j} \eps - \cp(E_{j-1}, \Om) \\
&= \cp(E_j, \Om) + \sum_{i=1}^{j} 2^{-i}\eps.
\end{align*}
Next let
\[
G  = \bigcup_{j=2}^\infty (\Ghat_j \setm \clB_{j-2}),
\quad \text{where }  B_0:=\emptyset.
\]
Then $G$ is open and $E \subset G$.
If $x\in G \cap B_j$, then clearly $x\notin \Ghat_i \setm \clB_{i-2}$
for $i\ge j+2$.
Since $\Ghat_{j} \subset \Ghat_{j+1}$, $j=1,2,\dots$\,,
this implies that
\(
  G \cap B_j \subset \Ghat_{j+1}
\)
and thus \eqref{eq-cap-Ghat-j} shows that
\[ 
  \cp(G \cap B_j,\Om)
   \le   \cp(\Ghat_{j+1},\Om) 
  \le \cp(E, \Om) + \eps.
\] 
Letting $j \to \infty$ shows that  
$  \cp(G,\Om) \le \cp(E, \Om) +\eps$.
Since $\eps$ was arbitrary, this gives the $\ge$-inequality in
\eqref{eq-outer-cap},
while the reverse inequality is trivial.

\ref{cp-Choq-K}
This  follows from \ref{cp-outercap} as in
the proof of Theorem~6.7\,(viii) in~\cite{BBbook}.
\end{proof}

We conclude this section by showing that for unbounded $\Om$, 
the condenser capacities $\cp(E,\Om)$ and $\cpDp(E,X\setm\Om)$
need not coincide (even when $E$ is compact).

\begin{example}    \label{ex-cap-not-same}
Let $X=\R^n$ (unweighted) and $1<p<n$.
For some fixed $z\in\R^n$ with $|z|>1$, let $\Om=\R^n\setm \itoverline{B(z,1)}$
and $E=\itoverline{B(-z,1)}$.
Then by Proposition~3.6 in Bj\"orn--Bj\"orn--Shan\-mu\-ga\-lin\-gam~\cite{BBShypend},
\[
\cpDp(E,X\setm\Om) = \int_{\Om\setm E} |\grad u|^p\,dx,
\]
where $u\in\Dp(\Om\setm E)$ is the unique solution of the Dirichlet
problem in $\Om\setm E$ with boundary data $u=1$ on $E$ and $u=0$ on $\bdy\Om$,
as in Hansevi~\cite[Definition~4.6 and Theorem~4.1]{Hansevi1}.
By symmetry and the uniqueness of the solution, we see
that $u(x)=1-u(-x)$ and thus
\[
\lim_{x\to\infty} u(x)= \tfrac12,
\]
since the limit exists by Lemma~6.15 in 
Hei\-no\-nen--Kil\-pe\-l\"ai\-nen--Martio~\cite{HeKiMa}.
On the other hand, Proposition~\ref{prop-ex-pot-unbdd}
below shows that 
\[
\cp(E,\Om) = \int_{\Om\setm E} |\grad v|^p\,dx,
\]
where $v= \pot{\Om}{E}$ is the capacitary potential for $\cp(E,\Om)$
as defined by \eqref{def-pot-unbdd}.
Moreover, $v= P_{\Om \setm E} \chione_{E}$ is the Perron  solution
with boundary data $v=1$ on $E$ and $v=0$ on $\bdystar\Om$
as in Definition~\ref{def:Perron} below.
As $\R^n$ is \p-hyperbolic (since $p<n$), Theorem~\ref{thm-char-hyp-intro}
implies that $\infty$ is a regular boundary point as in 
Definition~\ref{deff-reg-pt} below,
which means that
\[
\lim_{x\to\infty} v(x)= 0.
\]
In particular, $v\ne u$. 
Since the minimizer in the Dirichlet problem for $\cpDp$ is unique, 
it follows that
\[
\cp(E,\Om) = \int_{\Om\setm E} |\grad v|^p\,dx 
> \int_{\Om\setm E} |\grad u|^p\,dx = \cpDp(E,X\setm\Om).
\]
\end{example}

\section{\texorpdfstring{\p}{p}-harmonic functions and Perron solutions}
\label{sect-pharm} 

\emph{In addition to the assumptions from the beginning of Section~\ref{sect-ug},
  we assume from now on that
  $X$ is a proper connected metric space
equipped with a locally doubling measure $\mu$ that
supports a local \p-Poincar\'e inequality, where $1<p<\infty$.
}

\medskip

By Proposition~1.2 and Theorem~1.3 in
Bj\"orn--Bj\"orn~\cite{BBsemilocal},
it follows from our assumptions
that $\mu$ is doubling and supports a \p-Poincar\'e
inequality within every ball.
(These properties are called semilocal in~\cite{BBsemilocal}.)

Even though we do not assume global doubling and a global \p-Poincar\'e inequality,
all the results
in Chapters~6--14 in~\cite{BBbook} (except for the Liouville theorem),
as well as the results in Hansevi~\cite{Hansevi2} 
hold under our assumptions, see the discussions in
Bj\"orn--Bj\"orn~\cite[Section~10]{BBsemilocal}
and Bj\"orn--Bj\"orn--Shan\-mu\-ga\-lin\-gam~\cite[Remark~3.7]{BBShypend}.
Also the results in Bj\"orn--Hansevi~\cite{BH1} 
and Bj\"orn--Bj\"orn--M\"ak\"a\-l\"ainen--Par\-vi\-ai\-nen~\cite{BBMP}
hold under these assumptions.

A function $u \in \Nploc(\Om)$ is a
\emph{\textup{(}super\/\textup{)}minimizer} in $\Om$
if 
\[ 
      \int_{\phi \ne 0} g^p_u \, d\mu
           \le \int_{\phi \ne 0} g_{u+\phi}^p \, d\mu
           \quad \text{for all (nonnegative) } \phi \in \Np_0(\Om),
\] 
where 
\[
\Np_0(\Om):=
  \{f|_{\Om} : f \in \Np(X) \text{ and }
        f=0 \text{ on } X \setm \Om\}.
\]
A continuous minimizer is called a \emph{\p-harmonic function}.
(It follows from Kinnunen--Shan\-mu\-ga\-lin\-gam~\cite{KiSh01} that
every minimizer has a continuous representative.)

The \emph{lsc-regularization} $u^*$ of a function $u:\Om \to \eR$ is defined by
\begin{equation}   \label{eq-def-lsc-reg}
 u^*(x):=\essliminf_{y\to x} u(y):= \lim_{r \to 0} \essinf_{B(x,r)} u,
\quad x\in\Om,
\end{equation}
and $u$ is \emph{lsc-regularized} if $u=u^*$.
By Theorem~5.1 in Kinnunen--Martio~\cite{KiMa02}
(or \cite[Theorem~8.22]{BBbook}), 
every superminimizer has 
an lsc-regularized representative.

Minimizers, \p-harmonic functions, superminimizers and superharmonic functions
were introduced in metric spaces
by 
Shan\-mu\-ga\-lin\-gam~\cite{Sh-harm},
Kinnunen--Shan\-mu\-ga\-lin\-gam~\cite{KiSh01}
and Kinnunen--Martio~\cite{KiMa02}.

A function $u : \Om \to (-\infty,\infty]$ is \emph{superharmonic}
  in $\Om$ if $\min\{u,k\}$ is an lsc-regularized superminimizer for every $k \in \R$
and $u$ is not identically $\infty$ in any component of $\Om$.
In particular, $u$ itself is lsc-regularized.
 A function $u: \Om \to [-\infty,\infty)$ is \emph{subharmonic}
   if $-u$ is superharmonic.

By Theorem~6.1 in Bj\"orn~\cite{ABsuper}
(or \cite[Theorems~9.24 and 14.10]{BBbook}),
this definition of superharmonicity is equivalent to  the ones used
both in the Euclidean and metric space literature, e.g.\
in Hei\-no\-nen--Kil\-pe\-l\"ai\-nen--Martio~\cite{HeKiMa},
Kinnunen--Martio~\cite{KiMa02}
and Bj\"orn--Bj\"orn~\cite{BBbook}. 
It is not difficult to see that a
function is \p-harmonic if and only if it is both
sub- and superharmonic.

We will need the following pasting lemma from
Bj\"orn--Bj\"orn--M\"ak\"a\-l\"ainen--Par\-vi\-ai\-nen~\cite[Lemma~3.13]{BBMP}
(or \cite[Lemma~10.27]{BBbook})
several times.

\begin{lem} \label{lem-pasting}
\textup{(Pasting lemma)}
Assume that $\Om_1 \subset \Om_2$ 
are arbitrary nonempty open sets and that
$u_1$  and $u_2$ are superharmonic in $\Om_1$ and $\Om_2$\textup{,} respectively.
Let
\[
     u=\begin{cases}
        u_2 & \text{in } \Om_2 \setm \Om_1, \\ 
        \min\{u_1,u_2\} & \text{in } \Om_1.
	\end{cases}
\]
If $u$ is lower semicontinuous\textup{,} then it is superharmonic in $\Om_2$.
\end{lem}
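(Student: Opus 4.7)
The plan is to verify the comparison-principle characterization of superharmonicity on $\Om_2$, which (combined with the definition of superharmonicity recalled above and the fact that $u\le u_2$ shows $u\not\equiv\infty$ on any component of $\Om_2$) reduces the claim to the following: whenever $D\Subset\Om_2$ is open and $h$ is \p-harmonic in $D$ and continuous on $\itoverline{D}$ with $h\le u$ on $\bdy D$, then $h\le u$ throughout $D$. The assumed lower semicontinuity of $u$ then yields the conclusion via this characterization.

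First I would use the elementary bound $u\le u_2$ in $\Om_2$, which gives $h\le u_2$ on $\bdy D$; applying the comparison principle to the superharmonic function $u_2$ in $\Om_2\supset D$ yields $h\le u_2$ in $D$. This already settles $h\le u=u_2$ on $D\setm\Om_1$, so the remaining task is to show $h\le u_1$ on $U:=D\cap\Om_1$; combined with $h\le u_2$, this will give $h\le\min\{u_1,u_2\}=u$ there.

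The main obstacle is that $U$ need not be compactly contained in $\Om_1$, so the comparison principle on $\Om_1$ cannot be invoked directly between $u_1$ and $h$. I would instead work in the bounded open set $U\subset D$ and apply a comparison principle between the bounded \p-harmonic function $h$ and the superharmonic function $u_1|_U$, for which it suffices to verify
\[
  \limsup_{U\ni y\to x} h(y) \le \liminf_{U\ni y\to x} u_1(y) \qquad \text{for every } x\in\bdy U.
\]
For $x\in\bdy U$ with $x\in\Om_1$ (necessarily then $x\in\bdy D$), this follows from $h(x)\le u(x)\le u_1(x)$ together with the lsc of $u_1$. For $x\in\bdy U$ with $x\notin\Om_1$ (so $x\in\bdy\Om_1\cap\itoverline{D}\subset\Om_2$ and $u(x)=u_2(x)$), the inequality $h(x)\le u(x)$ holds by the hypothesis when $x\in\bdy D$ and by Step~1 when $x\in D$; combining with the lower semicontinuity of $u$ then yields
\[
  h(x)\le u_2(x)=u(x)\le\liminf_{\Om_1\ni y\to x}u(y) = \liminf_{\Om_1\ni y\to x}\min\{u_1,u_2\}(y) \le \liminf_{\Om_1\ni y\to x}u_1(y).
\]
This last chain is the crux of the argument: the assumed lower semicontinuity of $u$ is what transports the control of $h$ by $u_2$ across the interface $D\cap\bdy\Om_1$ into a control by $u_1$ from inside $\Om_1$, which is precisely why this hypothesis appears in the statement. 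A truncation of $u_1$ at $\sup_{\itoverline{D}}h$ reduces the comparison to the bounded case, where it is standard.
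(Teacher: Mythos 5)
The paper does not prove Lemma~\ref{lem-pasting}: it is quoted verbatim from Bj\"orn--Bj\"orn--M\"ak\"al\"ainen--Parviainen~\cite[Lemma~3.13]{BBMP} and~\cite[Lemma~10.27]{BBbook}, so there is no internal proof to compare against. Your reconstruction does, however, follow the same strategy used in those references, namely verifying the comparison-with-\p-harmonic-functions characterization of superharmonicity.

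Your proof is essentially correct. The decomposition $D=(D\setm\Om_1)\cup U$, the use of $u\le u_2$ to dispose of $D\setm\Om_1$ via comparison with the superharmonic $u_2$ on the full domain $\Om_2\supset D$, and above all the observation that the hypothesis of lower semicontinuity of $u$ is precisely what is needed to carry the inequality $h\le u=u_2$ across the interface $D\cap\bdy\Om_1$ into the estimate $\limsup h\le\liminf u_1$, are exactly the crux of the argument and you have identified and executed them correctly. Two small points deserve explicit attention in a fully written version. First, the comparison characterization of superharmonicity (\cite[Theorems~9.24, 14.10]{BBbook}) also requires $u$ to be finite on a dense set; this follows from $u\le u_2$ together with $u_1,u_2>-\infty$, but it is worth saying so. Second, the final comparison on $U=D\cap\Om_1$ between $h$ (\p-harmonic, bounded) and the truncation $\min\{u_1,M\}$ (superharmonic, not a priori bounded below) is most safely run through the Perron inequality $\lP_U\le\uP_U$, which holds without any assumption on $\Cp(X\setm U)$: one places $h\in\LL_f(U)$ and $\min\{\max\{u_1,-N\},M\}\in\UU_f(U)$ with $f=\min\{\liminf u_1,M\}$ truncated from below, obtains $h\le\min\{\max\{u_1,-N\},M\}$, and lets $N\to\infty$. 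Invoking a comparison theorem such as \cite[Theorem~9.39]{BBbook} directly would require $\Cp(X\setm U)>0$, which holds except in the degenerate case $X$ compact with $\Cp(X\setm\Om_1)=0$; the Perron route avoids having to separate that case. With these points addressed the argument is complete.
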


We are now ready to introduce the Perron solutions
for the Dirichlet problem.
Perron solutions will always be considered with respect to the
extended boundary 
\[
\bdystar \Om:= \begin{cases}   
     \bdy\Om \cup \{\infty\},   &  \text{if $\Om$ is unbounded,}  \\ 
     \bdy \Om,  & \text{otherwise,}  
\end{cases}
\]
where $\infty$ is the point added in the \emph{one-point compactification} 
$\Xstar:=X \cup \{\infty\}$
of $X$
when $X$ is unbounded.
If $X$ is bounded we simply let $\Xstar=X$.

In particular, $\bdystar \Om$ and $\Xstar$ are always compact.
Note that $\bdystar \Om=\emptyset$ if and only if $X$ is bounded and $\Om=X$.
In that
case, the Dirichlet problem for \p-harmonic functions
does not make sense.

\begin{deff}\label{def:Perron}
Assume that $\bdystar \Om \ne \emptyset$.
  Given $f:\bdystar\Omega\to\eR$, 
let $\UU_f(\Omega)$ be the collection of all superharmonic functions 
$u$ in $\Omega$ that are bounded from below and such that 
\[
	\liminf_{\Omega\ni y\to x} u(y) \geq f(x)
	\quad\textup{for all }x\in\bdystar\Omega.
\]
The \emph{upper Perron solution} of $f$ is defined by 
\[
	\uP_\Omega f(x)
	= \inf_{u\in\UU_f(\Omega)} u(x),
	\quad x\in\Omega.
\]
Let $\LL_f(\Omega)$ be the collection of all subharmonic functions 
$v$ in $\Omega$ that are bounded from above and such that 
\[
	\limsup_{\Omega\ni y\to x} v(y) \leq f(x)
	\quad\textup{for all }x\in\bdystar\Omega.
\]
The \emph{lower Perron solution} of $f$ is defined by 
\[
	\lP_\Omega f(x)
	= \sup_{v\in\LL_f(\Omega)} v(x),
	\quad x\in\Omega.
\]
If $\uP_\Omega f=\lP_\Omega f$, then we 
denote the common value by $P_\Omega f$.
If in addition $P_\Omega f$ is real-valued, then $f$ is said to be 
\emph{resolutive} (for $\Omega$). 
For simplicity,  we
often write $P f$ instead of $P_\Omega f$,
and similarly $\uP f$ and $\lP f$.
\end{deff}

In each component of $\Omega$, $\uP f$ is either \p-harmonic or 
identically $\pm\infty$, 
by Theorem~4.1 in Bj\"orn--Bj\"orn--Shan\-mu\-ga\-lin\-gam~\cite{BBS2}
(or \cite[Theorem~10.10]{BBbook}) whose
proof applies also here.

It follows immediately from the comparison principle
between sub- and superharmonic functions in 
Hansevi~\cite[Theorem~6.2]{Hansevi2} that
\begin{equation} \label{eq-lP-uP}
  \lP f \le \uP f
  \quad \text{for all } f: \bdystar \Om \to \eR.
\end{equation}  
(In \cite{Hansevi2} it is assumed that $\Cp(X \setm\Om)>0$,
but this is not needed in the proof, provided that
$\bdystar \Om \ne \emptyset$.)

We will need the following comparison principles and restriction results.
First, it follows directly from the definition that
\begin{equation*}  
\lP f_1\le \lP f_2 \quad \text{and} \quad \uP f_1\le \uP f_2
\qquad \text{when } f_1\le f_2. 
\end{equation*}

\begin{lem} \label{lem-simple}
Let $G_j$ be 
open sets such that  
$\bdystar G_j \ne \emptyset$, $j=1,2$.
 Let $f_j:\bdystar G_j \to[0,\infty]$
 be such that $f_1\le f_2$ on $\bdystar G_1 \cap \bdystar G_2$,
 \[
f_1=0 \ \text{on }  \bdy G_1 \cap G_2
\quad \text{and}  \quad 
f_2\ge \lP_{G_1} f_1 \ \text{on } \bdy G_2 \cap G_1.
 \]
Then $\lP_{G_1} f_1 \le \lP_{G_2}f_2$ in $G_1\cap G_2$.
\end{lem}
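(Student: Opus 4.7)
The plan is to take an arbitrary competitor $v\in\LL_{f_1}(G_1)$ for $\lP_{G_1}f_1$, extend it to a competitor for $\lP_{G_2}f_2$ in $G_2$, and conclude by passing to the supremum. Since the constant function $0$ lies in $\LL_{f_1}(G_1)$ (as $f_1\ge 0$), we automatically have $\lP_{G_1}f_1\ge 0$, which will be used repeatedly below. Fix $v\in\LL_{f_1}(G_1)$, bounded above by some $M\ge 0$, and define
\[
\vt(x)=\begin{cases} \max\{v(x),0\}, & x\in G_1\cap G_2,\\ 0, & x\in G_2\setm G_1.\end{cases}
\]
So $\vt$ is bounded between $0$ and $\max\{M,0\}$, and my first task is to show that $\vt$ is subharmonic in $G_2$.

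I would do this by applying Lemma~\ref{lem-pasting} to $-\vt$ with $\Om_1=G_1\cap G_2\subset G_2=\Om_2$, taking $u_1=-v$ (superharmonic in $G_1$, hence in $\Om_1$) and $u_2\equiv 0$; the pasted function is precisely $-\vt$. The only nontrivial hypothesis is that $-\vt$ is lower semicontinuous on $G_2$, i.e.\ that $\vt$ is upper semicontinuous. This is automatic on the two open pieces $G_1\cap G_2$ and $G_2\setm \itoverline{G_1}$. The critical points are $x\in \bdy G_1\cap G_2$, which lie in $G_2\setm G_1$ so $\vt(x)=0$; there the hypothesis $f_1=0$ on $\bdy G_1\cap G_2$ combined with $v\in\LL_{f_1}(G_1)$ gives $\limsup_{G_1\ni y\to x}v(y)\le f_1(x)=0$, and hence $\limsup_{y\to x}\vt(y)\le 0=\vt(x)$. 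This yields usc and thus subharmonicity of $\vt$ in $G_2$ via Lemma~\ref{lem-pasting}.

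Next, I would verify that $\vt\in\LL_{f_2}(G_2)$ by checking, for each $x\in\bdystar G_2$, that $\limsup_{G_2\ni y\to x}\vt(y)\le f_2(x)$. I would split into the natural cases according to where $x$ lies: \emph{(i)} if $x\in\bdy G_2\cap G_1$, a neighbourhood of $x$ lies in $G_1$, so $y$ eventually sits in $G_1\cap G_2$, and usc of the subharmonic $v$ combined with $v(x)\le \lP_{G_1}f_1(x)\le f_2(x)$ (using the hypothesis on $\bdy G_2\cap G_1$) does the job; \emph{(ii)} if $x\in\bdy G_2\cap\bdy G_1$, then $x\in \bdystar G_1\cap\bdystar G_2$, so $f_1(x)\le f_2(x)$ and the bound $\limsup v\le f_1(x)$ handles the $G_1\cap G_2$ approach while $\vt=0\le f_2(x)$ handles the other; \emph{(iii)} if $x\in\bdy G_2\setm\itoverline{G_1}$, then $\vt\equiv 0$ near $x$; \emph{(iv)} if $x=\infty\in\bdystar G_2$, I would separate on whether $G_1$ is bounded (then $\vt\equiv 0$ outside a bounded set) or unbounded (then $\infty\in\bdystar G_1\cap\bdystar G_2$ and $f_1(\infty)\le f_2(\infty)$ applies).

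Once $\vt\in\LL_{f_2}(G_2)$, the definition of the lower Perron solution gives $\vt\le\lP_{G_2}f_2$ on $G_2$. Since $v\le\max\{v,0\}=\vt$ on $G_1\cap G_2$, we deduce $v\le \lP_{G_2}f_2$ on $G_1\cap G_2$; taking the supremum over $v\in\LL_{f_1}(G_1)$ yields the claim. The main obstacle I anticipate is the case analysis in the boundary check, in particular ensuring the argument is uniform in the unbounded setting (case (iv)), and confirming that the pasting lemma really does apply even though $G_1\not\subset G_2$; both are handled by working inside $\Om_1=G_1\cap G_2\subset \Om_2=G_2$, which is the correct framework for Lemma~\ref{lem-pasting}.
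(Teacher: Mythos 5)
Your proposal is correct and takes essentially the same approach as the paper's proof: both define $\vt$ by truncating $v$ at $0$ on $G_1\cap G_2$ and extending by $0$ on $G_2\setm G_1$, invoke the pasting Lemma~\ref{lem-pasting} to get subharmonicity from the usc condition on $\bdy G_1\cap G_2$ (using $f_1=0$ there), and then check $\vt\in\LL_{f_2}(G_2)$ via the hypotheses on $\bdy G_2\cap G_1$ and $\bdystar G_1\cap\bdystar G_2$. The only difference is that you spell out the boundary case analysis (including $\infty$) in more detail than the paper, which compresses it into two displayed inequalities.
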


\begin{proof}
Let $v \in \LL_{f_1}(G_1)$ and 
\[
    \vt=\begin{cases}
        \max\{v,0\} & \text{in } G_1\cap G_2,  \\
        0 & \text{in } G_2 \setm G_1.
      \end{cases}
\]
Since  
\[
\limsup_{G_1 \ni y\to x} v(y) \le f_1(x) = 0 = \vt(x) \quad
\text{for }  x\in \bdy G_1\cap G_2, 
\]
we see that $\vt$ is upper semicontinuous in $G_2$.
Hence, by the pasting Lemma~\ref{lem-pasting},
$\vt$ is subharmonic in $G_2$. 
As also
\[
\limsup_{y\to x} \vt(y) \le 
\lP_{G_1}f_1(x) 
\le f_2(x) \quad
\text{for }  x\in \bdy G_2\cap G_1
\]
and $f_1\le f_2$ on $\bdystar G_1 \cap \bdystar G_2$,
it follows that $\vt \in \LL_{f_2}(G_2)$. 
Hence $\lP_{G_2} f_2 \ge \vt \ge v$ in $G_1\cap G_2$
and taking the supremum over all $v \in \LL_{f_1}(G_1)$ 
concludes the proof.
\end{proof}

\begin{lem} \label{lem-subset-Perron}
Assume that $\bdystar \Om \ne \emptyset$  and
$f:\bdystar\Om\to\eR$.
Let
\[
     h=\begin{cases}
        f & \text{on } \bdystar \Om, \\
        \uP f & \text{in }  \Om.
        \end{cases}
\]
Let $G \subset \Om$ be nonempty and open.
Then 
\begin{equation*} 
       \uP_{G} h \le \uP f
       \quad \text{in } G.
\end{equation*}
In particular, if $f$ is resolutive  for $\Om$ then $h$
is resolutive for $G$ and $P_{G} h = P f$ in~$G$.
\end{lem}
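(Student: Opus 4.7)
The plan is to show the inequality by proving that the restriction $u|_G$ of any $u\in\UU_f(\Om)$ lies in $\UU_h(G)$. Since $u$ is superharmonic in $\Om$ and $G\subset\Om$ is open, $u|_G$ is automatically superharmonic in $G$ and bounded from below, so the only nontrivial verification concerns the boundary behaviour of $u$ along $\bdystar G$.

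The set $\bdystar G$ splits into three types of points which I would handle separately. First, for $x\in\bdy G\cap\Om$ we have $h(x)=\uP f(x)\le u(x)$ since $u\in\UU_f(\Om)$; because superharmonic functions are lsc-regularized, $\liminf_{G\ni y\to x}u(y)\ge\liminf_{\Om\ni y\to x}u(y)\ge u(x)\ge h(x)$. Second, for $x\in\bdy G\cap\bdy\Om$ we have $x\in\bdystar\Om$ with $h(x)=f(x)$, and the defining property of $\UU_f(\Om)$ combined with $G\subset\Om$ (which only shrinks the collection of approaching sequences and hence increases the liminf) gives $\liminf_{G\ni y\to x}u(y)\ge\liminf_{\Om\ni y\to x}u(y)\ge f(x)=h(x)$. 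Third, if $G$ is unbounded then $\infty\in\bdystar G\cap\bdystar\Om$ and the same argument applies verbatim. Hence $u|_G\in\UU_h(G)$, so $\uP_G h\le u$ in $G$, and taking the infimum over $u\in\UU_f(\Om)$ yields $\uP_G h\le\uP f$.

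For the resolutivity statement, I would run the entirely analogous argument with subharmonic functions and the lower class: if $v\in\LL_f(\Om)$, then $v|_G\in\LL_h(G)$, yielding $\lP_G h\ge\lP f$ in $G$. When $f$ is resolutive, combining this with \eqref{eq-lP-uP} applied on $G$ (note that $\bdystar G\ne\emptyset$, since $G\subset\Om$ and $\bdystar\Om\ne\emptyset$) gives
\[
Pf=\lP f\le\lP_G h\le\uP_G h\le\uP f=Pf,
\]
so all four quantities coincide and are real-valued, proving that $h$ is resolutive for $G$ and $P_G h=Pf$ in $G$.

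The proof is essentially a direct verification; the only minor subtlety is keeping track of the three different kinds of boundary points of $G$ and invoking lower semicontinuity of the superharmonic function $u$ at points of $\bdy G\cap\Om$, where the boundary datum $h$ is defined not by $f$ but by the Perron solution $\uP f$ itself.
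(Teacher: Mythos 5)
Your proof is correct and follows essentially the same route as the paper: verify that $u|_G\in\UU_h(G)$ for every $u\in\UU_f(\Om)$, then take infima. The only variation is in how you handle the lower class for the resolutivity statement --- you argue directly that $v|_G\in\LL_h(G)$ for $v\in\LL_f(\Om)$, whereas the paper applies the first inequality to $-f$ and then uses resolutivity of $f$ to identify the induced boundary datum with $-h$; these are essentially the same computation. Note that at a boundary point $x\in\bdy G\cap\Om$ the lower-class verification you sketch is not literally symmetric to the upper-class one: it needs $v(x)\le\lP f(x)\le\uP f(x)=h(x)$, so the comparison $\lP f\le\uP f$ (i.e.\ \eqref{eq-lP-uP}) already enters there and not only in the later chain of inequalities, but since you invoke \eqref{eq-lP-uP} explicitly anyway this is just a matter of bookkeeping.
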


\begin{proof}
Since it is easily verified that every $v \in \UU_{f}(\Om)$ belongs
to $\UU_{h}(G)$, the first inequality is immediate.
The second part follows by applying the first part to both
$f$ and $-f$, together with \eqref{eq-lP-uP} and the resolutivity of $f$.
\end{proof}

The following 
simple observation will play a role later in this paper.

\begin{example} \label{ex-bdystar-one-pt}
Assume that $\bdystar \Om=\{a\}$ consists of one point (with $a=\infty$ 
if $\Om=X$ is unbounded).
Then every function $f:\bdystar \Om \to \eR$ is constant, and 
it is easy to see that 
$P f \equiv f(a)$.
Hence every $f:\bdystar \Om \to \R$ 
is resolutive and $a$ is 
trivially a regular boundary point (see Definition~\ref{deff-reg-pt} below).
\end{example}

The following convergence result will be used several times.

\begin{lem}   \label{lem-reflex-conv}
Let $u_j \in \Dp(X)$ be a  sequence 
such that $u_j \to u$ q.e.\ in $X$.
Assume that  there is a constant $M$ such that 
\[
|u_j|\le 1
\text{ q.e.\ in } X \quad \text{and} \quad
\int_X g_{u_j}^p\,d\mu \le M
\qquad \text{for } j=1,2,\dots. 
\]
Then $u \in \Dp(X)$, $|u| \le 1$ q.e.\ in $X$,
and for each $j=1,2,\dots$\,, there is a 
convex combination $\uhat_j$ of the sequence
$\{u_i\}_{i=j}^\infty$ such that $\uhat_j \to u$ q.e.\ in $X$
and
\[
\|g_{\uhat_j}-g_u\|_{L^p(X)}\to 0, \quad \text{as } j\to\infty.  
\]
\end{lem}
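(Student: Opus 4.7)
The first two conclusions are direct. Lemma~\ref{lem-Dp-6.3} applies verbatim with the same $M$ serving as both bounds, yielding $u\in\Dp(X)$ and $\int_X g_u^p\,d\mu\le\liminf_j\int_X g_{u_j}^p\,d\mu$, while $|u|\le 1$ q.e.\ is inherited from the hypotheses $|u_j|\le 1$ q.e.\ and the pointwise q.e.\ convergence $u_j\to u$.

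For the convex combinations, the plan is to exploit the reflexivity of the Newtonian spaces $\Np(B_k)$ on each ball. The $L^\infty$-bound on $u_j$ together with the $L^p$-bound on $g_{u_j}$ make $\{u_j|_{B_k}\}$ bounded in $\Np(B_k)$ for every $k$, and $\Np(B_k)$ is reflexive by Cheeger~\cite{Cheeg} and Ambrosio--Colombo--Di Marino~\cite{AmbCD}. A diagonal extraction across $k$ produces a subsequence (still indexed by $j$) converging weakly in $\Np(B_k)$ for every $k$; uniqueness of weak limits together with the q.e.\ convergence forces the weak limit to be $u|_{B_k}$. The key construction is then, for each $j$, to take $\hat u_j$ as a near-minimizer of $\int_X g_v^p\,d\mu$ among convex combinations of the tail $\{u_i\}_{i\ge j}$: setting $\alpha_j:=\inf\{\int_X g_v^p\,d\mu : v\text{ is a convex combination of }\{u_i\}_{i\ge j}\}$, pick $\hat u_j$ with $\int_X g_{\hat u_j}^p\,d\mu<\alpha_j+1/j$. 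The sequence $\alpha_j$ is nondecreasing and bounded by $M$, and $\hat u_j\to u$ q.e.\ because any convex combination of a q.e.\ convergent tail converges pointwise to the same limit (if $|u_i(x)-u(x)|<\eps$ for all $i\ge N(x)$, the same bound propagates through any convex combination with indices $\ge N(x)$).

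The main obstacle is showing $\alpha:=\lim_j\alpha_j=\int_X g_u^p\,d\mu$. The inequality $\alpha\ge\int_X g_u^p\,d\mu$ follows from Lemma~\ref{lem-Dp-6.3} applied to $\{\hat u_j\}$. The reverse inequality requires Mazur's lemma in $\Np(B_j)$: applied to the weakly convergent tail $\{u_i\}_{i\ge j}$, it produces convex combinations of the tail arbitrarily close to $u$ in $\Np(B_j)$; combined with the Jensen-type estimate $g_v^p\le\sum\lambda_i g_{u_i}^p$ for convex combinations $v=\sum\lambda_i u_i$, sufficiently fine averaging (many small coefficients) pushes the global $L^p$-energy of the approximants close to $\int_X g_u^p\,d\mu$. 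This is the delicate step where the full interplay between reflexivity and the structure of convex combinations is needed.

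With the energy identity $\int_X g_{\hat u_j}^p\,d\mu\to\int_X g_u^p\,d\mu$ in hand, the conclusion follows by localizing. Convex combinations of the weakly $\Np(B_K)$-convergent sequence $u_j\rightharpoonup u$ still converge weakly to $u$, so $\hat u_j\rightharpoonup u$ in $\Np(B_K)$ for every $K$. Applying the lower semicontinuity of Lemma~\ref{lem-Dp-6.3} on $B_K$ and on $X\setm B_K$ separately gives $\liminf\int_{B_K}g_{\hat u_j}^p\,d\mu\ge\int_{B_K}g_u^p\,d\mu$ and similarly for the complement, and combining with the global energy convergence forces both into full limits $\int_{B_K}g_{\hat u_j}^p\,d\mu\to\int_{B_K}g_u^p\,d\mu$ and $\int_{X\setm B_K}g_{\hat u_j}^p\,d\mu\to\int_{X\setm B_K}g_u^p\,d\mu$. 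Uniform convexity of $\Np(B_K)$ then upgrades weak convergence plus norm convergence to strong convergence $\hat u_j\to u$ in $\Np(B_K)$, yielding $g_{\hat u_j}\to g_u$ in $L^p(B_K)$. A triangle split of $\|g_{\hat u_j}-g_u\|_{L^p(X)}^p$ across $B_K$ and $X\setm B_K$, together with $\|g_u\|_{L^p(X\setm B_K)}\to 0$ as $K\to\infty$ (since $g_u\in L^p(X)$) and the tail convergence just obtained, then completes the proof.
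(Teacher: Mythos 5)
Your first two paragraphs are fine and agree with the paper's (very short) proof, which disposes of $u \in \Dp(X)$ and $|u|\le 1$ q.e.\ via Lemma~\ref{lem-Dp-6.3} and then delegates the entire convex-combination claim to ``verbatim as in the proof of Lemma~3.4 in Bj\"orn--Bj\"orn--Shanmugalingam~\cite{BBShypend}''. Your attempt to reconstruct that argument has the right ingredients (local reflexivity of $\Np(B_k)$, Mazur, a localisation in $K$), but the central claim is left open and, as written, the route around it does not close.

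The gap is in your ``main obstacle'': you need $\alpha:=\lim_j\alpha_j\le\int_X g_u^p\,d\mu$, where $\alpha_j$ is the infimum of the \emph{global} energy over convex combinations of the tail. Mazur's lemma in $\Np(B_j)$ only produces tail convex combinations $v$ with $\|v-u\|_{\Np(B_j)}$ small, hence with $\int_{B_j}g_v^p$ close to $\int_{B_j}g_u^p$; it gives no control whatsoever on $\int_{X\setm B_j}g_v^p$. The Jensen estimate $g_v^p\le\sum\lambda_i g_{u_i}^p$ that you invoke only yields the useless bound $\int_{X\setm B_j}g_v^p\le M$, and ``sufficiently fine averaging'' does not obviously shrink the tail energy: the coefficients $\lambda_i$ that Mazur supplies in $\Np(B_j)$ are not at your disposal and need not be spread out. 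So the reverse inequality, which is precisely the content of the cited Lemma~3.4, is asserted rather than proved. (The standard mechanism is instead to pass to a subsequence with $g_{u_j}\rightharpoonup g$ \emph{weakly in $L^p(X)$}, apply Mazur there, transfer the same coefficients to the $u_j$, and then argue that the limit upper gradient coincides with $g_u$; your sketch never brings weak $L^p(X)$ compactness of the gradients into play, only local weak compactness of the functions.)

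A second, independent problem is the appeal to ``uniform convexity of $\Np(B_K)$'' at the end. This is not available under the paper's standing hypotheses: what Cheeger~\cite[Theorem~4.48]{Cheeg} and Ambrosio--Colombo--Di Marino~\cite[Corollary~41]{AmbCD} provide, and what the paper says it relies on, is \emph{reflexivity} of the Newtonian space; uniform convexity of $\Np(B_K)$ with its native norm is a strictly stronger assertion and is not claimed. (If you do reach the point of knowing $g_{\hat u_j}\rightharpoonup g_u$ in $L^p(X)$ together with $\|g_{\hat u_j}\|_{L^p(X)}\to\|g_u\|_{L^p(X)}$, you can finish with the Radon--Riesz property of the uniformly convex space $L^p(X)$, which \emph{is} available; you do not need uniform convexity of the Newtonian space itself, but you also do need to establish that the weak $L^p$ limit is $g_u$, which again hinges on the missing step above.) Finally, citing Lemma~\ref{lem-Dp-6.3} ``on $X\setm B_K$'' is loose, since that lemma is formulated on $X$; the correct form of the lower semicontinuity on an arbitrary measurable set follows from an $L^p$ weak-limit argument, not from restricting the lemma.
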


Recall that a \emph{convex combination} of $\{u_i\}_{i=j}^\infty$
  is a finite linear combination
  $\sum_{i=j}^N \alp_i u_i$
with nonnegative coefficients $\alp_i \ge 0$ such that 
$\sum_{i=j}^N \alp_i=1$.

\begin{proof}
Clearly, $\uhat_j \to u$ q.e.\ in $X$ and $|u| \le 1$ q.e.\ in $X$.
It follows from Lemma~\ref{lem-Dp-6.3} that $u \in \Dp(X)$, and thus
$u \in \Np(B)$ for every ball $B$.
The last conclusion can now be shown verbatim as in the proof
of Lemma~3.4 in
Bj\"orn--Bj\"orn--Shan\-mu\-ga\-lin\-gam~\cite{BBShypend}, 
ignoring the second paragraph of that proof which  is not relevant here.
\end{proof}

The classification of unbounded spaces as hyperbolic or parabolic 
plays an important role in this paper.

\begin{deff} \label{def-p-par}
Assume that $X$ is unbounded.
The space $X$ is \emph{\p-parabolic}
if $\cp(K,X)=0$ for all compact sets $K\subset X$.
Otherwise, $X$ is \emph{\p-hyperbolic}. 
\end{deff}

Note that unweighted $\R^n$ is \p-parabolic if and only if $p \ge n$
(and $p>1$).
Also any unbounded space with finite measure is \p-parabolic
since the constant function $1$ is admissible for $\cp(K,X)$.
We are now able to explain why 
it is essential to have the second step \eqref{eq-cp-lim-deff}
in Definition~\ref{deff-cp}.

\begin{prop} \label{prop-cpt}
For arbitrary\/ \textup{(}even unbounded\/\textup{)} $E \subset \Om$, let
\begin{equation*}  
\cpt(E,\Om) = \inf_u\int_{\Om} g_u^p\, d\mu,
\end{equation*}
where the infimum is taken over all $u \in \Np(X)$
such that $\chione_E  \le u \le \chione_\Om$.
Assume that $X$ is \p-parabolic and that $\mu(X)=\infty$.
Then the following hold\textup{:}
\begin{enumerate}
\item \label{k-b}
$\cpt(E,X)=0$ for every bounded  set $E\subset X$.
\item \label{k-c}
  $\cpt(X,X)=\infty$.
\item \label{k-d}
  $\cpt$ is not countably subadditive.
\item \label{k-e}
  $\cpt$ is not a Choquet capacity. 
\end{enumerate}
\end{prop}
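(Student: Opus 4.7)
The strategy is to exploit the contrast between two features of $\cpt$: for bounded sets it inherits the \p-parabolicity of $X$ and hence vanishes, while for the whole space $X$ the $\Np$-summability required of admissible functions leaves no admissible function at all. Parts (c) and (d) then fall out by exhausting $X$ with balls.

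For (a), observe that when $E$ is bounded no limit is involved, so $\cpt(E,X)$ coincides with $\cp(E,X)$ as defined by \eqref{eq-deff-cp}. Since $X$ is proper, $\clE$ is compact, and every function admissible for $\cp(\clE,X)$ is also admissible for $\cp(E,X)$. Hence $\cpt(E,X) = \cp(E,X) \le \cp(\clE,X) = 0$ by the definition of \p-parabolicity (Definition~\ref{def-p-par}). For (b), the sandwich $\chione_X \le u \le \chione_X$ forces $u \equiv 1$ on $X$, but then $\|u\|_{\Np(X)}^p \ge \mu(X) = \infty$, so the class of admissible functions is empty and $\cpt(X,X) = \infty$.

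For (c) and (d), take $E_i = B_i$ for $i=1,2,\dots$\,. Each $E_i$ is bounded, so $\cpt(E_i,X) = 0$ by (a), while $\bigcup_{i=1}^\infty E_i = X$ since every point of $X$ lies in some ball centred at $x_0$. Combining these with (b) gives
\[
\cpt\biggl(\bigcup_{i=1}^\infty E_i, X\biggr) = \infty > 0 = \sum_{i=1}^\infty \cpt(E_i, X),
\]
which establishes (c). The same example yields $\lim_{i\to\infty} \cpt(E_i, X) = 0 \ne \infty = \cpt(\bigcup_{i=1}^\infty E_i, X)$, so continuity from below along increasing sequences fails; this is exactly one of the defining Choquet properties, cf.\ Theorem~\ref{thm-cp}\ref{cp-Choq-E}, so (d) follows.

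There is no real obstacle: the entire argument turns on the observation that requiring admissible functions to lie in $\Np(X)$ is incompatible with $u\equiv 1$ when $\mu(X)=\infty$. This is precisely the pathology that the second step \eqref{eq-cp-lim-deff} of Definition~\ref{deff-cp} is designed to circumvent, thereby motivating the modified definition of $\cp$ used throughout the paper.
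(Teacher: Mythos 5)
Your proof is correct and follows essentially the same route as the paper's: bounded sets reduce to compact sets via properness so \p-parabolicity gives (a), the $\Np$-requirement leaves no admissible function when $\mu(X)=\infty$ giving (b), and the exhaustion $\clB_r\nearrow X$ then violates both countable subadditivity and continuity from below. No gaps.
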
  

Note that for bounded $E$,
\[
\cpt(E,\Om) = \cp(E,\Om).
\]

\begin{proof}
\ref{k-b} 
This follows directly from the definitions, since closed bounded sets
are compact.

\ref{k-c}
As $\mu(X)=\infty$ there is no $u \in \Np(X)$ such that $u = 1 $ on $X$.
Hence $\cpt(X,X)=\infty$.

\ref{k-d} and \ref{k-e}
By \ref{k-b} and~\ref{k-c},
\[ 
      \lim_{r \to \infty} \cpt(\clB_r,X) = 0 < \infty = \cpt(X,X),
\] 
showing that $\cpt$ is not countably subadditive.
This also shows that $\cpt$ fails
the property in Theorem~\ref{thm-cp}\ref{cp-Choq-E},
and is thus not a Choquet capacity.
\end{proof}

\section{Boundary regularity}
\label{sect-bdyreg}

In this generality it can happen that
  there are nonresolutive $f\in C(\bdystar \Om)$,
see Proposition~\ref{prop-Cp=0-para} below.
We therefore make the following definition of regular boundary
points.

\begin{deff} \label{deff-reg-pt}
A boundary point $x \in \bdystar \Om$ is 
\emph{regular} (for $\Omega$) if
\[
\lim_{\Om \ni y \to x} \uP f(y) = f(x)
\quad \text{for all } f\in C(\bdystar \Om).
\]
Otherwise, $x$  is \emph{irregular}.
\end{deff}

We will rely on several results about boundary regularity in 
unbounded sets from Bj\"orn--Hansevi~\cite{BH1}.
As mentioned in Section~\ref{sect-pharm}, the
results in~\cite{BH1} hold under local assumptions as here.
Since \cite{BH1}  assumes
that $\Cp(X \setm \Om)>0$, some
care is needed to see that the results we need also hold when $\Cp(X \setm \Om)=0$.
The following statement clearly does as $I_\Om \subset X^* \setm\Om$.

\begin{thm} \label{thm-Kellogg}
\textup{(The Kellogg property \cite[Theorem~7.1]{BH1})}
If\/ $I_\Om \subset \bdystar \Om$
is the set of  irregular boundary points
for $\Om$, then $\Cp(I_\Om \cap X)=0$.
\end{thm}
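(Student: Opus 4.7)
The plan is to reduce to Theorem~7.1 of Bj\"orn--Hansevi~\cite{BH1} by splitting the argument according to whether $\Cp(X\setm\Om)>0$ or not, since only the positive case is covered there.

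In the case $\Cp(X\setm\Om)>0$, the full boundary regularity and resolutivity machinery of \cite{BH1} applies without modification (the discussion preceding the theorem notes exactly this), and so Theorem~7.1 of \cite{BH1} yields $\Cp(I_\Om\cap X)=0$ directly. I would simply cite that result verbatim.

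The remaining case $\Cp(X\setm\Om)=0$ I expect to handle by a one-line monotonicity bound. Since $\Om$ is open, the complement $X\setm\Om$ is closed, hence $\bdy\Om\subset X\setm\Om$. Because $I_\Om\cap X$ is a set of finite boundary points, it is contained in $\bdy\Om$, so
\[
I_\Om\cap X \;\subset\; \bdy\Om \;\subset\; X\setm\Om.
\]
Monotonicity of the Sobolev capacity (immediate from its definition in~\eqref{eq-Cp-deff}, as every function admissible for the larger set is admissible for the smaller one) then gives $\Cp(I_\Om\cap X)\le\Cp(X\setm\Om)=0$.

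I do not anticipate any genuine obstacle. The only subtle point is noticing that the missing case $\Cp(X\setm\Om)=0$ excluded from \cite{BH1} is precisely the one where the desired conclusion is trivial: in this regime Perron solutions can behave badly and resolutivity can fail (cf.\ Proposition~\ref{prop-Cp=0-para}), but the containment $I_\Om\cap X\subset X\setm\Om$ bypasses any analysis of $\uP f$ at boundary points. Thus no new ingredients beyond those already in \cite{BH1} are needed, and the proof reduces to invoking \cite[Theorem~7.1]{BH1} in one case and monotonicity of $\Cp$ in the other.
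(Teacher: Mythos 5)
Your proposal is correct and matches the paper's own (one-line) justification: for $\Cp(X\setm\Om)>0$ it cites \cite[Theorem~7.1]{BH1} directly, and for $\Cp(X\setm\Om)=0$ the paper notes the conclusion is trivial precisely because $I_\Om\subset X^*\setm\Om$, so $I_\Om\cap X\subset X\setm\Om$ and monotonicity of $\Cp$ finishes it. The only cosmetic difference is that the paper phrases the observation via the inclusion $I_\Om\subset X^*\setm\Om$ rather than via $\bdy\Om\subset X\setm\Om$; both are the same point.
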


The proof of the following result in~\cite{BH1}
did not use the assumption $\Cp(X \setm \Om)>0$.

\begin{prop}\label{prop-reg5.1}
\textup{(\cite[Theorem~5.1]{BH1})}
Let $x\in\bdystar\Omega$. 
Define $d_{x}: X^*\to[0,1]$  by 
\begin{equation*}
	d_{x}(y) 
	= \begin{cases}
		\min\{d(y,x),1\},
			& \text{if }y\neq\infty, \\
		1, 
			& \text{if }y=\infty,
	\end{cases}
\quad \text{when } x \ne \infty,
\end{equation*}
and 
\[
	d_\infty(y) 
	= \begin{cases}
         e^{-d(y,x_0)},	& \text{if }y\neq\infty, \\
		0, 	& \text{if }y=\infty.
	\end{cases}
\]

Then the following are equivalent\/\textup{:} 
\begin{enumerate}
\item\label{reg-reg} 
The point $x$ is regular. 
\item\label{reg-dx0}
It is true that 
\[
	\lim_{\Omega\ni y\to x}\uP d_{x}(y)
	= 0.
\]
\item\label{reg-semicont-x0}
It is true that 
\[
	\limsup_{\Omega\ni y\to x}\uP f(y)
	\leq f(x)
\]
for all $f:\bdystar\Omega\to[-\infty,\infty)$ 
that are bounded from above on 
$\bdystar\Omega$ and upper semicontinuous at $x$. 
\item\label{reg-cont-x0}
It is true that 
\[
	\lim_{\Omega\ni y\to x}\uP f(y)
	= f(x)
\]
for all $f:\bdystar\Omega\to\R$ that are 
bounded on $\bdystar\Omega$ and continuous at $x$. 
\item\label{reg-cont}
It is true that 
\[
	\limsup_{\Omega\ni y\to x}\uP f(y)
	\leq f(x)
\]
for all $f\in C(\bdystar\Omega)$. 
\end{enumerate}
\end{prop}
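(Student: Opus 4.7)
The plan is to prove the cyclic chain (a)$\Rightarrow$(e)$\Rightarrow$(b)$\Rightarrow$(c)$\Rightarrow$(d)$\Rightarrow$(a). Three of these implications should be essentially immediate. First, (a)$\Rightarrow$(e) follows directly from Definition~\ref{deff-reg-pt}, since every $f\in C(\bdystar\Om)$ then satisfies $\lim\uP f(y)=f(x)$, which of course implies $\limsup\uP f(y)\le f(x)$. For (e)$\Rightarrow$(b), I would verify that $d_x\in C(\bdystar\Om)$ with $d_x(x)=0$ (checking continuity at $\infty$ directly from the two given formulas), apply (e) to obtain $\limsup_{\Om\ni y\to x}\uP d_x(y)\le 0$, and combine this with $\uP d_x\ge \lP d_x\ge 0$. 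The latter holds because the constant $0$ is \p-harmonic with $\limsup 0=0\le d_x$, so $0\in\LL_{d_x}(\Om)$, and then \eqref{eq-lP-uP} gives $\uP d_x\ge 0$. Finally, (d)$\Rightarrow$(a) is immediate since every $f\in C(\bdystar\Om)$ is bounded on the compact set $\bdystar\Om$ and continuous at $x$.

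The crucial step is (b)$\Rightarrow$(c), where the barrier $d_x$ has to transfer upper semicontinuity from $f$ to $\uP f$. Given $f:\bdystar\Om\to[-\infty,\infty)$ bounded above by $M$ and upper semicontinuous at $x$, and $\eps>0$, I would use upper semicontinuity at $x$ to find $\de\in(0,1]$ such that $f(y)<f(x)+\eps$ whenever $y\in\bdystar\Om$ satisfies $d_x(y)<\de$. For $y$ with $d_x(y)\ge\de$, the bound $f(y)\le M$ together with $1\le d_x(y)/\de$ yields
\[
f(y)\le f(x)+\eps+\frac{M-f(x)}{\de}d_x(y)\quad\text{on all of }\bdystar\Om.
\]
Combining monotonicity of $\uP$ with the standard identity $\uP(cg+c')=c\uP g+c'$ for $c\ge 0$ and $c'\in\R$ (which follows from $\UU_g$ being stable under such positive affine transformations and the fact that adding constants and positive scaling preserve superharmonicity) then gives
\[
\uP f\le f(x)+\eps+\frac{M-f(x)}{\de}\uP d_x\quad\text{in }\Om.
\]
Taking $\limsup$ as $\Om\ni y\to x$ and invoking (b) yields $\limsup\uP f(y)\le f(x)+\eps$; letting $\eps\to 0$ gives (c).

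The remaining implication (c)$\Rightarrow$(d) is a routine sandwich: if $f:\bdystar\Om\to\R$ is bounded and continuous at $x$, then both $f$ and $-f$ are bounded above and upper semicontinuous at $x$, so applying (c) to each and using the identity $\uP(-f)=-\lP f$ (which follows immediately from $u\in\UU_{-f}\iff -u\in\LL_f$) gives $\limsup\uP f(y)\le f(x)$ and $\liminf\lP f(y)\ge f(x)$. Chaining these with \eqref{eq-lP-uP} forces $\lim\uP f(y)=\lim\lP f(y)=f(x)$. The main obstacle I expect is the linear barrier argument in (b)$\Rightarrow$(c), and in particular the verification that the Perron identity $\uP(cg+c')=c\uP g+c'$ remains valid in this generality, where $\bdystar\Om$ can contain the point $\infty$ and no assumption $\Cp(X\setm\Om)>0$ is imposed; the rest of the argument is an essentially formal manipulation of the definitions.
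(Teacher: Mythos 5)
The paper does not supply its own proof of this proposition; it imports it verbatim from \cite[Theorem~5.1]{BH1}, only noting that the assumption $\Cp(X\setm\Om)>0$ from \cite{BH1} is not used in the argument. Your cyclic chain \ref{reg-reg}$\Rightarrow$\ref{reg-cont}$\Rightarrow$\ref{reg-dx0}$\Rightarrow$\ref{reg-semicont-x0}$\Rightarrow$\ref{reg-cont-x0}$\Rightarrow$\ref{reg-reg} with the linear barrier $f\le f(x)+\eps+\frac{M-f(x)}{\de}\,d_x$ on $\bdystar\Om$ is the same barrier argument that underlies \cite[Theorem~5.1]{BH1} (and the bounded analogue \cite[Theorem~11.11]{BBbook}), and your concern about the affine Perron identity $\uP(cg+c')=c\,\uP g+c'$ is unfounded: it follows formally from the bijection $\UU_g\to\UU_{cg+c'}$, $u\mapsto cu+c'$, for $c>0$, and from $c'\in\UU_{c'}$ combined with the comparison principle \eqref{eq-lP-uP} for $c=0$, none of which needs $\Cp(X\setm\Om)>0$ once $\bdystar\Om\ne\emptyset$ (which is automatic since $x\in\bdystar\Om$).

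There is, however, one genuine (if small) gap in \ref{reg-dx0}$\Rightarrow$\ref{reg-semicont-x0}. Statement \ref{reg-semicont-x0} allows $f:\bdystar\Om\to[-\infty,\infty)$, and when $f(x)=-\infty$ your opening step, choosing $\de$ with $f(y)<f(x)+\eps$ for $d_x(y)<\de$, is vacuous and the coefficient $(M-f(x))/\de$ is meaningless, so the linear majorant is not defined. The fix is standard: for arbitrary $L\in\R$ apply the finite-valued argument to $\max\{f,L\}$ (still bounded above and upper semicontinuous at $x$, with value $L$ there) to get $\limsup_{y\to x}\uP f\le\limsup_{y\to x}\uP\max\{f,L\}\le L$, then let $L\to-\infty$. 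Equivalently, replace $f(x)+\eps$ in your neighbourhood estimate by an arbitrary $L$ when $f(x)=-\infty$. Aside from this omitted case, the proof is correct and matches the standard approach.
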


We will also need barriers.

\begin{deff}\label{def:barrier}
A function $u$ is a \emph{barrier} 
(for $\Omega$)
at $x\in\bdystar\Omega$ if
\begin{enumerate}
\renewcommand{\theenumi}{\textup{(\roman{enumi})}}%
\item\label{barrier-i}
$u$ is superharmonic in $\Omega$,
\item\label{barrier-ii}
$\lim_{\Omega\ni y\to x}u(y)=0$,
\item\label{barrier-iii}
$\liminf_{\Omega\ni y\to z}u(y)>0$ for every $z\in\bdystar\Omega\setm\{x\}$.
\end{enumerate}
\end{deff}

\begin{prop} \label{prop-barrier=>}
\textup{(\cite[Theorem~6.2]{BH1})}
If there is a barrier at $x \in \bdystar \Om$,
then $x$ is regular.
\end{prop}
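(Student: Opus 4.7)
The plan is to verify condition~\ref{reg-cont} of Proposition~\ref{prop-reg5.1}, i.e.\ that $\limsup_{\Om\ni y\to x}\uP f(y)\le f(x)$ for every $f\in C(\bdystar\Om)$. Fix such an $f$, let $M=\sup_{\bdystar\Om}|f|$ and let $\eps>0$. I will construct $w\in\UU_f(\Om)$ with $\lim_{\Om\ni y\to x}w(y)=f(x)+\eps$; then $\uP f\le w$ in $\Om$ gives $\limsup_{\Om\ni y\to x}\uP f(y)\le f(x)+\eps$, and letting $\eps\to 0$ finishes the argument.

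First I would arrange that $0\le u\le 1$. Replacing $u$ by $\min\{u,1\}$ keeps $u$ superharmonic (directly from the definition, since $\min\{\min\{u,1\},k\}$ is an lsc-regularized superminimizer for every $k\in\R$) and clearly preserves properties~\ref{barrier-ii} and~\ref{barrier-iii}. To upgrade to $u\ge 0$, the truncated $u$ is bounded above and satisfies $\liminf_{\Om\ni y\to z}u(y)\ge 0$ at every $z\in\bdystar\Om$, so a minimum principle derived from the comparison inequality~\eqref{eq-lP-uP} (applied with zero boundary datum) forces $u\ge 0$ in $\Om$.

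Next, since $X^*$ is compact Hausdorff and $f$ is continuous at $x$, I choose an open neighbourhood $V\subset X^*$ of $x$ with $f(z)<f(x)+\eps$ for every $z\in V\cap\bdystar\Om$. The complement $\bdystar\Om\setm V$ is compact, and combining property~\ref{barrier-iii} with a finite-covering argument produces $m>0$ and an open set $W\subset X^*$ with $\bdystar\Om\setm V\subset W$ and $u\ge m$ on $W\cap\Om$. Set
\[
  w:=\tfrac{2M}{m}\,u+f(x)+\eps.
\]
Since $u\ge 0$ is superharmonic, so is $w$, and $w\ge f(x)+\eps$, so $w$ is bounded below. At each $z\in V\cap\bdystar\Om$, $\liminf_{\Om\ni y\to z}w(y)\ge f(x)+\eps\ge f(z)$; at each $z\in\bdystar\Om\setm V\subset W$, $\liminf_{\Om\ni y\to z}w(y)\ge 2M+f(x)+\eps\ge f(z)$. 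Hence $w\in\UU_f(\Om)$, and property~\ref{barrier-ii} gives $\lim_{\Om\ni y\to x}w(y)=f(x)+\eps$, as required.

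The main obstacle is the first step: the reduction to a bounded nonnegative barrier. Without it, the candidate $w$ need not be bounded below and so need not lie in $\UU_f(\Om)$, on which the Perron comparison $\uP f\le w$ ultimately depends. The careful deployment of Hansevi's comparison principle at the \emph{extended} boundary $\bdystar\Om$—in particular at the point at infinity when $\Om$ is unbounded—is what makes this reduction work in the present generality; the remainder is the classical Perron--barrier argument, adapted to $\bdystar\Om$.
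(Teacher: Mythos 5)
Your overall strategy --- verify condition~\ref{reg-cont} of Proposition~\ref{prop-reg5.1} by cooking up $w\in\UU_f(\Om)$ from the barrier with $\lim_{\Om\ni y\to x}w(y)=f(x)+\eps$ --- is the standard barrier-to-regularity argument, and it is essentially the proof in \cite[Theorem~6.2]{BH1} that the paper cites. The finite cover of the compact set $\bdystar\Om\setm V$, the choice of $w$, and the check that $w\in\UU_f(\Om)$ are all correct (and the truncation $\min\{u,1\}$ is actually superfluous, since you never use $u\le 1$).

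The genuine gap is in your reduction to $u\ge 0$. Replacing $u$ by $\min\{u,1\}$ gives you a barrier bounded from \emph{above}, but that is not the hypothesis you need: to run any comparison of the sort behind~\eqref{eq-lP-uP} (Hansevi's Theorem~6.2) with $0$ as the subfunction, you must place $\min\{u,1\}$ in $\UU_0(\Om)$, i.e.\ you need it bounded from \emph{below} --- and that is precisely what you have not yet established. Definition~\ref{def:barrier} does not assume boundedness from below, so as written the argument is circular. The step is closeable without any comparison principle: for $\al>0$, the set $\{y\in\Om:u(y)\le-\al\}$ is relatively closed in $\Om$ (lower semicontinuity), and since $\liminf_{\Om\ni y\to z}u(y)\ge 0>-\al$ for every $z\in\bdystar\Om$ and $\bdystar\Om$ is compact in $X^*$, the same finite-cover device you use later shows this set is bounded and bounded away from $\bdy\Om$, hence compact; an lsc function attains its infimum on a compact set, so if $u$ dipped below $0$ it would achieve a negative global minimum in the interior of its component, contradicting the strong minimum principle \cite[Theorem~9.13]{BBbook} and the boundary $\liminf$. (Equivalently: the $\liminf$-extension of $u$ to $\itoverline{\Om}$ in $X^*$ is lower semicontinuous on a compact set, hence bounded below, after which your comparison-based route works.) With that repair your proof is complete.
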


This follows from the proof of (b)$\imp$(a) of Theorem~6.2 in~\cite{BH1}.
Note that the case $x=\infty$ is included in the proof of that implication
in \cite[p.~187]{BH1} 
and that the proof also holds when $\Cp(X \setm \Om)=0$
(since  Proposition~\ref{prop-reg5.1} above does.)

\begin{lem} \label{lem-reg-B=>Om}
Let $x \in \bdy \Om$ and $0 < r < \tfrac12 \diam X$.
If $x$ is regular for $\Om \cap B(x,r)$,
then it is regular for $\Om$.

If moreover $\Cp(X \setm \Om)>0$, then 
$x$ is regular for $\Om \cap B(x,r)$
if and only if it is regular for $\Om$.
\end{lem}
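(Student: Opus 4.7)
The plan is to deduce both implications from the characterization of regularity via barriers (Proposition~\ref{prop-barrier=>}) combined with the pasting Lemma~\ref{lem-pasting}. Throughout I write $G:=\Om\cap B(x,r)$, and observe that $r<\tfrac12\diam X$ forces $G$ to be bounded and $\Cp(X\setm G)>0$.

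For the first (unconditional) direction, assuming $x$ is regular for $G$, I would build a barrier for $\Om$ at $x$ as follows. Fix $c:=\min\{r,1\}>0$ and set $f(y)=\min\{d(y,x),c\}$ on $\bdy G$, so that $f\in C(\bdy G)$ with $f(x)=0$, $f>0$ elsewhere on $\bdy G$, and $f\equiv c$ on $\bdy B(x,r)\cap\Om$. Let $v:=\uP_G f$. The maximum principle forces $0\le v\le c$, and Proposition~\ref{prop-reg5.1} applied to $G$ gives $\lim_{G\ni y\to x}v(y)=0$. Next, define
\[
u(y)=\begin{cases}v(y), & y\in G,\\ c, & y\in\Om\setm G.\end{cases}
\]
Since $v\le c$, this coincides with $\min\{v,c\}$ on $G$, so the pasting Lemma~\ref{lem-pasting} (with $\Om_1=G$, $\Om_2=\Om$, $u_1=v$, $u_2\equiv c$) identifies $u$ as superharmonic in $\Om$. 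A direct verification then shows that $u$ is a barrier for $\Om$ at $x$: $\lim_{y\to x}u=0$ and $\liminf u(y)\ge c$ at every other boundary point of $\Om$ (either from the constant value $c$ outside $G$, or from the fact that $f>0$ on $\bdy\Om\cap\clB(x,r)\setm\{x\}$). Proposition~\ref{prop-barrier=>} then yields regularity of $x$ for $\Om$.

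For the converse, under the additional assumption $\Cp(X\setm\Om)>0$ and starting from regularity of $x$ for $\Om$, the barrier characterization of regularity from Bj\"orn--Hansevi~\cite{BH1} (which requires precisely this capacity hypothesis) supplies a barrier $u$ for $\Om$ at $x$. I would then check that $u|_G$ is a barrier for $G$ at $x$: superharmonicity and $\lim_{G\ni y\to x}u(y)=0$ are inherited from the ambient barrier; for $z\in\bdy\Om\cap\clB(x,r)\setm\{x\}\subset\bdystar\Om\setm\{x\}$, the inequality $\liminf u(y)>0$ is also inherited; and for $z\in\bdy B(x,r)\cap\Om$, the lsc-regularity of $u$ together with the strong minimum principle (applied to the superharmonic $u$, which is strictly positive on $\Om$ because $\liminf u>0$ at every point of $\bdystar\Om\setm\{x\}$) gives $\liminf_{G\ni y\to z}u(y)\ge u(z)>0$. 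Proposition~\ref{prop-barrier=>} then yields regularity of $x$ for $G$.

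The main technical hurdle lies in the first direction: one must ensure that the pasted function $u$ is lower semicontinuous at every $z\in\bdy B(x,r)\cap\Om$, including points that may be irregular for $G$. By Kellogg's property (Theorem~\ref{thm-Kellogg}) the set of such irregular points has zero Sobolev capacity, but lsc is a pointwise requirement. The cleanest remedy is to replace $v$ by its lsc-regularization and to use resolutivity of $f$ on the bounded set $G$ (available because $\Cp(X\setm G)>0$, via the machinery of \cite{BH1}), which guarantees the pointwise inequality $\liminf_{G\ni y\to z}v(y)\ge f(z)=c$ at every $z\in\bdy B(x,r)\cap\Om$ and thus legitimizes the appeal to Lemma~\ref{lem-pasting}.
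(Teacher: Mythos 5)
Your overall strategy, i.e.\ deduce regularity from barriers (Proposition~\ref{prop-barrier=>}) and transfer them using the pasting Lemma~\ref{lem-pasting}, is the one used in the paper. The converse direction is fine and matches the paper, which simply refers to Theorem~6.2 in Bj\"orn--Hansevi~\cite{BH1}.

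However, there is a genuine gap in your argument for the first implication, and you have actually put your finger on it yourself. You construct $v=\uP_G f$ with $f=\min\{d(\cdot,x),c\}$ on $\bdy G$, paste it with the constant $c$, and correctly observe that one must verify lower semicontinuity of the pasted function at points of $\bdy B(x,r)\cap\Om$, which can be irregular for $G$. The remedy you propose, namely invoking resolutivity of $f$ for $G$, does not close this gap: resolutivity only means $\uP_G f=\lP_G f$, and it carries no information about the \emph{pointwise} boundary behaviour of the Perron solution. At an irregular point $z\in\bdy B(x,r)\cap\Om$ one can very well have
\[
\liminf_{G\ni y\to z}v(y) < f(z)=c,
\]
in which case the pasted function is not lower semicontinuous at~$z$ and Lemma~\ref{lem-pasting} does not apply. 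The Kellogg property only says that such $z$ form a set of Sobolev capacity zero, which is not enough since lower semicontinuity is a requirement at \emph{every} point. Likewise, taking the lsc-regularization of the pasted function does not help, since you would first need to know that the pasted function is a superminimizer to conclude that its lsc-regularization is superharmonic, which is exactly what is under construction.

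The paper sidesteps the difficulty entirely: applying Theorem~6.2\,(a)$\Rightarrow$(f) of~\cite{BH1} to the bounded set $G=\Om\cap B(x,r)$ (for which $\Cp(X\setm G)>0$, ensured by the hypothesis $r<\tfrac12\diam X$) produces a barrier $u$ for $G$ at~$x$ that is \emph{positive and continuous} and satisfies $u(y)\ge d(y,x)$ in~$G$. This continuity, together with the lower bound, makes $\min\{u,r\}$, extended by $r$ outside $B(x,r)$, automatically lower semicontinuous across $\bdy B(x,r)\cap\Om$, so Lemma~\ref{lem-pasting} applies directly. Your approach via the Perron solution would need to be upgraded to a construction of this kind to be rigorous; as written, the lower semicontinuity at the pasting boundary is not established.
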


The condition $\Cp(X \setm \Om)>0$ in the second part
is essential, see Propositions~\ref{prop-Cp=0-para} and~\ref{prop-resolutive-cp=0} 
below.

\begin{proof}
For the implication in the first part,
note that $\Cp(X \setm (\Om \cap B(x,r))) >0$.
Applying  Theorem~6.2\,(a)\imp(f) in~\cite{BH1}
to $\Om \cap B(x,r)$ then
shows that regularity for $\Om \cap B(x,r)$ is equivalent to the
existence of a positive continuous barrier $u$ 
for $\Om \cap B(x,r)$
such that $u(x)\ge d(x,x_0)$ for all $x\in\Om \cap B(x,r)$. 
The pasting Lemma~\ref{lem-pasting}
shows that the
function $\min\{u,r\}$, extended by $r$ in $\Om\setm B(x,r)$, is 
a barrier in $\Om$. 
Proposition~\ref{prop-barrier=>} then implies regularity with respect to
$\Om$.

The equivalence in the second part 
is contained in Theorem~6.2 in~\cite{BH1}.
\end{proof}

The following result partly extends 
Proposition~7.3
in Bj\"orn--Bj\"orn--Shan\-mu\-ga\-lin\-gam~\cite{BBS2}
(and \cite[Proposition~10.32]{BBbook})
to unbounded sets.

\begin{lem} \label{lem-Perron-chi}
Assume that $\bdystar \Om \ne \emptyset$.
Let $E \subset \bdy \Om$ be  relatively open in $\bdy \Om$,
and let $I \subset \bdystar \Om$ be the set of 
irregular boundary points for $\Om$.
Then $\chione_{E \setm I}$ is resolutive and
\begin{equation*} 
      P\chione_{E \setm I} = \lP \chione_E.
\end{equation*}
\end{lem}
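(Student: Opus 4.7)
The plan is to establish the chain of inequalities
\[
\lP \chione_{E\setm I} \le \uP \chione_{E\setm I} \le \lP \chione_E \le \lP \chione_{E\setm I},
\]
which forces equality throughout and yields the identity $P\chione_{E\setm I}=\lP\chione_E$ together with resolutivity. The first inequality is \eqref{eq-lP-uP}, so only the middle and rightmost inequalities require proof.

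For the middle inequality, I would show that $\lP\chione_E$ itself belongs to $\UU_{\chione_{E\setm I}}(\Om)$. The constants $0$ and $1$ lie in $\LL_{\chione_E}(\Om)$ and $\UU_{\chione_E}(\Om)$ respectively, so comparison gives $0\le \lP\chione_E\le 1$; hence $\lP\chione_E$ is \p-harmonic (in particular superharmonic and bounded from below) in each component of $\Om$. The boundary condition $\liminf_{\Om\ni y\to x}\lP\chione_E(y)\ge \chione_{E\setm I}(x)$ is trivial for $x\notin E\setm I$. For $x\in E\setm I\subset\bdy\Om$, the relative openness of $E$ in $\bdy\Om$ makes $F:=(\bdy\Om\setm E)\cup\{\infty\}$ closed in the compact Hausdorff space $\Xstar$ and disjoint from $\{x\}$, so Urysohn's lemma yields a continuous $h:\Xstar\to[0,1]$ with $h(x)=1$ and $h\equiv 0$ on $F$; in particular $h\le \chione_E$ on $\bdystar\Om$. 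Applying Proposition~\ref{prop-reg5.1}\ref{reg-cont-x0} to $-h$ at the regular point $x$, together with $\uP(-h)=-\lP h$, gives $\lim_{\Om\ni y\to x}\lP h(y)=1$, and the monotonicity $\lP h\le \lP\chione_E$ finishes this step.

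For the rightmost inequality, the Kellogg property (Theorem~\ref{thm-Kellogg}) combined with $E\cap I\subset I\cap X$ (recall $E\subset\bdy\Om\subset X$, so $\infty\notin E\cap I$) gives $\Cp(E\cap I)=0$. Standard nonlinear potential theory then supplies a nonnegative superharmonic function $u$ on $\Om$ with $\liminf_{\Om\ni y\to z}u(y)=\infty$ for every $z\in E\cap I$ and with $\{u=\infty\}$ of zero Sobolev capacity; concretely, one picks $\phi_k\in\Np(X)$ with $\phi_k\ge 1$ on $E\cap I$ and $\|\phi_k\|_{\Np(X)}<2^{-k}$, and takes a superharmonic representative of $\sum_k\phi_k$ via the obstacle problem. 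For any $v\in \LL_{\chione_E}(\Om)$ and any $\eps>0$, the function $v-\eps u$ is subharmonic and bounded above, with
\[
\limsup_{\Om\ni y\to z}(v-\eps u)(y)\le \chione_E(z)-\eps\liminf_{\Om\ni y\to z}u(y)
\]
at each $z\in\bdystar\Om$; this is $\le \chione_{E\setm I}(z)$ trivially off $E\cap I$, and it equals $-\infty$ on $E\cap I$. Hence $v-\eps u\in \LL_{\chione_{E\setm I}}(\Om)$ and thus $v-\eps u\le \lP\chione_{E\setm I}$ in $\Om$. At each point where $u$ is finite, letting $\eps\to 0^{+}$ yields $v\le \lP\chione_{E\setm I}$ $\Cp$-quasieverywhere, and this upgrades to everywhere using the essential upper semicontinuity of the subharmonic function $v$ (equivalent to the lsc-regularization of $-v$) together with the continuity of the \p-harmonic function $\lP\chione_{E\setm I}$. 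Taking the supremum over $v\in \LL_{\chione_E}(\Om)$ closes the chain.

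The main obstacle is producing the superharmonic ``polar'' function $u$ singular precisely on the boundary set $E\cap I$ in the present unbounded, semilocal setting. In bounded domains this is a standard obstacle-problem argument (see \cite{BBbook}); for unbounded $\Om$ I would combine the semilocal results in \cite{BBsemilocal} with the boundary-regularity machinery of \cite{BH1}, relying on $E\cap I\subset X\setm\{\infty\}$ to keep the Sobolev-capacity tools applicable.
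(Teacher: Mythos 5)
Your first half (showing $\uP\chione_{E\setm I}\le\lP\chione_E$) is correct and close to the paper's argument: you show $\lP\chione_E\in\UU_{\chione_{E\setm I}}(\Om)$ at each regular $x\in E\setm I$, going through a Urysohn function and Proposition~\ref{prop-reg5.1}\ref{reg-cont-x0}, whereas the paper applies Proposition~\ref{prop-reg5.1}\ref{reg-reg}\imp\ref{reg-semicont-x0} directly to the lower semicontinuous function $\chione_E$; these are equivalent routes.

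The second half, however, has a fatal flaw independent of the construction issue you flag. You place $v-\eps u$ in $\LL_{\chione_{E\setm I}}(\Om)$, which requires $v-\eps u$ to be \emph{subharmonic}. But here subharmonic means $p$-subharmonic, i.e.\ a minimizer-type condition for the (generally nonlinear) $p$-energy \eqref{eq-energy-int-intro}, and for $p\neq 2$ the class of $p$-sub/superharmonic functions is not closed under addition: the difference of a $p$-subharmonic $v$ and an $\eps$-multiple of a $p$-superharmonic $u$ is in general neither $p$-sub- nor $p$-superharmonic. The additive-polar-function trick is a genuinely linear ($p=2$) device, and it is precisely what the nonlinear literature (e.g.\ Kilpel\"ainen--Mal\'y~\cite{KilpMaly89}, quoted in Remark~\ref{rmk-KilpMaly89}) works hard to circumvent. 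On top of that, as you yourself note, even manufacturing the required nonnegative superharmonic $u$ in $\Om$ with $\liminf_{\Om\ni y\to z}u(y)=\infty$ on $E\cap I$ is delicate here: the obstacle problem and balayage machinery you invoke are developed under hypotheses (bounded $\Om$, or $\Cp(X\setm\Om)>0$) that this lemma does not impose, and in the case $\Cp(X\setm\Om)=0$ with $X$ \p-parabolic, Lemma~\ref{lem-parabolic-superh-X} actually forces any superharmonic function bounded from below to be constant, so no such $u$ can exist unless $E\cap I=\emptyset$. The paper avoids both obstructions with a different mechanism: since $E\subset\bdy\Om$ excludes $\infty$, any $v\in\LL_{\chione_E}(\Om)$ satisfies $\limsup_{y\to\infty}v\le 0$, so the set $\{v>\eps\}$ is bounded; one truncates to $G=\Om\cap B_R$, compares irregular sets via Lemma~\ref{lem-reg-B=>Om} and the Kellogg property, invokes the known bounded-domain result (Proposition~7.3 of~\cite{BBS2}) on $G$, and passes $\eps\to 0$, $R\to\infty$ using Lemma~\ref{lem-simple}. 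That exhaustion argument is the step your proof is missing.
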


\begin{proof}
As $\chione_E$ is lower semicontinuous, it follows from
the regularity of $x \in E \setm I$ and
Proposition~\ref{prop-reg5.1}\ref{reg-reg}\imp\ref{reg-semicont-x0} that
\[
\liminf_{\Om \ni y \to x} \lP \chione_E(y) \ge 1
\quad \text{for every }
x \in E \setm I.
\]
Hence $\lP \chione_E \in \UU_{\chione_{E \setm I}}(\Om)$ and thus
\begin{equation} \label{eq-chi}
  \lP \chione_E \ge \uP \chione_{E \setm I}.
\end{equation}  

Let $v \in \LL_{\chione_E}(\Om)$ and $\eps >0$.
Then $V:=\{x \in \Om: v(x)>\eps\}$ is bounded 
since 
\[
\limsup_{x\to\infty} v(x) \le \chione_E(\infty)=0 
\quad \text{if $\Om$ is unbounded}.
\]
Let $R>1/\eps$ be such that $B_R\Supset V$.
Set $G=\Om \cap B_R$ ($G=\Om$ if $\Om$ is bounded)
and $E'=E\cap B_R$.
Since  $v$ is subharmonic and upper semicontinuous, we have
$v-\eps \in \LL_{\chione_{E'}}(G)$ and hence $v-\eps \le \lP_{G} \chione_{E'}$.

Let $I_G$ be the set of irregular boundary points for $G$.
It follows from Lemma~\ref{lem-reg-B=>Om} that 
$E'\setm I_G \subset E'\setm I$, 
and from the Kellogg property (Theorem~\ref{thm-Kellogg}) that
$\Cp(I_G)=0$.
Note that $\chione_{E'}$ is lower semicontinuous on $\bdy G$.
Proposition~7.3
in Bj\"orn--Bj\"orn--Shan\-mu\-ga\-lin\-gam~\cite{BBS2}
(or \cite[Proposition~10.32]{BBbook}),
applied on the bounded set $G$ to the boundary data
$f=-\chione_{E'}$ and $f+h=-\chione_{E'\setm I_G}$, 
together with Lemma~\ref{lem-simple}, then implies
\[
v-\eps \le \lP_{G} \chione_{E'} =
P_{G} \chione_{E' \setm I_G}
\le \lP \chione_{E \setm I}
\quad \text{in } G.
\]
Letting $\eps \to 0$  (and thus $R\to\infty$) gives 
\[
v\le \lP \chione_{E \setm I} \quad \text{in } \Om.
\]
Taking the supremum over all $v \in \LL_{\chione_E}(\Om)$
shows that $\lP \chione_E \le  \lP \chione_{E \setm I}$,
which together with \eqref{eq-lP-uP} and~\eqref{eq-chi}
concludes the proof.
\end{proof}

\section{Capacitary potentials}
\label{sect-potentials}

If $\Om$ is bounded 
and $\cp(E,\Om)<\infty$, then there exists $u\in\Np_0(\Om)$
such that $\chione_E  \le u \le \chione_\Om$ q.e.\ in $\Om$ and 
\begin{equation} \label{eq-11.19}
\cp(E,\Om)=\int_{\Om} g_u^p \, d\mu.
\end{equation}
If in addition $\Cp(X \setm \Om)>0$, then $u$
 can be obtained as the unique \emph{lsc-regularized solution}
of the $\K_{\chione_E,0}(\Om)$-obstacle problem, i.e.\ it satisfies
$u=u^*$ in the sense of \eqref{eq-def-lsc-reg} and
\[
\int_\Om g^p_{u} \, d\mu \le \int_\Om g^p_{v} \, d\mu
       \quad \text{for all $v \in \Np_0(\Om)$ with $v\ge\chione_E$ q.e.},
\]
see \cite[Definition~11.15 and Lemma~11.19]{BBbook}.
When extended by $0$ outside $\Om$, this function
is called the \emph{capacitary potential} for $\cp(E,\Om)$.
It is superharmonic and a superminimizer in $\Om$.

After a modification on the set $\{x\in E : u(x)<1\}$, the 
solution $u$ of the $\K_{\chione_E,0}(\Om)$-obstacle problem
is also admissible for $\cp(E,\Om)$.
In fact, minimizing admissible functions, i.e.\ 
those fulfilling \eqref{eq-11.19},
exist under much weaker assumptions than here, provided that $\Om$ is bounded,
see Bj\"orn--Bj\"orn~\cite[Theorem~4.2 and Remark~5.6]{BBnonopen}.
(The same proof applies also when $\Om$ is unbounded and $\mu(\Om)<\infty$.)

In contrast, the following result shows that when $\Om$ is unbounded there need
not exist any minimizing admissible function for $\cp(E,\Om)$.

\begin{prop} \label{prop-not-admissible}
Assume that $X$ is \p-parabolic and that $\mu(X)=\infty$.
Then 
\begin{equation} \label{eq-not-adm}
\cp(E,X)=0 \quad \text{for every set $E\subset X$.}
\end{equation}
Moreover, if $\Cp(E)>0$ then there is
no admissible function $u$ for $\cp(E,X)$ with
$\int_{X} g_u^p\, d\mu=0$, i.e.\ 
the infimum in~\eqref{eq-deff-cp}
is not attained for $\cp(E,X)$.
\end{prop}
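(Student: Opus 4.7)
The plan is to handle the two claims separately: first the capacity identity \eqref{eq-not-adm}, then the nonattainment of the infimum.

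For \eqref{eq-not-adm}, I would use monotonicity together with the assumption of \p-parabolicity. Since $X$ is proper, every closed ball $\clB_j$ is compact, so \p-parabolicity gives $\cp(\clB_j,X)=0$ for all $j\ge 1$. For an arbitrary $E\subset X$, the set $E\cap B_j$ is bounded and $E\cap B_j\subset \clB_j\subset X$, so Proposition~\ref{prop-cp}\ref{cp-subset} yields
\[
  \cp(E\cap B_j,X)\le \cp(\clB_j,X)=0.
\]
Letting $j\to\infty$ and appealing to \eqref{eq-cp-lim-deff} (or, if $E$ is bounded, directly to \eqref{eq-deff-cp} once $E\subset B_j$) gives $\cp(E,X)=0$.

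For the nonattainment statement, I would argue by contradiction. Suppose $u\in\Np(X)$ is admissible for $\cp(E,X)$ with $\int_X g_u^p\,d\mu=0$. Then $g_u=0$ a.e.\ in $X$. The goal is to show that this forces $u$ to be constant q.e., and then that the constant must be $0$. Once that is known, $\{u\ne 0\}$ has Sobolev capacity zero; since $u=1$ on $E$, we would get $\Cp(E)\le\Cp(\{u\ne 0\})=0$, contradicting $\Cp(E)>0$.

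To deduce that $u$ is constant q.e., I would use the \p-Poincar\'e inequality within each ball $B\subset X$ (available by our local assumptions together with Bj\"orn--Bj\"orn~\cite{BBsemilocal} as recalled at the start of Section~\ref{sect-pharm}), which combined with $g_u\equiv 0$ gives $\int_B|u-u_B|\,d\mu=0$, so $u$ equals the constant $u_B$ a.e.\ on $B$. A standard chaining argument using the connectedness of $X$ then yields that $u$ is a.e.\ equal to a single constant $c\in[0,1]$, and since any two pointwise representatives of a Newtonian function that agree a.e.\ in fact agree q.e.\ (see Section~\ref{sect-ug}), we get $u=c$ q.e.\ in $X$. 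The constant must satisfy $u\in L^p(X)$; because $\mu(X)=\infty$, this forces $c=0$, completing the argument. The main obstacle to be careful about is the passage from "$g_u=0$ a.e." to "$u$ constant q.e." in the metric setting, since this uses both the local \p-Poincar\'e inequality and the connectedness of $X$ in an essential way; all other steps are routine given the results already established in the excerpt.
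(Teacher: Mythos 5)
Your proposal is correct and follows essentially the same route as the paper's own proof: both parts rely on monotonicity plus $\cp(\clB_j,X)=0$ for the capacity identity, and on the local Poincar\'e inequality plus connectedness to force any energy-zero admissible $u$ to be constant q.e., then derive a contradiction from $\Cp(E)>0$ and $u\in L^p(X)$ with $\mu(X)=\infty$. The only (cosmetic) difference is the order of the final two steps: you first use $\mu(X)=\infty$ to pin the constant to $0$ and then contradict $\Cp(E)>0$, whereas the paper first uses $\Cp(E)>0$ to pin the constant to $1$ and then contradicts $u\in L^p(X)$.
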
  

\begin{proof}
That \eqref{eq-not-adm} holds follows directly from Definitions~\ref{deff-cp}
and~\ref{def-p-par}.

If there was
an admissible function for $\cp(E,X)$ with 
$\int_{X} g_u^p\, d\mu=0$, then by the local \p-Poincar\'e inequality
and the connectedness of $X$, it would be constant a.e.\ 
(and thus q.e.\ by Proposition~1.59 in~\cite{BBbook}).
Since $\Cp(E)>0$, it would follow
that $u=1$ a.e.\ in $X$, which is impossible
as $\mu(X)=\infty$ and $u \in \Np(X)$.
\end{proof}

Despite the fact that there may not be any minimizing admissible function,
our
next aim is to construct suitable
capacitary potentials also for unbounded $\Om$.
(As a by-product we obtain them also when $X$ is bounded and
  $\Cp(X \setm \Om)=0$.)
For any open $\Om$, we define the \emph{capacitary potential} as
\begin{align} 
\pot{\Om}{E}(x) = & \begin{cases}
  \inf \{\psi(x):\psi \in \PsiQ{\Om}{E}\}, & \text{if } x \in \Om, \\
  0, & \text{otherwise},
  \end{cases} \label{def-pot-unbdd} \\
 & \quad \text{where }
 \PsiQ{\Om}{E}= \{\psi:\psi
    \text{ is superharmonic in $\Om$ and $\psi \ge \chione_E$ q.e.}\}.
\nonumber
\end{align}
This is a special case of the $Q$-balayage studied in
Bj\"orn--Bj\"orn--M\"ak\"a\-l\"ainen--Par\-vi\-ai\-nen~\cite{BBMP},
where the function in~\eqref{def-pot-unbdd}
is denoted
$\pot{1}{E}= \pot{\chione_E}{} = \Qhat^1_E=\Qhat^{\chione_E}$
with $\Om$ implicit.
Because of the q.e.-inequality in \eqref{def-pot-unbdd}, this definition
differs from the usual balayage and potential definition in e.g.\ 
Hei\-no\-nen--Kil\-pe\-l\"ai\-nen--Martio~\cite[Chapter~8]{HeKiMa}.
In particular, here we do not need to lsc-regularize the infimum in \eqref{def-pot-unbdd}.
By Theorem~4.4 in~\cite{BBMP}, $\pot{\Om}{E}\in \PsiQ{\Om}{E}$,
and it is therefore the minimal superharmonic function in $\Om$
which is  $\ge\chione_E$ q.e.\ in $\Om$.
It is convenient to have $\pot{\Om}{E}\equiv 0$ on $X \setm \Om$.

\begin{lem} \label{lem-pot-conv}
  Let   $\Om_1 \subset \Om_2 \subset \dots \subset \Om=\bigcup_{j=1}^\infty \Om_j$
  be open sets.

  If $E_1 \subset E_2 \subset \dots \subset E=\bigcup_{j=1}^\infty E_j$
  and $E_j \subset \Om_j$, $j=1,2,\dots$\,,
  then
  \[
  \pot{\Om_j}{E_j} \nearrow \pot{\Om}{E}
  \quad \text{everywhere in } X,  \text{ as } j \to \infty.
  \]
In particular, if $E_1 \subset E_2$, $\Om_1 \subset \Om_2$
  and $E_j \subset \Om_j$, $j=1,2$,
    then $\pot{\Om_1}{E_1} \le \pot{\Om_2}{E_2}$.
\end{lem}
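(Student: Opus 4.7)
The ``in particular'' monotonicity is immediate from the pointwise infimum definition~\eqref{def-pot-unbdd}. Given $E_1 \subset E_2$ and $\Om_1 \subset \Om_2$ with $E_j \subset \Om_j$, the restriction $\pot{\Om_2}{E_2}|_{\Om_1}$ is superharmonic in $\Om_1$ (superharmonicity is preserved under restriction to open subsets), is nonnegative everywhere (being lsc-regularized and nonnegative q.e., since $\chione_{E_2}\ge 0$), and satisfies $\pot{\Om_2}{E_2} \ge \chione_{E_2} \ge \chione_{E_1}$ q.e. Hence it lies in $\PsiQ{\Om_1}{E_1}$, and \eqref{def-pot-unbdd} yields $\pot{\Om_1}{E_1} \le \pot{\Om_2}{E_2}$ in $\Om_1$; outside $\Om_1$ the inequality is trivial since $\pot{\Om_1}{E_1}=0\le \pot{\Om_2}{E_2}$.

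For the main convergence statement, set $v:=\sup_j \pot{\Om_j}{E_j}$. By the monotonicity just proved the sequence $\{\pot{\Om_j}{E_j}\}$ is increasing in $j$ and $v \le \pot{\Om}{E}$ pointwise on $X$. The task is to establish the reverse inequality, for which the plan is to show that $v$ itself lies in $\PsiQ{\Om}{E}$, so that~\eqref{def-pot-unbdd} forces $\pot{\Om}{E}\le v$. The required q.e.\ bound is routine: $v \ge \pot{\Om_j}{E_j} \ge \chione_{E_j}$ q.e.\ in $\Om_j$ for each $j$, and since $E = \bigcup_j E_j$ and the Sobolev capacity is countably subadditive, $v \ge \chione_E$ q.e.\ in $\Om$.

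The crux is the superharmonicity of $v$ in $\Om$. I would argue locally: fix $j_0$ and consider the tail $\{\pot{\Om_j}{E_j}\}_{j \ge j_0}$ on $\Om_{j_0}$. Each such function is superharmonic on $\Om_{j_0}\subset\Om_j$ and uniformly bounded by~$1$ (since the constant function $1\in\PsiQ{\Om_j}{E_j}$ forces $\pot{\Om_j}{E_j}\le 1$). The nonlinear monotone convergence theorem for superharmonic functions in the metric setting (Kinnunen--Martio~\cite{KiMa02}, cf.~\cite[Chapter~9]{BBbook}) then ensures that the pointwise increasing limit $v|_{\Om_{j_0}}$ is itself superharmonic in $\Om_{j_0}$, being bounded and hence not identically~$\infty$. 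Since superharmonicity is a local property and $\Om = \bigcup_{j_0} \Om_{j_0}$, $v$ is superharmonic on all of $\Om$. Therefore $v \in \PsiQ{\Om}{E}$, giving $\pot{\Om}{E} \le v$ and hence the desired pointwise equality. The principal obstacle is this appeal to the nonlinear monotone convergence theorem, which is what secures that the pointwise limit itself (and not merely its lsc-regularization $v^*$) is superharmonic---essential for the ``everywhere'' conclusion rather than a merely q.e.\ convergence.
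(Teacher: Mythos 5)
Your proof is correct and follows essentially the same route as the paper: monotonicity comes from restricting admissible functions across the inclusion of domains and comparison sets, and the reverse inequality is obtained by showing the pointwise limit is itself superharmonic via the nonlinear monotone convergence theorem (Kinnunen--Martio~\cite[Lemma~7.1]{KiMa02}, i.e.\ \cite[Theorem~9.27]{BBbook}) together with locality of superharmonicity (\cite[Proposition~9.21]{BBbook}), so that the limit lies in $\PsiQ{\Om}{E}$. The only cosmetic difference is that you localize to each $\Om_{j_0}$ and patch, whereas the paper applies the convergence result directly in $\Om$; both are fine.
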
  

\begin{proof}
The restriction of any function in $\PsiQ{\Om_{j+1}}{E_{j+1}}$
belongs to $\PsiQ{\Om_{j}}{E_{j}}$ and thus
$\pot{\Om_j}{E_j} \le \pot{\Om_{j+1}}{E_{j+1}}$.
Similarly, $\pot{\Om_j}{E_j} \le \pot{\Om}{E}$.
So the limit $u:=\lim_{j \to \infty} \pot{\Om_j}{E_j}$ exists and 
$u \le \pot{\Om}{E}$.

For the reverse inequality, note first that
$u$ is a bounded increasing limit of superharmonic functions 
and thus itself superharmonic in $\Om$,
by Lemma~7.1 in Kinnunen--Martio~\cite{KiMa02}
(or \cite[Theorem~9.27]{BBbook}) and \cite[Proposition~9.21]{BBbook}.
Moreover,
$u  \ge \chione_{E_j}$ q.e.\ for each $j=1,2,\dots$\,, and
thus $u \ge \chione_E$ q.e.
Hence $u \in \PsiQ{\Om}{E}$
and $u=0$ outside $\Om$, and so $u \ge \pot{\Om}{E}$.
\end{proof}  

Next we 
show that our definition
of capacitary potentials agrees with the one in 
\eqref{eq-11.19}
when that one is defined.
For $E \Subset \Om$ this follows from Theorem~5.3 in
Bj\"orn--Bj\"orn--M\"ak\"a\-l\"ainen--Par\-vi\-ai\-nen~\cite{BBMP}.

\begin{prop} \label{prop-pot-Om-bdd}
If   $\Om$ is bounded, $\Cp(X \setm\Om)>0$ and $\cp(E,\Om)< \infty$, then
$\pot{\Om}{E}$ is the unique
lsc-regularized solution of the $\K_{\chione_E,0}(\Om)$-obstacle problem 
as in~\eqref{eq-11.19}.
\end{prop}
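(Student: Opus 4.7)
The plan is to show that the lsc-regularized solution $u$ of the $\K_{\chione_E,0}(\Om)$-obstacle problem coincides with $\pot{\Om}{E}$ by proving both inequalities. For $\pot{\Om}{E}\le u$: under the assumption $\Cp(X\setm\Om)>0$, the obstacle-problem solution $u$ is superharmonic in $\Om$ by standard theory (as recalled in the excerpt and in \cite{BBbook}) and satisfies $u\ge\chione_E$ q.e.\ in $\Om$, so $u\in\PsiQ{\Om}{E}$. Since also $\pot{\Om}{E}=0\le u$ outside $\Om$, the definition \eqref{def-pot-unbdd} immediately gives $\pot{\Om}{E}\le u$ in $X$.

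For the reverse inequality $\pot{\Om}{E}\ge u$, I would exhaust $\Om$ by compact sets $K_1\subset K_2\subset\cdots$ with $\bigcup_j K_j=\Om$ (possible since $X$ is proper and $\Om$ is open and bounded) and set $E_j:=E\cap K_j$. Then $E_j\Subset\Om$ and $E_j\nearrow E$. By the case $E_j\Subset\Om$ already established from Theorem~5.3 in~\cite{BBMP}, each $\pot{\Om}{E_j}$ is the lsc-regularized solution of the $\K_{\chione_{E_j},0}(\Om)$-obstacle problem, so in particular $\pot{\Om}{E_j}\in\Np_0(\Om)$ and
\[
\int_\Om g_{\pot{\Om}{E_j}}^p\,d\mu=\cp(E_j,\Om).
\]
By Theorem~\ref{thm-cp}\ref{cp-Choq-E} these energies increase to $\cp(E,\Om)<\infty$, and by Lemma~\ref{lem-pot-conv} the potentials increase pointwise to $\pot{\Om}{E}$. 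Note also that $\pot{\Om}{E_j}\le 1$, since truncating any $\psi\in\PsiQ{\Om}{E_j}$ at $1$ preserves both superharmonicity and the q.e.-bound~$\chione_{E_j}$.

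These uniform bounds let me apply Lemma~\ref{lem-Dp-6.3} to conclude that $\pot{\Om}{E}\in\Dp(X)$ with
\[
\int_X g_{\pot{\Om}{E}}^p\,d\mu\le\liminf_{j\to\infty}\int_X g_{\pot{\Om}{E_j}}^p\,d\mu=\cp(E,\Om).
\]
Since $\Om$ is bounded and $\pot{\Om}{E}$ is bounded with $\pot{\Om}{E}=0$ outside $\Om$, we have $\pot{\Om}{E}\in\Np_0(\Om)$, and by construction $\pot{\Om}{E}\ge\chione_E$ q.e.\ in $\Om$. Hence $\pot{\Om}{E}$ is admissible for the $\K_{\chione_E,0}(\Om)$-obstacle problem, and its energy is at most the minimal value $\cp(E,\Om)$ attained by $u$; so in fact equality holds and $\pot{\Om}{E}$ itself is a solution. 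Being superharmonic (by Theorem~4.4 in~\cite{BBMP}), it is lsc-regularized, and uniqueness of the lsc-regularized solution forces $\pot{\Om}{E}=u$. The main delicate point is to pass from the $E\Subset\Om$ case to general $E\subset\Om$; this is handled cleanly by combining Lemma~\ref{lem-pot-conv}, the countable-exhaustion property of the Choquet capacity (Theorem~\ref{thm-cp}\ref{cp-Choq-E}), and the lower semicontinuity of the $p$-energy from Lemma~\ref{lem-Dp-6.3}.
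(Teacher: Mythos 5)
Your proof takes essentially the same route as the paper's: approximate $E$ from within by compactly contained sets $E_j\Subset\Om$, identify each $\pot{\Om}{E_j}$ with the corresponding obstacle\-problem solution via Theorem~5.3 in \cite{BBMP}, use Lemma~\ref{lem-pot-conv} to get pointwise monotone convergence, apply Lemma~\ref{lem-Dp-6.3} to bound the energy of the limit by $\cp(E,\Om)$, and conclude that $\pot{\Om}{E}$ belongs to $\K_{\chione_E,0}(\Om)$ and is therefore itself an lsc\-regularized minimizer, equal to $u$ by uniqueness. Your choice $E_j=E\cap K_j$ versus the paper's $E_j=\{x\in E:\dist(x,X\setm\Om)\ge1/j\}$ is immaterial.

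Two small observations. First, the opening inequality $\pot{\Om}{E}\le u$ is never used: once $\pot{\Om}{E}$ is shown to be an lsc\-regularized solution of the obstacle problem, uniqueness (Theorem~8.27 in \cite{BBbook}) forces $\pot{\Om}{E}=u$ with no need for either one-sided bound. Second, and more substantively, your appeal to Theorem~\ref{thm-cp}\ref{cp-Choq-E} is circular within the paper's development: Theorem~\ref{thm-cp} is proved later, and its proof rests (through Propositions~\ref{prop-ex-pot-bdd} and~\ref{prop-ex-pot-unbdd}) on the present proposition. The circularity is easily removed, since all Lemma~\ref{lem-Dp-6.3} needs is a uniform bound on the energies, and plain monotonicity (Proposition~\ref{prop-cp}\ref{cp-subset}) already gives $\cp(E_j,\Om)\le\cp(E,\Om)<\infty$; alternatively one could cite \cite[Theorem~6.17]{BBbook}, which establishes the Choquet property for bounded $\Om$ independently. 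With that replacement your argument matches the paper's.
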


\begin{proof}
Let $u_j$ be the unique lsc-regularized solution 
of the $\K_{\chione_{E_j},0}(\Om)$-obstacle problem, where
\[
E_j = \{x \in E : \dist(x,X \setm \Om)\ge 1/j\} \Subset \Om,
\quad j=1,2,\dots.
\]
By Lemma~\ref{lem-pot-conv} and~\cite[Theorem~5.3]{BBMP},
\[
\pot{\Om}{E}=\lim_{j \to \infty} \pot{\Om}{E_j} = \lim_{j \to \infty} u_j.
\]
Lemma~\ref{lem-Dp-6.3} and~\eqref{eq-11.19} then imply that
\[
\int_X g^p_{\pot{\Om}{E}} \, d\mu
\le \liminf_{j \to \infty}   \int_X g^p_{u_j} \, d\mu
=  \liminf_{j \to \infty} \cp(E_j,\Om)
 \le \cp(E,\Om) <\infty.
 \]
Hence $\pot{\Om}{E} \in \Np_0(\Om)$ and so
$\pot{\Om}{E} \in \K_{\chione_E,0}(\Om)$.
Thus, in view of~\eqref{eq-11.19},  $\pot{\Om}{E}$ is an
lsc-regularized solution of the $\K_{\chione_E,0}(\Om)$-obstacle problem,
which is unique by Theorem~8.27 in~\cite{BBbook}.
\end{proof}

\begin{prop} \label{prop-ex-pot-unbdd} 
If $\cp(E,\Om)< \infty$, then  $\pot{\Om}{E} \in \Dp(X)$,
\begin{equation*} 
     \cp(E,\Om)=\int_{X} g_{\pot{\Om}{E}}^p \, d\mu
\end{equation*}
and $\pot{\Om}{E}$ is \p-harmonic in $\Om\setm\clE$.
If moreover $E\ne\Om$ is relatively closed in $\Om$,
  and $\bdystar \Om \ne \emptyset$ or $\Cp(E)>0$, then 
\begin{equation} \label{eq-cp-E-unbdd}
\pot{\Om}{E}=\lP_{\Om \setm E} \chione_E=P_{\Om \setm E} \chione_{E \setm I}
\quad \text{in } \Om \setm E,
\end{equation}
where $I \subset \bdystar (\Om \setm E)$
is the set of irregular boundary points for $\Om \setm E$.
\end{prop}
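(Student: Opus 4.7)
The plan is to handle the three conclusions separately: first integrability and the energy identity, then the \p-harmonicity in $\Om\setm\clE$, and finally the Perron identification.

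For the energy formula and $\pot{\Om}{E}\in\Dp(X)$, my strategy is a diagonal approximation by bounded sets. Setting $\Om_k = \Om\cap B_k$ and $F_j = E\cap B_j$, Proposition~\ref{prop-pot-Om-bdd} applies to each $u_{j,k} := \pot{\Om_k}{F_j}$ (for $k$ large enough that $\Cp(X\setm\Om_k)>0$), giving $\int_X g_{u_{j,k}}^p \,d\mu = \cp(F_j,\Om_k)$ and $u_{j,k}\in\Np_0(\Om_k)\subset\Np(X)$. Lemma~\ref{lem-cp-inc-1} allows one to pick $k_j\ge j$ with $\cp(F_j,\Om_{k_j}) \le \cp(F_j,\Om) + 1/j$, and Lemma~\ref{lem-pot-conv} then makes the diagonal sequence $u_j := u_{j,k_j}$ increase to $\pot{\Om}{E}$. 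Since these energies are uniformly bounded by $\cp(E,\Om)+1$, Lemma~\ref{lem-reflex-conv} produces $\pot{\Om}{E}\in\Dp(X)$ and convex combinations $\uhat_j$ of $\{u_i\}_{i\ge j}$ such that $g_{\uhat_j}\to g_{\pot{\Om}{E}}$ in $L^p(X)$. Each $\uhat_j\in\Np(X)$ satisfies $\chione_{F_j}\le\uhat_j\le\chione_\Om$ q.e., so it is admissible for $\cp(F_j,\Om)$; together with the convexity inequality $\int g_{\uhat_j}^p\,d\mu\le \sum\alp_i\int g_{u_i}^p\,d\mu$, this sandwiches $\int g_{\uhat_j}^p\,d\mu$ between $\cp(F_j,\Om)$ and $\cp(E,\Om)+1/j$, both of which converge to $\cp(E,\Om)$.

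The \p-harmonicity of $\pot{\Om}{E}$ in $\Om\setm\clE$ I would deduce from its minimality among members of $\PsiQ{\Om}{E}$ (Theorem~4.4 in~\cite{BBMP} ensures $\pot{\Om}{E}\in\PsiQ{\Om}{E}$). For any ball $V\Subset\Om\setm\clE$, solve the Dirichlet problem in $V$ with boundary data $\pot{\Om}{E}|_{\bdy V}$ to get a \p-harmonic $h\le\pot{\Om}{E}$, and paste $h$ on $V$ with $\pot{\Om}{E}$ elsewhere. Lemma~\ref{lem-pasting} shows that the result remains superharmonic in $\Om$, and it still majorizes $\chione_E$ q.e.\ (since $\clE\cap V=\emptyset$); minimality then forces equality, hence $\pot{\Om}{E}=h$ on $V$.

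For the Perron identities (under the extra hypothesis), the second equality comes from Lemma~\ref{lem-Perron-chi} applied to the relatively open subset $E\cap\bdy(\Om\setm E)$ of $\bdy(\Om\setm E)$, noting that $\chione_E$ on $\bdystar(\Om\setm E)$ coincides with $\chione_{E\cap\bdy(\Om\setm E)}$ and that $\Cp(I)=0$ by the Kellogg property (Theorem~\ref{thm-Kellogg}). To get $\pot{\Om}{E}=\lP_{\Om\setm E}\chione_E$, I would prove both inequalities. For $\pot{\Om}{E}\le\uP_{\Om\setm E}\chione_{E\setm I}$, take any $\psi\in\UU_{\chione_{E\setm I}}(\Om\setm E)$ and paste $\min\{\psi,1\}$ on $\Om\setm E$ with the constant $1$ on $E$, using Lemma~\ref{lem-pasting}; after lsc-regularization on the $\Cp$-null set $E\cap I$ this produces an element of $\PsiQ{\Om}{E}$ dominating $\pot{\Om}{E}$. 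For the reverse $\pot{\Om}{E}\ge\lP_{\Om\setm E}\chione_E$, I would combine the \p-harmonicity of $\pot{\Om}{E}$ in $\Om\setm E$, the pointwise identity $\pot{\Om}{E}(x)=1$ at q.e.\ $x\in E$ (so $\liminf_{y\to x}\pot{\Om}{E}(y)\ge 1$ by lsc-regularization there), and the comparison principle of Hansevi~\cite[Theorem~6.2]{Hansevi2} against any $v\in\LL_{\chione_E}(\Om\setm E)$. The main obstacle will be this last step, in particular the boundary analysis at the irregular set $I\subset\bdystar(\Om\setm E)$: this is precisely what forces the representation via $\chione_{E\setm I}$ instead of $\chione_E$, and it is also why the hypothesis $\bdystar\Om\ne\emptyset$ or $\Cp(E)>0$ is needed, since without it the Perron machinery has no boundary to work with and $\pot{\Om}{E}$ may collapse to the constant $1$, as happens in \p-parabolic spaces with infinite measure.
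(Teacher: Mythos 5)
Your treatment of the energy identity and the $\Dp$-membership closely follows the paper's route: exhaustion by bounded $\Om_{k_j}$ and $F_j=E\cap B_j$, Lemma~\ref{lem-pot-conv}, and the convex combinations from Lemma~\ref{lem-reflex-conv}, with a slightly more streamlined ``sandwich'' that gets both inequalities at once. Your derivation of the \p-harmonicity in $\Om\setm\clE$ via Poisson modification and the pasting lemma is a reasonable self-contained alternative to the paper's citation of \cite[Theorem~6.1]{BBMP}, though stated as is, you should specify that the modified function is taken as the lsc-regularized solution of the obstacle problem in $V$ with obstacle $-\infty$ so that the hypotheses of Lemma~\ref{lem-pasting} (lower semicontinuity across $\bdy V$) are actually met.

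The Perron identification in the last part is where there is a genuine gap. When you paste $\min\{\psi,1\}$ with the constant $1$ on $E$, lower semicontinuity of the pasted function can fail at $E\cap I$: for $x\in E\cap I$ the membership $\psi\in\UU_{\chione_{E\setm I}}(\Om\setm E)$ only gives $\liminf_{\Om\setm E\ni y\to x}\psi(y)\ge 0$, so the pasted function takes the value $1$ at $x$ while its lower limit from $\Om\setm E$ can be arbitrarily close to $0$. Lemma~\ref{lem-pasting} requires lower semicontinuity \emph{everywhere}, and ``after lsc-regularization'' does not repair this: the lsc-regularization $v^*$ indeed drops below $1$ exactly at those bad points, but there is no lemma in the paper that tells you this $v^*$ is superharmonic in $\Om$ -- the function $v$ is not known to be a superminimizer on the open set $\Om$, only on the finely open set $\Om\setm(E\cap I)$, and turning that into superharmonicity on $\Om$ is precisely the kind of fine-potential-theoretic extension that the paper carefully defers to Section~\ref{sect-level} (Remark~\ref{rmk-fine}). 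Your $\ge$ direction is also fragile: you invoke ``the comparison principle of Hansevi~\cite[Theorem~6.2]{Hansevi2}'' to compare at all but a $\Cp$-null subset of $\bdystar(\Om\setm E)$, but Theorem~6.2 there (as used in the paper to prove $\lP f\le\uP f$ right after \eqref{eq-lP-uP}) requires the boundary inequalities everywhere; the q.e.\ comparison principle you actually need is a different tool and its hypotheses (e.g.\ $\Cp(X\setm(\Om\setm E))>0$) must be verified -- and they fail precisely in the case $\Cp(E)=0$, $\Cp(X\setm\Om)=0$, which the paper singles out and handles separately via the removability Theorem~6.3 in \cite{ABremove}. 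The paper avoids both gaps by reducing to the \emph{bounded} potentials $\pot{\Om_j}{E_j}=P_{\Om_j\setm E_j}\chione_{E_j}$, using Lemma~\ref{lem-reg-B=>Om} and Proposition~\ref{prop-reg5.1} to control the boundary limit at $E\setm I$, and Lemma~\ref{lem-simple} plus exhaustion for the reverse inequality; your proposal would need to be rerouted through those tools (or a finely-open pasting lemma) at the failing steps.
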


\begin{example} 
If $\bdystar \Om= \emptyset$ then $X$ must be bounded and $\Om=X$.
If in addition $E$ is closed and $\Cp(E)=0$,
 then $\pot{\Om}{E} \equiv 0$ but \eqref{eq-cp-E-unbdd} never holds: 
If $E=\{x\}$, then $I=\emptyset$ and
\[
P_{\Om \setm E} \chione_{E \setm I}  = P_{\Om \setm E} \chione_E\equiv 1,
\]
see Example~\ref{ex-bdystar-one-pt}.
If $E$  contains at least two points,
  then it follows from Proposition~\ref{prop-Cp=0-para} below that 
 $I=E$, 
\[
P_{\Om \setm E} \chione_{E \setm I}\equiv 0 \quad \text{and}
\quad  P_{\Om \setm E} \chione_E \equiv 1.
\]
Finally, if $E= \emptyset$, then the Perron solutions in \eqref{eq-cp-E-unbdd}
  are not even defined.
\end{example}

\begin{proof}[Proof of Proposition~\ref{prop-ex-pot-unbdd}]
  Let $E_j= E \cap \clB_j$, $j=1,2,\dots$\,.
By Lemma~\ref{lem-cp-inc-1} there is  $R_j > j$ such that
\[
\cp(E_j,\Om_j) <  \cp(E_j,\Om) + 1/j,
\quad \text{where } \Om_j = \Om \cap B_{R_j}.
\]
We may assume that $R_{j+1} \ge R_j$ for $j=1,2,\dots$\,.
By definition~\eqref{eq-cp-lim-deff} and monotonicity,
\begin{align*}
\cp(E,\Om)
& =\lim_{j \to \infty} \cp(E_j,\Om)
\le \liminf_{j \to \infty} \cp(E_j,\Om_j) \\
& \le \limsup_{j \to \infty} \cp(E_j,\Om_j)
\le \lim_{j \to \infty} (\cp(E_j,\Om) +1/j)
= \cp(E,\Om).
\end{align*}
Hence, by Proposition~\ref{prop-pot-Om-bdd} and~\eqref{eq-11.19},
\[
\cp(E,\Om)=\lim_{j \to \infty} \cp(E_j,\Om_j)
= \lim_{j \to \infty} \int_X g^p_{v_j}  
\, d\mu,
\quad \text{where } v_j:=\pot{\Om_j}{E_j}.
\]
Moreover, by Lemma~\ref{lem-pot-conv},
$v_j\nearrow \pot{\Om}{E}$.
Thus by Lemma~\ref{lem-Dp-6.3},
\[ 
\int_X g^p_{\pot{\Om}{E}} \, d\mu
\le \liminf_{j \to \infty} \int_{X} g^p_{v_j} \, d\mu
= \cp(E,\Om).
\] 

To prove the opposite inequality, Lemma~\ref{lem-reflex-conv} 
provides us with convex combinations
\[
u_j=\sum_{k=j}^{N_j}\la_{j,k}v_k,
\quad \text{where $0\le \la_{j,k}\le 1$ and
$\sum_{k=j}^{N_j}\la_{j,k}=1$,}
\]
such that $\|g_{u_j} - g_{\pot{\Om}{E}}\|_{L^p(X)} \to 0$,
as $j \to \infty$.
It follows from Proposition~\ref{prop-pot-Om-bdd} that
$u_j$ is admissible for $\cp(E_j,\Om_{N_j})$,
from which it follows that
\begin{align*} 
\int_{X} g_{\pot{\Om}{E}}^p \,d\mu
& =  \lim_{j \to \infty} \int_X g_{u_j}^p \,d\mu
\ge   \liminf_{j \to \infty} \cp(E_j,\Om_{N_j})  \\
& \ge   \lim_{j \to \infty} \cp(E_j,\Om)
=   \cp(E,\Om). 
\end{align*}

The \p-harmonicity of $\pot{\Om}{E}$
in $\Om\setm\clE$ follows  directly from Theorem~6.1 in 
Bj\"orn--Bj\"orn--M\"ak\"a\-l\"ainen--Par\-vi\-ai\-nen~\cite{BBMP}.

Next, assume that $E$ is relatively closed in $\Om$,
with $\bdystar \Om \ne \emptyset$ or $\Cp(E)>0$.
Consider first the case when $\Cp(E)=0$.
Then $\pot{\Om_j}{E_j} \equiv 0$
for all $j$ and hence also $\pot{\Om}{E} \equiv 0$.
Since $\Cp(E)=0$, every  $v \in \LL_{\chione_E}(\Om \setm E)$
has a subharmonic extension $\vt$ to~$\Om$, by
Theorem~6.3 in Bj\"orn~\cite{ABremove} (or \cite[Theorem~12.3]{BBbook}).
Using that $\bdystar \Om \ne \emptyset$ and
\[
\limsup_{\Om\setm E\ni y\to x} v(y)\le 0 \quad \text{for all } 
x\in\bdystar\Om \subset \bdystar (\Om\setm E),
\]
together with the usc-regularity
of $\vt$, the empty interior of $E$
and the maximum principle for subharmonic functions, 
we conclude that $\vt \le 0$, and so
$\lP_{\Om \setm E} \chione_E \equiv 0 $ in  $\Om \setm E$.
The last identity in \eqref{eq-cp-E-unbdd}
follows from Lemma~\ref{lem-Perron-chi}.

Finally, consider the case when $\Cp(E)>0$.  
Since $\pot{\Om_j}{E_j}\in \Np(X)$ is \p-harmonic in 
the bounded open set $\Om_j \setm E_j$,
it is the unique solution of the Dirichlet problem with itself as boundary data.
More precisely, 
Corollary~5.7 and Theorem~5.1 
in Bj\"orn--Bj\"orn--Shan\-mu\-ga\-lin\-gam~\cite{BBS2}
(or \cite[Corollary~10.16 and Theorem~10.15]{BBbook})
imply that 
\begin{equation}  \label{eq-1012}
\pot{\Om_j}{E_j}=P_{\Om_j \setm E_j} \pot{\Om_j}{E_j}=P_{\Om_j \setm E_j} \chione_{E_j}
\quad \text{in } \Om_j \setm E_j, \quad j=1,2,\dots,
\end{equation}
where in the second equality we have also used that $\pot{\Om_j}{E_j}=\chione_{E_j}$
q.e.\ on $\bdy (\Om_j \setm E_j)$.

Let $I$ be the set of irregular boundary points for $\Om \setm E$.
Since \[
\Cp(X\setm(\Om\setm E))\ge \Cp(E)>0, 
\]
Lemma~\ref{lem-reg-B=>Om} implies that
every $x \in \bdy(\Om \setm E) \cap(E\setm I)$ is also
regular for $\Om_k \setm E_k$ when $k>d(x,x_0)$.
For such $x$ and $k$, we have by
Lemma~\ref{lem-pot-conv}, \eqref{eq-1012},
Proposition~\ref{prop-reg5.1}\ref{reg-reg}$\imp$\ref{reg-cont-x0}
and the continuity of $\chione_{E_k}|_{\bdy(\Om_k\setm E_k)}$ at $x$
that
\[
\liminf_{\Om \setm E \ni y \to x} \pot{\Om}{E}(y)
   \ge \liminf_{\Om_k \setm E_k \ni y \to x} \pot{\Om_k}{E_k}(y) 
   =  \liminf_{\Om_k \setm E_k \ni y \to x} P_{\Om_k \setm E_k} \chione_{E_k}(y)
  = 1.
\]
Since also $\pot{\Om}{E} \ge 0$,
we get that $\pot{\Om}{E} \in \UU_{\chione_{E\setm I}}(\Om \setm E)$.
Noting also that $E\cap \bdy(\Om \setm E)$ is relatively open in $\bdy(\Om \setm E)$,
Lemma~\ref{lem-Perron-chi} implies that
\begin{equation}   \label{eq-pot-ge-lP}
\pot{\Om}{E} \ge P_{\Om \setm E} \chione_{E\setm I} 
= \lP_{\Om \setm E} \chione_{E}.
\end{equation}

Next, for $j=1,2,\dots$\,, Lemma~\ref{lem-simple} with $G_1=\Om_j\setm E_j$ 
and $G_2= \Om\setm E$ implies that
\[
\lP_{\Om \setm E} \chione_{E} \ge \lP_{\Om_j \setm E_j} \chione_{E_j}
\quad \text{in }\Om_j \setm E.
\]
Hence, using also \eqref{eq-1012} and \eqref{eq-pot-ge-lP},
\[
\pot{\Om}{E} \ge P_{\Om \setm E} \chione_{E\setm I} = \lP_{\Om \setm E} \chione_{E}
\ge 
\lP_{\Om_j \setm E_j} \chione_{E_j}
= \pot{\Om_j}{E_j}
\quad \text{in } \Om_j \setm E.
\]
Letting $j\to\infty$ and using Lemma~\ref{lem-pot-conv} shows
\eqref{eq-cp-E-unbdd}.
\end{proof}

When $E$ is bounded or $X$ is \p-parabolic,
the following equivalence holds.

\begin{prop} \label{prop-ex-pot-bdd} 
Let $E \subset \Om$ and assume that $E$ is bounded
or that $X$ is \p-parabolic. 
Then 
\begin{equation} \label{eq-cp-lim-Omj-bdd}
     \cp(E,\Om)=\int_{X} g_{\pot{\Om}{E}}^p \, d\mu.
\end{equation}
In particular, $\cp(E,\Om)< \infty$ if and only if $\pot{\Om}{E} \in \Dp(X)$.
\end{prop}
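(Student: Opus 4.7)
The plan is to note that Proposition~\ref{prop-ex-pot-unbdd} already supplies the implication $\cp(E,\Om)<\infty\Rightarrow \pot{\Om}{E}\in\Dp(X)$, together with the equality \eqref{eq-cp-lim-Omj-bdd}. Hence it suffices to establish the converse implication $\pot{\Om}{E}\in\Dp(X)\Rightarrow \cp(E,\Om)<\infty$ under either of the extra hypotheses. Once that is done, \eqref{eq-cp-lim-Omj-bdd} holds unconditionally: either both sides are finite (and equal by Proposition~\ref{prop-ex-pot-unbdd}) or both are infinite.

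Set $u=\pot{\Om}{E}$ and assume $M:=\int_X g_u^p\,d\mu<\infty$. Note that $0\le u\le 1$ (since $\min\{u,1\}\in \PsiQ{\Om}{E}$, and $u$ is the minimal element) and $u=0$ outside $\Om$. The common strategy is to multiply $u$ by a suitable cutoff $\eta$ so that $\eta u\in\Np(X)$ becomes admissible for $\cp(E,\Om)$, or for an exhaustion of it, with an energy controlled by $M$ via the Leibniz inequality $g_{\eta u}\le \eta g_u+u g_\eta$ and the elementary estimate $(a+b)^p\le 2^{p-1}(a^p+b^p)$.

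When $E$ is bounded, I would pick $R_0$ with $E\subset B_{R_0}$ and a Lipschitz cutoff $\eta$ with $\eta=1$ on $B_{R_0}$, $\supp\eta\subset B_{R_0+1}$, $0\le\eta\le 1$ and $g_\eta\le 1$. Then $\eta u$ has compact support and is bounded, hence lies in $\Np(X)$, satisfies $\chione_E\le \eta u\le \chione_\Om$, and
\[
\int_X g_{\eta u}^p\,d\mu\le 2^{p-1}\bigl(M+\mu(B_{R_0+1})\bigr)<\infty,
\]
so $\cp(E,\Om)<\infty$.

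When $X$ is \p-parabolic, the properness of $X$ makes each $\clB_R$ compact, so $\cp(\clB_R,X)=0$. For each $R$ this yields $\eta_R\in\Np(X)$ with $\chione_{\clB_R}\le\eta_R\le 1$ and $\int_X g_{\eta_R}^p\,d\mu<1/R$. Then $\eta_R u\in \Np(X)$ (since $\eta_R u\le\eta_R\in L^p(X)$) is admissible for $\cp(E\cap B_R,\Om)$, and
\[
\int_X g_{\eta_R u}^p\,d\mu\le 2^{p-1}(M+1/R).
\]
Passing to the limit $R\to\infty$ and invoking the definition \eqref{eq-cp-lim-deff} gives $\cp(E,\Om)\le 2^{p-1}M<\infty$. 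The main obstacle is precisely this \p-parabolic case, where $E$ can be unbounded and the constant function $1$ is not in $\Np(X)$: one must exploit the defining property of \p-parabolicity to manufacture the cutoffs $\eta_R$ whose gradient energies vanish, since without them there is no way to localize $u$ without paying a cost comparable to $\mu(X)$. Once $\cp(E,\Om)<\infty$ is established, Proposition~\ref{prop-ex-pot-unbdd} gives \eqref{eq-cp-lim-Omj-bdd}.
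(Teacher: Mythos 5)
Your proof is correct and follows essentially the same route as the paper: in both cases one multiplies $\pot{\Om}{E}$ by a suitable cutoff $\eta$ to produce an admissible function for $\cp(E,\Om)$ (or $\cp(E\cap B_k,\Om)$), using compact support when $E$ is bounded and, in the \p-parabolic case, cutoffs whose gradient energy is made arbitrarily small by $\cp(\clB_k,X)=0$. The only cosmetic difference is that you control the energy via $(a+b)^p\le 2^{p-1}(a^p+b^p)$, whereas the paper applies Minkowski's inequality on $L^p$ norms and lets $k\to\infty$, avoiding the extraneous $2^{p-1}$; both yield the finiteness needed to invoke Proposition~\ref{prop-ex-pot-unbdd}.
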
  

\begin{proof}
If $\cp(E,\Om)< \infty$, then the conclusion follows from
Proposition~\ref{prop-ex-pot-unbdd}.
Assume conversely that  $\pot{\Om}{E} \in \Dp(X)$.

If $E$ is bounded, let $k$ be so large that $E \subset B_{k}$ 
and
let $\eta(x)=(1-\dist(x,B_k))_\limplus$ and $v=\eta \pot{\Om}{E}$.
Then $g_v \le g_\eta + g_{\pot{\Om}{E}}$ (by \cite[Theorem~2.15]{BBbook})
and thus $v \in \Dp(X)$.
As $v$ has bounded support, it follows that $v \in\Np(X)$
and hence
\[
   \cp(E,\Om) \le \int_X 
g_v^p\, d\mu < \infty.
\]

If $X$ is \p-parabolic, then for each $k=1,2,\dots$\,, find
$\eta_k\in\Np(X)$ such that 
\[
\chione_{B_k}\le\eta_k\le1 
\quad \text{and} \quad
\|g_{\eta_k}\|_{L^p(X)} \le1/k.
\]
Then $v_k:=\eta_k \pot{\Om}{E}$ is admissible for $\cp(E\cap B_k,\Om)$
and hence
\begin{align*}
\cp(E\cap B_k,\Om)^{1/p} 
 &\le \|g_{v_k}\|_{L^p(X)} 
\le \|g_{\eta_k}\|_{L^p(X)} + \|g_{\pot{\Om}{E}} \|_{L^p(X)}  \\
 &\le \|g_{\pot{\Om}{E}} \|_{L^p(X)} + 1/k. 
\end{align*}
Letting $k\to\infty$ shows that $\cp(E,\Om) < \infty$ also in this case.
\end{proof}

\begin{example}  \label{ex-6.7}
When $E$ is unbounded, the converse in Proposition~\ref{prop-ex-pot-bdd}
need not hold.
To see this, let
$E=\Om=X$ in a \p-hyperbolic space $X$.
In this case, clearly $\pot{X}{X} \equiv 1 \in \Dp(X)$.
However, as $X$ is \p-hyperbolic,
$\cp(X,X) \ge \cp(\clB_1,X)>0$ and \eqref{eq-cp-lim-Omj-bdd} does not
hold.
By Proposition~\ref{prop-ex-pot-unbdd}
or~\ref{prop-hyp-char-1},
we must have $\cp(X,X)=\infty$.
\end{example}

\begin{example} \label{ex-R-half-weighted-cap}
For a perhaps more illuminating example let
$X=\R$ and 
$d\mu=w\,dx$, where
$w>0$ is a continuous weight such that
$w(x) \equiv 1$ if $x \le 0$ and 
\begin{equation*} 
    \al_r:=  \int_r^{+\infty} w^{1/(1-p)}(t) \,dt \le \al_0 < \infty
\quad \text{for all $r\ge 0$.}
\end{equation*}
Since the weight $w$ is locally bounded from above and below from $0$,
$\mu$ is locally doubling and supports a local $1$-Poincar\'e inequality.
Moreover, all points have positive capacity and thus every
superharmonic and Newtonian function is continuous.

Let $E \subset [0,+\infty)$ be unbounded and $\Om=(a,+\infty)$,
with $a<0$.
We shall show that 
\[
    \int_{\R} g_{\pot{\Om}{E}}^p \, d\mu < \infty = \cp(E,\Om).
\]
Let  $r \ge 0$.
Lemma~6.2 in Bj\"orn--Bj\"orn--Shan\-mu\-ga\-lin\-gam~\cite{BBSliouville}
(or a direct calculation)
implies that  the function
\begin{equation}    \label{eq-def-ur}
u_r(x) := \frac{1}{\al_r} \int_x^{+\infty} w^{1/(1-p)}(t) \,dt
\end{equation}
is \p-harmonic in $(r,+\infty)$ with 
\begin{equation*}  
\lim_{x \to r\limplus} u_r(x)=1
\quad \text{and}  \quad \lim_{x \to +\infty} u_r(x)=0.
\end{equation*}
Clearly, $\cp(\{r\},\Om)< \infty$ and thus it follows from 
\eqref{eq-cp-E-unbdd} in Proposition~\ref{prop-ex-pot-unbdd}
that $\pot{\Om}{\{r\}}=u_r$ in $(r,+\infty)$ and 
\begin{equation} \label{eq-ur-alp_r}
     \cp(\{r\},\Om) 
= \int_{\R} g_{\pot{\Om}{\{r\}}}^p \, d\mu
\ge  \int_r^{+\infty} g_{u_r}^p \, d\mu
= \alp_r^{1-p}.
\end{equation}
Hence, as $E$ is unbounded,
\[
   \cp(E,\Om) 
\ge \sup_{r \in E}     \cp(\{r\},\Om) 
\ge \sup_{r \in E} \alp_r^{1-p} 
= \infty.
\]

On the other hand, 
it follows from the strong minimum principle 
(\cite[Theorem~9.13]{BBbook}) 
and another use of \eqref{eq-cp-E-unbdd} that
\[
    \pot{\Om}{E}(x)=\begin{cases}
      1, & \text{if } x \ge m, \\
      0, & \text{if } x \le a, \\
      P_{(a,m)} \chione_{\{m\}}(x), & \text{if }  a < x < m,
      \end{cases}
\qquad  
\text{where } m=\inf E.
\] 
Thus,
\[
    \int_{\R} g_{\pot{\Om}{E}}^p \, d\mu 
=\int_a^m g_{\pot{\Om}{E}}^p \, d\mu
\le \cp(\{m\},\Om) < \infty.
\]
Moreover, by Lemma~\ref{lem-cp-inc-1} and \eqref{eq-ur-alp_r},
\[
    \cp(\{0\},\R) 
    = \lim_{k \to \infty} \cp(\{0\},(-k,k)) 
    \ge  \lim_{k \to \infty} \cp(\{0\},(-k,+\infty)) 
    \ge \alp_0^{1-p}>0,
\]
and thus $X$ is \p-hyperbolic, by definition. 
\end{example}

\begin{lem} \label{lem-cp=0}
Let $E \subset \Om$.
If $\Cp(E)=0$, then $ \cp(E,\Om)=0$ and $\pot{\Om}{E} \equiv 0$.

If $\cp(E,\Om)=0 <\Cp(E)$, then $\pot{\Om}{E} \equiv \chione_\Om$
and $\Cp(X \setm \Om)=0$.
\end{lem}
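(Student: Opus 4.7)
The plan is to handle the two cases separately, using the structural results about $\pot{\Om}{E}$ and $\cp$ already established in this section.

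For the first case, $\Cp(E)=0$ immediately gives $\cp(E,\Om)=0$ by Proposition~\ref{prop-cp}\ref{cp-Cp}. To show $\pot{\Om}{E} \equiv 0$, I would observe that $\chione_E=0$ q.e., so the constant function $0$ is superharmonic in $\Om$ and satisfies $0 \ge \chione_E$ q.e.; hence $0 \in \PsiQ{\Om}{E}$, giving $\pot{\Om}{E} \le 0$ on $\Om$. The reverse inequality comes from the fact that $\pot{\Om}{E} \in \PsiQ{\Om}{E}$ (Theorem~4.4 in \cite{BBMP} as cited), so $\pot{\Om}{E} \ge \chione_E = 0$ q.e.\ on $\Om$. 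Combining the two yields $\pot{\Om}{E}=0$ q.e., and the lsc-regularization property of superharmonic functions upgrades this to equality everywhere in $\Om$; outside $\Om$ the potential is $0$ by definition.

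For the second case, I would apply Proposition~\ref{prop-ex-pot-unbdd} to the hypothesis $\cp(E,\Om)=0<\infty$ to conclude that $\pot{\Om}{E} \in \Dp(X)$ and
\[
\int_X g_{\pot{\Om}{E}}^p\,d\mu = \cp(E,\Om) = 0.
\]
Thus $g_{\pot{\Om}{E}}=0$ a.e.\ in $X$. Since $X$ is connected and supports a local \p-Poincar\'e inequality, this forces $\pot{\Om}{E}$ to be constant a.e.\ on $X$, say equal to $c$; because $\pot{\Om}{E} \in \Nploc(X)$, this also holds q.e.

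Now the constant function $1$ lies in $\PsiQ{\Om}{E}$, so $\pot{\Om}{E} \le 1$ on $\Om$, forcing $c \le 1$. On the other hand, $\pot{\Om}{E} \ge \chione_E$ q.e.\ and $\Cp(E)>0$, so there must exist points where $\pot{\Om}{E} \ge 1$ and $\pot{\Om}{E}=c$ hold simultaneously; hence $c \ge 1$. Therefore $c=1$. Since $\pot{\Om}{E}=0$ pointwise on $X\setm\Om$ but $\pot{\Om}{E}=1$ q.e.\ in $X$, the set $X\setm\Om$ is contained in the $\Cp$-null exceptional set, giving $\Cp(X\setm\Om)=0$. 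Finally, for each $x\in\Om$ a small ball around $x$ lies in $\Om$ and satisfies $\pot{\Om}{E}=1$ a.e.\ there; the lsc-regularization identity $\pot{\Om}{E}(x)=\essliminf_{y\to x}\pot{\Om}{E}(y)=1$ then promotes this to $\pot{\Om}{E}\equiv\chione_\Om$ everywhere.

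The only delicate step is the passage from q.e.\ or a.e.\ equality to pointwise equality on $\Om$; this is handled uniformly by invoking the fact that superharmonic functions coincide with their lsc-regularizations \eqref{eq-def-lsc-reg}, so that essential lower limits at points of $\Om$ determine the pointwise values.
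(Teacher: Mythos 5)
Your proof is correct and follows essentially the same route as the paper: Proposition~\ref{prop-cp}\ref{cp-Cp} and membership of $0$ in $\PsiQ{\Om}{E}$ for the first case, and Proposition~\ref{prop-ex-pot-unbdd} plus the Poincar\'e inequality and lsc-regularity of superharmonic functions for the second. You merely spell out in more detail a few steps the paper compresses (the q.e.\ sandwich forcing $c=1$ and the a.e.-to-everywhere upgrade via $\essliminf$), but there is no genuine difference in method.
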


\begin{proof}
If $\Cp(E)=0$, then $0 \in \Psi^\Om_E$, and so $\pot{\Om}{E} \equiv 0$.
Moreover, $\cp(E,\Om)=0$ by Proposition~\ref{prop-cp}.
  
If $\cp(E,\Om)=0 <\Cp(E)$, then 
by  Proposition~\ref{prop-ex-pot-unbdd},
\begin{equation*} 
     \int_{X} g_{\pot{\Om}{E}}^p \, d\mu =0.
\end{equation*}
Thus the \p-Poincar\'e inequality on larger and larger balls shows
that there is $c$ such that $\pot{\Om}{E} = c$ q.e.\ in $X$.
Since $\pot{\Om}{E}=1$ q.e.\ in $E$ and $\Cp(E)>0$, we see that $c=1$.
As $\pot{\Om}{E}$ is superharmonic and thus lsc-regularized  
in $\Om$ we get that $\pot{\Om}{E}\equiv 1$ in $\Om$.
On the other hand, $\pot{\Om}{E}\equiv 0$ in $X \setm \Om$, by definition,
which in particular implies that 
$\Cp(X \setm \Om)=0$.
\end{proof}

We can now characterize \p-hyperbolicity in the following ways.

\begin{prop} \label{prop-hyp-char-1}
  The following are equivalent\/\textup{:}
\begin{enumerate}
\renewcommand{\theenumi}{\textup{(\roman{enumi})}}%
\item \label{b-hyp}
$X$ is \p-hyperbolic, i.e.\ there is a compact set $K$ such
that $\cp(K,X)>0$,
\item \label{b-cp-2}
$\cp(E,X)>0$ for some  set $E$ with $\Cp(E)>0$,
\item \label{b-cp}
$\cp(E,X)>0$ for every set $E$ with $\Cp(E)>0$,
\item \label{b-cp-X}
$\cp(X,X)>0$,
\item \label{b-X}
$\cp(X,X)=\infty$.
\end{enumerate}
\end{prop}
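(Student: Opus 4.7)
The plan is to close a circle of implications around one key dichotomy. First I would show that $\pot{X}{X} \equiv 1$ unconditionally: the constant function $1$ is superharmonic and lies in $\PsiQ{X}{X}$, so $\pot{X}{X} \le 1$, while by definition $\pot{X}{X} \ge \chione_X = 1$ q.e., and since sets of zero Sobolev capacity have zero measure and $\pot{X}{X}$ is itself lsc-regularized (being superharmonic), the essential infimum in~\eqref{eq-def-lsc-reg} forces $\pot{X}{X} \ge 1$ everywhere. Combining this with Proposition~\ref{prop-ex-pot-unbdd}, any finite value of $\cp(X,X)$ must equal $\int_X g^p_{\pot{X}{X}}\,d\mu = 0$, so $\cp(X,X) \in \{0,\infty\}$. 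This immediately yields \ref{b-cp-X}$\eqv$\ref{b-X}.

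The routine implications come next. The monotonicity in Proposition~\ref{prop-cp}\ref{cp-subset} applied to $K \subset X$ gives \ref{b-hyp}$\imp$\ref{b-cp-X}, and \ref{b-cp-X}$\imp$\ref{b-hyp} follows from the defining limit $\cp(X,X) = \lim_{j\to\infty} \cp(B_j,X)$ in~\eqref{eq-cp-lim-deff}, which produces a $j$ with $\cp(B_j,X) > 0$; monotonicity gives $\cp(\clB_j,X) > 0$, and $\clB_j$ is compact since $X$ is proper. The implication \ref{b-cp}$\imp$\ref{b-cp-2} is trivial (any ball $\clB_1$ has $\Cp(\clB_1) \ge \mu(B_1) > 0$), and \ref{b-cp-2}$\imp$\ref{b-hyp} is the same compact-ball argument applied to $E \cap B_j$ via $\cp(E,X) = \lim_{j\to\infty} \cp(E \cap B_j, X)$.

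The main obstacle is \ref{b-hyp}$\imp$\ref{b-cp}, which I would prove by contradiction. Suppose $\Cp(E) > 0$ yet $\cp(E,X) = 0$. Countable subadditivity of $\Cp$ yields some $j$ with $\Cp(E \cap B_j) > 0$, and monotonicity of $\cp$ gives $\cp(E \cap B_j, X) = 0$, so I may assume $E$ is bounded. Then the second part of Lemma~\ref{lem-cp=0} forces $\pot{X}{E} \equiv 1$. For an arbitrary compact $K$, the set $E \cup K$ is bounded, and by Lemma~\ref{lem-pot-conv} together with the superharmonicity of the constant function $1$,
\begin{equation*}
1 = \pot{X}{E} \le \pot{X}{E \cup K} \le 1,
\end{equation*}
so $\pot{X}{E \cup K} \equiv 1$ and $g_{\pot{X}{E \cup K}} \equiv 0$. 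Proposition~\ref{prop-ex-pot-bdd}, applicable since $E \cup K$ is bounded, then yields $\cp(E \cup K, X) = 0$, and monotonicity gives $\cp(K, X) = 0$. Since $K$ was an arbitrary compact set, $X$ would be \p-parabolic, contradicting \ref{b-hyp} and closing the circle.
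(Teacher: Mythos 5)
Your proof is correct and rests on the same core ingredients as the paper's: the observation that $\pot{X}{X}\equiv 1$ combined with Propositions~\ref{prop-ex-pot-unbdd} and~\ref{prop-ex-pot-bdd}, the monotonicity of potentials from Lemma~\ref{lem-pot-conv}, and the dichotomy in Lemma~\ref{lem-cp=0}. You arrange the implications slightly differently (two short loops instead of the paper's single cycle \ref{b-hyp}$\Rightarrow$\ref{b-cp-2}, $\neg$\ref{b-cp}$\Rightarrow\neg$\ref{b-cp-2}, \ref{b-cp}$\Rightarrow$\ref{b-cp-X}, $\neg$\ref{b-X}$\Rightarrow\neg$\ref{b-cp-X}, \ref{b-X}$\Rightarrow$\ref{b-hyp}), and in the hardest step you deduce $\cp(K,X)=0$ for all compact $K$ where the paper deduces $\cp(E,X)=0$ for all $E$ with $\Cp(E)>0$, but these are cosmetic reorganizations of the same argument.
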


\begin{proof}
Observe first that none of the conditions is true when $X$ is bounded.

  \ref{b-hyp}\imp\ref{b-cp-2}
This is trivial.

$\neg$\ref{b-cp}\imp$\neg$\ref{b-cp-2}
By assumption there is $E'$ with $\cp(E',X)=0<\Cp(E')$.
We need to show that  $\cp(E,X)=0$ for every set $E$  with $\Cp(E)>0$.
By monotonicity, we can assume that $E\supset E'$.
Let $k$ be so large that $\Cp(E' \cap B_k)>0$.
As $\cp(E' \cap B_k,X)=0$, Lemmas~\ref{lem-pot-conv} and~\ref{lem-cp=0} show that
\[
    1 \ge \pot{X}{E \cap B_k} \ge\pot{X}{E' \cap B_k} \equiv 1.
\]
Thus by Proposition~\ref{prop-ex-pot-bdd},
$\cp(E \cap B_k,X)=0$.
As  this holds for all large $k$, we see that $\cp(E,X)=0$, by~\eqref{eq-cp-lim-deff}.

\ref{b-cp}\imp\ref{b-cp-X}
This is trivial.

$\neg$\ref{b-X}\imp$\neg$\ref{b-cp-X}
By definition and lsc-regularity, $\pot{X}{X} \equiv 1$.
Since $\cp(X,X)<\infty$, Proposition~\ref{prop-ex-pot-unbdd} 
therefore implies that $\cp(X,X)=0$.

\ref{b-X}\imp\ref{b-hyp}
By definition, there is $k$ such that $\cp(\clB_k,X)\ge\cp(B_k,X)>0$.
\end{proof}

\begin{cor}  \label{cor-cpE-Om>0}
Let $E \subset \Om$ be  such that $\Cp(E)>0$.
Then $\cp(E,\Om)>0$
if and only if $X$ is \p-hyperbolic or 
$\Cp(X \setm \Om)>0$.
\end{cor}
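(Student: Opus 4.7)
The plan is to split the biconditional and reduce each direction to results already in the excerpt, namely the characterization of \p-hyperbolicity in Proposition~\ref{prop-hyp-char-1}, the zero-capacity information in Lemma~\ref{lem-cp=0}, and monotonicity from Proposition~\ref{prop-cp}\ref{cp-subset}.

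For the easy direction $(\Leftarrow)$, I would consider the two alternatives separately. If $X$ is \p-hyperbolic, then Proposition~\ref{prop-hyp-char-1}\ref{b-hyp}$\Leftrightarrow$\ref{b-cp}, applied to $E$ (which has $\Cp(E)>0$ by assumption), gives $\cp(E,X)>0$, and monotonicity in the second argument $(\Om\subset X)$ then yields $\cp(E,\Om)\ge\cp(E,X)>0$. If instead $\Cp(X\setm\Om)>0$, I would argue by contradiction: should $\cp(E,\Om)=0$, the hypothesis $\Cp(E)>0$ would let Lemma~\ref{lem-cp=0} force $\Cp(X\setm\Om)=0$, a contradiction.

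For the forward direction $(\Rightarrow)$, I would argue by contrapositive, assuming both that $X$ is \p-parabolic and that $\Cp(X\setm\Om)=0$, and aiming to conclude $\cp(E,\Om)=0$. The central intermediate claim is the identity $\cp(E,\Om)=\cp(E,X)$ under the hypothesis $\Cp(X\setm\Om)=0$; monotonicity gives $\cp(E,X)\le\cp(E,\Om)$ for free, so the content is the reverse inequality. For bounded $E$, I would take any $u\in\Np(X)$ admissible for $\cp(E,X)$ and set $\tilde u:=u\chione_\Om$. Because $\Cp(X\setm\Om)=0$, the function $\tilde u$ agrees with $u$ quasieverywhere, hence lies in the same equivalence class in $\Np(X)$, so $g_{\tilde u}=g_u$ a.e.; since $E\subset\Om$, we also have $\chione_E\le\tilde u\le\chione_\Om$, making $\tilde u$ admissible for $\cp(E,\Om)$ with the same \p-energy. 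Taking the infimum over all such $u$ yields $\cp(E,\Om)\le\cp(E,X)$. For unbounded $E$, I would apply this to each $E\cap B_j$ and pass to the limit via the definition \eqref{eq-cp-lim-deff}. Finally, \p-parabolicity of $X$ combined with Proposition~\ref{prop-hyp-char-1}\ref{b-hyp}$\Leftrightarrow$\ref{b-cp} forces $\cp(E,X)=0$, and the established identity then gives $\cp(E,\Om)=0$.

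No single step is hard; the only point that requires any care is the truncation $\tilde u = u\chione_\Om$, which needs the fact that Newtonian functions are determined only up to $\Cp$-null sets, so modifying a pointwise representative on $X\setm\Om$ does not affect membership in $\Np(X)$ or the minimal \p-weak upper gradient. Everything else is an immediate quotation of results already in the excerpt.
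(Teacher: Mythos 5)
Your proof is correct and follows essentially the same route as the paper's: Proposition~\ref{prop-hyp-char-1} plus monotonicity for one alternative of the easy direction, Lemma~\ref{lem-cp=0} for the other, and for the converse the identity $\cp(E,\Om)=\cp(E,X)$ obtained via the truncation $u\chione_\Om$ (which is admissible precisely because $\Cp(X\setm\Om)=0$ implies $u\chione_\Om=u$ q.e., as you argue). The one small slip is that the contrapositive of the forward direction should assume $X$ is \p-parabolic \emph{or bounded} together with $\Cp(X\setm\Om)=0$, since \p-hyperbolicity is only defined for unbounded $X$; your argument in fact covers the bounded case as well, because Proposition~\ref{prop-hyp-char-1} observes at the outset of its proof that none of its conditions hold when $X$ is bounded, so $\cp(E,X)=0$ there too, and the rest goes through unchanged.
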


\begin{proof}
If $X$ is \p-hyperbolic, then $\cp(E,\Om) \ge \cp(E,X)>0$,
by Proposition~\ref{prop-hyp-char-1}.
On the other hand, if $\cp(E,\Om)=0$, 
then $\Cp(X \setm \Om)=0$, by Lemma~\ref{lem-cp=0}.

Conversely, assume that $\Cp(X \setm \Om)=0$
and in addition that $X$ is \p-parabolic or bounded.
Let $j>0$.
If $u$ is admissible for $\cp(E \cap B_j,X)$, then $u\chione_\Om$
is admissible for $\cp(E \cap B_j,\Om)$.
Thus $\cp(E \cap B_j,\Om)=\cp(E \cap B_j,X)$.
Letting $j \to \infty$ shows that $\cp(E,\Om)=\cp(E,X)=0$,
by Proposition~\ref{prop-hyp-char-1}.
\end{proof}

\section{Further properties of the condenser capacity}
\label{sect-prop-cap}

In addition to the properties of $\cp$ deduced in 
Section~\ref{sect-cap},
we can now derive 
Theorem~\ref{thm-cp}.
It would be interesting to know if 
it holds in more general situations,
such as under the assumptions from the beginning
of Section~\ref{sect-ug}.
Note that \ref{cp-Choq-E} in Theorem~\ref{thm-cp} does not always
hold when $p=1$ even when $\Om$ is bounded, see
Example~6.18 in~\cite{BBbook}.
For bounded $\Om$,  
Theorem~\ref{thm-cp}
was
obtained in Theorem~6.17 in~\cite{BBbook}.

\begin{proof}[Proof of Theorem~\ref{thm-cp}]
\ref{cp-Choq-E}
The $\ge$-inequality is trivial, by monotonicity, and we may therefore
assume that all the capacities on the right-hand side are finite.
Let $j \ge 1$.
By Lemma~\ref{lem-pot-conv},
$\pot{\Om}{E_k \cap B_j} \nearrow \pot{\Om}{E \cap B_j}$, as $k \to \infty$.
As $B_j$ is bounded, it thus follows from
Lemma~\ref{lem-Dp-6.3} and 
Proposition~\ref{prop-ex-pot-bdd} that
\begin{align*}
\cp(E \cap B_j, \Om) & = \int_X g^p_{\pot{\Om}{E \cap B_j}} \, d\mu
\le \liminf_{k \to \infty} \int_{X} g^p_{\pot{\Om}{E_k \cap B_j}} \, d\mu \\
& = \lim_{k \to \infty} \cp(E_k \cap B_j,\Om)
\le \lim_{k \to \infty} \cp(E_k,\Om).
\end{align*}
Letting $j \to \infty$ shows the $\le$-inequality, and thus concludes
the proof of this part.

\ref{cp-subadd}
By Proposition~\ref{prop-cp}\ref{cp-strong-subadd} and induction,
for every $k=1,2,\dots$\,,
\[
        \cp\biggl(\bigcup_{i=1}^k E_i,\Om\biggr) 
          \le \sum_{i=1}^k \cp(E_i,\Om)
          \le \sum_{i=1}^\infty \cp(E_i,\Om).
\]
The conclusion thus follows from \ref{cp-Choq-E}.
\end{proof}  

As $\cp$ is monotone in the first argument and
satisfies the properties in
Theorems~\ref{thm-cp-loccomp}\ref{cp-Choq-K} and~\ref{thm-cp}\ref{cp-Choq-E},
it is a  \emph{Choquet capacity}.
An important consequence is the following result.
See e.g.\
Aikawa--Ess\'en~\cite[Part~2, Section~10]{AE} for a proof.
Note that every Borel set is a Suslin set (also called analytic set).

\begin{thm} \label{thm-Choq-cap}
\textup{(Choquet's capacitability theorem)}
If $E \subset \Om$ is a Borel or Suslin set, then 
$E$ is \emph{capacitable}, i.e. 
\[
     \cp(E,\Om)=\sup_{\substack{ K \text{ compact} \\K \subset E  }}
\cp(K,\Om)
         =\inf_{\substack{G\text{ open} \\  E \subset G \subset \Om  }}
\cp(G,\Om).
\]
\end{thm}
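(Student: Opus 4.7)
The plan is to invoke the abstract Choquet capacitability theorem (cited in the statement as Aikawa--Ess\'en~\cite{AE}). For this one has to verify that $\cp(\cdot,\Om)$ is a Choquet capacity on the class of subsets of $\Om$, meaning that it is monotone in its first argument, continuous along arbitrary increasing sequences of sets, and continuous along decreasing sequences of compact sets. All three of these have already been established in the paper: monotonicity is Proposition~\ref{prop-cp}\ref{cp-subset}, the increasing continuity is Theorem~\ref{thm-cp}\ref{cp-Choq-E}, and the decreasing continuity on compacta is Theorem~\ref{thm-cp-loccomp}\ref{cp-Choq-K}. Once these are in hand, the abstract theorem applies verbatim.

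The inner regularity identity $\cp(E,\Om)=\sup \cp(K,\Om)$ over compact $K\subset E$ is then precisely the content of the abstract capacitability theorem applied to a Suslin set $E$, and since every Borel subset of a metric space is a Suslin set, the Borel case follows at once. The outer regularity identity $\cp(E,\Om)=\inf \cp(G,\Om)$ over open $G$ with $E\subset G\subset\Om$ is not really part of the abstract theorem; it has already been proved in full generality (for arbitrary $E\subset\Om$, with no Borel or Suslin hypothesis) in Theorem~\ref{thm-cp-loccomp}\ref{cp-outercap}, and so may simply be quoted.

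Since the substantive work lies entirely in having verified the three axioms of a Choquet capacity, the present proof is almost purely bookkeeping, and there is no real obstacle to surmount. If one wished to avoid appealing to the external reference, the Suslin-case inner regularity can be derived directly from a Suslin scheme representation $E=\bigcup_{\sigma\in\mathbf{N}^{\mathbf{N}}}\bigcap_n F_{\sigma|_n}$ with closed $F_{\sigma|_n}$, by using the two monotone-limit properties to extract, for each $\eps>0$, a compact subset $K\subset E$ with $\cp(K,\Om)>\cp(E,\Om)-\eps$ (when $\cp(E,\Om)<\infty$; the infinite case is handled by approximation from below via compact truncations), along the standard lines of Choquet's original argument. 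In the present context, however, quoting the general theorem is clearly the cleanest route.
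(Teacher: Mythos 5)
Your proposal matches the paper's argument exactly: the paper also first observes that $\cp$ is a Choquet capacity by virtue of monotonicity (Proposition~\ref{prop-cp}\ref{cp-subset}), continuity along increasing sequences (Theorem~\ref{thm-cp}\ref{cp-Choq-E}), and continuity along decreasing sequences of compacta (Theorem~\ref{thm-cp-loccomp}\ref{cp-Choq-K}), and then cites the abstract Choquet capacitability theorem from Aikawa--Ess\'en for the conclusion. Your additional observations --- that outer regularity for arbitrary $E$ is already supplied by Theorem~\ref{thm-cp-loccomp}\ref{cp-outercap} and that Borel sets are Suslin --- are likewise the remarks the paper makes before stating the theorem.
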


The following result extends Theorem~5.1 
from Bj\"orn--Bj\"orn~\cite{BBvarcap} to unbounded~$\Om$.
See Hei\-no\-nen--Kil\-pe\-l\"ai\-nen--Martio~\cite[Section~1.1]{HeKiMa} 
and Section~\ref{sect-ug} for the definition
of \p-admissible weights.

\begin{prop} \label{prop-cp=HeKiMa}
Assume that $X=\R^n$ equipped with a  measure
$d\mu = w\,dx$, where $w$ is a \p-admissible weight.
Let $\cpmu$ be the condenser capacity defined  in 
Hei\-no\-nen--Kil\-pe\-l\"ai\-nen--Martio\/~\textup{\cite[p.~27]{HeKiMa}}.
If  $E \subset \Om$, then
\begin{equation} \label{eq-cp=cpt}
  \cp(E,\Om)=\cpmu(E,\Om).
\end{equation}
\end{prop}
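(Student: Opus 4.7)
The plan is to reduce Proposition~\ref{prop-cp=HeKiMa} to the bounded-$\Om$ case already handled in Theorem~5.1 of~\cite{BBvarcap} and then bootstrap to unbounded $\Om$ and unbounded $E$ using the monotone-convergence properties shared by both capacities. A useful preliminary observation is that on weighted $\R^n$ with a \p-admissible weight the Newtonian space $\Np(\R^n,\mu)$ coincides with the weighted Sobolev space used in~\cite{HeKiMa}, and $g_u=|\grad u|$ $\mu$-a.e.\ (see Section~\ref{sect-ug}), so both definitions of capacity operate on the same class of functions.

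First I would invoke Theorem~5.1 of~\cite{BBvarcap} to obtain $\cp(E,\Om)=\cpmu(E,\Om)$ for all $E\subset\Om$ whenever $\Om$ is bounded. This provides the base case, and no further work is needed for bounded $\Om$.

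Next, for bounded $E$ in unbounded $\Om$, Lemma~\ref{lem-cp-inc-1} gives
\[
\cp(E,\Om)=\lim_{j\to\infty}\cp(E,\Om\cap B_j)=\lim_{j\to\infty}\cpmu(E,\Om\cap B_j),
\]
where the second equality is the base case applied in the bounded set $\Om\cap B_j$. The matching identity
\[
\cpmu(E,\Om)=\lim_{j\to\infty}\cpmu(E,\Om\cap B_j)
\]
must then be proved on the HKM side: for compact $E$ this is immediate from HKM's definition via test functions in $C_0^\infty(\Om)$ (every admissible function is eventually supported inside $\Om\cap B_j$), and for arbitrary bounded $E$ it follows by combining this with the outer regularity of $\cpmu$. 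Together these give $\cp(E,\Om)=\cpmu(E,\Om)$ for bounded $E$.

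Finally, for arbitrary (possibly unbounded) $E\subset\Om$, definition~\eqref{eq-cp-lim-deff} gives
\[
\cp(E,\Om)=\lim_{j\to\infty}\cp(E\cap B_j,\Om),
\]
and the previous step identifies each term with $\cpmu(E\cap B_j,\Om)$. Since $\cpmu$ is a Choquet capacity, HKM Theorem~2.2(v) yields $\cpmu(E,\Om)=\lim_j\cpmu(E\cap B_j,\Om)$, which closes the argument. The main obstacle is the second step: verifying continuity from below of $\cpmu$ in the open-set argument, namely $\cpmu(E,\Om)=\lim_j\cpmu(E,\Om\cap B_j)$ for bounded $E$. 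This is intuitive but not stated as such in~\cite{HeKiMa}; a careful justification will likely go via HKM's characterization of $\cpmu(K,\Om)$ as the energy of a \p-harmonic capacitary potential with zero boundary data on $\bdy\Om$, combined with a monotone convergence argument for those potentials that mirrors Lemma~\ref{lem-pot-conv} together with Lemma~\ref{lem-Dp-6.3} in our setting.
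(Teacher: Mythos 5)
Your proposal takes a genuinely different route from the paper, and it has a gap in the middle step. The paper's proof follows the three-layer structure of HKM's own definition on page~27: first for compact $K$ it combines Lemma~\ref{lem-cp-inc-1} with~\cite[Theorem~5.1]{BBvarcap} and HKM's exhaustion property to get $\cp(K,\Om)=\cpmu(K,\Om)$ (as you also do); then for \emph{open} $E$ it invokes Choquet's capacitability theorem (Theorem~\ref{thm-Choq-cap}) to write $\cp(E,\Om)=\sup_K\cp(K,\Om)$ and matches this against HKM's definition of $\cpmu$ on open sets; and finally for arbitrary $E$ it uses the outer-capacity property (Theorem~\ref{thm-cp-loccomp}\ref{cp-outercap}) together with HKM's definition for arbitrary sets. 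In this scheme all of the regularity work happens on the $\cp$ side, where Theorems~\ref{thm-cp-loccomp} and~\ref{thm-Choq-cap} are already available, and the $\cpmu$ side is consulted only for its definitions. A side benefit, which the paper points out explicitly, is that this gives an alternative proof of HKM's Theorem~2.2(v); your argument, by contrast, \emph{uses} HKM Theorem~2.2(v), which is logically fine but loses that byproduct.

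The gap in your plan is the claim that $\cpmu(E,\Om)=\lim_j\cpmu(E,\Om\cap B_j)$ for merely bounded, not necessarily compact, $E$. Outer regularity of $\cpmu$ alone does not deliver this. After reducing to a bounded open $G\supset E$, what remains is to show $\cpmu(G,\Om)=\lim_j\cpmu(G,\Om\cap B_j)$; but HKM's inner regularity gives $\cpmu(G,\Om\cap B_j)=\sup_{K\subset G}\cpmu(K,\Om\cap B_j)$, and exchanging the supremum over $K$ with the decreasing limit in $j$ is exactly the kind of commutation that does not hold automatically. You acknowledge this and sketch a possible repair through HKM's potential-theoretic characterization of $\cpmu(K,\Om)$ together with monotone-convergence arguments for the potentials, but this essentially re-develops on the HKM side the machinery that is already packaged for $\cp$ in Proposition~\ref{prop-cp-inc-gen} and Lemma~\ref{lem-pot-conv}. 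You would do better to proceed compact $\to$ open $\to$ arbitrary as the paper does, since that way the ``bounded non-compact'' case is subsumed in the final step with no extra HKM-side lemma needed.
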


\begin{proof}
Consider first $K \subset \Om$ compact.
It follows directly from the definition in~\cite[p.~27]{HeKiMa}
that
\[
  \cpmu(K,\Om)=\lim_{j \to \infty} \cpmu(K,\Om \cap B_j).
\]
Hence, by \cite[Theorem~5.1]{BBvarcap} and Lemma~\ref{lem-cp-inc-1},
\[
\cpmu(K,\Om)
=\lim_{j \to \infty} \cpmu(K,\Om \cap B_j)
=\lim_{j \to \infty} \cp(K,\Om \cap B_j)
= \cp(K,\Om).
\]
Theorem~\ref{thm-Choq-cap} and the definition
in~\cite[p.~27]{HeKiMa} then imply that \eqref{eq-cp=cpt} holds for open $E$.
Finally, by Theorem~\ref{thm-cp-loccomp}\ref{cp-outercap}
and  the definition in~\cite[p.~27]{HeKiMa},
we conclude
that  \eqref{eq-cp=cpt}  holds for arbitrary
$E \subset \Om$.
\end{proof}

The following continuity result generalizes Lemma~\ref{lem-cp-inc-1}
and will be needed later on.

\begin{prop} \label{prop-cp-inc-gen}
Let $\Om_1 \subset \Om_2 \subset \dots \subset \Om=\bigcup_{j=1}^\infty \Om_j$
be an increasing sequence of open sets, and  $E \subset \Om_1$ be 
such that $\cp(E,\Om_1)< \infty$.
Then 
\begin{equation} \label{eq-cp-lim-Omj}
\cp(E,\Om) = \lim_{j \to \infty} \cp(E,\Om_j).
\end{equation}
\end{prop}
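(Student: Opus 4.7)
The inequality $\cp(E,\Om)\le\lim_{j\to\infty}\cp(E,\Om_j)$ is immediate from the monotonicity in Proposition \ref{prop-cp}\ref{cp-subset}: since $\Om_j\subset\Om$, one has $\cp(E,\Om_j)\ge\cp(E,\Om)$, and because the sequence is decreasing in $j$, the limit $L:=\lim_j\cp(E,\Om_j)$ exists with $L\ge\cp(E,\Om)$. The content of the proposition is therefore the reverse inequality $L\le\cp(E,\Om)$.

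For this I would pass to the capacitary potentials $v_j:=\pot{\Om_j}{E}$ and $u:=\pot{\Om}{E}$. By Lemma \ref{lem-pot-conv}, $v_j\nearrow u$ pointwise everywhere in $X$, and since $\cp(E,\Om_j)\le\cp(E,\Om_1)<\infty$, Proposition \ref{prop-ex-pot-unbdd} yields the energy identities $\int_X g_{v_j}^p\,d\mu=\cp(E,\Om_j)$ and $\int_X g_u^p\,d\mu=\cp(E,\Om)$. A direct application of Lemma \ref{lem-Dp-6.3} recovers only $\cp(E,\Om)\le\liminf_j\cp(E,\Om_j)$, which is the already-known direction; the real work is to produce, for any $\eps>0$, an index $j$ and an admissible competitor for $\cp(E,\Om_j)$ whose energy does not exceed $\cp(E,\Om)+\eps$.

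The key construction mimics the proof of Proposition \ref{prop-ex-pot-unbdd}. I would apply Lemma \ref{lem-reflex-conv} to the bounded sequence $\{v_j\}\subset\Dp(X)$ (uniformly bounded by $1$ with energies bounded by $\cp(E,\Om_1)$), obtaining convex combinations $\hat u_j=\sum_{k=j}^{N_j}\lambda_{j,k}v_k$ such that $\hat u_j\to u$ q.e.\ and $g_{\hat u_j}\to g_u$ in $L^p(X)$. Each $\hat u_j$ is supported in $\Om_{N_j}$ and satisfies $\chione_E\le\hat u_j\le 1$ q.e. For every $m$ I would then multiply $\hat u_j$ by a Lipschitz cutoff $\eta_m$ with $\eta_m=1$ on $B_m$ and compact support, so that $w_{j,m}:=\hat u_j\eta_m\in\Np(X)$ is admissible for $\cp(E\cap B_m,\Om_{N_j})$. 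The Leibniz bound $g_{w_{j,m}}\le\eta_m g_{\hat u_j}+\hat u_j g_{\eta_m}$ combined with the sharp convexity inequality $(a+b)^p\le(1+\delta)^pa^p+C_\delta b^p$ gives
\[
\cp(E\cap B_m,\Om_{N_j})\le(1+\delta)^p\int_X g_{\hat u_j}^p\,d\mu+C_\delta\int_X\hat u_j^p\, g_{\eta_m}^p\,d\mu.
\]
As $j\to\infty$ the first term tends to $(1+\delta)^p\cp(E,\Om)$; letting $\delta\to 0$ and $m\to\infty$ along a carefully chosen diagonal should yield $L\le\cp(E,\Om)$, using that $\{\cp(E,\Om_{N_j})\}$ is a subsequence of the monotone sequence $\{\cp(E,\Om_j)\}$ and hence has the same limit.

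The main obstacle is controlling the cutoff error $\int\hat u_j^p g_{\eta_m}^p\,d\mu$ uniformly and coordinating the order of the three limits $j,m,\delta$. This is delicate because $\hat u_j$ need not lie in $L^p(X)$ and $u$ may fail to decay at infinity in the \p-parabolic case. It can be addressed by choosing $\eta_m$ with slowly growing support and small gradient $L^p$-norm (available via the local doubling property and the \p-Poincar\'e inequality), or by first reducing to bounded pieces $F=E\cap K$ with $K\Subset\Om$, for which the cutoff argument applies directly to $u$ itself since $\dist(K,X\setm\Om_j)$ is bounded away from $0$ once $K\subset\Om_j$, and then passing to general $E$ via the countable-union continuity in Theorem \ref{thm-cp}\ref{cp-Choq-E} together with a diagonal argument to handle the resulting double limit.
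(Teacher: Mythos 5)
Your framing of the problem is right, and the early steps match the paper: the monotonicity direction, the passage to capacitary potentials $v_j=\pot{\Om_j}{E}$, the observation that $v_j\nearrow\pot{\Om}{E}$ with uniformly bounded energies, and the invocation of Lemma~\ref{lem-reflex-conv} to produce convex combinations $\hat u_j$ with $g_{\hat u_j}\to g_{\pot{\Om}{E}}$ in $L^p(X)$. You also correctly identify the genuine technical obstruction that drives the entire second half of the argument: the $\hat u_j$ are only in $\Dp(X)$, not $\Np(X)$, so they are not literally admissible.

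Where your proposal has a real gap is in the cutoff step. Multiplying $\hat u_j$ by a Lipschitz cutoff $\eta_m$ produces the error term $\int \hat u_j^p\, g_{\eta_m}^p\,d\mu$, and you cannot in general make this small: on the annulus where $g_{\eta_m}\neq0$ the function $\hat u_j$ can be of order~$1$ (the potential $\pot{\Om}{E}$ need not decay at infinity even when $X$ is \p-hyperbolic, cf.\ Examples~\ref{ex-R-half-weighted-cap} and \ref{ex-R-half-weighted}), while choosing cutoffs with $\|g_{\eta_m}\|_{L^p}$ small is only possible in the \p-parabolic case. So your first proposed workaround covers only half the cases. Your second workaround---first proving the statement for bounded $F=E\cap K$ and then appealing to Theorem~\ref{thm-cp}\ref{cp-Choq-E}---runs into the problem that the resulting double limit over $k$ (bounded piece) and $j$ (exhaustion of $\Om$) involves a sequence $\cp(F_k,\Om_j)$ that is increasing in $k$ and decreasing in $j$; the standard inequality $\sup_k\inf_j\le\inf_j\sup_k$ then delivers only the trivial direction, and the "diagonal argument" you gesture at would need to be an actual content-bearing estimate, not a formal interchange.

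The paper's resolution is different and worth internalizing: instead of forming a product $\hat u_j\eta_m$, one takes a \emph{minimum} $u_{j,k}:=\min\{2\hat\eta_k,u_j\}$, where the $\hat\eta_k$ are a \emph{second} family of convex combinations, this time built from genuine $\Np(X)$ capacitary potentials $\pot{\Om\cap B_{R_k}}{E\cap B_k}$ whose energies approach $\cp(E,\Om)$. Because $g_{\min\{f,h\}}$ equals $g_f$ on $\{f\le h\}$ and $g_h$ on $\{h<f\}$ (no Leibniz cross-term), the only new gradient contribution lives on $\{2\hat\eta_k<u_j\}$, a set that shrinks to measure zero as $k\to\infty$, and is estimated by $\int_{\{2\hat\eta_k<u_j\}}g_{\hat\eta_k}^p\,d\mu$, which vanishes by dominated convergence together with the $L^p$ control $\|g_{\hat\eta_k}-g_{\pot{\Om}{E}}\|_{L^p}<1/k$. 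This truncation device sidesteps the hyperbolic/parabolic dichotomy entirely and is the missing idea in your attempt.
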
  
\medskip

The following example shows that 
the condition $\cp(E,\Om_1) < \infty$ cannot be dropped.

\begin{example} \label{ex-warning-E}
Let $X=\R^2$, $1<p \le 2$, and
\begin{alignat*}{2}
E& =[-1,0) \times [-1,1], \\
\Om& =((-2,2) \times (-2,2)) \setm \{(0,0)\}, \\
\Om_j &= \Om\setm ([0,1/j] \times [-1/j,1/j]), & \quad &j=1,2,\dots. 
\end{alignat*}
Then $\Om=\bigcup_{j=1}^\infty \Om_j$.
In this case,  $\cp(E,\Om_j)=\infty$ for each $j$
since any admissible function $u$ for $\cp(E,\Om_j)$ would
satisfy 
\[
u=1 \text{ in }E \quad \text{and} \quad  u=0 \text{ in }
[0,1/j] \times [-1/j,1/j],
\]
and thus 
violate the \p-Poincar\'e inequality on the ball
$B((0,0),1/j)$.
However, as $\Cp(\{(0,0)\})=0$, we see that
\[
\cp(E,\Om)=\cp(E,(-2,2) \times (-2,2)) < \infty.
\]
Observe that all the involved sets are bounded.
The argument above works also for weighted $\R^2$ provided
that $\Cp(\{(0,0)\})=0$.
\end{example}

\begin{proof}[Proof of Proposition~\ref{prop-cp-inc-gen}]
The limit exists since the capacities therein decrease with~$j$.
The  $\le$-inequality in~\eqref{eq-cp-lim-Omj}
follows directly from monotonicity.
In particular, $\cp(E,\Om)<\infty$.

Conversely,
Lemma~\ref{lem-pot-conv} shows that $v_j:=\pot{\Om_j}{E} \nearrow \pot{\Om}{E}$.
By Proposition~\ref{prop-ex-pot-unbdd} and monotonicity,
\[
\int_X g^p_{v_j}\, d\mu 
= \cp(E,\Om_j) \le \cp(E,\Om_1) < \infty.
\]
Hence, Lemma~\ref{lem-reflex-conv} provides us with convex combinations
\begin{equation}  \label{eq-def-uj}
u_j=\sum_{i=j}^{N_j}\la_{j,i}v_i,
\quad \text{where $0\le \la_{j,i}\le 1$ and
$\sum_{i=j}^{N_j}\la_{j,i}=1$,}
\end{equation}
such that 
\begin{equation} \label{eq-g-conv}
\|g_{u_j} - g_{\pot{\Om}{E}}\|_{L^p(X)} \to 0,
\quad \text{as }j \to \infty.
\end{equation}
By definition, $\chione_E  \le u_j \le \chione_{\Om_{N_j}}$ q.e.\ in $X$.
If we knew that all $u_j \in \Np(X)$, this would conclude
the proof. 
Since this cannot be guaranteed
we proceed as follows.

For each $k=1,2,\dots$\,, there are $R_k\ge R_{k-1}\ge k$ such that the
capacitary potentials
\[
\eta_k:=\pot{\Om\cap B_{R_k}}{E\cap B_k}\in\Np(X)
\] 
satisfy 
\begin{align*}
\int_X g_{\eta_k}^p \,d\mu & = \cp(E\cap B_k,\Om\cap B_{R_k}) \\
&\le \cp(E\cap B_k,\Om) + 1/k
\le \cp(E,\Om) + 1/k.
\end{align*}
Lemma~\ref{lem-pot-conv} shows that $\eta_k \nearrow \pot{\Om}{E}$.
Another use of Lemma~\ref{lem-reflex-conv} provides us with convex combinations
\begin{equation*}  
\etah_k=\sum_{i=k}^{I_k}\al_{k,i}\eta_i,
\quad \text{where $0\le \al_{k,i}\le 1$ and
$\sum_{i=k}^{I_k}\al_{k,i}=1$,}
\end{equation*}
such that 
\begin{equation} \label{eq-g-conv-etah}
\|g_{\etah_k} - g_{\pot{\Om}{E}}\|_{L^p(X)} <1/k, \quad k=1,2,\dots.
\end{equation}
With $u_j$ as in \eqref{eq-def-uj}, the function $u_{j,k}:=\min\{2\etah_k,u_j\}$ 
is admissible 
(after modification on a set of capacity zero)
 for $\cp(E\cap B_k,\Om_{N_j})$  and 
\[
g_{u_{j,k}} = \begin{cases}
             g_{u_j}, & \text{if } u_j\le 2\etah_k, \\
             2g_{\etah_k}, & \text{if } 2\etah_k < u_j, \end{cases}
\]
(by \cite[Corollary~2.20]{BBbook}).
Hence 
\begin{equation}   
\label{eq-cp-with-min}
\cp(E\cap B_k,\Om_{N_j}) \le \int_X g_{u_{j,k}}^p\,d\mu 
\le \int_X g_{u_j}^p \,d\mu 
+ 2^p\int_{\{2\etah_k < u_j\}} g_{\etah_k}^p \,d\mu. 
\end{equation}
Note that $\chione_{\{2\etah_k < u_j\}}(x)\to 0$, as $k\to\infty$,
for all $x\in X$ and each $j=1,2,\dots$\,.
For the last integral we therefore have using \eqref{eq-g-conv-etah},
Proposition~\ref{prop-ex-pot-unbdd}
and dominated convergence that
\[
\biggl(\int_{\{2\etah_k < u_j\}} g_{\etah_k}^p \,d\mu\biggr)^{1/p} 
   \le \biggl(\int_{\{2\etah_k < u_j\}} g_{\pot{\Om}{E}}^p \,d\mu\biggr)^{1/p} + 1/k 
\to 0, \quad \text{as }k \to \infty.
\]
Inserting this into~\eqref{eq-cp-with-min}, letting $k\to\infty$ 
and using \eqref{eq-g-conv} and
Proposition~\ref{prop-ex-pot-unbdd}, then shows that 
\begin{equation}   \label{eq-cap-E-Nj}
\cp(E,\Om_{N_j}) \le \int_X g_{u_j}^p \,d\mu
\to \int_{X} g_{\pot{\Om}{E}}^p \,d\mu =
\cp(E,\Om),
\quad \text{as } j\to\infty,
\end{equation}
and so \eqref{eq-cp-lim-Omj} holds.
\end{proof}  

\begin{remark}
If all $\mu(\Om_j)<\infty$ in Proposition~\ref{prop-cp-inc-gen}, then
the proof can be simplified after \eqref{eq-g-conv} as follows:
Since $\mu(\Om_{N_j})<\infty$,  
we directly have $u_j \in \Np(X)$,
which together with 
Propositions~\ref{prop-cp} and~\ref{prop-ex-pot-unbdd}
immediately gives \eqref{eq-cap-E-Nj}.
\end{remark}

\begin{example}   \label{ex-warning-ring}
In view of Lemma~\ref{lem-pot-conv}, Theorem~\ref{thm-cp} 
and Proposition~\ref{prop-cp-inc-gen},
it is tempting to think that also
\begin{equation} \label{eq-warning-ring}
\cp(E_j,\Om_j)\to \cp(E,\Om), 
\quad \text{as } j\to\infty, 
\end{equation}
when
\begin{equation*}   
\Om_1 \subset \Om_2 \subset \dots \subset \Om=\bigcup_{j=1}^\infty \Om_j
\quad \text{and}  \quad
E_1 \subset E_2 \subset \dots \subset E=\bigcup_{j=1}^\infty E_j
\end{equation*}
are increasing sequences of sets with  $\Om_j$ open
and $E_j \subset \Om_j$, $j=1,2,\dots$\,.
By Example~\ref{ex-warning-E} this is not true in general
 even when $E_j=E$ is fixed. 
However in that example the limit is infinite, while 
Proposition~\ref{prop-cp-inc-gen} holds when the limit is finite.
We shall now see that \eqref{eq-warning-ring} is not true in general even 
when the limit of the left-hand side is finite.
Observe that all the involved sets are bounded.

Consider unweighted $\R^n$ with $1< p\le n$ and let $x_0=0$.
Recall that $B_r=B(0,r)$.
Note first that by Theorem~\ref{thm-cp},
\begin{equation} \label{eq-m1}
\lim_{r\to0} \cp(B_r,B_s)=0 
\quad \text{and} \quad \lim_{r\to s\limminus} \cp(B_r,B_s)=\infty
\quad \text{when $s>0$ is fixed}.
\end{equation}
(Exact formulas for the involved capacities in  unweighted $\R^n$
appear in Hei\-no\-nen--Kil\-pe\-l\"ai\-nen--Martio~\cite[Example~2.12]{HeKiMa}, 
but are not needed here.
The arguments below work also for weighted $\R^n$, provided that $\Cp(\{0\})=0$.)

The uniform Wiener type estimate in Corollary~6.36 in~\cite{HeKiMa} 
shows that for every $\eps>0$ there is $\de>0$ such that for all $s\in (s_0,1)$,
the capacitary potential $u_s$ for $\cp(B_r,B_s)$ satisfies
$u_s<\eps$ in $B_s\setm B_{s-\de}$.
Hence $(u_s-\eps)_\limplus/(1-\eps)$ is admissible for $\cp(B_r,B_{s_0})$
when $s<s_0+\de$.
Letting $\eps\to0$ and an application of Proposition~\ref{prop-cp-inc-gen}
(for $s\to s_0\limminus$) then imply that
\begin{equation} \label{eq-m2}
\lim_{s\to s_0} \cp(B_r,B_s) = \cp(B_r,B_{s_0}) \quad \text{when $0<r<s_0<1$ are  fixed}.
\end{equation}

Fix an arbitrary sequence $\{c_j\}_{j=0}^\infty$
such that  $c_j>c_0>0$, $j=1,2,\dots$\,.
We are now going to construct decreasing sequences $\{r_j\}_{j=1}^\infty$
and $\{s_j\}_{j=1}^\infty$  as follows:
By \eqref{eq-m1} and~\eqref{eq-m2}, 
there are
$0<r_1<s_1<1$ so that 
\[
\cp(B_{s_1},B_1) =c_0 \quad \text{and}  \quad
\cp(B_{r_1},B_{s_1}) = c_1 - c_0.
\]
For $j=2,3,\dots$\,,
choose recursively  $0<r_j<\min\{r_{j-1},2^{-j}\}$
and $s_j$ with $r_j < s_j < s_{j-1}$ so that
\[
\cp(B_{r_j},B_{s_{j-1}}) < c_j-c_0 \quad \text{and} \quad
\cp(B_{r_j},B_{s_{j}}) = c_j-c_0,
\]
which again is possible due to \eqref{eq-m1} and~\eqref{eq-m2}.

Let $E_j=\{x:s_j \le |x| \le s_1\}$ and 
$\Om_j=\{x:r_j<|x|<1\}$, $j=1,2,\dots$\,. 
Then
\[
\cp(E_j,\Om_j)= \cp(B_{r_j},B_{s_j}) +  \cp(B_{s_1},B_1) = c_j, 
\quad j=1,2,\dots.
\]
At the same time,  for $\Om:=\bigcup_{j=1}^\infty\Om_j= B(0,1)\setm\{0\}$
and $E:=\bigcup_{j=1}^\infty E_j$ we have
\[
\cp(E,\Om) =\cp(B_{s_1},B_1) = c_0.
\]
Hence, \eqref{eq-warning-ring} fails in general since $\lim_{j\to\infty}c_j$
can be chosen arbitrarily from $[c_0,\infty]$ 
and need not even exist.
\end{example}

Even though Example~\ref{ex-warning-ring} showed that
\eqref{eq-warning-ring} fails in general, one inequality is still true.

\begin{prop} \label{prop-cp-inc-liminf}
Let 
\begin{equation*}   
\Om_1 \subset \Om_2 \subset \dots \subset \Om=\bigcup_{j=1}^\infty \Om_j
\quad \text{and}  \quad
E_1 \subset E_2 \subset \dots \subset E=\bigcup_{j=1}^\infty E_j
\end{equation*}
be increasing sequences of sets with  $\Om_j$ open
and $E_j \subset \Om_j$, $j=1,2,\dots$\,.
Then 
\begin{equation*} 
\cp(E,\Om) \le \liminf_{j \to \infty} \cp(E_j,\Om_j).
\end{equation*}
\end{prop}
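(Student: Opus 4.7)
The plan is to bypass the technical convex-combination machinery used in the proof of Proposition~\ref{prop-cp-inc-gen} and instead reduce the statement to the continuity from below in the first argument (Theorem~\ref{thm-cp}\ref{cp-Choq-E}), exploiting the fact that the outer set $\Om$ is fixed. The key observation is that although neither $E_j$ nor $\Om_j$ is fixed, we can enlarge the outer set to $\Om$ (which only makes the capacity smaller) and then localize the inner set to a ball $B_m$ before taking limits.

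First I would assume $L := \liminf_{j\to\infty} \cp(E_j,\Om_j) < \infty$, since otherwise the inequality is trivial. Fix $m\ge 1$. The chain of inclusions $E_j \cap B_m \subset E_j \subset \Om_j \subset \Om$ together with Proposition~\ref{prop-cp}\ref{cp-subset} (monotonicity in both arguments simultaneously) yields
\[
    \cp(E_j \cap B_m, \Om) \le \cp(E_j, \Om_j), \qquad j = 1,2,\dots.
\]

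Next, since $E_1 \cap B_m \subset E_2 \cap B_m \subset \cdots$ is an increasing sequence of subsets of the \emph{fixed} open set $\Om$ with union $E \cap B_m$, Theorem~\ref{thm-cp}\ref{cp-Choq-E} applies and gives
\[
    \cp(E \cap B_m, \Om) \;=\; \lim_{j\to\infty} \cp(E_j \cap B_m, \Om) \;\le\; \liminf_{j\to\infty} \cp(E_j,\Om_j).
\]
Finally, letting $m\to\infty$ and using either the definition~\eqref{eq-cp-lim-deff} (if $E$ is unbounded) or Lemma~\ref{lem-cp-inc-1} together with monotonicity (if $E$ is bounded) gives the desired inequality $\cp(E,\Om) \le \liminf_{j\to\infty}\cp(E_j,\Om_j)$.

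I do not anticipate a real obstacle here; the main conceptual step is recognizing that the failure of equality exhibited in Example~\ref{ex-warning-ring} is not an obstruction to the $\le$-direction, and that one should not attempt to carry the capacitary potentials $\pot{\Om_j}{E_j}$ along (as in Proposition~\ref{prop-cp-inc-gen}), since doing so would force one to confront the very admissibility and cutoff problems that the two-step reduction above sidesteps entirely.
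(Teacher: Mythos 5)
Your proposal is correct and takes essentially the same route as the paper: the heart of the argument is replacing $\Om_j$ by $\Om$ via monotonicity and then invoking continuity from below in the first argument, Theorem~\ref{thm-cp}\ref{cp-Choq-E}. The intermediate localization to $B_m$ (and the reduction to $L<\infty$) is an unnecessary detour, since Theorem~\ref{thm-cp}\ref{cp-Choq-E} already applies to unbounded $E_j$; the paper's proof is simply $\cp(E,\Om)=\lim_{j}\cp(E_j,\Om)\le\liminf_{j}\cp(E_j,\Om_j)$.
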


\begin{proof}
By monotonicity, $\cp(E_j,\Om) \le \cp(E_j,\Om_j)$.
Thus, by 
Theorem~\ref{thm-cp}\ref{cp-Choq-E},
\[
    \cp(E,\Om)=\lim_{j \to \infty} \cp(E_j,\Om) \le \liminf_{j \to \infty} \cp(E_j,\Om_j).
  \qedhere
\]
\end{proof}

The following equality
strengthens  
Theorem~6.19\,(x) in~\cite{BBbook}
even when $\Om$ is bounded.

\begin{prop} 
If $K \subset \Om$ is compact, then
\[
    \cp(K,\Om) =
        \inf_{\substack{u \in \Lipc(\Om) \\u \ge 1 \text{ on } K}}
\|g_u\|_{L^p(X)}^p,
\]
where\/ $\Lipc(\Om)=\{f \in \Lip(X) : \supp f \Subset \Om \}$.
\end{prop}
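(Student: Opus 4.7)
My plan is as follows. The $\le$ inequality is immediate: every $u\in\Lipc(\Om)$ with $u\ge\chione_K$ belongs to $\Np(X)$ and satisfies $\chione_K\le u\le \chione_\Om$, so it is admissible for $\cp(K,\Om)$ and yields $\cp(K,\Om)\le \|g_u\|_{L^p(X)}^p$.

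For the reverse inequality I would assume $\cp(K,\Om)<\infty$ and reduce to the bounded case by a compact exhaustion of $\Om$. Since $X$ is proper and $K$ is compact, $\dist(K,X\setm\Om)>0$. I would set
\(V_j=\{x\in\Om:\dist(x,X\setm\Om)>1/j\}\cap B_j\); these are open and form an increasing sequence with $\bigcup_j V_j=\Om$, while their closures, being closed bounded subsets of $\{x:\dist(x,X\setm\Om)\ge 1/j\}\subset\Om$, are compact subsets of $\Om$ by properness of $X$. Hence $V_j\Subset\Om$, and $K\subset V_j$ once $j$ is large enough. A Lipschitz cutoff of the form $(1-\dist(\cdot,K)/\delta)_\limplus$ with $\delta>0$ small enough so that $\{\dist(\cdot,K)\le\delta\}\subset V_j$ shows $\cp(K,V_j)<\infty$, so Proposition~\ref{prop-cp-inc-gen} gives $\cp(K,V_j)\to\cp(K,\Om)$.

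For each bounded $V_j$ with $K\Subset V_j$, the known characterization of the condenser capacity for bounded open sets (Theorem~6.19(x) in~\cite{BBbook}, combined with the standard density of Lipschitz functions in the relevant Newtonian subspaces) produces, for every $\eps>0$, a function $u_{j,\eps}\in\Lip(X)$ with $u_{j,\eps}\ge\chione_K$ and $\supp u_{j,\eps}\subset\overline{V_j}$ satisfying $\|g_{u_{j,\eps}}\|_{L^p(X)}^p<\cp(K,V_j)+\eps$. Since $\overline{V_j}$ is a compact subset of $\Om$, we have $u_{j,\eps}\in\Lipc(\Om)$. Letting first $\eps\to 0$ and then $j\to\infty$ produces a sequence of $\Lipc(\Om)$ test functions whose $p$-energies tend to $\cp(K,\Om)$, giving the $\ge$ inequality.

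The main obstacle is arranging that the Lipschitz approximants have compact support strictly inside $\Om$, rather than only in $\overline{\Om}$. The compact exhaustion by $V_j\Subset\Om$ together with the continuity result Proposition~\ref{prop-cp-inc-gen} is exactly what supplies this; this is also the point where the stated strengthening of \cite[Theorem~6.19(x)]{BBbook} is obtained even when $\Om$ is bounded.
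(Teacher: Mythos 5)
Your argument follows essentially the same route as the paper's: exhaust $\Om$ by bounded open $V_j \Subset \Om$ containing $K$, apply Proposition~\ref{prop-cp-inc-gen} to deduce $\cp(K,V_j)\to\cp(K,\Om)$, and then invoke Theorem~6.19\,(x) in~\cite{BBbook} on the bounded set $V_j$ to obtain a Lipschitz function supported in $\overline{V_j}\Subset\Om$ with nearly optimal energy. One minor slip in the ``trivial'' direction: a function $u\in\Lipc(\Om)$ with $u\ge1$ on $K$ need not satisfy $\chione_K\le u\le\chione_\Om$ (it may exceed $1$ or be negative off $K$), so it is not directly admissible for $\cp(K,\Om)$; you should first truncate, replacing $u$ by $\min\{u_\limplus,1\}\in\Lipc(\Om)$, which is admissible and has $g_{\min\{u_\limplus,1\}}\le g_u$ a.e., so the infimum is unaffected.
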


\begin{proof}
Since $K$ is compact, there is an integer $k \ge 1$ such that
$\dist(K,X \setm \Om)>1/k$ and $K \subset B_k$.
Let
\[
\Om_j=\{x \in \Om \cap B_j : \dist(x,X \setm \Om)> 1/j\},
\quad j=k,k+1,\dots,
\]
which are bounded open sets.
Then 
\[
K \Subset \Om_k \Subset \Om_{k+1} \Subset \dots \Subset \Om
= \bigcup_{j=k}^\infty \Om_j
\quad \text{and} \quad \cp(K,\Om_k)< \infty.
\]
Let $\eps >0$.
By Proposition~\ref{prop-cp-inc-gen}, there is $k_0 \ge k$ such that
\[
\cp(K,\Om_{k_0}) < \cp(K,\Om) + \eps.
\]
As $\Om_{k_0}$ is bounded,
it follows from
Theorem~6.19\,(x) in~\cite{BBbook} that 
there is $u \in \Lip(X)$ such that $u \ge 1$ on $K$, 
$u=0$ outside $\Om_{k_0}$ and
$\|g_u\|_{L^p(X)}^p < \cp(K,\Om_{k_0}) + \eps$.
Since  $\supp u \Subset \Om$, letting $\eps \to 0$
shows one inequality. 
The reverse inequality  is trivial.
\end{proof}

\section{Capacity of superlevel sets}
\label{sect-level}

Superlevel set identities are part of the definition of Green functions,
through \eqref{eq-normalized-Green-intro}.
We shall now study superlevel sets for capacitary potentials.
They will be used in Section~\ref{sect-Green} 
in our study of Green functions.  

Recall from~\eqref{def-pot-unbdd} that the 
capacitary potential for $E$ in $\Om$ is 
\begin{align*} 
\pot{\Om}{E}(x) = & \begin{cases}
  \inf \{\psi(x):\psi \in \PsiQ{\Om}{E}\}, & \text{if } x \in \Om, \\
  0, & \text{otherwise},
  \end{cases} \\ 
 & \quad \text{where }
 \PsiQ{\Om}{E}= \{\psi:\psi
    \text{ is superharmonic in $\Om$ and $\psi \ge \chione_E$ q.e.}\},
\nonumber
\end{align*}
and that $\pot{\Om}{E}\in\PsiQ{\Om}{E}$.

In this section we fix $E \subset \Om$, let $0 \le a  \le 1$
and consider the superlevel sets
\begin{equation}  
\label{eq-def-level-sets}
    \Om_a  = \{ x \in \Om : \pot{\Om}{E} > a \}
  \quad \text{and} \quad
   \Om^a  = \{ x \in \Om : \pot{\Om}{E} \ge a \}.
\end{equation} 
The set $\Om_a$ is open since
the potential $\pot{\Om}{E}$ is superharmonic, and thus
lower semicontinuous, in $\Om$.
If the potential $\pot{\Om}{E}$ is continuous in $\Om$, then $\Om^a$ is relatively
closed in $\Om$.
Note that $\Om_0=\Om$ whenever $\Om$ is connected and
 $\Cp(E)>0$, by the minimum principle for
superharmonic functions,
while $\Om_0=\Om_a=\Om^a=\emptyset$ for all $0<a\le1$ if $\Cp(E)=0$.
Clearly, $\Om^0=\Om$ for all $E\subset\Om$.
Also, recall that $\cp(\emptyset,\emptyset)=0$.

\begin{lem} \label{lem-level-both}
\begin{alignat}{3}
 \pot{\Om_a}{E \cap \Om_a} &=  \frac{\pot{\Om}{E}-a}{1-a}
   &\quad& \text{in } \Om_a, &\quad&\text{if\/ } 0 \le a <1, \nonumber\\
 \pot{\Om}{\Om_a} = \pot{\Om}{\Om^a}
   &= \min\biggl\{1,\frac{\pot{\Om}{E}}{a}\biggr\} &\quad& \text{in } \Om, 
  & \quad &\text{if\/ } 0 < a <1.
\label{potOmOma}
\end{alignat}
\end{lem}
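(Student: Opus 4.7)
For the first identity $\pot{\Om_a}{E \cap \Om_a} = (u-a)/(1-a)$ on $\Om_a$, I would prove both inequalities. The $\le$ direction is direct: the function $(u-a)/(1-a)$ is superharmonic in $\Om_a$ as an affine transformation with positive slope of $u|_{\Om_a}$, it is nonnegative (since $u > a$ on $\Om_a$), and it equals $1$ q.e.\ on $E \cap \Om_a$ (as $u = 1$ q.e.\ on $E$), so it lies in $\PsiQ{\Om_a}{E \cap \Om_a}$. For the $\ge$ direction, let $v = \pot{\Om_a}{E \cap \Om_a}$ and define
\[
w(x) = \begin{cases} \min\{a + (1-a) v(x),\, u(x)\}, & x \in \Om_a, \\ u(x), & x \in \Om \setm \Om_a. \end{cases}
\]
I would apply the pasting Lemma \ref{lem-pasting} with $\Om_1 = \Om_a$, $\Om_2 = \Om$, $u_1 = a + (1-a)v$, $u_2 = u$. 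The essential step is lower semicontinuity of $w$ at a point $x \in \bdy \Om_a \cap \Om$, where $w(x) = u(x) \le a$: the limit from $\Om_a$ gives $\liminf w \ge \min\{a, u(x)\} = u(x)$ (using $v \ge 0$ and lsc of $u$), while from $\Om \setm \Om_a$, $w = u$ is lsc. Since $E \subset \Om_a$ q.e.\ and both $v = 1$ and $u = 1$ q.e.\ on $E$, we obtain $w \in \PsiQ{\Om}{E}$, so $w \ge u$; combined with $w \le u$ by construction, $w = u$. On $\Om_a$ this yields $a + (1-a)v \ge u$, i.e., $v \ge (u-a)/(1-a)$.

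For the second identity, the chain $\pot{\Om}{\Om_a} \le \pot{\Om}{\Om^a} \le U$, with $U := \min\{u/a, 1\}$, follows from monotonicity (Lemma \ref{lem-pot-conv}) together with $U \in \PsiQ{\Om}{\Om^a}$ ($U$ is the min of two superharmonic functions and $U \equiv 1$ on $\Om^a$). The reverse $U \le \pot{\Om}{\Om_a}$ is the crux. Let $\phi = \pot{\Om}{\Om_a}$; WLOG $\phi \le 1$ (via $\min\{\phi, 1\} \in \PsiQ{\Om}{\Om_a}$), so $\phi = 1$ pointwise on the open set $\Om_a$. I would mimic the pasting construction above, with the roles of $\phi$ and $u$ interchanged, by setting
\[
w'(x) = \begin{cases} u(x), & x \in \overline{\Om_a} \cap \Om, \\ \min\{a \phi(x),\, u(x)\}, & x \in \Om \setm \overline{\Om_a}. \end{cases}
\]
Were $w'$ superharmonic in $\Om$, then since $E \subset \Om_a$ q.e.\ and $w' = u = 1$ q.e.\ on $E$, we would have $w' \in \PsiQ{\Om}{E}$, hence $w' \ge u$; combined with $w' \le u$ this forces $w' = u$, giving $a \phi \ge u$ on $\Om \setm \overline{\Om_a}$, i.e., $\phi \ge u/a = U$ there. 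On $\overline{\Om_a}$ the inequality $\phi \ge 1 = U$ holds q.e., and pointwise via lsc-regularization.

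The main obstacle lies in verifying lsc of $w'$ at $x \in \bdy \Om_a \cap \Om$: this would demand $a \phi(x) \ge u(x) = a$, i.e., $\phi(x) \ge 1$. Although $\phi = 1$ pointwise in the open interior $\Om_a$ and q.e.\ on $\Om_a$, at ordinary boundary points where $\Om_a$ has low measure density it may happen that $\phi(x) < 1$, so the ordinary pasting Lemma \ref{lem-pasting} does not directly apply. As the introduction signals in Remark \ref{rmk-fine}, one must invoke a pasting lemma for fine superminimizers on finely open sets: using that $\phi$ is finely superharmonic and that level sets of the superharmonic function $u$ behave well in the fine topology, the fine pasting restores the semicontinuity needed for the comparison argument to close up.
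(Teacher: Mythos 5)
Your proof of the first identity matches the paper's. The $\le$-direction is right (the paper in fact takes the infimum over \emph{all} $\psi\in\PsiQ{\Om}{E}$, which is equivalent since $\pot{\Om}{E}$ itself is admissible), and the $\ge$-direction via the standard pasting Lemma~\ref{lem-pasting} is exactly the paper's argument. For the second identity, your $\le$-direction is also correct.

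The $\ge$-direction, however, contains a genuine gap, and you have put your finger on the symptom without arriving at the cure. You attempt to paste along $\Om\setm\overline{\Om_a}$ using $\phi=\pot{\Om}{\Om_a}$, observe that lsc fails at $\bdy\Om_a\cap\Om$, and then invoke ``fine pasting'' rather vaguely. What actually makes the paper's fine-pasting argument go through is a careful choice of \emph{both} the set and the potential: one pastes along $\Om\setm\Om^a=\{\pot{\Om}{E}<a\}$, which is \emph{finely} open by fine continuity of $\pot{\Om}{E}$ (not merely metrically open like your $\Om\setm\overline{\Om_a}$, which in fact omits the level set $\{\pot{\Om}{E}=a\}\setm\overline{\Om_a}$); and one uses $\pot{\Om}{\Om^a}$, not $\pot{\Om}{\Om_a}$. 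The reason the latter matters is that $\pot{\Om}{\Om^a}=1$ q.e.\ on \emph{all} of $\Om^a$, which makes the glued function admit the clean representation
$v = a\pot{\Om}{\Om^a}+(\pot{\Om}{E}-a)_\limplus$ q.e.\ in $\Om$, which is manifestly in $\Nploc(\Om)$ --- this membership is a \emph{hypothesis} of the fine pasting lemma in Bj\"orn--Bj\"orn--Latvala~\cite[Lemma~5.7]{BBLat-DP}, not something the lemma delivers. Your $w'$ equals $a\phi + (\pot{\Om}{E}-a\phi)\chione_{\overline{\Om_a}\cap\Om}$ with $\phi=\pot{\Om}{\Om_a}$, and because $\phi$ need not equal $1$ on $\overline{\Om_a}\cap\Om\setm\Om_a$, this product with a characteristic function of a closed set has no visible Newtonian structure; the fine pasting lemma does not apply, and merely re-topologizing does not rescue it. Once one has $\pot{\Om}{\Om^a}=\min\{1,\pot{\Om}{E}/a\}$, the equality $\pot{\Om}{\Om_a}=\pot{\Om}{\Om^a}$ is then deduced at the very end by the squeeze $\pot{\Om}{\Om^a}\ge\pot{\Om}{\Om_a}\ge\lim_{\eps\to0\limplus}\pot{\Om}{\Om^{a+\eps}}=\pot{\Om}{\Om^a}$, rather than directly.
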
 

\begin{remark} \label{rmk-fine}
The proof below of \eqref{potOmOma}
uses fine continuity of superharmonic functions,
proved in Bj\"orn~\cite{JBfine} and Korte~\cite{RiikkaFine},
together with a pasting lemma for fine superminimizers
from Bj\"orn--Bj\"orn--Latvala~\cite[Lemma~5.7]{BBLat-DP}.
This is essential also when $E$ is compact.

However, if the potential $\pot{\Om}{E}$ is continuous, as will be
the case in our investigations of singular and Green functions,
these fine potential theoretic tools can be replaced by metric 
continuity and 
the
standard pasting 
Lemma~\ref{lem-pasting}.
This somewhat simplifies the proof.
\end{remark}

\begin{proof}[Proof of Lemma~\ref{lem-level-both}]
Let $\psi \in \PsiQ{\Om}{E}$ and $0 \le a < 1$. 
Then $\psi \ge \pot{\Om}{E} >  a$ in the superlevel set $\Om_a$ and thus 
$(\psi-a)/(1-a) \in \PsiQ{\Om_a}{E \cap \Om_a}$.
Taking the infimum over all such $\psi$ shows that
\[
    \pot{\Om_a}{E \cap \Om_a} \le  \frac{\pot{\Om}{E}-a}{1-a}
   \quad \text{in } \Om_a.
\]

Conversely, let
\[
   v=\begin{cases}
      a+ (1-a)\pot{\Om_a}{E \cap \Om_a} & \text{in } \Om_a, \\
      \pot{\Om}{E} & \text{in } \Om \setm \Om_a. 
   \end{cases}
\]
By the first part of the proof, $v \le \pot{\Om}{E}$.
Since $\pot{\Om}{E} \le a$ in $\Om \setm \Om_a$, we see that $v$ is lower
semicontinuous.
Hence, by the pasting Lemma~\ref{lem-pasting},
$v$ is superharmonic in $\Om$.
By definition, $v \ge 1$ q.e.\ in $E$, 
and thus $v \in \PsiQ{\Om}{E}$, 
i.e.\ $v \ge \pot{\Om}{E}$.
This proves the first identity in the statement.

For the second identity, consider $0<a<1$. First
note that the inequality
\begin{equation} \label{eq-pot-2-2}
\pot{\Om}{\Om^a} \le 
 \min\biggl\{1,\frac{\pot{\Om}{E}}{a}\biggr\} \quad \text{in } \Om
\end{equation}
follows immediately from the definition \eqref{def-pot-unbdd}
of $\pot{\Om}{\Om^a}$ since the right-hand side belongs to  $\PsiQ{\Om}{\Om^a}$.

Conversely, $\pot{\Om}{E}$ is superharmonic
in $\Om$ and thus finely continuous, by Bj\"orn~\cite[Theorem~4.4]{JBfine} 
or Korte~\cite[Theorem~4.3]{RiikkaFine} (or \cite[Theorem~11.38]{BBbook}).
Thus, the set $\Om\setm\Om^a$ is finely open.
Let
\[
   v=\begin{cases}
      \pot{\Om}{E} & \text{in } \Om^a, \\
      a \pot{\Om}{\Om^a} = \min\{ a \pot{\Om}{\Om^a},  \pot{\Om}{E}\}  
                   & \text{in } \Om \setm \Om^a, 
   \end{cases}
\]
and note that $v=a \pot{\Om}{\Om^a} + (\pot{\Om}{E} -a)_\limplus$ q.e.\ in $\Om$.
Hence $v\in \Np\loc(\Om)$.
It then follows from the pasting lemma 
for fine superminimizers in 
Bj\"orn--Bj\"orn--Latvala~\cite[Lemma~5.7]{BBLat-DP} that $v$
is a fine superminimizer in~$\Om$.
Since $\Om$ is open, $v$ is a standard superminimizer in $\Om$ 
by Corollary~5.6 in~\cite{BBLat-DP}.
Its lsc-regularization $v^*$, defined by~\eqref{eq-def-lsc-reg} 
and q.e.\ equal to $v$, is then superharmonic in~$\Om$
and admissible for the definition of $\pot{\Om}{E}$.
In particular, this gives
\[
a \pot{\Om}{\Om^a} = v^* \ge \pot{\Om}{E} \quad \text{q.e.\ in } \Om \setm \Om^a.
\]
Hence
\[
\pot{\Om}{\Om^a} \ge \min\biggl\{1,\frac{\pot{\Om}{E}}{a}\biggr\} 
\quad \text{q.e.\ in } \Om,
\]
and since both functions are superharmonic in $\Om$ (and thus
lsc-regularized), it follows that the inequality holds everywhere.
Together with~\eqref{eq-pot-2-2} this shows that
\begin{equation*} 
\pot{\Om}{\Om^a} = 
 \min\biggl\{1,\frac{\pot{\Om}{E}}{a}\biggr\} \quad \text{in } \Om.
\end{equation*}

Finally, we see that
\[
\pot{\Om}{\Om^a} \ge \pot{\Om}{\Om_a}
    \ge   \lim_{\eps \to0\limplus}   \pot{\Om}{\Om^{a+\eps}}
   =  \lim_{\eps \to0\limplus}  \min\biggl\{1,\frac{\pot{\Om}{E}}{a+\eps}\biggr\}
  =  \pot{\Om}{\Om^a}. 
\qedhere
\]
\end{proof}

\begin{thm} \label{thm-cp-level-pot}
The following hold for the superlevel sets 
in~\eqref{eq-def-level-sets}\/\textup:
\begin{enumerate}
\item \label{a1}
$\cp(E \cap \Om_a,\Om_a) = (1-a)^{1-p} \cp(E,\Om)$ if $0 \le a <1$.
\item \label{a2a}
$\cp(\Om_a,\Om)  = a^{1-p} \cp(E,\Om)$ if $0<a<1$.
\item \label{a2b}
$\cp(\Om^a,\Om) = a^{1-p} \cp(E,\Om)$ if $0<a \le 1$.
\end{enumerate}
\end{thm}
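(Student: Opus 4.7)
The plan is to combine Lemma~\ref{lem-level-both} (which identifies the capacitary potentials of the level sets explicitly) with Proposition~\ref{prop-ex-pot-unbdd} (which equates $\cp$ with the $L^p$-energy of its capacitary potential when the capacity is finite), together with an approximation by the bounded sets $E_j = E\cap B_j$. First I would dispose of the degenerate cases. If $\Cp(E) = 0$ then $u := \pot{\Om}{E}\equiv 0$, every superlevel set is empty, and each identity is trivial. If $\cp(E,\Om) = \infty$, then $u = 1$ q.e.\ on $E$, so $E \subset \Om_a \subset \Om^a$ q.e.\ for $a < 1$; the monotonicity of $\cp$ in both arguments (Proposition~\ref{prop-cp}\ref{cp-subset}) then forces each left-hand side to be infinite as well, matching the right-hand side under the convention $c^{1-p}\cdot\infty = \infty$ for $c\in (0,1]$.

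Assume next that $\cp(E,\Om) < \infty$. Then Proposition~\ref{prop-ex-pot-unbdd} gives $u\in\Dp(X)$ with $\int_X g_u^p\,d\mu = \cp(E,\Om)$, while Lemma~\ref{lem-level-both} identifies
\[
\pot{\Om_a}{E\cap\Om_a} = \frac{(u-a)_+}{1-a}, \qquad \pot{\Om}{\Om_a}=\pot{\Om}{\Om^a}=\min\biggl\{1,\frac{u}{a}\biggr\}.
\]
Standard truncation properties of minimal \p-weak upper gradients, combined with the vanishing of $g_u$ a.e.\ on the level set $\{u=a\}$, then yield
\[
g_{\pot{\Om_a}{E\cap\Om_a}} = \frac{g_u\,\chione_{\Om_a}}{1-a}, \qquad g_{\pot{\Om}{\Om_a}} = \frac{g_u\,\chione_{\{u<a\}}}{a} \quad \text{a.e.}
\]
Both potentials therefore lie in $\Dp(X)$ with finite energies bounded by $(1-a)^{-p}\cp(E,\Om)$ and $a^{-p}\cp(E,\Om)$.

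To promote these gradient formulas into the explicit scaling laws, I would approximate by $E_j = E\cap B_j$, for which $\cp(E_j,\Om)<\infty$ is automatic and Proposition~\ref{prop-ex-pot-bdd} eliminates any finiteness issues for the associated level-set capacities. For each bounded $E_j$, the theorem then reduces, via the formulas above, to the energy-partition identity
\[
\int_{\{u_j<a\}} g_{u_j}^p\,d\mu = a\cdot\cp(E_j,\Om) \qquad \text{for } u_j=\pot{\Om}{E_j},
\]
which I would extract by testing the obstacle-problem minimality of $u_j$ against $u_j + t(a-u_j)_+$ (the perturbation $\phi = (a-u_j)_+$ vanishes q.e.\ on $E_j$, so this is a valid one-parameter family of comparison functions) and exploiting the convexity of the \p-energy functional together with Lemma~\ref{lem-reflex-conv}. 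Once this is established in the bounded case, Lemma~\ref{lem-pot-conv} gives $u_j\nearrow u$, whence $\{u_j>a\}\nearrow\Om_a$ and $E_j\cap\{u_j>a\}\nearrow E\cap\Om_a$; Theorem~\ref{thm-cp}\ref{cp-Choq-E} then passes the identities \ref{a1} and \ref{a2a} to the limit, yielding $\cp(\Om_a,\Om) = a^{1-p}\cp(E,\Om)$ and $\cp(E\cap\Om_a,\Om_a) = (1-a)^{1-p}\cp(E,\Om)$. Finally \ref{a2b} follows by sandwiching $\cp(\Om_a,\Om) \le \cp(\Om^a,\Om) \le \cp(\Om_{a-\eps},\Om)$ and letting $\eps\to0\limplus$ in \ref{a2a} applied at level $a-\eps$.

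The principal obstacle will be the energy-partition identity in the bounded case. This is where the genuine \p-harmonic structure of the capacitary potential enters essentially; without it, the mere gradient decomposition only yields the consistency relation $\cp(E,\Om) = a^p\cp(\Om^a,\Om)+(1-a)^p\cp(E\cap\Om_a,\Om_a)$, which is one equation for two unknown capacities and hence insufficient. Converting the first-order variational inequality $\int g_{u_j+t(a-u_j)_+}^p\,d\mu \ge \int g_{u_j}^p\,d\mu$ into the sharp identity in the nonsmooth metric setting is the delicate point; after that, the rest of the argument is bookkeeping through Lemma~\ref{lem-level-both}, Lemma~\ref{lem-pot-conv} and Theorem~\ref{thm-cp}.
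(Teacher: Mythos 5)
Your framework—Lemma~\ref{lem-level-both} plus Proposition~\ref{prop-ex-pot-unbdd} plus approximation by $E_j = E\cap B_j$—is the correct one, and the gradient formulas and the consistency relation
\[
\cp(E,\Om) = (1-a)^p\,\cp(E\cap\Om_a,\Om_a) + a^p\,\cp(\Om_a,\Om)
\]
that you derive from them are precisely the identity the paper uses (its display \eqref{eq-le-to-ge}). But you then declare this relation ``one equation for two unknown capacities, hence insufficient'' and try to supply the energy-partition identity $\int_{\{u_j<a\}}g_{u_j}^p\,d\mu = a\,\cp(E_j,\Om)$ as an extra ingredient. That last step is where the proposal goes wrong, in two ways.

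First, the perturbation you propose is not admissible. The capacitary potential $u_j$ lies in $\Np_0(\Om_j)$ and equals $0$ outside a bounded superset of $E_j$, while $\phi=(a-u_j)_\limplus$ equals $a>0$ throughout $X\setm\Om_j$; hence $u_j+t\phi\notin\Np_0(\Om_j)$ for any $t\ne 0$, so it is not a competitor in the $\K_{\chione_{E_j},0}(\Om_j)$-obstacle problem. Even if you replace $\phi$ by the $\Np_0$-correction $-\min\{u_j,a\}$, you lose the obstacle constraint $u_j+t\phi\ge\chione_{E_j}$ for one sign of $t$, so minimality only yields a one-sided variational inequality, not the identity. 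Turning that one-sided inequality into an equality in the nonsmooth metric setting is exactly the ``delicate point'' you flagged, and the proposal leaves it unresolved.

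Second—and this is the more important methodological point—the extra ingredient you need is not the energy-partition identity but merely the \emph{$\le$-inequalities} in \ref{a1} and \ref{a2a}. The paper proves $\cp(E\cap\Om_a,\Om_a)\le(1-a)^{1-p}\cp(E,\Om)$ and $\cp(\Om_a,\Om)\le a^{1-p}\cp(E,\Om)$ separately by approximating with $E_k=E\cap B_k$ and $G_j=\Om\cap B_{j+1}$, applying the bounded-domain identities from \cite[Lemma~3.4]{BBLehGreen}, and passing to the limit via Lemma~\ref{lem-pot-conv}, Proposition~\ref{prop-cp-inc-gen} and Proposition~\ref{prop-cp-inc-liminf}. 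Substituting both $\le$-inequalities into the consistency relation gives $\cp(E,\Om)\le(1-a)\cp(E,\Om)+a\,\cp(E,\Om)=\cp(E,\Om)$, forcing equality throughout and hence in each inequality. So the ``one equation, two unknowns'' worry is not an obstruction; it is the core of the proof once you have the one-sided bounds. (As a bonus, the energy-partition identity then drops out for free, which is how the paper later proves the formula $\int_{\Om_a\setm\Om^b}g_u^p\,d\mu = b-a$ in Theorem~\ref{thm-Green-intro}.) Your derivation of \ref{a2b} from \ref{a2a} by sandwiching is fine for $a<1$, but you should also handle $a=1$, for which the paper observes $\cp(E,\Om)\le\cp(\Om^1,\Om)$ since $\Cp(E\setm\Om^1)=0$.
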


For bounded $\Om$, the capacitary identities in Theorem~\ref{thm-cp-level-pot} 
and Corollary~\ref{cor-cp-level-pot-ab} were obtained in
Aikawa--Bj\"orn--Bj\"orn--Shan\-mu\-ga\-lin\-gam~\cite[Proposition~4.1]{ABBSdich}
and Bj\"orn--Bj\"orn--Lehrb\"ack~\cite[Theorem~3.3 and Lemma~3.4]{BBLehGreen}.
See Holo\-pai\-nen~\cite[Lemma~3.8]{Ho} and 
Hei\-no\-nen--Kil\-pe\-l\"ai\-nen--Martio~\cite[p.~118]{HeKiMa} for earlier results
for bounded subsets of manifolds and weighted $\R^n$.
In~\cite{BBLehGreen},
these identities were subsequently used to study Green functions on bounded domains
in metric spaces.

\begin{proof}
First, note that \ref{a2b} follows from \ref{a2a}: Indeed, \ref{a2a} and
\begin{equation*} 
\cp(\Om^a,\Om) 
   \le \lim_{\eps \to0\limplus}   \cp(\Om_{a-\eps},\Om),
\quad 0<a\le 1, 
\end{equation*}
yield the $\le$-inequality in \ref{a2b}. 
The $\ge$-inequality follows from \ref{a2a} and 
$\Om_a\subset\Om^a$ when $a<1$, while 
Proposition~\ref{prop-cp} and $\Cp(E \setm \Om^1)=0$ give
$\cp(E,\Om) \le \cp(\Om^1,\Om)$ for $a=1$.

Next, if $\cp(E,\Om)=\infty$ then the left-hand sides in 
\ref{a1}--\ref{a2a} are clearly bounded from below by 
\[
\cp(E \cap \Om_a,\Om) =\infty,
\]
since $\Cp(E \setm \Om_a)=0$ when $0\le a<1$.
So assume that $\cp(E,\Om)<\infty$ and  $0\le a<1$.

We begin by proving the $\le$-inequalities.
For $j,k=2,3,\dots$\,, let 
\[
E_k=E\cap B_k \quad \text{and} \quad G_j=\Om\cap B_{j+1}.
\]

Fix $k$ for the moment. Since $E_k$ is bounded and 
$\cp(E_k,\Om)\le \cp(E,\Om)<\infty$,
there is an admissible function for $\cp(E_k,\Om)$.
Multiplying it by a suitable cutoff function shows that 
\begin{equation}   \label{eq-cp-Ek-Omj<infty}
\cp(E_k,G_j)<\infty \quad \text{for }j \ge k.
\end{equation}
Let 
\[
G^k_{j,a}=\{x \in G_j : \pot{G_j}{E_k} >a\}
\quad \text{for } j =k,k+1,\dots.
\]
By Lemma~\ref{lem-pot-conv}, $\pot{G_j}{E_k} \nearrow \pot{\Om}{E_k}$
as $j \to \infty$,
while $\pot{\Om}{E_k} \nearrow \pot{\Om}{E}$ and
$\pot{G_k}{E_k} \nearrow \pot{\Om}{E}$ as $k \to \infty$. Hence
\begin{equation}   \label{eq-Om-k,a=cup}
   \Om^k_a:= \{x \in \Om : \pot{\Om}{E_k} >a\} = \bigcup_{j=k}^\infty G^k_{j,a}
 \quad \text{and} \quad 
  \Om_a = \bigcup_{k=1}^\infty \Om^k_a = \bigcup_{k=1}^\infty G^k_{k,a}.
\end{equation}
As $\Cp(E_k \setm G^k_{j,a})=0$ 
for all $j\ge k$, it follows from 
Proposition~\ref{prop-cp-inc-gen} and 
Bj\"orn--Bj\"orn--Lehrb\"ack~\cite[Lemma~3.4]{BBLehGreen} that
\begin{align}
\cp(E_k \cap G^k_{k,a},\Om^k_a)
 & = \lim_{j \to \infty} \cp(E_k \cap G^k_{k,a},G^k_{j,a}) 
  = \lim_{j \to \infty} \cp(E_k \cap G^k_{j,a},G^k_{j,a}) 
  \nonumber \\
  & = \lim_{j \to \infty} (1-a)^{1-p}\cp(E_k,G_j) 
   = (1-a)^{1-p}\cp(E_k,\Om), \label{eq-cap-Om-a}
\end{align}
where all the involved capacities are finite because 
of~\eqref{eq-cp-Ek-Omj<infty} and the monotonicity of capacity.
Now, letting $k\to\infty$ and using \eqref{eq-Om-k,a=cup},
\eqref{eq-cap-Om-a} and Proposition~\ref{prop-cp-inc-liminf}
yields
\begin{align*}
  \cp(E \cap \Om_a,\Om_a)
 &    \le  \liminf_{k\to\infty} \cp(E_k \cap G^k_{k,a},\Om^k_a)  \nonumber \\
  &=  \lim_{k\to\infty} (1-a)^{1-p}\cp(E_k,\Om)
  = (1-a)^{1-p}\cp(E,\Om). 
\end{align*}
This shows the $\le$-inequality in \ref{a1}.

In \ref{a2a},  we proceed similarly, assuming that $0<a<1$. 
For each fixed $k$ and $j\ge k$, 
Lemma~3.4 in~\cite{BBLehGreen}
implies that 
\[ 
\cp(G^k_{j,a},\Om)
  \le    \cp(G^k_{j,a},G_j)
  = a^{1-p} \cp(E_k,G_j).
\]
Together with  Theorem~\ref{thm-cp}\ref{cp-Choq-E},
Lemma~\ref{lem-cp-inc-1} (or Proposition~\ref{prop-cp-inc-gen})
and \eqref{eq-Om-k,a=cup},  this yields
\[ 
  \cp(\Om^k_a,\Om) 
 =  \lim_{j \to \infty} \cp(G^k_{j,a},\Om) 
 \le  \lim_{j \to \infty} a^{1-p} \cp(E_k,G_j) =
a^{1-p} \cp(E_k,\Om)
\] 
and
\[ 
\cp(\Om_a,\Om)
=\lim_{k \to \infty}   \cp(\Om^k_a,\Om) 
 \le \lim_{k \to \infty} a^{1-p} \cp(E_k,\Om)
= \cp(E,\Om),
\] 
which
proves the $\le$-inequality in \ref{a2a}.

To conclude the proof of \ref{a1}--\ref{a2a}, we use 
Proposition~\ref{prop-ex-pot-unbdd}
and Lemma~\ref{lem-level-both}, together with the already proved
$\le$-inequalities, as follows:
(When $a=0$, we also use that 
$\pot{\Om}{E}\equiv0$ in $\Om \setm \Om_a$ to obtain the third equality.)
\begin{align}  \label{eq-le-to-ge}
 \cp(E,\Om)
  & = \int_{X} g_{\pot{\Om}{E}}^p \, d\mu \nonumber\\
  & = \int_{\Om_a} g_{\pot{\Om}{E}}^p \, d\mu 
   + \int_{\Om \setm \Om_a} g_{\pot{\Om}{E}}^p \, d\mu \\
  & = (1-a)^p\int_{\Om_a} g_{\pot{\Om_a}{E \cap \Om_a}}^p \, d\mu 
    + a^p    \int_{\Om \setm \Om_a} g_{\pot{\Om}{\Om_a}}^p \, d\mu \nonumber\\
   & = (1-a)^p\cp(E \cap \Om_a,\Om_a)
    + a^p    \cp(\Om_a,\Om) \nonumber\\
  & \le  (1-a)\cp(E,\Om) + a \cp(E,\Om)  \nonumber\\
  & = \cp(E,\Om).\nonumber
\end{align}
This shows that we must have equality throughout.
\end{proof}

\begin{cor}  \label{cor-cp-level-pot-ab}
Let $0 \le a < b \le 1$.
Then 
\begin{equation} \label{eq-pot-b-a-1}
\cp(\Om^b,\Om_a)  = (b-a)^{1-p} \cp(E,\Om)
\end{equation}
and 
\begin{equation} \label{eq-pot-b-a}
     \pot{\Om_a}{\Om^b} 
   = \min\biggl\{1,\frac{\pot{\Om}{E}-a}{b-a}\biggr\}_\limplus.
\end{equation}
If moreover $b<1$ then also
\begin{equation} \label{eq-pot-b-a-3}
\cp(\Om_b,\Om_a) = \cp(\Om^b,\Om_a).
\end{equation}
\end{cor}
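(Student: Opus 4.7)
The plan is to view $\Om_a$ as a new ambient open set in its own right and reduce everything to Theorem~\ref{thm-cp-level-pot} and Lemma~\ref{lem-level-both} applied with $E$ replaced by $E\cap\Om_a$ and $\Om$ replaced by $\Om_a$. The key observation is that, by Lemma~\ref{lem-level-both},
\[
\pot{\Om_a}{E \cap \Om_a}= \frac{\pot{\Om}{E}-a}{1-a} \quad \text{in } \Om_a,
\]
so with $c:=(b-a)/(1-a)\in(0,1]$, the superlevel sets of $\pot{\Om_a}{E \cap \Om_a}$ in $\Om_a$ at level $c$ are exactly
\[
\{x \in \Om_a:\pot{\Om_a}{E \cap \Om_a}(x)\ge c\}=\Om^b
\quad\text{and}\quad
\{x \in \Om_a:\pot{\Om_a}{E \cap \Om_a}(x)> c\}=\Om_b.
\]

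For \eqref{eq-pot-b-a-1}, I would chain Theorem~\ref{thm-cp-level-pot}\ref{a2b} (applied in $\Om_a$ at level $c$) with Theorem~\ref{thm-cp-level-pot}\ref{a1} (applied in $\Om$ at level $a$) to get
\[
\cp(\Om^b,\Om_a) = c^{1-p}\cp(E\cap \Om_a,\Om_a)
= c^{1-p}(1-a)^{1-p}\cp(E,\Om) = (b-a)^{1-p}\cp(E,\Om),
\]
and similarly for \eqref{eq-pot-b-a-3}, using Theorem~\ref{thm-cp-level-pot}\ref{a2a} in place of \ref{a2b} (valid only when $c<1$, i.e.\ $b<1$), which yields the same value and hence $\cp(\Om_b,\Om_a)=\cp(\Om^b,\Om_a)$.

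For \eqref{eq-pot-b-a}, I would apply the second identity of Lemma~\ref{lem-level-both} inside $\Om_a$ at level $c$, giving
\[
\pot{\Om_a}{\Om^b}
=\min\biggl\{1,\frac{\pot{\Om_a}{E\cap\Om_a}}{c}\biggr\}
=\min\biggl\{1,\frac{\pot{\Om}{E}-a}{b-a}\biggr\} \quad\text{in }\Om_a,
\]
and then check that outside $\Om_a$ both sides vanish: on the left by the convention $\pot{\Om_a}{\Om^b}\equiv 0$ on $X\setm\Om_a$, and on the right because $\pot{\Om}{E}\le a$ there makes the argument nonpositive, so the $_\limplus$ collapses to $0$.

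The only genuine obstacle is handling degenerate cases cleanly. If $\cp(E,\Om)=\infty$, one needs to observe that both sides of \eqref{eq-pot-b-a-1} and \eqref{eq-pot-b-a-3} are infinite, using monotonicity and the fact that $\Cp(E\setm\Om^b)=\Cp(E\setm\Om_a)=0$ when $b\le 1$ and $a<1$. If $\Cp(E)=0$ (so $\pot{\Om}{E}\equiv 0$, $\Om_a=\Om^b=\emptyset$ for $a,b>0$), both sides are trivially $0$; the boundary cases $a=0$ with $\Om_0=\Om$ and $b=1$ require only that Theorem~\ref{thm-cp-level-pot}\ref{a1} covers $a=0$ and that \ref{a2b} covers $b=1$, which it does. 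Once these edge cases are settled, the corollary is a straightforward rescaling bookkeeping exercise.
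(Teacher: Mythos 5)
Your proposal reproduces the paper's argument: both identify $\Om^b$ and $\Om_b$ as the $\beta$-superlevel sets of $\pot{\Om_a}{E\cap\Om_a}$ with $\beta=(b-a)/(1-a)$ via Lemma~\ref{lem-level-both}, then chain Theorem~\ref{thm-cp-level-pot} (parts \ref{a2b}/\ref{a2a} followed by \ref{a1}) for the two capacity identities and the second identity of Lemma~\ref{lem-level-both} for \eqref{eq-pot-b-a}. The extra edge-case bookkeeping you supply simply makes the paper's terse proof explicit without changing the route.
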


\begin{proof}
If
$\cp(E,\Om)=\infty$, then  \eqref{eq-pot-b-a-1} is obvious by monotonicity of 
the capacity.
So assume that $\cp(E,\Om)< \infty$.
Note that by Lemma~\ref{lem-level-both},
\[
\Om^{b}= \{x \in \Om_a : \pot{\Om_a}{E \cap \Om_a} \ge \be\} =: (\Om_a)^{\be},
\quad \text{where }
\be=\frac{b-a}{1-a} >0.
\]
Hence, Theorem~\ref{thm-cp-level-pot} 
applied twice (first \ref{a2b} and then \ref{a1})
yields
\begin{align*}
\cp(\Om^b,\Om_a)
  &= \cp((\Om_a)^{\be},\Om_a) =
 \be^{1-p} \cp(E \cap \Om_a,\Om_a) \\
  &= 
 \be^{1-p} (1-a)^{1-p}\cp(E,\Om)
  = (b-a)^{1-p}\cp(E,\Om).
\end{align*} 
The equality~\eqref{eq-pot-b-a-3}
is also a consequence of Theorem~\ref{thm-cp-level-pot}.

Finally, \eqref{eq-pot-b-a} follows directly from
Lemma~\ref{lem-level-both}.  
\end{proof}

\section{Singular functions}
\label{sect-singular}

The following definition summarizes the requirements we 
impose on singular functions. 
It is slightly different from the definition in the introduction,
but Proposition~\ref{prop-sing-eqv} shows that 
they are equivalent.
Recall that $B_r = B(x_0,r)$ and $\clB_r=\itoverline{B(x_0,r)}$.

\begin{deff} \label{deff-sing}
Let $\Om \subset X$ be a domain, i.e.\ a connected
open set.
A function $u:X^* \to [0,\infty]$ 
is a \emph{singular function} in $\Om$
with singularity at $x_0 \in \Om$ if
$u>0$ in $\Om$,  $u=0$ outside $\Om$, and
it satisfies the following
properties\/\textup{:} 

\begin{enumerate}
\renewcommand{\theenumi}{\textup{(S\arabic{enumi})}}%
\item \label{dd-s}
$u$ is superharmonic in $\Om$,
\item \label{dd-h}
$u$ is \p-harmonic in $\Om \setm \{x_0\}$,
\item \label{dd-sup}
$u(x_0)=\sup_\Om u$,
\item \label{dd-inf}
$\inf_\Om u = 0$,
\setcounter{saveenumi}{\value{enumi}}
\item \label{dd-P}
$  u=P_{\Om \setm \clB_r} u$ in $\Om \setm \clB_r$
whenever $B_r \Subset \Om$.
\end{enumerate}
\end{deff}

The essence of \ref{dd-P} is the inequality $u \le \lP_{\Om \setm \clB} u$,
while $u \ge \uP_{\Om \setm \clB} u$ clearly follows from the other 
properties of $u$.
It is convenient to have $u=0$ in $X^* \setm \Om$, even though
it might be more correct to call only $u|_\Om$ the singular function.

\begin{prop}  \label{prop-sing-lika}
Let $\Om$ be a bounded domain and $u:X^* \to [0,\infty]$ 
be such that  $u=0$ outside $\Om$.
  Then $u$ is a singular function in $\Om$ with singularity at $x_0$
  according to Definition~\ref{deff-sing} if and only if $u|_\Om$ is 
a singular function according to Definition\/~\textup{1.1} in 
Bj\"orn--Bj\"orn--Lehrb\"ack\/~\textup{\cite{BBLehGreen}}.
\end{prop}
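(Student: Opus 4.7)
The plan is to go through Definition~\ref{deff-sing} item by item and match its content with Definition~1.1 of Bj\"orn--Bj\"orn--Lehrb\"ack~\cite{BBLehGreen}, which is formulated only for bounded $\Om$. On a bounded domain $\bdystar\Om=\bdy\Om$, and so the difference between the two definitions is that the Perron-matching condition \ref{dd-P}, which encodes both the zero boundary values on $\bdy\Om$ and the natural pasting on $\bdy B_r$, is replaced in \cite{BBLehGreen} by an explicit boundary-limit condition of the form $\lim_{\Om\ni y\to x} u(y)=0$ at quasievery $x\in\bdy\Om$, together with the global properties \ref{dd-s}--\ref{dd-sup} and positivity. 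Since these latter items are common to the two definitions, the entire proof reduces to showing that, assuming \ref{dd-s}--\ref{dd-sup} and $u=0$ outside $\Om$, condition \ref{dd-P} is equivalent to the boundary-limit condition at $\bdy\Om$, and moreover \ref{dd-inf} follows automatically.

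For the forward direction, fix $B=B_r\Subset\Om$ with $x_0\in B$. Since $u$ is \p-harmonic on a neighborhood of $\bdy B$, the function $u|_{\bdy B}$ is continuous; together with $u|_{\bdy\Om}=0$, this gives continuous boundary data $u$ on $\bdy(\Om\setm\clB)$. By assumption $u=P_{\Om\setm\clB}u$ in $\Om\setm\clB$. The Kellogg property (Theorem~\ref{thm-Kellogg}) applied to $\Om\setm\clB$ ensures that the set of irregular points has Sobolev capacity zero, and Proposition~\ref{prop-reg5.1}\ref{reg-reg}$\imp$\ref{reg-cont-x0} yields $\lim_{\Om\setm\clB\ni y\to x}u(y)=0$ at every regular point $x\in\bdy\Om$, i.e.\ for q.e.\ $x\in\bdy\Om$. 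Condition~\ref{dd-inf} is then immediate since, by the Kellogg property, $\bdy\Om$ contains regular points.

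For the converse, assume the \cite{BBLehGreen} definition and fix $B=B_r\Subset\Om$ with $x_0\in B$. Since $u$ is superharmonic in $\Om$ (hence in $\Om\setm\clB$), and has continuous data on $\bdy B$ and limit $0$ at q.e.\ point of $\bdy\Om$, Proposition~\ref{prop-reg5.1}\ref{reg-reg}$\imp$\ref{reg-semicont-x0} at regular boundary points shows $u\in\UU_u(\Om\setm\clB)$, so $u\ge\uP_{\Om\setm\clB}u$. For the reverse inequality, $u$ is \p-harmonic in $\Om\setm\clB$ (by \ref{dd-h} and the assumption $x_0\in B$), bounded above by $\sup_{\bdy B}u\le u(x_0)$ thanks to \ref{dd-sup}, and satisfies the correct boundary limits from above on $\bdy B$ and q.e.\ on $\bdy\Om$; a symmetric application of Proposition~\ref{prop-reg5.1} puts $u\in\LL_u(\Om\setm\clB)$, whence $u\le\lP_{\Om\setm\clB}u\le\uP_{\Om\setm\clB}u$. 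Combining the two inequalities with \eqref{eq-lP-uP} gives $u=P_{\Om\setm\clB}u$, i.e.\ \ref{dd-P}.

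The main obstacle is that the boundary limit on $\bdy\Om$ is only assumed quasieverywhere, not everywhere, so one cannot plug the boundary data directly into the definition of Perron solutions. The resolution is precisely the Kellogg property combined with Proposition~\ref{prop-reg5.1}, which together allow us to check the $\liminf/\limsup$ conditions for membership in $\UU_u(\Om\setm\clB)$ and $\LL_u(\Om\setm\clB)$ only at regular boundary points, where the boundary limits do hold. A minor book-keeping point is to verify that the choice of $B_r$ does not matter: once \ref{dd-P} is established for one admissible ball $B_r$, Lemma~\ref{lem-subset-Perron} (restriction of Perron solutions) yields it for all smaller admissible balls, and the general case then follows by comparing data on intermediate balls.
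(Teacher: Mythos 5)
Your proof rests on a misidentification of what the competing definition actually says. You treat Definition~1.1 of Bj\"orn--Bj\"orn--Lehrb\"ack as if it replaces the Perron-matching condition~\ref{dd-P} by a quasieverywhere boundary-limit condition $\lim_{\Om\ni y\to x}u(y)=0$ for q.e.\ $x\in\bdy\Om$, and as if \ref{dd-inf} were absent there. In fact, as the paper points out at the start of its own proof, the only difference between Definition~\ref{deff-sing} and Definition~1.1 of \cite{BBLehGreen} (once $u$ is extended by $0$ outside the bounded $\Om$) is that \ref{dd-P} is replaced by the local Newtonian membership condition \ref{dd-Np0}: $u\in\Nploc(X\setm\{x_0\})$. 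Conditions \ref{dd-s}--\ref{dd-inf} are common to both. Your argument therefore proves an equivalence between \ref{dd-P} and a statement that \cite{BBLehGreen} does not make, and never engages with the condition \ref{dd-Np0} that is actually at issue.

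The correct bridge, and the one the paper uses, is the coincidence of Perron and Sobolev solutions for Newtonian boundary data on the \emph{bounded} set $\Om\setm\clB_r$ (Theorem~5.1 in \cite{BBS2}, equivalently Theorem~10.12 in \cite{BBbook}). For \ref{dd-P}$\Rightarrow$\ref{dd-Np0}, one multiplies $u$ by a Lipschitz cutoff $\eta$ supported in $\Om\setm B_{r/4}$ with $\eta=1$ on $B_r\setm B_{r/2}$, so $u\eta\in\Np(X)$ and $u=P_{\Om\setm\clB_r}u=P_{\Om\setm\clB_r}(u\eta)$; the coincidence theorem then gives $u-u\eta\in\Np_0(\Om\setm\clB_r)$, hence $u\in\Np(X\setm\clB_r)$, and $r\to0$ yields $u\in\Nploc(X\setm\{x_0\})$. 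For the converse, \ref{dd-Np0} makes $u$ itself Newtonian boundary data on $\bdy(\Om\setm\clB_r)$, so $u$ coincides with the Perron solution of itself, which is exactly \ref{dd-P}. Your route via the Kellogg property and Proposition~\ref{prop-reg5.1} does not establish either of these implications involving $\Nploc$, so the proof has a genuine gap.
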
  

In particular, if $\Om$ is bounded then many of the
results  in this and later sections were already obtained
in~\cite{BBLehGreen}.

\begin{proof}
Apart from taking restriction to $\Om$, 
it is only \ref{dd-P} that 
differs from~\cite[Definition~1.1]{BBLehGreen}.
Since 
$u=0$ outside $\Om$, and $\Om$ is bounded,
\ref{dd-P} therein becomes
\begin{enumerate}
\renewcommand{\theenumi}{\textup{(S\arabic{enumi}$'$)}}%
\setcounter{enumi}{\value{saveenumi}}
\item \label{dd-Np0}
$u \in \Nploc(X \setm \{x_0\})$.
\end{enumerate}

Assume
that $u$ is singular according to Definition~\ref{deff-sing}.
Let $B_r \Subset \Om$ 
and let $\eta$ be a Lipschitz function
such that 
$\chione_{B_r \setm B_{r/2}} \le \eta \le 1$ and 
$\supp \eta \Subset \Om \setm B_{r/4}$.
Then $u\eta \in \Np(X)$
and by \ref{dd-P},
\[
u = P_{\Om \setm \clB_r} u = P_{\Om \setm \clB_r} (u \eta)
\quad \text{in } \Om \setm \clB_r.
\]
Since Perron and Sobolev solutions of the Dirichlet problem
(on the bounded set $\Om \setm \clB_r$)
coincide for Newtonian boundary data, by
Theorem~5.1 
in Bj\"orn--Bj\"orn--Shan\-mu\-ga\-lin\-gam~\cite{BBS2}
(or \cite[Theorem~10.12]{BBbook}),
it follows that $u-u\eta\in\Np_0(\Om\setm\clB_r)$ and so
$u \in \Np(X \setm \clB_r)$.
Letting $r \to 0$ shows that \ref{dd-Np0} holds.

Conversely, if $u$ is singular according 
to~\cite[Definition~1.1]{BBLehGreen}, let
$B_r \Subset \Om$.
Then, by 
Theorem~5.1 
in~\cite{BBS2}
(or \cite[Theorem~10.12]{BBbook})
again,
$u=P_{\Om \setm \clB_r} u$ in $\Om \setm \clB_r$.
Hence \ref{dd-P} holds.
\end{proof}

\begin{lem} \label{lem-parabolic-superh-X}
  Assume that $\Cp(X \setm \Om)=0$ and that $X$ 
is either \p-parabolic or bounded.
If  $u$ is a superharmonic function in $\Om$, which is bounded from below,
then $u$ is constant.

In particular, in such $\Om$, there are no singular functions.
\end{lem}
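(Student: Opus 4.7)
The plan is to combine Corollary~\ref{cor-cpE-Om>0} with Lemma~\ref{lem-cp=0} and a standard truncation. Under the lemma's hypotheses, $X$ is not \p-hyperbolic, so Corollary~\ref{cor-cpE-Om>0} together with $\Cp(X \setm \Om)=0$ gives $\cp(E,\Om)=0$ for every $E \subset \Om$ with $\Cp(E)>0$. Lemma~\ref{lem-cp=0} then yields $\pot{\Om}{E} \equiv 1$ in $\Om$ for all such $E$, which is the only analytic input from the hypotheses that I will need.

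For the first assertion I would argue by contradiction. After subtracting the finite constant $c=\inf_\Om u$ (which preserves superharmonicity), I may assume $u \ge 0$ and $\inf_\Om u = 0$, and the goal becomes $u \equiv 0$. Suppose instead that $u(x_1)>0$ for some $x_1 \in \Om$, and pick a real number $M$ with $0<M<u(x_1)$. Lower semicontinuity of $u$ (as an lsc-regularized superharmonic function) gives a neighbourhood $U$ of $x_1$ on which $u>M$, so the level set $E := \{x \in \Om : u(x) \ge M\}$ contains $U$ and in particular satisfies $\Cp(E) \ge \Cp(U)\ge \mu(U)>0$. The truncation $v := \min\{u,2M\}/(2M)$ is, by the very definition of superharmonicity, a bounded lsc-regularized superminimizer in $\Om$ and hence itself superharmonic, with $0 \le v \le 1$ and $v \ge 1/2$ on $E$. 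Thus $2v$ lies in $\PsiQ{\Om}{E}$, so $2v \ge \pot{\Om}{E} \equiv 1$ in $\Om$, which forces $u \ge M$ everywhere in $\Om$. This contradicts $\inf_\Om u = 0$, so $u \equiv 0$, and unshifting gives $u \equiv c$ in the original situation.

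The nonexistence of singular functions then follows at once: any singular function $u$ is superharmonic in $\Om$ and satisfies $u>0$ there by Definition~\ref{deff-sing}, so it is bounded from below. The first part would then force $u$ to be a positive constant in $\Om$, contradicting \ref{dd-inf}. The only modest obstacle in the argument is checking that $\min\{u,2M\}$ really qualifies as a superharmonic function feeding into the definition of $\pot{\Om}{E}$; this is immediate from the definition of superharmonicity together with the boundedness $\min\{u,2M\} \le 2M<\infty$, which rules out the excluded case of being identically $\infty$ in a component of $\Om$.
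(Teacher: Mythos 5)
Your proof is correct, and it takes a genuinely different route from the paper's. The paper first extends $u$ to a superharmonic function on all of $X$ using the removability theorem (Theorem~6.3 in Bj\"orn~\cite{ABremove}, applicable because $\Cp(X\setm\Om)=0$), then applies Lemma~\ref{lem-cp=0} to the capacitary potential $\pot{X}{\clB}$ of a closed ball in $X$, and finally invokes the strong minimum principle on $X$ to conclude constancy. You instead stay inside $\Om$: Corollary~\ref{cor-cpE-Om>0} hands you $\cp(E,\Om)=0$ whenever $\Cp(E)>0$, Lemma~\ref{lem-cp=0} then gives $\pot{\Om}{E}\equiv 1$ in $\Om$, and comparing this potential with the truncated function $\min\{u,2M\}/M$ forces $u\ge M$ everywhere, contradicting $\inf_\Om u=0$. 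Your version avoids both the extension across $X\setm\Om$ and the strong minimum principle, at the cost of leaning on Corollary~\ref{cor-cpE-Om>0} (which the paper's proof does not use). The small details you flag are fine: $\min\{u,2M\}$ is indeed superharmonic by the lattice definition of superharmonicity plus boundedness, $\Cp(U)\ge\mu(U)>0$ because the $\Np$-norm dominates the $L^p$-norm and open sets have positive measure, and scaling by a positive constant preserves superharmonicity so $2v\in\PsiQ{\Om}{E}$. The deduction of nonexistence of singular functions from \ref{dd-inf} is the same in both proofs.
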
  

That there may exist superharmonic functions which are not bounded from below
in \p-parabolic spaces is easily seen by looking at unweighted $\R^2$ with $p=2$:
take $u(x)=-\log|x|$ or $u(x)=\min\{-\log|x|,0\}$ (for an example which is bounded
from above).

\begin{proof}[Proof of Lemma~\ref{lem-parabolic-superh-X}]
By Theorem~6.3 in Bj\"orn~\cite{ABremove} (or \cite[Theorem~12.3]{BBbook}),
$u$ has an extension as a superharmonic function on $X$ (also called $u$).
As $u$ is lsc-regularized, $\inf_X u = \inf_\Om u > -\infty$.

We may assume that $\inf_X u=0$.
Let $B$ be a ball.
By the lower semicontinuity, $m:=\min_{\clB} u$ exists.
Then $u \ge m \pot{X}{\clB}$  by definition~\eqref{def-pot-unbdd} 
of the capacitary potential.
Since $X$ is \p-parabolic or bounded, we have $\cp(\clB,X)=0$, by
Proposition~\ref{prop-hyp-char-1}, and hence
$u \ge m \pot{X}{\clB} \equiv m$, by Lemma~\ref{lem-cp=0}.
As $\inf_X u=0$, we must have $m=0$, and thus
it follows from the strong minimum principle 
(\cite[Theorem~9.13]{BBbook}) that $u$ is constant.
\end{proof}

We will need the following special case of
Proposition~4.4 in~\cite{BBLehGreen}.

\begin{lem} \label{lem-punctured-ball}
Let $u \ge 0$ be a function which is superharmonic in $\Om$  and
\p-harmonic in $\Om\setm \{x_0\}$, where $x_0 \in \Om$.
Then the limit
$a:=\lim_{x \to x_0} u(x)$ exists\/ {\rm(}possibly infinite\/{\rm)}
and $u(x_0)=a$. 

In particular, if $u$ is a singular function in a domain $\Om$,
then $u$ is $\eR$-continuous in $\Om$.
\end{lem}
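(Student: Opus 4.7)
The plan is to establish two one-sided inequalities, namely $u(x_0) \le \liminf_{y \to x_0} u(y)$ and $\limsup_{y \to x_0} u(y) \le u(x_0)$, from which the existence of the limit and its identification with $u(x_0)$ follow. Once this is in place, the ``in particular'' clause for singular functions is immediate: \ref{dd-s} and \ref{dd-h} in Definition~\ref{deff-sing} provide exactly the hypotheses of the main statement, and $u$ is already continuous on $\Om \setm \{x_0\}$ by \p-harmonicity there.

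The lower bound is easy. Since $u$ is superharmonic in $\Om$ it is lsc-regularized, so by~\eqref{eq-def-lsc-reg} together with the fact that punctured balls around $x_0$ have positive measure (local doubling) and the continuity of $u$ on $\Om \setm \{x_0\}$, one immediately has $u(x_0) \le \liminf_{y \to x_0,\, y \ne x_0} u(y)$.

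For the upper bound I would split into cases on whether $u$ is bounded in a punctured neighbourhood of $x_0$. In the unbounded case, fix $r>0$ with $\clB_r \Subset \Om$ such that $\sup_{B_r \setm \{x_0\}} u = \infty$. By the Harnack inequality for positive \p-harmonic functions, available under our local doubling and Poincar\'e assumptions, applied through a finite chain of balls covering the annulus $A_\rho := B_{2\rho} \setm \clB_{\rho/2}$, there is a constant $C$ independent of small $\rho>0$ with $\sup_{A_\rho} u \le C \inf_{A_\rho} u$. Hence unboundedness forces $\inf_{A_\rho} u \to \infty$ as $\rho \to 0$, so $\lim_{y \to x_0} u(y) = \infty$; combined with the lsc bound this gives $u(x_0) = \infty$ and the desired identity. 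In the bounded case I would invoke the oscillation-decay estimates for \p-harmonic functions on shrinking concentric annuli around $x_0$: iteration of Caccioppoli/weak-Harnack type inequalities in our metric setting yields $\osc_{A_\rho} u \to 0$ as $\rho \to 0$, so, combined with Harnack, the values of $u$ on the spheres $\bdy B_\rho$ converge to a common limit $L$, which the lsc bound forces to equal $u(x_0)$.

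The main technical obstacle is the bounded case. The naive linear strategy of subtracting the \p-harmonic extension $h$ of $u|_{\bdy B_r}$ from $u$ and analysing the difference $u-h$ breaks down when $p \ne 2$, because the difference of a superharmonic and a \p-harmonic function is in general neither super- nor subharmonic, so the comparison principle cannot be applied to $u-h$. One must therefore genuinely use the nonlinear interior Hölder/oscillation estimates for \p-harmonic functions on annuli. The details are carried out precisely in this way in Proposition~4.4 of~\cite{BBLehGreen}, of which our statement is the promised special case.
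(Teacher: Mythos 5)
The paper gives no proof of this lemma: it is stated to be a special case of Proposition~4.4 in Bj\"orn--Bj\"orn--Lehrb\"ack~\cite{BBLehGreen}, and the reader is directed there. Your proposal ultimately defers to exactly the same citation, so at the level of what actually gets proved in this paper the two agree; the ``in particular'' clause is handled correctly in both cases.

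However, the sketch that precedes your citation has a genuine gap in the bounded case. In the unbounded case your scale-invariant Harnack chain on the annuli $A_\rho$ is correct and standard. In the bounded case the claim that ``iteration of Caccioppoli/weak-Harnack type inequalities yields $\osc_{A_\rho}u\to0$'' does not follow from the interior regularity of \p-harmonic functions in $\Om\setm\{x_0\}$ alone: the interior H\"older estimate at $z\in A_\rho$ controls oscillation only on balls of radius comparable to $\rho$, and nothing in those local estimates forces the oscillations on successive shrinking annuli to decrease, let alone vanish; a~priori the function could approach different limit values along different radial directions. What makes the bounded case work is the superharmonicity of $u$ \emph{across} $x_0$ combined with a capacity argument, and the natural dichotomy is $\Cp(\{x_0\})=0$ versus $\Cp(\{x_0\})>0$, not bounded versus unbounded. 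When $\Cp(\{x_0\})=0$ and $u$ is bounded, the removable-singularity theorem of Bj\"orn~\cite{ABremove} (i.e.\ \cite[Theorem~12.2]{BBbook}) makes $u$ \p-harmonic across $x_0$, after which continuity is automatic; when $\Cp(\{x_0\})>0$, $u(x_0)<\infty$ by \cite[Corollary~9.51]{BBbook} (as the paper itself recalls in the proof of Lemma~\ref{lem-sing-x_0}), and continuity at $x_0$ then requires a separate argument. Your sketch neither invokes removability nor distinguishes by $\Cp(\{x_0\})$, and that is precisely where the real work sits. The hand-off to~\cite{BBLehGreen} saves the proposal formally, but the bounded case as written would not survive being turned into a proof.
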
  

The following simple observation is worth keeping in mind.

\begin{lem} \label{lem-sing-x_0}
Let $u$ be a singular function in a domain $\Om$ with singularity at $x_0$,
Then $u(x_0)=\infty$ if and only if $\Cp(\{x_0\})=0$.
\end{lem}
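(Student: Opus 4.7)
The plan is to prove the two implications separately, each with a short standard argument from nonlinear potential theory.

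For the implication $\Cp(\{x_0\})>0\imp u(x_0)<\infty$ (the contrapositive of $u(x_0)=\infty\imp\Cp(\{x_0\})=0$), I would simply invoke the standard fact that every superharmonic function is finite quasi-everywhere. Since $u$ is superharmonic in $\Om$ by property~\ref{dd-s}, and the singleton $\{x_0\}$ has positive Sobolev capacity by assumption, this immediately forces $u(x_0)<\infty$. This finite-q.e.\ property is due to Kinnunen--Martio~\cite{KiMa02} and is also recorded in~\cite{BBbook} under the local doubling and local \p-Poincar\'e assumptions used here.

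For the reverse implication $\Cp(\{x_0\})=0\imp u(x_0)=\infty$, I would argue by contradiction. Suppose $\Cp(\{x_0\})=0$ and $u(x_0)<\infty$. Lemma~\ref{lem-punctured-ball} shows that $u$ is $\eR$-continuous on $\Om$, so the finite value at $x_0$ promotes $u$ to a real-valued continuous function on all of $\Om$, and by~\ref{dd-sup} it is bounded above by $u(x_0)=\sup_\Om u$. Since $u$ is \p-harmonic on the open set $\Om\setm\{x_0\}$ by~\ref{dd-h} and the singularity has zero Sobolev capacity, a removable singularity theorem for bounded \p-harmonic functions across a polar set (as in~\cite[Theorem~12.2]{BBbook}) supplies a \p-harmonic extension to $\Om$, which must coincide with $u$ by continuity. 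Hence $u$ is \p-harmonic on the whole domain $\Om$ and attains its maximum at the interior point $x_0$; the strong maximum principle then forces $u$ to be constant on $\Om$, say $u\equiv c\ge 0$. But $c>0$ contradicts \ref{dd-inf}, while $c=0$ contradicts the positivity $u>0$ on $\Om$ from Definition~\ref{deff-sing}. This contradiction yields $u(x_0)=\infty$.

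The only mild obstacle is ensuring that the two external ingredients---finite-q.e.\ for superharmonic functions and removability of $\Cp$-null singularities for bounded \p-harmonic functions---are available under the local doubling and local \p-Poincar\'e framework of this paper. Both are standard and recorded in~\cite{BBbook} at this level of generality, so no new work is required beyond citing them.
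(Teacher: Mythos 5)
Your proof is correct and follows essentially the same route as the paper's: the forward implication via the finite-q.e.\ property of superharmonic functions (the paper cites Kinnunen--Shan\-mu\-ga\-lin\-gam~\cite{KiSh06} / \cite[Corollary~9.51]{BBbook} for this, which is the same content as your reference), and the converse by contradiction via boundedness, removability of the polar singularity (\cite[Theorem~12.2]{BBbook}), and the strong maximum principle combined with~\ref{dd-sup} and~\ref{dd-inf}. Your extra detour through Lemma~\ref{lem-punctured-ball} to identify the extension with $u$ is harmless but unnecessary, since $u$, being superharmonic, is lsc-regularized and therefore automatically coincides everywhere with the continuous \p-harmonic extension.
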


\begin{proof}
If $\Cp(\{x_0\})>0$, then 
Proposition~2.2
in Kinnunen--Shan\-mu\-ga\-lin\-gam~\cite{KiSh06}
(or \cite[Corollary~9.51]{BBbook})
implies that $u(x_0)<\infty$.

When $\Cp(\{x_0\})=0$, assume for a contradiction that
$u(x_0)<\infty$. 
Then by \ref{dd-sup},
$u$ is  bounded in $\Om$.
Hence by Theorem~6.2 in Bj\"orn~\cite{ABremove} (or \cite[Theorem~12.2]{BBbook}),
$u$ is \p-harmonic in $\Om$.
But then \ref{dd-sup} and \ref{dd-inf} contradict the strong maximum principle.
Therefore $u(x_0)=\infty$.
\end{proof}

The following result shows that there are some
  redundancies in the definition of singular functions.
In particular, it shows that Definition~\ref{deff-sing}
is equivalent to the definition in the introduction.

\begin{prop} \label{prop-sing-eqv}
  Let $\Om$ be a domain
  and let $u:X^* \to [0,\infty]$ 
be a function such that 
$u>0$ in $\Om$ and  $u=0$ outside $\Om$.
Then the following are equivalent\/\textup{:}
\begin{enumerate}
\item \label{s1}
  $u$ is singular in $\Om$ with singularity at $x_0$,
\item \label{s2}
  \ref{dd-s}, \ref{dd-inf} and \ref{dd-P} hold,
\item  \label{s3}
  \ref{dd-s} and \ref{dd-P} hold, and $u$ is nonconstant in $\Om$,
\item   \label{s4}
  \ref{dd-s} and \ref{dd-P} hold, and $\Cp(X \setm \Om)>0$ or $X$ is \p-hyperbolic.
\end{enumerate}  
\end{prop}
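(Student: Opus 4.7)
The implications (a)$\imp$(b), (a)$\imp$(c), (a)$\imp$(d) are essentially immediate: (b) merely extracts three of the defining axioms; (c) holds since (S4) together with $u>0$ in $\Om$ forces $u$ to be nonconstant; and (d) is the contrapositive of Lemma~\ref{lem-parabolic-superh-X} applied to the nonconstant, nonnegative superharmonic function $u$.

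The central preliminary is to show that (S1) and (S5) together imply (S2) and (S3). By (S5), $u$ is a Perron solution on $\Om\setm\clB_r$ for every $B_r\Subset\Om$, so it is either \p-harmonic or identically $\pm\infty$ in each component; the superharmonicity of $u$ (so $u$ is finite q.e.) rules out the latter, proving (S2) on $\Om\setm\{x_0\}$. Lemma~\ref{lem-punctured-ball} then gives the $\eR$-continuity of $u$ in $\Om$ with $u(x_0)=\lim_{x\to x_0}u(x)$; for (S3), when $u(x_0)<\infty$ one picks $r$ small enough that $u\le u(x_0)+\eps$ on $\clB_r$, and the constant $u(x_0)+\eps$ then belongs to $\UU_f$ for the Perron problem of (S5), giving $u\le u(x_0)+\eps$ in $\Om\setm\clB_r$. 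Letting $\eps\to0$ yields (S3), and thus (b)$\imp$(a).

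Since the contrapositive of Lemma~\ref{lem-parabolic-superh-X} also yields (c)$\imp$(d), it remains to prove (d)$\imp$(S4). Suppose for contradiction that $c:=\inf_\Om u>0$; after rescaling we may take $c=1$, so $u\in\PsiQ{\Om}{\clB_r}$ and hence $u\ge\pot{\Om}{\clB_r}$ in $\Om$ whenever $\clB_r\Subset\Om$. The strategy is to derive $\cp(\clB_r,\Om)=0$, which contradicts Corollary~\ref{cor-cpE-Om>0} under (d). When $\Cp(\bdy\Om)>0$, Kellogg's Theorem~\ref{thm-Kellogg} produces a regular boundary point $z\in\bdy\Om$ for $\Om\setm\clB_r$ at which (S5) forces $u(y)\to0$, contradicting $u\ge 1$. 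In the remaining sub-case $\Cp(\bdy\Om)=0$, the standing local Poincar\'e inequality and the connectedness of $X$ should guarantee that capacity-zero sets do not separate $X$, so that $X\setm\Om=\bdy\Om$ and consequently $\Cp(X\setm\Om)=0$; (d) then forces $X$ to be \p-hyperbolic, and a truncation/exhaustion argument based on the identity $\pot{\Om}{\clB_r}=\lP_{\Om\setm\clB_r}\chione_{\clB_r}$ from Proposition~\ref{prop-ex-pot-unbdd}, combined with the bound $u\ge\pot{\Om}{\clB_r}$, is intended to yield $\cp(\clB_r,\Om)=0$. The main technical obstacles are verifying this nonseparation property of capacity-zero sets purely from the standing local hypotheses, and constructing admissible Newtonian test functions in the case $\mu(\Om)=\infty$, where $\chione_\Om$ itself fails to lie in $\Np(X)$.
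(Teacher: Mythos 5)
Your overall decomposition (the trivial implications, $(b)\Rightarrow(a)$ via (S1)+(S5), and $(c)\Rightarrow(d)$ via the contrapositive of Lemma~\ref{lem-parabolic-superh-X}) agrees with the paper's, and your $(b)\Rightarrow(a)$ argument (deriving (S2) from the Perron-solution alternative and (S3) from a constant comparison function) is sound and essentially equivalent to the paper's treatment.

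The genuine gap is in $(d)\Rightarrow$(S4). First, you case-split on $\Cp(\bdy\Om)$ rather than on the dichotomy in (d); passing between them requires the nonseparation property of capacity-zero sets, which you yourself flag as unverified. More seriously, the strategy you sketch in the remaining sub-case -- deriving $\cp(\clB_r,\Om)=0$ from $\inf_\Om u>0$ -- cannot work: under (d), Corollary~\ref{cor-cpE-Om>0} gives $\cp(\clB_r,\Om)>0$, and the inequality $u\ge\pot{\Om}{\clB_r}$ is simply true (it follows from $u\ge 1\ge\chione_{\clB_r}$ being superharmonic) with no contradictory content. In a \p-hyperbolic space there exist bounded superharmonic functions bounded away from zero, so the mere assumption $\inf_\Om u>0$ puts no pressure on the condenser capacity; the contradiction must instead come from the behaviour of $u$ at $\infty$. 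This is exactly what Lemma~\ref{lem-hyperbolic-lim-infty-simple} was built for: applied to $\Om':=\Om\setm\clB_r$ with $K:=\clB_r$ (so $\Cp(X\setm(\Om'\cup K))=\Cp(X\setm\Om)=0$ and $X$ is \p-hyperbolic), it gives
\[
\liminf_{\Om'\ni x\to\infty}\lP_{\Om'}\chione_{\bdy\Om'}(x)=0,
\]
and combining (S5) with the boundary comparison $u\le\bigl(\sup_{\bdy B_r}u\bigr)\chione_{\bdy\Om'}$ on $\bdystar\Om'$ then yields $\liminf_{x\to\infty}u(x)=0$, hence (S4). The Kellogg-property argument you give does correctly dispose of the case $\Cp(X\setm\Om)>0$ (a regular finite boundary point forces $u\to 0$ there, contradicting $\inf_\Om u>0$). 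You should drop the $\cp(\clB_r,\Om)=0$ strategy and replace it with the application of Lemma~\ref{lem-hyperbolic-lim-infty-simple}, splitting directly on whether $\Cp(X\setm\Om)$ is positive or zero.
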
  

In order to prove this we will need the following lemma,
which in particular applies to complements of balls.
The role of $K$ is to patch ``holes'' in $\Om$.

\begin{lem} \label{lem-hyperbolic-lim-infty-simple}
Assume that $\Om$ is unbounded and 
that there is a compact $K \subset X$ such that
$\Cp(X \setm (\Om \cup K))=0$.
Then either $\lP_{\Om} \chione_{\bdy \Om}\equiv1$ or
\begin{equation}     \label{eq-lim-infty=0}
    \liminf_{\Om \ni x \to \infty} \lP_{\Om} \chione_{\bdy \Om} (x)=0.
\end{equation}
In particular, if $\Cp(X\setm\Om)=0$ or 
$X$ is  \p-hyperbolic, then \eqref{eq-lim-infty=0} holds.
\end{lem}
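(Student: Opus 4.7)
The plan is as follows. Define $v := \lP_\Om \chione_{\bdy\Om}$, which is \p-harmonic in $\Om$ with values in $[0,1]$ (being a bounded Perron solution in a domain), and set $\alpha := \liminf_{\Om \ni x\to\infty} v(x)$. If $\alpha = 0$ the second alternative holds, so I assume $\alpha > 0$ and aim to show $v \equiv 1$. The first step is to invoke the Kellogg property (Theorem~\ref{thm-Kellogg}) together with Lemma~\ref{lem-Perron-chi} applied to $E = \bdy\Om$, which gives $v = P_\Om \chione_{\bdy\Om \setm I} = \uP_\Om \chione_{\bdy\Om \setm I}$, where $I$ is the set of irregular boundary points for $\Om$ and $\Cp(I \cap X) = 0$.

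The heart of the argument is a two-stage comparison. In the first stage I show $v \ge \alpha$ throughout $\Om$ by considering the superharmonic function $h := \min\{v/\alpha, 1\} \in [0,1]$. At every regular boundary point of $\Om$, $v \to 1$, so $h \to 1$; and at $\infty$, $\liminf v \ge \alpha$ gives $\liminf h \ge 1$. The points where $\liminf h$ might drop below $1$ lie in $I \cap X$, which has capacity zero by the Kellogg property; handling this set via the removable-singularity theorem (Theorem~6.3 in Bj\"orn~\cite{ABremove}) and applying the minimum principle for superharmonic functions yields $h \equiv 1$, equivalently $v \ge \alpha$ in $\Om$.

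In the second stage, having $v \ge \alpha$, the function $\tilde v := (v-\alpha)/(1-\alpha)$ is \p-harmonic in $\Om$ with values in $[0,1]$, tends to $1$ at regular boundary points, and is $\ge 0$ at irregular boundary points and at $\infty$. Hence $\tilde v \in \UU_{\chione_{\bdy\Om \setm I}}(\Om)$, and the minimality of $\uP_\Om \chione_{\bdy\Om \setm I} = v$ gives $\tilde v \ge v$. The inequality $(v-\alpha)/(1-\alpha) \ge v$ rearranges to $\alpha(1-v) \le 0$, which together with $v \le 1$ and $\alpha > 0$ forces $v \equiv 1$. The main obstacle I expect is making the first stage fully rigorous: since the points of $I \cap X$ lie on $\bdy\Om$ rather than inside $\Om$, the classical removable-singularity theorem does not apply directly, and a careful pasting argument using Lemma~\ref{lem-pasting} will be needed to patch $h$ across them.

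For the ``in particular'' statement, I rule out the option $v \equiv 1$ in both cases. If $\Cp(X \setm \Om) = 0$ and $X$ is \p-parabolic (or bounded), then $v$ extends by removable singularities to a bounded \p-harmonic function on $X$, which by Lemma~\ref{lem-parabolic-superh-X} must be constant; since $u \equiv 0 \in \LL_{\chione_{\bdy\Om}}(\Om)$ and every subharmonic member of $\LL_{\chione_{\bdy\Om}}$ has $\limsup \le 0$ at $\infty$, this constant equals $0$, so $v \equiv 0$ and the liminf alternative trivially holds. If $X$ is \p-hyperbolic, a capacitary potential $\pot{X}{\clB_R}$ provided by Proposition~\ref{prop-hyp-char-1} yields a nontrivial superharmonic function strictly below $1$ somewhere, which via a comparison argument is incompatible with $v \equiv 1$, leaving only the liminf alternative.
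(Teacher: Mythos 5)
Your Stage~2 is essentially the paper's argument: from $v \ge \alpha$ one passes to $(v-\alpha)/(1-\alpha) \in \UU_{\chione_{\bdy\Om\setminus I}}(\Om)$, hence $\ge \uP_\Om\chione_{\bdy\Om\setminus I} = v$, which forces $v \ge 1$. Your ``in particular'' arguments also work, modulo small rephrasings (for the parabolic/$\Cp$-zero case the cleanest route is to quote Proposition~\ref{prop-Cp=0-para} directly, which gives $v\equiv 0$; your $\pot{X}{\clB_R}$ comparison in the hyperbolic case is fine once you note that $\Psi^X_E$ is unchanged by capacity-zero modifications, so $\pot{X}{X\setm\Om}=\pot{X}{K\cap(X\setm\Om)}\le \pot{X}{\clB_R}$ for suitable $R$; the paper instead computes $0<\cp(X\setm\Om,X)\le\cp(K,X)<\infty$ and uses Proposition~\ref{prop-ex-pot-unbdd} to see $u$ is nonconstant).

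The genuine gap is Stage~1, and your proposed tools do not close it. You want $v\ge\alpha$ by a minimum principle applied to $h=\min\{v/\alpha,1\}$, handling the irregular set $I\cap X$ ``via the removable-singularity theorem and a pasting argument''. But $I\cap X$ sits on $\bdy\Om$, so to invoke the removable-singularity theorem you need a relatively closed $\Cp$-null set $E$ inside an open $G\supset\Om$, and $G\setm\Om$ must lie in $E$. Near a point of $I\cap X\cap K$ any open $G$ necessarily picks up a piece of $X\setm\Om$ that need not have capacity zero (the hypothesis only kills capacity \emph{outside} $K$), so you cannot open up $\Om$ around those points; and the pasting Lemma~\ref{lem-pasting} requires a function already defined and superharmonic on a larger open set, so it does not create the needed extension. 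The missing ingredient is the identification that the paper makes at the outset via Proposition~\ref{prop-ex-pot-unbdd}: $\lP_\Om\chione_{\bdy\Om}$ equals (the restriction of) the capacitary potential $\pot{X}{X\setm\Om}$, which is superharmonic on all of $X$ with $u=0$ nowhere forced at $\bdy\Om$. Once $v$ is known to extend to a superharmonic function on $X$, the bound $v\ge \alpha$ does hold: for $a<\alpha$ the sublevel set $\{v<a\}$ is a bounded open subset of $X$, and a minimum principle for superharmonic functions on $X$ applied to $\min\{v,a\}$ (or, equivalently, the level-set identity $\min\{1,v/a\}=\pot{X}{X^a}$ from Lemma~\ref{lem-level-both} together with $\pot{X}{X\setm B_R}\equiv 1$) forces $v\ge a$. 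Your outline never invokes this potential representation, so Stage~1 remains unproved as written.
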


The proof below uses
Lemma~4.5 from \cite{BBbook}.
Its proof only uses
the \p-Poincar\'e inequality on each ball separately, without
assuming any
uniformity in the constants, and is thus available here.

\begin{proof}
Proposition~\ref{prop-ex-pot-unbdd} (with $\Om$ and $E$
replaced by $X$ and $X\setm\Om$, respectively) shows that 
\begin{equation*} 
u:= \pot{X}{X\setm\Om} = \lP_{\Om} \chione_{\bdy \Om} = P_{\Om} \chione_{\bdy \Om\setm I}
\quad  \text{in }\Om,
\end{equation*}
where  $I\subset \bdystar \Om$ is the set of irregular boundary points for $\Om$.
Let
\begin{equation*} 
 c=\liminf_{\Om \ni x \to \infty} u(x).
\end{equation*}
If $c=0$, then \eqref{eq-lim-infty=0} holds.
If instead $c>0$, then 
\[
u-c  \ge (1-c) P_{\Om} \chione_{\bdy\Om\setm I}  = (1-c)u \quad
\text{in } \Om.
\]
Hence $u\ge1$ in $\Om$, which together with the trivial 
inequality $u\le1$ proves the first part of the lemma.

If $\Cp(X \setm \Om)=0$, then $u \equiv 0$ by definition,
and so \eqref{eq-lim-infty=0} holds.

Finally, assume that $X$ is \p-hyperbolic and $\Cp(X\setm \Om)>0$.
Then by Propositions~\ref{prop-cp} and~\ref{prop-hyp-char-1},
\[
0<\cp(X\setm\Om,X) 
\le \cp(X \setm (\Om \cup K),X) + \cp(K,X)
= \cp(K,X) <\infty.
\]
Proposition~\ref{prop-ex-pot-unbdd} 
then implies that 
$\int_X g_u^p\,d\mu>0$, which 
excludes the possibility 
that  $u\equiv1$ in $\Om$,  and so $c=0$.
\end{proof}

\begin{proof}[Proof of Proposition~\ref{prop-sing-eqv}]
\ref{s1}\imp\ref{s2}
This is trivial.

\ref{s2}\imp\ref{s1}
Note first that 
\ref{dd-h} follows directly by letting $r\to0$ in \ref{dd-P}
 and using Proposition~9.21 in~\cite{BBbook}.
Let $M_r=\max_{\bdy B_r} u$ for $0 < r< \dist(x_0,X\setm \Om)$.
By \ref{dd-P},  $u\le M_r$ in $\Om\setm \clB_r$.
Thus, by Lemma~\ref{lem-punctured-ball}, $u(x_0)=\lim_{r \to 0} M_r = \sup u$,
i.e.\ \ref{dd-sup} holds.

\ref{s2}\imp\ref{s3}
As $u(x_0)>0$ and \ref{dd-inf} holds, $u$ must be nonconstant
in $\Om$.

\ref{s3}\imp\ref{s4}
This follows directly from Lemma~\ref{lem-parabolic-superh-X}.

\ref{s4}\imp\ref{s2}
If $\Cp(X \setm \Om)>0$, then \ref{dd-inf} follows from
\ref{dd-P} and the Kellogg property (Theorem~\ref{thm-Kellogg}).
On the other hand, if $X$ is \p-hyperbolic 
and $\Cp(X \setm \Om)=0$, then \ref{dd-inf} follows from
\ref{dd-P} and Lemma~\ref{lem-hyperbolic-lim-infty-simple}
(applied with $K=\clB_r \Subset \Om$
 and $\Om \setm \clB_r$ instead of  $\Om$),
since $u\le \chione_{\bdy(\Om\setm\clB_r)}\sup_{\bdy B_r} u$ 
on $\bdy(\Om \setm \clB_r)$.
\end{proof}

We are now ready to show the existence of singular functions.

\begin{thm} \label{thm-exist-singular}
Let $\Om$ be an   unbounded domain.
  Assume that $X$ is \p-hyperbolic or that  $\Cp(X \setm \Om)>0$.
Then there is a singular function in $\Om$ with singularity at~$x_0$.
\end{thm}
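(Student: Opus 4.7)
The plan is to construct the singular function as a locally uniform limit of suitably rescaled capacitary potentials $\pot{\Om}{\clB_r}$ as $r\to 0$. Fix $R>0$ with $\clB_R\Subset\Om$, $x_0\in B_R$, and a point $y_0\in\bdy B_R$. For $r\in(0,R)$, set $u_r:=\pot{\Om}{\clB_r}$. By Corollary~\ref{cor-cpE-Om>0} the hypothesis gives $\cp(\clB_r,\Om)>0$, while monotonicity and $\clB_r\Subset\Om$ yield $\cp(\clB_r,\Om)<\infty$. Proposition~\ref{prop-ex-pot-unbdd} then delivers $u_r\in\Dp(X)$, \p-harmonicity of $u_r$ in $\Om\setm\clB_r$, and the Perron representation $u_r=P_{\Om\setm\clB_r}\chione_{\clB_r\setm I_r}$ (with $I_r$ the set of irregular points for $\Om\setm\clB_r$). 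Set $v_r:=u_r/u_r(y_0)$, so that $v_r(y_0)=1$; each $v_r$ is superharmonic in $\Om$, \p-harmonic in $\Om\setm\clB_r$, and Harnack's inequality together with interior H\"older regularity of \p-harmonic functions makes $\{v_r\}$ locally uniformly bounded and equicontinuous on $\Om\setm\{x_0\}$.

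By Arzel\`a--Ascoli and a diagonal selection along an exhaustion of $\Om\setm\{x_0\}$ by compact sets, extract $r_k\to 0$ with $v_{r_k}\to v$ locally uniformly on $\Om\setm\{x_0\}$. The limit $v$ is \p-harmonic in $\Om\setm\{x_0\}$ with $v(y_0)=1$, and so $v>0$ on $\Om\setm\{x_0\}$ by the strong minimum principle. Extend $v$ to $X^*$ by setting $v(x_0):=\liminf_{x\to x_0}v(x)\in(0,\infty]$ and $v\equiv 0$ off $\Om$. Superharmonicity of $v$ on $\Om$ is obtained by truncation: each $\min\{v_{r_k},N\}$ is a uniformly bounded superharmonic function which converges locally uniformly on $\Om\setm\{x_0\}$ to $\min\{v,N\}$, so the limit is superharmonic on $\Om$ by standard convergence results for superharmonic functions (using Lemma~\ref{lem-reflex-conv} for the requisite energy control); letting $N\to\infty$ yields (S1).

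The key step is verifying the Perron property (S5). Fix $\rho\in(0,R)$. For $r<\rho$, Lemma~\ref{lem-subset-Perron} applied with $G=\Om\setm\clB_\rho\subset\Om\setm\clB_r$ gives
\[
u_r=P_{\Om\setm\clB_\rho}\ut_r \quad\text{in } \Om\setm\clB_\rho,
\]
where $\ut_r$ equals $u_r$ on $\bdy B_\rho$ and $0$ on $\bdystar\Om$. Dividing by $u_r(y_0)$ yields $v_r=P_{\Om\setm\clB_\rho}h_r$, with $h_r=v_r$ on $\bdy B_\rho$ and $h_r=0$ on $\bdystar\Om$. Along $r_k\to 0$, the uniform convergence $v_{r_k}\to v$ on the compact set $\bdy B_\rho$ gives uniform convergence $h_{r_k}\to h$ on $\bdystar(\Om\setm\clB_\rho)$, where $h=v|_{\bdy B_\rho}$ on $\bdy B_\rho$ and $h=0$ on $\bdystar\Om$. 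The comparison principle for Perron solutions then implies $\|v_{r_k}-P_{\Om\setm\clB_\rho}h\|_\infty\le\|h_{r_k}-h\|_\infty\to 0$, so $v=P_{\Om\setm\clB_\rho}h$ in $\Om\setm\clB_\rho$, proving (S5).

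With (S1), (S5), $v>0$ on $\Om$, $v=0$ off $\Om$, and the theorem's hypothesis, Proposition~\ref{prop-sing-eqv}\ref{s4}\imp\ref{s1} identifies $v$ as a singular function in $\Om$ with singularity at $x_0$. The main obstacle is verifying (S5): the boundary of $\Om\setm\clB_\rho$ includes the point at infinity in the unbounded case, so the stability of the Perron representation under the limit must be handled with care. The theorem's hypothesis enters twice: through Corollary~\ref{cor-cpE-Om>0} to ensure that the potentials $u_r$ are nontrivial, and through Proposition~\ref{prop-sing-eqv} (whose proof rests on Lemma~\ref{lem-hyperbolic-lim-infty-simple}) to rule out the degenerate possibility that $v$ is constant in the limit.
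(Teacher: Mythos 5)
Your construction is a genuinely different one from the paper's, but it is correct in outline. The paper exhausts $\Om$ by bounded domains $\Om_j\nearrow\Om$ and passes to the limit of the Green functions $u_j$ in $\Om_j$ (existence from Bj\"orn--Bj\"orn--Lehrb\"ack~\cite[Theorem~1.3]{BBLehGreen}); the crucial uniform two-sided bound on a sphere $S_r$ around $x_0$ comes from the quantitative Wiener-type estimate $u_j\simeq\cp(B_r,\Om_j)^{1/(1-p)}$ of~\cite[Theorem~7.1]{BBLehIntGreen}, combined with $\cp(B_r,\Om_j)\to\cp(B_r,\Om)>0$ from Proposition~\ref{prop-cp-inc-gen} and Corollary~\ref{cor-cpE-Om>0}. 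You instead fix the ambient set $\Om$, shrink the inner body $\clB_r$, normalize the capacitary potentials $\pot\Om{\clB_r}$ at a reference point $y_0$, and feed the limit through Proposition~\ref{prop-sing-eqv}\ref{s4}. What the paper's route buys is that the normalization and nondegeneracy of the limit are {\em automatic} from the estimate in~\cite{BBLehIntGreen}, whereas your normalization $v_r=u_r/u_r(y_0)$ has to be controlled a posteriori. What your route buys is that it stays entirely inside the capacitary-potential machinery that the paper has already built (Proposition~\ref{prop-ex-pot-unbdd}, Lemma~\ref{lem-subset-Perron}), so the verification of~\ref{dd-P} is particularly clean. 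Your use of Lemma~\ref{lem-subset-Perron} together with the comparison principle to pass to the limit in the Perron representation is essentially the same idea as the paper's comparison of $P_{\Om\setm\clB}u$ with $P_{\Om_j\setm\clB}u_j$, and both handle the point at $\infty$ correctly.

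Two technical points deserve more care than your sketch gives them. First, your citation of Lemma~\ref{lem-reflex-conv} for ``the requisite energy control'' in the superharmonicity step is off target: that lemma requires a uniform $L^p$ bound on the minimal gradients, and $\int_X g_{v_{r_k}}^p\,d\mu = \cp(\clB_{r_k},\Om)/u_{r_k}(y_0)^p$ need not be bounded (and truncating at level $N$ does not obviously restore a uniform bound without already knowing the level-set identity from Theorem~\ref{thm-cp-level-pot} --- which would be circular at this stage). The right tool here is the convergence theorem for superharmonic functions, Bj\"orn--Bj\"orn--Parviainen~\cite[Theorem~6.2]{BBParv} (equivalently \cite[Theorem~9.31]{BBbook}) together with \cite[Proposition~9.21]{BBbook}, which needs only a uniform lower bound and pointwise/a.e.\ convergence and is exactly what the paper invokes. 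Second, your locally uniform boundedness of $\{v_r\}$ on compacta of $\Om\setm\{x_0\}$ rests on Harnack chains from $y_0$, which silently assumes that the component structure of $\Om\setm\clB_\rho$ is controlled; this is fixable (argue component by component, as in the paper's use of Harnack's convergence theorem), but should be stated. Neither issue changes the approach, and with these two repairs the argument goes through.
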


\begin{proof}
Let $\Om_1 \subset \Om_2 \subset \dots \subset \Om=\bigcup_{j=1}^\infty \Om_j$
be an increasing sequence of bounded domains such that $x_0 \in \Om_1$.
(One may e.g.\ let $\Om_j$ be the component of $\Om \cap B_j$ containing
$x_0$.
By Lemma~4.10 in
Bj\"orn--Bj\"orn~\cite{BBsemilocal} 
this exhausts $\Om$,
i.e.\ $\Om=\bigcup_{j=1}^\infty \Om_j$.)

Let $u_j$, extended by 0 to $X^*\setm\Om_j$,
be a Green function in $\Om_j$ with singularity at $x_0$
according to Definition~1.2 in 
Bj\"orn--Bj\"orn--Lehrb\"ack~\cite{BBLehGreen},
which exists by Theorem~1.3 in~\cite{BBLehGreen}.
Let $r_0>0$ be so small that $2B_{r_0} \subset \Om_1$
and let $\la_0$ be the dilation for the \p-Poincar\'e inequality
within $B_{r_0}$.
Then let $0 <r \le  r_0/51 \la_0$.
By Theorem~7.1 in Bj\"orn--Bj\"orn--Lehrb\"ack~\cite{BBLehIntGreen},
\begin{equation} \label{eq-uj-simeq-cp-closed}
u_j(x) \simeq \cp(B_{r},\Om_j)^{1/(1-p)}
\quad \text{if }x \in S_{r}:=\{x : d(x,x_0)=r\},
\end{equation}
with comparison constants depending only on the doubling
and Poincar\'e constants within $B_{r_0}$,
but independent of $j$ and $r$, see~\cite[Section~11]{BBLehIntGreen}.

Now $\{\cp(B_{r},\Om_j)\}_{j=1}^\infty$ is a decreasing sequence
tending to  $\cp(B_{r},\Om)>0$, by Proposition~\ref{prop-cp-inc-gen}
  and Corollary~\ref{cor-cpE-Om>0}.
Hence, by~\eqref{eq-uj-simeq-cp-closed}, all $u_j$ are uniformly bounded on $S_r$.
By Harnack's convergence theorem,
see Shan\-mu\-ga\-lin\-gam~\cite[Proposition~5.1]{Sh-conv}.
(or  \cite[Theorems~9.36 and~9.37]{BBbook}),
there is a subsequence (also denoted $\{u_j\}_{j=1}^\infty$)
such that $\{u_j\}_{j=1}^\infty$ converges
locally uniformly in $\Om \setm \{x_0\}$ to a \p-harmonic
function $u$ in $\Om \setm \{x_0\}$.
By letting $j\to\infty$ in~\eqref{eq-uj-simeq-cp-closed},
\[
u(x) \simeq \cp(B_r, \Om)^{1/(1-p)}
\quad \text{when } 0 < d(x,x_0)=r \le  \frac{r_0}{51 \la_0},
\] 
with the same comparison constants as in \eqref{eq-uj-simeq-cp-closed}.
Let $u(x_0)=\liminf_{j \to \infty} u_j(x_0)$
  and  $u \equiv 0$ on $X^* \setm \Om$.

Since each $u_j$ is superharmonic in $\Om_j$, 
Theorem~6.2 in Bj\"orn--Bj\"orn--Par\-vi\-ai\-nen~\cite{BBParv}
(or \cite[Theorem~9.31]{BBbook})
together with
Proposition~9.21 in~\cite{BBbook} 
implies
that the lsc-regularization $u_*$ 
of $u$ is superharmonic in $\Om$.
By \p-harmonicity, $u=u_*$ in $\Om\setm\{x_0\}$.
If $\cp(B_r, \Om)\to0$, as $r\to0$, then 
\[
u_*(x_0)=\infty=\lim_{j \to \infty} u_j(x_0)=u(x_0)
\]
and $\Cp(\{x_0\})=0$.
Thus $u$ is superharmonic in $\Om$, i.e.\ \ref{dd-s} holds.

On the other hand, if $\cp(B_r, \Om)\ge c>0$ for all $0<r\le r_0$, then
by \eqref{eq-uj-simeq-cp-closed}, all $u_j$ as well as $u$ are uniformly
bounded and thus $\Cp(\{x_0\})>0$ and $u(x_0)<\infty$.
In this case the Green function $u_j$ in $\Om_j$ is unique.
By Theorem~9.3 in~\cite{BBLehGreen}
and  Proposition~\ref{prop-cp-inc-gen}, the limit
\[
u(x_0)=\lim_{j \to \infty} u_j(x_0)
= \lim_{j \to \infty}\cp(\{x_0\},\Om_j)^{1/(1-p)}
= \cp(\{x_0\},\Om)^{1/(1-p)}
\]
exists and is finite.
Also in this case, $u=u^*$ is superharmonic in $\Om$ and \ref{dd-s} holds.

Next, we turn to \ref{dd-P}.
Recall that 
$u_j \equiv 0$ in $X^* \setm \Om_j$. 
Let $B=B_r  \Subset \Om$ and $\eps>0$. 
By the locally uniform convergence, there is $j_0$ such that $B\Subset\Om_{j_0}$ and
\[
|u_j(x)-u(x)| < \eps \quad \text{for
every $x \in S_r$ and every $j \ge j_0$.}
\]
For such $u_j$, \eqref{eq-lP-uP},  the definition of Perron
solutions and \ref{dd-P} (from Proposition~\ref{prop-sing-lika})
show that
\[
u \ge \uP_{\Om \setm \clB} u
\ge \lP_{\Om \setm \clB} u
\ge  \lP_{\Om_j \setm \clB} (u_j-\eps) = u_j-\eps
\quad \text{in } \Om_j \setm \clB.
\]
Letting $j \to \infty$ and then $\eps \to 0$ shows that 
\[
u \ge \uP_{\Om \setm \clB} u 
\ge \lP_{\Om \setm \clB} u \ge  u
\quad \text{in } \Om \setm \clB.
\]
Hence \ref{dd-P} holds for $u$, and thus 
$u$ is a singular function in $\Om$, by Proposition~\ref{prop-sing-eqv}.
\end{proof}

\begin{lem} \label{lem-sing-subdomain}
Let $u$ be a singular function in a domain $\Om$ with singularity at $x_0$,
and let $0 < a< u(x_0)$.
Then 
$\Om_a:=\{x : u(x) > a\}$ is a domain and
$v=(u-a)_\limplus$ is a singular function in 
$\Om_a$
with singularity at $x_0$.
\end{lem}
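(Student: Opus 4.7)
The plan is to first establish that $\Om_a$ is open and connected, and then verify the defining properties of a singular function for $v := (u-a)_\limplus$ on $\Om_a$. Openness of $\Om_a$ is immediate from the superharmonicity (hence lower semicontinuity) of $u$. For connectedness I argue by contradiction: suppose $\Om_a$ has a component $C$ not containing $x_0$, and choose $r>0$ so small that $\clB_r$ lies in the component of $\Om_a$ containing $x_0$. By Lemma~\ref{lem-punctured-ball}, $u$ is $\eR$-continuous in $\Om$, so $u = a$ on $\bdy C \cap \Om$. The pasting Lemma~\ref{lem-pasting}, applied with $\Om_1 = C$, $u_1 \equiv a$ and $\Om_2 = \Om$, $u_2 = u$, produces
\[
\tilde u = \begin{cases} u & \text{in } \Om \setm C, \\ a & \text{in } C, \end{cases}
\]
as a superharmonic function in $\Om$, with the required lower semicontinuity across $\bdy C$ coming from $u = a$ there. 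Then $\tilde u < u$ on $C$, but $\tilde u$ belongs to $\UU_u(\Om \setm \clB_r)$: it is superharmonic, bounded below by $0$, coincides with $u$ near $\bdy B_r$, and satisfies $\liminf \tilde u \ge 0 = u$ on $\bdystar \Om$. Property \ref{dd-P} for $u$ gives $u = \uP_{\Om \setm \clB_r} u$, so $\tilde u \ge u$ in $\Om \setm \clB_r$, contradicting $\tilde u < u$ on $C$.

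Once $\Om_a$ is a domain, I verify the properties of $v$ one by one. Properties \ref{dd-s}, \ref{dd-h}, \ref{dd-sup}, positivity of $v$ in $\Om_a$ and vanishing outside, all follow at once from $v = u - a$ in $\Om_a$. For \ref{dd-inf}, the continuity of $u$ at each $y \in \bdy \Om_a \cap \Om$ forces $u(y) = a$ and hence $v(y) = 0$. The main technical point is \ref{dd-P}: for every $B_r \Subset \Om_a$ (with $B_r = B(x_0,r)$), I show $v = P_{\Om_a \setm \clB_r} v$ in $\Om_a \setm \clB_r$. The upper bound $\uP_{\Om_a \setm \clB_r} v \le v$ follows by checking $v \in \UU_v(\Om_a \setm \clB_r)$, which uses superharmonicity of $v$ together with the vanishing of its boundary traces on $\bdystar \Om_a \setm \bdy B_r$.

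For the reverse inequality, given $w \in \LL_u(\Om \setm \clB_r)$, the function $(w - a)_\limplus$ is subharmonic in $\Om \setm \clB_r$ (max of subharmonic functions is subharmonic) and bounded above. By the comparison between sub- and superharmonic functions, $w \le u$ in $\Om \setm \clB_r$, so on $\bdy \Om_a \cap \Om \subset \Om \setm \clB_r$ one obtains $\limsup (w-a)_\limplus \le 0 = v$; on $\bdy \Om$ and at $\infty$ the same holds since $\limsup w \le 0$; and on $\bdy B_r$ one has $\limsup (w-a)_\limplus \le (u-a)_\limplus = v$. Hence $(w-a)_\limplus \in \LL_v(\Om_a \setm \clB_r)$. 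Now fix $x \in \Om_a \setm \clB_r$ and $\eps \in (0,\, u(x) - a)$; since $u = \lP_{\Om \setm \clB_r} u = \sup_w w$, there is $w \in \LL_u$ with $w(x) > u(x) - \eps > a$, and then $\lP v(x) \ge w(x) - a > v(x) - \eps$. Letting $\eps \to 0$ yields $\lP v \ge v$, which combined with $\uP v \le v$ and $\lP v \le \uP v$ gives \ref{dd-P}.

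The hard part will be the connectedness step: one is tempted to apply Lemma~\ref{lem-subset-Perron} directly, but that gives $\uP_C u \le u$, a one-sided bound in the wrong direction. The Pasting approach circumvents this at the cost of a careful semicontinuity check at $\bdy C$ and of verifying the boundary behavior of $\tilde u$ at $\bdystar \Om$. A secondary subtlety is the pointwise approximation $w(x) > u(x) - \eps$ needed for \ref{dd-P} of $v$; it works only because $u(x) > a$ strictly on $\Om_a$, permitting the choice $\eps < u(x) - a$ that makes $(w(x) - a)_\limplus = w(x) - a$.
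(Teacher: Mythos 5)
Your proof is correct, but it takes a genuinely different route from the paper's. For connectedness, the paper applies Lemma~\ref{lem-subset-Perron} (crucially, its ``in particular'' resolutivity clause) with $G=\Om\setm\clU$: since $u$ is resolutive for $\Om\setm\clB_r$ by \ref{dd-P}, the lemma yields $u=P_{\Om\setm\clU}u$ in $\Om\setm\clU$, which is $\le a$ there because the boundary data is $\le a$ everywhere on $\bdystar(\Om\setm\clU)$. You instead use the pasting Lemma~\ref{lem-pasting} to build a superharmonic competitor $\tilde u$ that sits strictly below $u$ on an extra component $C$, and then contradict \ref{dd-P} by placing $\tilde u\in\UU_u(\Om\setm\clB_r)$. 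Both arguments are sound; yours is more elementary but longer. Note that your opening remark about Lemma~\ref{lem-subset-Perron} being ``the wrong direction'' is not quite right: the resolutivity clause gives the two-sided equality $P_Gh=Pf$, so one could even take $G=C$ directly and conclude $u=P_C u\le a$ in $C$. For \ref{dd-P}, the paper again invokes Lemma~\ref{lem-subset-Perron} to write $v=P_{\Om_a\setm\clB_r}u-a=P_{\Om_a\setm\clB_r}v$, while you verify $\uP_{\Om_a\setm\clB_r}v\le v\le\lP_{\Om_a\setm\clB_r}v$ by hand, pushing individual subsolutions $w\in\LL_u$ through the map $w\mapsto(w-a)_\limplus$; this is a self-contained alternative to the lemma. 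One small gap: in your \ref{dd-inf} check you use $v(y)=0$ at points $y\in\bdy\Om_a\cap\Om$, but you should note that $\bdy\Om_a\cap\Om\ne\emptyset$ (which follows since $\Om$ is connected, $\Om_a$ is open with $x_0\in\Om_a$, and $\Om_a\ne\Om$ by \ref{dd-inf} for $u$). Also, since Proposition~\ref{prop-sing-eqv} reduces the characterization to \ref{dd-s}, \ref{dd-P} and nonconstancy, several of the items you check are redundant, though not incorrect.
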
  

\begin{proof}
As $u$ is $\eR$-valued continuous in $\Om$, by Lemma~\ref{lem-punctured-ball},
we see that $\Om_a$ is open.
Let $U$ be the component of $\Om_a$ containing $x_0$ and let $B_r\Subset U$.
Then $u=P_{\Om \setm \clU} u$ 
in the open set $\Om \setm \clU$,
by \ref{dd-P} and Lemma~\ref{lem-subset-Perron} applied with
$G=\Om\setm \clU$.
Since $u=0$ outside $\Om$ and $u \le a$ on $\Om \cap \bdy (\Om \setm \clU)$,
we conclude that $u=P_{\Om \setm \clU} u \le a$ in $\Om \setm \clU$.
It follows that $\Om_a=U$ and it is thus connected.
Clearly, $v>0$ in $\Om_a$ and
the conditions \ref{dd-s}--\ref{dd-inf} hold.

Finally, let $B_r \Subset \Om_a$ be arbitrary.
By \ref{dd-P} for $u$ and the last part of Lemma~\ref{lem-subset-Perron}
with $G=\Om_a \setm \clB_r$,
\[
v= u-a = P_{\Om_a \setm \clB_r} u-a = P_{\Om_a \setm \clB_r} v
\quad \text{in } \Om_a \setm \clB_r,
\]
i.e.\ $v$ also satisfies \ref{dd-P}.  
\end{proof}

We shall now see that when $\Cp(\{x_0\})>0$, singular functions
are just multiples of the capacitary potential for $\{x_0\}$.
When $\Cp(\{x_0\})=0$, we will only prove a comparison between singular 
functions in Proposition~\ref{prop-comparable} below.
Note that in that case, $u(x_0)=\infty$ and hence 
$P_{\Om \setm \{x_0\}} u\equiv0$, since $au\in \UU_u(\Om \setm \{x_0\})$
for every $a>0$.

\begin{prop} \label{prop-Cp>0-unique}
Assume that $\Cp(\{x_0\})>0$.
Let $u$ be a singular function in a domain $\Om$ with singularity at $x_0$.
Then 
\[
u=P_{\Om \setm \{x_0\}} u \quad \text{in } \Om \setm \{x_0\} 
\qquad \text{and} \qquad 
u=u(x_0)\pot{\Om}{\{x_0\}} \quad \text{in } X.
\]
\end{prop}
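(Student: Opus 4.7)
My plan is to first establish the potential identity $u = u(x_0)\pot{\Om}{\{x_0\}}$ in $X$ and then deduce the Perron identity from it using Proposition~\ref{prop-ex-pot-unbdd}. By Lemma~\ref{lem-sing-x_0}, $\Cp(\{x_0\})>0$ ensures $u(x_0)<\infty$, and \ref{dd-sup} gives $u \le u(x_0)$ everywhere. The easy inequality $u \ge u(x_0)\pot{\Om}{\{x_0\}}$ is immediate: $u/u(x_0)$ is superharmonic in $\Om$ with value $1$ at $x_0$, hence $u/u(x_0) \in \PsiQ{\Om}{\{x_0\}}$ and thus dominates $\pot{\Om}{\{x_0\}}$.

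The reverse inequality is the main step. I fix an arbitrary $\psi \in \PsiQ{\Om}{\{x_0\}}$. Since $\psi$ is superharmonic (hence lsc-regularized) in $\Om$ with $\psi(x_0) \ge 1$, for every $\eps \in (0,1)$ there is $r_0>0$ with $B_{r_0} \Subset \Om$ and $\psi > 1-\eps$ throughout $B_{r_0}$. For $0<r<r_0$, set $w := u(x_0)\psi + u(x_0)\eps$; then $w$ is superharmonic in $\Om$ and bounded below (since $\psi \ge 0$ everywhere after lsc-regularization). On $\bdy B_r$ we have $w \ge u(x_0)(1-\eps) + u(x_0)\eps = u(x_0) \ge u$, while on $\bdystar\Om$ we have $u \equiv 0$ (as $u$ vanishes outside $\Om$) and $w \ge u(x_0)\eps > 0$. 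Hence $w \in \UU_u(\Om\setm\clB_r)$, and \ref{dd-P} gives $u = \uP_{\Om\setm\clB_r} u \le w$ in $\Om\setm\clB_r$. Sending $r \to 0$, then $\eps\to 0$, and finally taking the infimum over $\psi$, I obtain $u \le u(x_0)\pot{\Om}{\{x_0\}}$ in $\Om \setm \{x_0\}$; at $x_0$ the inequality holds since $\pot{\Om}{\{x_0\}}(x_0) \ge 1$, and outside $\Om$ both sides vanish. Combined with the easy direction, this gives $u = u(x_0)\pot{\Om}{\{x_0\}}$ in $X$.

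For the Perron identity, I first verify $\cp(\{x_0\},\Om)<\infty$: taking any bounded open $\Om_1 \subset \Om$ containing $x_0$, a compactly supported Lipschitz bump makes $\cp(\{x_0\},\Om_1)$ finite, and Proposition~\ref{prop-cp-inc-gen} applied to an exhaustion $\Om_1\subset\Om_2\subset\dots\subset\Om$ shows $\cp(\{x_0\},\Om)$ is a decreasing limit of finite values. Proposition~\ref{prop-ex-pot-unbdd} with $E=\{x_0\}$ (relatively closed in $\Om$, $\Cp(E)>0$) then gives $\pot{\Om}{\{x_0\}} = \lP_{\Om\setm\{x_0\}} \chione_{\{x_0\}}$ in $\Om\setm\{x_0\}$. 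Since Perron solutions are $1$-homogeneous for nonnegative scalars, and $u|_{\bdystar(\Om\setm\{x_0\})} = u(x_0)\chione_{\{x_0\}}$ (because $u=0$ outside $\Om$), multiplying by $u(x_0)$ yields $\lP_{\Om\setm\{x_0\}} u = u(x_0)\pot{\Om}{\{x_0\}} = u$ in $\Om\setm\{x_0\}$. On the other hand $u \in \UU_u(\Om\setm\{x_0\})$ by superharmonicity and the lsc-regularity at $\bdystar(\Om\setm\{x_0\})$, so $\uP_{\Om\setm\{x_0\}} u \le u$, and together with the trivial $\lP \le \uP$ this yields $P_{\Om\setm\{x_0\}} u = u$ in $\Om\setm\{x_0\}$. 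The main technical obstacle is the comparison in the second paragraph, where the lsc-regularity of $\psi$ at $x_0$ is what allows \ref{dd-P} to be propagated across the shrinking annuli $\bdy B_r$ as $r \to 0$; everything else is bookkeeping with definitions.
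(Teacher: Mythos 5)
Your proof is correct, but it reverses the order of the two identities compared to the paper. The paper first establishes $u=P_{\Om\setm\{x_0\}}u$: it sets $v=\lP_{\Om\setm\{x_0\}}u$, uses the Kellogg property (Theorem~\ref{thm-Kellogg}) together with the finiteness and continuity of $u$ at $x_0$ (Lemmas~\ref{lem-punctured-ball} and~\ref{lem-sing-x_0}) to get $\lim_{x\to x_0}v(x)=u(x_0)$, then shows $v+\eps\in\UU_u(\Om\setm\clB_r)$ on a small sphere where $|u-v|<\eps$ and closes the sandwich with \ref{dd-P}; the identity $u=u(x_0)\pot{\Om}{\{x_0\}}$ then drops out immediately from \eqref{eq-cp-E-unbdd}. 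You instead attack $u=u(x_0)\pot{\Om}{\{x_0\}}$ head-on by fattening an arbitrary $\psi\in\PsiQ{\Om}{\{x_0\}}$ to $u(x_0)(\psi+\eps)$, showing this is an upper-class function for the Perron problem in $\Om\setm\clB_r$ using \ref{dd-sup} to control $u$ on $\bdy B_r$ and the lower semicontinuity of $\psi$ at $x_0$ (rather than the Kellogg property), and then recovering the Perron identity from \eqref{eq-cp-E-unbdd} and positive homogeneity. Both routes hinge on \ref{dd-P} as the comparison engine and on Proposition~\ref{prop-ex-pot-unbdd}, but use it in opposite directions; your variant makes the potential identity the primitive step and substitutes \ref{dd-sup} plus lsc at $x_0$ for the Kellogg-based boundary-limit argument. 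One cosmetic simplification: the finiteness of $\cp(\{x_0\},\Om)$ follows directly from monotonicity (Proposition~\ref{prop-cp}\ref{cp-subset}) applied to a bounded $\Om_1\ni x_0$ inside $\Om$; the appeal to Proposition~\ref{prop-cp-inc-gen} is unnecessary. Also, $\bdy B_r$ is a sphere, not an annulus, in your closing remark, but that does not affect the argument.
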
  

\begin{proof}
Lemma~\ref{lem-punctured-ball}  shows that
\[
u(x_0)=\lim_{x\to x_0} u(x).  
\]
Hence $u \in \UU_u(\Om \setm \{x_0\})$
and thus $u \ge \uP_{\Om \setm \{x_0\}} u$.

Conversely, let $v=\lP_{\Om \setm \{x_0\}} u\ge0$.
It follows from Lemmas~\ref{lem-punctured-ball} and~\ref{lem-sing-x_0} 
that $u$ is continuous in $\Om$ and $u(x_0)<\infty$.
By the Kellogg property (Theorem~\ref{thm-Kellogg}), also
$\lim_{x \to x_0} v(x)=u(x_0)$.
Let $\eps >0$ and find a ball $B_r \Subset \Om$ such that $r<\eps$ and
$|u-v| < \eps$ in $\clB_r$.
By continuity, $v+\eps \in \UU_u(\Om \setm \clB_r)$
and thus, using also \ref{dd-P} and \eqref{eq-lP-uP},
\[
v+\eps \ge P_{\Om \setm \clB_r} u = u \ge \uP_{\Om \setm \{x_0\}} u \ge v
\quad \text{in } \Om \setm \clB_r.
\]
Letting $\eps \to 0$ (and thus $r \to 0$),
concludes the proof of the first identity.
The second identity 
now follows from~\eqref{eq-cp-E-unbdd} in Proposition~\ref{prop-ex-pot-unbdd}.
\end{proof}

\begin{prop} \label{prop-comparable}
Let $u$ and $v$ be two singular functions in a domain $\Om$ with singularity at~$x_0$.
  Then $u \simeq v$ in $\Om$ with comparison constants depending 
also on $u$ and $v$.
\end{prop}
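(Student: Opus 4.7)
The plan is to split on whether $\Cp(\{x_0\})>0$ or not. In the former case the conclusion is immediate from the explicit formula of Proposition~\ref{prop-Cp>0-unique}; in the latter we combine a Perron-solution comparison on complements of small balls around $x_0$ with a uniform annular Harnack estimate for \p-harmonic functions.

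If $\Cp(\{x_0\})>0$, then by Lemma~\ref{lem-sing-x_0} both $u(x_0)$ and $v(x_0)$ are finite, and Proposition~\ref{prop-Cp>0-unique} applied to $u$ and to $v$ separately gives
$u=u(x_0)\pot{\Om}{\{x_0\}}$ and $v=v(x_0)\pot{\Om}{\{x_0\}}$ in $X$, so $u=(u(x_0)/v(x_0))v$. Assume therefore that $\Cp(\{x_0\})=0$. Then $u(x_0)=v(x_0)=\infty$ by Lemma~\ref{lem-sing-x_0}, and $u,v$ are $\eR$-continuous and strictly positive on $\Om$ by Lemma~\ref{lem-punctured-ball} and Definition~\ref{deff-sing}. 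Fix $r_0>0$ with $B_{r_0}\Subset\Om$ and, for $0<r<r_0$, set
\[
\alpha_r:=\max_{\bdy B_r}\frac{u}{v}\in(0,\infty),
\qquad
\beta_r:=\min_{\bdy B_r}\frac{u}{v}\in(0,\infty),
\]
which are attained on the compact set $\bdy B_r\subset\Om\setm\{x_0\}$. By \ref{dd-P} applied to $u$ and to $v$, both functions are the Perron solutions in $\Om\setm\clB_r$ of their own values on $\bdy B_r$ together with the vanishing data on $\bdystar\Om$. Monotonicity and positive homogeneity of Perron solutions then yield
\[
\beta_r v\le u\le\alpha_r v \quad\text{in }\Om\setm\clB_r.
\]
Restricting these two inequalities to $\bdy B_{r_0}$ produces $\beta_r\le\beta_{r_0}$ and $\alpha_r\ge\alpha_{r_0}$.

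Now the annulus $A_r:=B_{2r}(x_0)\setm\clB_{r/2}(x_0)$ is compactly contained in $\Om\setm\{x_0\}$ for $r<r_0/2$, and $u,v$ are nonnegative and \p-harmonic there by \ref{dd-h}. Our local doubling and local \p-Poincar\'e assumptions supply a Harnack inequality for nonnegative \p-harmonic functions on balls contained in $A_r$, with a constant independent of $r$. Covering $A_r$ by a number of such balls bounded purely by the local doubling dimension, a standard Harnack chain produces a constant $H\ge 1$, independent of $r\in(0,r_0/2)$, with
\[
\max_{\bdy B_r}u\le H\min_{\bdy B_r}u, \qquad
\max_{\bdy B_r}v\le H\min_{\bdy B_r}v.
\]
For any $x,y\in\bdy B_r$ this gives
\[
\frac{u(x)}{v(x)}
=\frac{u(x)}{u(y)}\cdot\frac{v(y)}{v(x)}\cdot\frac{u(y)}{v(y)}
\le H^2\,\frac{u(y)}{v(y)},
\]
so $\alpha_r\le H^2\beta_r$. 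Together with $\beta_r\le\beta_{r_0}$ and $\alpha_r\ge\alpha_{r_0}$ we get $\alpha_{r_0}\le\alpha_r\le H^2\beta_{r_0}$ uniformly for $r\in(0,r_0/2)$. Plugging back into the Perron comparison,
\[
\frac{\alpha_{r_0}}{H^2}\,v\le \beta_r v\le u\le\alpha_r v\le H^2\beta_{r_0}\,v \quad\text{on } \Om\setm\clB_r,
\]
and taking the union over $r\in(0,r_0/2)$ extends these bounds to $\Om\setm\{x_0\}$. As $u(x_0)=v(x_0)=\infty$, the comparison $u\simeq v$ holds on all of $\Om$ with constants depending on $u$ and $v$ only through $\alpha_{r_0}$ and $\beta_{r_0}$.

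The main technical obstacle is the uniform annular Harnack estimate: its constant $H$ must not degenerate as $r\to 0$. This uniformity rests on the semilocal boundedness of the doubling and \p-Poincar\'e data within $B_{r_0}$ from \cite{BBsemilocal}, which keeps the classical Harnack constants on balls inside $A_r$ bounded and limits the number of chain balls needed to a quantity depending only on the local doubling dimension. Without such uniformity, the ratio $u/v$ could in principle oscillate arbitrarily along spheres shrinking to $x_0$, and the argument would collapse.
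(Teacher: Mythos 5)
Your proof follows the paper's high-level strategy closely: the same case split on $\Cp(\{x_0\})$, the first case handled identically via Proposition~\ref{prop-Cp>0-unique}, and in the second case the same propagation mechanism -- a comparison of $u$ and $v$ near $x_0$ pushed outward via \ref{dd-P} and monotonicity of Perron solutions. The outward propagation together with the monotone trapping of $\alpha_r$ and $\beta_r$ is correctly organized.

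The gap is in the inner comparison, i.e.\ the asserted uniform annular Harnack estimate $\max_{\bdy B_r} u \le H \min_{\bdy B_r} u$ with $H$ independent of $r\in(0,r_0/2)$. You cover $A_r$ by boundedly many Harnack balls and then invoke a ``standard Harnack chain'', but it is exactly the chaining step that is not justified in the generality of the paper's hypotheses. For an $r$-independent constant one must join any two points of $\bdy B_r$ by a uniformly bounded chain of overlapping balls lying compactly in $\Om\setm\{x_0\}$ at scale $\sim r$; this is a quantitative annular connectivity hypothesis, which local doubling and a local $p$-Poincar\'e inequality in a proper connected space do not by themselves supply. The quasiconvexity coming from the Poincar\'e inequality produces curves inside $B_{Cr}$, but nothing prevents them from passing arbitrarily close to $x_0$, where the Harnack balls are not admissible since $u$ is not $p$-harmonic at $x_0$; the semilocal bounds on the doubling and Poincar\'e constants inside $B_{r_0}$ control the Harnack constant on each ball and the covering number of $A_r$, but not the chain geometry between two components of $A_r$ or two antipodal points of $\bdy B_r$. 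The paper sidesteps this entirely: it first passes to singular functions on bounded subdomains of $B_r$ via Lemma~\ref{lem-sing-subdomain} and then invokes the two-sided capacity estimate (7.2) of Theorem~7.1 in \cite{BBLehIntGreen}, which is based on the capacity $\cp(B_s,\cdot)$ -- a global integral quantity insensitive to the fine connectivity of spheres -- to produce $u\simeq v$ on a small ball $\clB_\rho$, and only then propagates outward via \ref{dd-P}. Unless you can establish the annular chaining, or replace it with such a capacity-based comparison, the bound $\alpha_r\le H^2\beta_r$ is not available and your argument does not close.
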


\begin{proof}
If $\Cp(\{x_0\})>0$, then this follows directly from 
Proposition~\ref{prop-Cp>0-unique}.

Assume therefore that $\Cp(\{x_0\})=0$.
Let $B_r \Subset \Om$
and  $M=\sup_{\bdy B_r} u(x)$.
Then $u-M$ is a singular function in 
the bounded domain $\{x \in \Om : u(x)>M\} \subset B_r$, 
by Lemma~\ref{lem-sing-subdomain} and \ref{dd-P}.
Similarly, $v-M'$ is a singular function in 
the bounded domain $\{x \in \Om : v(x)>M'\} \subset B_r$,
where $M'=\sup_{\bdy B_r} v(x)$.

It now follows from (7.2) in Theorem~7.1 in Bj\"orn--Bj\"orn--Lehrb\"ack~\cite{BBLehIntGreen}
that there is a smaller ball $B_\rho \subset B_r$ such that
$u \simeq v$ in $\clB_\rho$.
As $u=P_{\Om \setm \clB_\rho} u$ and $v=P_{\Om \setm \clB_\rho} v$,
by \ref{dd-P},
we see that  $u \simeq v$ also in $\Om \setm \clB_\rho$.
\end{proof}

\section{Green functions}
\label{sect-Green}

Recall from the introduction that a
\emph{Green function} on a domain $\Om \subset X$
is a singular function which satisfies
\begin{equation} \label{eq-normalized-Green}
\cp(\Om^b,\Om) = b^{1-p}
\quad \text{when }
   0  <b < u(x_0),
\end{equation}
where $\Om^b=\{x\in \Om:u(x)\ge b\}$.
In fact it follows from  the proof of 
Theorem~\ref{thm-sing} below that it is enough
if \eqref{eq-normalized-Green} holds for one
value of $b$.

\begin{thm} \label{thm-sing}
Let $u$ be a singular function in a domain $\Om$ with singularity at $x_0$.
Then there is a constant $c$ such that
\[
\cp(\Om^b,\Om_a) = c (b-a)^{1-p} 
\quad \text{when }
   0 \le a <b \le  u(x_0),
\] 
where $\Om_a=\{x\in \Om:u(x)>a\}$,  $\Om^b=\{x\in \Om:u(x)\ge b\}$
and we interpret $\infty^{1-p}$ as $0$.
\end{thm}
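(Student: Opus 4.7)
The plan is to reduce the statement to Corollary~\ref{cor-cp-level-pot-ab} via the Key Claim that, for each finite $M \in (0, u(x_0))$,
\[
v_M := \frac{\min\{u,M\}}{M} = \pot{\Om}{\Om^M} \quad \text{in } X,
\]
with both sides extended by $0$ outside $\Om$. Once this identification holds, the superlevel sets $\Om_a = \{u > a\}$ and $\Om^b = \{u \ge b\}$ of $u$ (for $0 \le a < b \le M$) agree respectively with the superlevel sets of $\pot{\Om}{\Om^M}$ at heights $a/M$ and $b/M$ in the notation~\eqref{eq-def-level-sets}, so Corollary~\ref{cor-cp-level-pot-ab} applied with $E = \Om^M$ gives
\[
\cp(\Om^b,\Om_a) = \Bigl(\tfrac{b-a}{M}\Bigr)^{1-p} \cp(\Om^M,\Om)
     = c_M (b-a)^{1-p}, \quad c_M := M^{p-1} \cp(\Om^M,\Om).
\]
Applying this identity to two overlapping admissible choices of $M$ forces $c_M$ to be independent of $M$, so letting $M$ vary over $(0, u(x_0))$ yields the formula for all $0 \le a < b < u(x_0)$ with a common constant $c$. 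The endpoint $b = u(x_0)$ can occur only when $u(x_0) < \infty$, in which case $\Cp(\{x_0\}) > 0$ by Lemma~\ref{lem-sing-x_0} and Proposition~\ref{prop-Cp>0-unique} identifies $u$ with $u(x_0)\pot{\Om}{\{x_0\}}$, so Corollary~\ref{cor-cp-level-pot-ab} applies directly with $E = \{x_0\}$; when $u(x_0) = \infty$ and $b = \infty$, the set $\Om^b = \{x_0\}$ has zero Sobolev capacity by Lemma~\ref{lem-sing-x_0}, so both sides vanish under the convention $\infty^{1-p} = 0$.

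The first preparatory step is to verify that $\Om^M$ is bounded for each finite $M \in (0, u(x_0))$. Fixing a ball $B_r \Subset \Om$ with $x_0 \in B_r$, property~\ref{dd-P} gives $u = P_{\Om \setm \clB_r} u$ in $\Om \setm \clB_r$, and the maximum principle applied with the constant $M_r := \max_{\bdy B_r} u < \infty$ (finite by Lemma~\ref{lem-punctured-ball}) yields $u \le M_r \pot{\Om}{\clB_r}$ there. By Proposition~\ref{prop-ex-pot-unbdd} and~\eqref{eq-cp-E-unbdd}, $\pot{\Om}{\clB_r}$ is a Perron solution with boundary data $0$ on $\bdystar \Om$ (outside the irregular set), and thus tends to $0$ at every regular point of $\bdystar \Om$. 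Combined with continuity of $u$ on $\Om \setm \{x_0\}$ and the compactness of $\bdystar \Om$ in $X^*$, this confines $\{u \ge M\}$ to a bounded subset of $\Om$. The easy inequality $v_M \ge \pot{\Om}{\Om^M}$ of the Key Claim is then immediate, since $v_M$ is superharmonic in $\Om$ (truncation of the superharmonic $u$ from above at the constant $1$), takes values in $[0,1]$, and equals $1$ on $\Om^M$, so that $v_M \in \PsiQ{\Om}{\Om^M}$.

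The reverse inequality $v_M \le P := \pot{\Om}{\Om^M}$ is the main obstacle. Both $v_M$ and $P$ are bounded, nonnegative, and \p-harmonic in the open set $\Om \setm \Om^M$: for $v_M$ because $u$ is \p-harmonic away from $x_0$, which lies in the interior of $\Om^M$ since $u(x_0) > M$; for $P$ by Proposition~\ref{prop-ex-pot-unbdd}, which moreover gives $P = \lP_{\Om \setm \Om^M} \chione_{\Om^M}$ there. I would establish that $v_M \in \LL_{\chione_{\Om^M}}(\Om \setm \Om^M)$ by verifying the Perron-type boundary conditions: at points of $\bdy \Om^M \cap \Om$ regular for $\Om \setm \Om^M$, $v_M$ tends to $1$ by continuity of $u$ combined with $u = M$ on $\bdy \Om^M$; at regular points of $\bdystar \Om$, applying~\ref{dd-P} with a ball $B_r \Supset \Om^M$ (available from the previous paragraph) together with Lemma~\ref{lem-subset-Perron} shows that $v_M$ tends to $0$. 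The irregular boundary points of $\Om \setm \Om^M$ form a capacity-zero subset of $X$ by the Kellogg property (Theorem~\ref{thm-Kellogg}) and can be absorbed via Lemma~\ref{lem-Perron-chi}. This yields $v_M \le \lP_{\Om \setm \Om^M} \chione_{\Om^M} = P$ in $\Om \setm \Om^M$; since $v_M = 1 = P$ q.e.\ on $\Om^M$ and both functions are superharmonic, hence lsc-regularized, equality extends throughout $\Om$, completing the proof of the Key Claim and therefore of the theorem.
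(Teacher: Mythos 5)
Your overall strategy is the same as the paper's: identify the truncation $v_M=\min\{u,M\}/M$ with the capacitary potential $\pot{\Om}{\Om^M}$, feed this into Corollary~\ref{cor-cp-level-pot-ab} with $E=\Om^M$, match constants across overlapping ranges of $M$, and treat the endpoints via Proposition~\ref{prop-Cp>0-unique} and Lemma~\ref{lem-sing-x_0}. These parts are fine. The easy direction of your Key Claim, namely that $v_M\in\PsiQ{\Om}{\Om^M}$ and hence $v_M\ge\pot{\Om}{\Om^M}$, is also correct.

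There is, however, a genuine gap in your proof of the reverse inequality, and it starts with the preparatory claim that $\Om^M$ is bounded. This is false in general under the local standing assumptions of this theorem. Example~\ref{ex-R-half-weighted} exhibits a singular function $u$ in the domain $\Om=X=\R$ (with a suitable weight) satisfying $u\equiv 1$ on $(-\infty,0]$, so that $\Om^M=\{u\ge M\}\supset(-\infty,0]$ is unbounded for every $M<1$. The underlying reason your boundedness argument fails is a conflation of regularity for different sets: you argue that $\pot{\Om}{\clB_r}$ tends to $0$ at regular points of $\bdystar\Om$, but what Proposition~\ref{prop-ex-pot-unbdd} and the Kellogg property actually control is the behaviour of $\lP_{\Om\setm\clB_r}\chione_{\clB_r}$ at the regular boundary points \emph{of $\Om\setm\clB_r$}, not of $\Om$. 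In Example~\ref{ex-R-half-weighted}, $\bdystar\Om=\{\infty\}$ is trivially regular (a one-point boundary), yet $(-\infty,-r)$ is a \p-parabolic set, so $\infty$ is irregular for $X\setm\clB_r$ by Proposition~\ref{prop-reg-par/hyp} and $\pot{X}{\clB_r}\equiv1$ on $(-\infty,-r)$. Once $\Om^M$ can be unbounded, the later step ``\ref{dd-P} with a ball $B_r\Supset\Om^M$'' has no ball to use, and the verification that $v_M$ ``tends to $0$'' at $\infty$ along $\Om\setm\Om^M$ — which is needed if you want to put $v_M$ into $\LL_{\chione_{\Om^M}}(\Om\setm\Om^M)$ — is left unproved; note also that membership in $\LL_{\chione_{\Om^M}}$ requires the $\limsup$ condition at \emph{every} point of $\bdystar(\Om\setm\Om^M)$, not merely q.e., so the Kellogg/Perron-chi absorption does not automatically rescue it.

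The repair is to abandon the pointwise boundary-limit verification altogether, which is precisely what the paper does. Pick a ball $B_\rho$ around $x_0$ with $\clB_\rho\subset\Om^M$ (always possible since $x_0\in\Om_M$ is interior to $\Om^M$). Property~\ref{dd-P} gives $u=P_{\Om\setm\clB_\rho}u$ in $\Om\setm\clB_\rho$, and Lemma~\ref{lem-subset-Perron} with $G=\Om\setm\Om^M\subset\Om\setm\clB_\rho$ transfers this to $u=P_{\Om\setm\Om^M}u$ in $\Om\setm\Om^M$, with $\chione_{\Om^M}$ thereby resolutive for $\Om\setm\Om^M$. Since $u=M\chione_{\Om^M}$ as a function on $\bdystar(\Om\setm\Om^M)$ (by continuity of $u$ on $\bdy\Om^M\cap\Om$, and $u=0$ on $\bdy\Om$ and at $\infty$), this yields $u/M=P_{\Om\setm\Om^M}\chione_{\Om^M}=\lP_{\Om\setm\Om^M}\chione_{\Om^M}$, which equals $\pot{\Om}{\Om^M}$ in $\Om\setm\Om^M$ by Proposition~\ref{prop-ex-pot-unbdd}. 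No boundedness of $\Om^M$ and no boundary-limit bookkeeping is required.
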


As $u$ is $\eR$-continuous in $\Om$ we directly see that
$\Om_a$ is open and that $\Om^b$ is relatively closed in $\Om_a$ if $a<b$.
Note that $\Om^{u(x_0)}$ can be unbounded when $u(x_0)<\infty$
(i.e.\ when $\Cp(\{x_0\})>0$), see Example~\ref{ex-R-half-weighted}.

\begin{proof}
Assume first that $\Cp(\{x_0\})=0$.
Then $u(x_0)=\infty$, by Lemma~\ref{lem-sing-x_0}.
Let $0<m<\infty$ and $B=B_\rho\Subset \Om_m\cap B_{1/m}$.
By \ref{dd-P}, $u=P_{\Om \setm \clB} u$ in $\Om \setm \clB$,
and thus by the last part of Lemma~\ref{lem-subset-Perron}
(with $\Om$ replaced by $\Om\setm\clB$, $h=u$ and $G=\Om\setm\Om^m$)
together with Proposition~\ref{prop-ex-pot-unbdd},
\[
\frac{u}{m}= \frac{P_{\Om \setm \Om^{m}} u}{m} =  P_{\Om \setm \Om^{m}}  \chione_{\Om^m}
= \pot{\Om}{\Om^m}
\quad \text{in } \Om \setm \Om^{m}.
\]
It then follows from Corollary~\ref{cor-cp-level-pot-ab} that
for $0 \le a < b  \le m$,
\begin{align*}
\cp(\Om^b,\Om_a) 
  &=  
\cp\biggl( \biggl\{ x\in \Om : \pot{\Om}{\Om^m}(x) \ge \frac bm\biggr\},
      \biggl\{x \in \Om : \pot{\Om}{\Om^m}(x) > \frac am\biggr\} \biggr) \\
  &= \biggl(\frac{b-a}{m}\biggr)^{1-p} \cp(\Om^m,\Om) 
  =: c(b-a)^{1-p}.
\end{align*}
Letting $m \to \infty$ and thus $\rho \to 0$, 
we see that $c$ must be independent
of $m$ and that this holds for all $0 \le a < b \le \infty$.

If $\Cp(\{x_0\})>0$ we instead note that 
$\pot{\Om}{\{x_0\}}=u/u(x_0)$ by
Proposition~\ref{prop-Cp>0-unique}. 
Thus by Corollary~\ref{cor-cp-level-pot-ab}  
we get that  for $0 \le a < b  \le u(x_0)$, 
\begin{align*}
&\cp(\Om^b,\Om_a) \\
  & \qquad  
  = \cp \biggl( \biggl\{ x \in \Om : 
               \pot{\Om}{\{x_0\}}(x)
\ge \frac{b}{u(x_0)} \biggr\},
     \biggl\{x \in \Om : \pot{\Om}{\{x_0\}}(x)
> \frac{a}{u(x_0)} \biggr\} \biggr)  \\
  & \qquad 
= \biggl(\frac{b-a}{u(x_0)}\biggr)^{1-p} \cp(\{x_0\},\Om) 
  =: c(b-a)^{1-p}.\qedhere
\end{align*}
\end{proof}

We are now ready to prove Theorems~\ref{thm-main} and~\ref{thm-Green-intro}.
We start with the latter as it will be used to prove the former one.

\begin{proof}[Proof of Theorem~\ref{thm-Green-intro}]
As in \eqref{eq-def-level-sets} with $u$ replaced by $v$, let
\[
G_a=\{x\in \Om:v(x)>a\} \quad \text{and} \quad G^b=\{x\in \Om:v(x)\ge b\},
\]
while $\Om_a$ and $\Om^b$ are for $u$.
Also let $c$ be the constant provided for $v$ by Theorem~\ref{thm-sing}.
Then
\begin{equation}  \label{eq-cap-Omt}
\cp(\Om^b,\Om_a) 
=\cp(G^{b/\alp},G_{a/\alp})
=c \biggl(\frac{b-a}{\alp}\biggr)^{1-p},
\end{equation}
which equals $(b-a)^{1-p}$ if and only if $\alp=c^{1/(1-p)}$,
i.e.\ $u$ is a Green function if and only if $\alp=c^{1/(1-p)}$.

Moreover, for this value of $\alp$, 
the last part of Lemma~\ref{lem-subset-Perron}
(with $\Om$ replaced by $\Om\setm \clB_r$ for some ball 
$ B_r\subset \Om^b$, $h=u$ and $G=\Om_a\setm\Om^b$),
Proposition~\ref{prop-ex-pot-unbdd} and~\eqref{eq-cap-Omt} yield
\[ 
\int_{\Om_a \setm \Om^b} g_u^p\,d\mu
= (b-a)^p \int_{\Om_a \setm \Om^b} g^p_{\pot{\Om_a}{\Om^b}} \, d\mu 
= (b-a)^p \cp(\Om^b,\Om_a)
= b-a.
\qedhere 
\] 
\end{proof}

\begin{proof}[Proof of Theorem~\ref{thm-main}]
\ref{m-Green}\imp\ref{m-superh}
Let $u$ be the Green function, then $\min\{u,1\}$ is 
a nonconstant bounded superharmonic function in $\Om$.

$\neg$\ref{m-hyp-cp}\imp$\neg$\ref{m-superh}
This follows directly from Lemma~\ref{lem-parabolic-superh-X}.

\ref{m-hyp-cp}\imp\ref{m-sing}
If $\Om$ is unbounded, then this
follows from Theorem~\ref{thm-exist-singular}.
On the other hand, if $\Om$ is bounded then it follows from
Theorem~1.3\,(a) in Bj\"orn--Bj\"orn--Lehrb\"ack~\cite{BBLehGreen},
in view of Proposition~\ref{prop-sing-lika}.

\ref{m-sing}\imp\ref{m-Green}
This follows from Theorem~\ref{thm-Green-intro}.
\end{proof}

We also obtain the following consequences.

\begin{cor} \label{cor-Green-subdomain}
Let $u$ be a Green function in $\Om$ with singularity at $x_0$,
and let $0 < a< u(x_0)$.
Then 
$\Om_a:=\{x : u(x) > a\}$ is a domain and
$v=(u-a)_\limplus$ is a Green function in 
$\Om_a$
with singularity at $x_0$.
\end{cor}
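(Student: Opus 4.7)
The plan is to combine Lemma~\ref{lem-sing-subdomain} with the capacity identity in Theorem~\ref{thm-Green-intro}. By Lemma~\ref{lem-sing-subdomain}, the set $\Om_a$ is a domain and $v:=(u-a)_\limplus$ is a singular function in $\Om_a$ with singularity at $x_0$. Moreover $v(x_0)=u(x_0)-a>0$ (interpreting $\infty-a=\infty$), so it only remains to verify the Green-function normalization~\eqref{eq-normalized-Green-intro} for $v$ in $\Om_a$.

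For $0<b<v(x_0)$, I first identify the superlevel sets of $v$: since $u(x)\ge a+b>a$ forces $x\in\Om_a$, we have
\[
(\Om_a)^b:=\{x\in\Om_a:v(x)\ge b\}=\{x\in\Om:u(x)\ge a+b\}=\Om^{a+b}.
\]
Because $0<a+b\le u(x_0)$ (with equality only possible when $u(x_0)=\infty$), I may apply Theorem~\ref{thm-Green-intro} to the Green function $u$ with the pair $(a,a+b)$ in place of $(a,b)$, obtaining
\[
\cp\bigl((\Om_a)^b,\Om_a\bigr)=\cp(\Om^{a+b},\Om_a)=\bigl((a+b)-a\bigr)^{1-p}=b^{1-p}.
\]
This is precisely the normalization~\eqref{eq-normalized-Green-intro} for $v$ on $\Om_a$, so $v$ is a Green function in $\Om_a$ with singularity at~$x_0$.

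There is no real obstacle; the corollary is a bookkeeping consequence of Lemma~\ref{lem-sing-subdomain} together with the fact that the constant $c$ produced by Theorem~\ref{thm-sing} equals~$1$ for the Green function $u$ and therefore, via the superlevel-set identification above, also equals~$1$ for $v$.
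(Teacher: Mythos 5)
Your proof is correct and follows essentially the same route as the paper: invoke Lemma~\ref{lem-sing-subdomain} to get that $\Om_a$ is a domain and $v$ is singular in $\Om_a$, then use the identity $\cp(\Om^b,\Om_a)=(b-a)^{1-p}$ from Theorem~\ref{thm-Green-intro} (applied to $u$ with the pair $(a,a+b)$) to check the Green normalization for $v$. You simply spell out the superlevel-set identification $(\Om_a)^b=\Om^{a+b}$ that the paper leaves implicit.
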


\begin{proof}
By Lemma~\ref{lem-sing-subdomain}, $v$ is a singular function.
It now follows from Theorem~\ref{thm-Green-intro}, that $v$ satisfies
the normalization required for Green functions.
\end{proof}  

\begin{cor}
Assume that $X$ is \p-hyperbolic or $\Cp(X \setm \Om)>0$, 
and in addition that $\Cp(\{x_0\})>0$.
Then the Green function in $\Om$ with singularity at $x_0$ is unique.
\end{cor}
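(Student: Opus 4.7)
The plan is to combine Proposition~\ref{prop-Cp>0-unique} with the normalization identity \eqref{eq-normalized-Green} in the definition of Green functions. The hypothesis that $X$ is \p-hyperbolic or $\Cp(X \setm \Om)>0$ guarantees via Theorem~\ref{thm-main} that at least one Green function exists, so the substantive point is uniqueness.

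First I would let $u_1$ and $u_2$ be two Green functions in $\Om$ with singularity at $x_0$. Since $\Cp(\{x_0\})>0$, Lemma~\ref{lem-sing-x_0} gives $u_i(x_0)<\infty$, and $u_i(x_0)>0$ because $u_i>0$ in $\Om$. Proposition~\ref{prop-Cp>0-unique} then applies to each $u_i$ and yields
\[
u_i = u_i(x_0)\, \pot{\Om}{\{x_0\}} \quad \text{on } X, \quad i=1,2.
\]
Setting $c:=u_1(x_0)/u_2(x_0) \in (0,\infty)$, we obtain $u_1 = c u_2$ everywhere on $X$. Thus the two Green functions can already differ only by a positive multiplicative constant, and it remains to show $c=1$.

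Next I would pin down $c$ via the Green normalization. For every $b$ with $0 < b < \min\{u_1(x_0),\, u_2(x_0)\}$, the superlevel sets coincide under rescaling:
\[
\{x\in\Om:u_1(x)\ge b\} = \{x\in\Om: c u_2(x)\ge b\} = \{x\in\Om:u_2(x)\ge b/c\}.
\]
Applying the normalization \eqref{eq-normalized-Green} to $u_1$ at level $b$ and to $u_2$ at level $b/c$ therefore gives
\[
b^{1-p} = \cp(\{u_1\ge b\},\Om) = \cp(\{u_2\ge b/c\},\Om) = (b/c)^{1-p} = c^{p-1} b^{1-p}.
\]
Since $p>1$, this forces $c=1$, and hence $u_1 \equiv u_2$.

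I do not anticipate any substantive obstacle: everything reduces to the rigidity statement of Proposition~\ref{prop-Cp>0-unique} (which already does the heavy lifting by identifying a singular function with a capacitary potential once $\Cp(\{x_0\})>0$), followed by a one-parameter matching using \eqref{eq-normalized-Green}. Note that the assumption $\Cp(\{x_0\})>0$ is essential at the very first step, since otherwise only comparability of singular functions is known (Proposition~\ref{prop-comparable}), and the above argument collapses.
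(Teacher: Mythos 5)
Your proposal is correct and follows essentially the same route as the paper: both first invoke Proposition~\ref{prop-Cp>0-unique} to reduce two Green functions to scalar multiples of $\pot{\Om}{\{x_0\}}$. Where the paper then cites the uniqueness of $\alp$ from Theorem~\ref{thm-Green-intro} as a black box, you re-derive that step directly by comparing the superlevel-set normalization~\eqref{eq-normalized-Green} for the two functions; this is just the relevant content of Theorem~\ref{thm-Green-intro} spelled out, so there is no substantive difference.
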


The existence of the Green function follows from Theorem~\ref{thm-main}.

\begin{proof}
By Proposition~\ref{prop-Cp>0-unique}, every Green function $u$ 
in $\Om$ with singularity at $x_0$
is of the form
$u=\alp\pot{\Om}{\{x_0\}}$, where $\alp>0$.
It now follows from Theorem~\ref{thm-Green-intro} that there
is only one choice for $\alp$, i.e.\ the Green function is unique.
\end{proof}

\begin{remark} \label{rmk-HoSh}
The singular function $g$ constructed  
in Holo\-pai\-nen--Shan\-mu\-ga\-lin\-gam~\cite[Theorems~3.12 and 3.14]{HoSh}
is \p-harmonic in $X\setm\{x_0\}$.
However, the capacity estimates for the superlevel sets of $g$
in \cite[pp.~325--326]{HoSh} do not seem to be fully justified:
In particular, they seem to  use that 
\[
\Capp_p(E_j,\Om_j) \to \Capp_p(E,\Om)
\quad \text{when $E_j \nearrow E$ and $\Om_j \nearrow \Om$},
\]
which can fail in general, as shown by 
Example~\ref{ex-warning-ring}.
Without suitable superlevel set or energy estimates, it is also not quite
clear why $g$ should be nonconstant when $\Cp(\{x_0\})>0$.

In the setting of \cite{HoSh}, provided that $X$ is also proper,
it follows from our Proposition~\ref{prop-cp-inc-liminf}
that $g$ 
satisfies (in their notation)
\begin{equation} \label{eq-Xb}
\Capp_p(X^b,X_a) \le p^2(b-a)^{1-p}
\quad \text{when } \mu(X^b)<\infty, 
\end{equation}
but it is less
clear how to obtain a similar lower bound.
Our argument in \eqref{eq-le-to-ge} cannot be used since it
is based on the precise $\le$-inequalities in the first part of the proof
of Theorem~\ref{thm-cp-level-pot}, which are stronger
than both \eqref{eq-Xb} and the  superlevel set normalization 
in \cite[Condition~4 on p.~325]{HoSh}.

The condenser capacity 
$\Capp_p$ defined in  \cite[Definition~2.3 and (9)]{HoSh}
coincides with $\cpt$ in our Proposition~\ref{prop-cpt},
and is thus in general neither countably subadditive nor a Choquet capacity.
In particular, the left-hand side in~\eqref{eq-Xb} is infinite
when $\mu(X^b)=\infty$.
If $\Capp_p$ is replaced by our capacity $\cp$, then 
our Green functions satisfy all the requirements 
on the singular functions in~\cite{HoSh} with $b_0=0$.

When $\Om=X$ is unbounded, Definition~3.11 in~\cite{HoSh} does not seem to 
acknowledge the ``zero boundary data at $\infty$'', 
nor is \ref{dd-inf} necessarily satisfied. 
In fact, if $u$ is a Green function in $X$
and $\Cp(\{x_0\})=0$
then for any $c>0$, the function $v(x):=u(x)+c$ satisfies 
Definition~3.11 in~\cite{HoSh}
(upon choosing a sufficiently large $b_0$).
Similarly, if $\Cp(\{x_0\})>0$ 
then  the convex combination 
\[
v(x):=\theta u(x)+(1-\theta) \cp(\{x_0\},X)^{1/(1-p)}, 
\quad \text{with any } (1-1/p)^2\le \theta<1,
\]
satisfies Definition~3.11 in~\cite{HoSh} with 
\[
   b_0\ge\frac{(1-\theta)\cp(\{x_0\},X)^{1/(1-p)}}{1-\theta/p^{2/(p-1)}}.
\]

When $X$ is only locally compact (and nonproper) as in~\cite{HoSh}, 
it is not even known whether 
the condenser capacity $\cp$ is countably subadditive.
See Section~12 in Bj\"orn--Bj\"orn--Lehrb\"ack~\cite{BBLehGreen}
for a similar discussion in the case when $\Om$ is bounded.
\end{remark}

\section{Boundary regularity at \texorpdfstring{$\infty$}{oo} and resolutivity}
\label{sect-reg}

The following result is the key to proving 
Theorem~\ref{thm-char-hyp-intro}.

\begin{lem} \label{lem-global-u-lim=0}
Assume that  $X$ is \p-hyperbolic and that $\mu$ is globally doubling
  and supports a global \p-Poincar\'e inequality.
Let $\Om$ be an unbounded open set.
Then $\infty$ is regular for $\Om$.

Moreover, if $u$ is a singular function in a domain $\Om$, then
\begin{equation}   \label{ex-lim-u=0}
   \lim_{x \to \infty} u(x) = 0.
\end{equation}
\end{lem}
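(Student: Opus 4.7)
The plan is to reduce both statements to the existence of a barrier at $\infty$ for every unbounded open $\Om$, after which Proposition~\ref{prop-barrier=>} gives the first claim directly. The second claim then follows easily: if $u$ is a singular function in an unbounded domain $\Om$ and $B_r\Subset\Om$, then~\ref{dd-P} says $u = P_{\Om\setm\clB_r} u$ in $\Om\setm\clB_r$. The boundary data $u|_{\bdystar(\Om\setm\clB_r)}$ is bounded (nonzero only on the compact set $\bdy B_r$) and continuous at $\infty$ with value $u(\infty)=0$ (by the extension of $u$ by zero). Since $\Om\setm\clB_r$ is itself an unbounded open set, the first part of the lemma applied to it makes $\infty$ regular for $\Om\setm\clB_r$, and Proposition~\ref{prop-reg5.1}\ref{reg-reg}$\imp$\ref{reg-cont-x0} then yields $\lim_{y\to\infty} u(y)=0$, which is~\eqref{ex-lim-u=0}.

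For the barrier, I would fix $R>0$ large enough that $0<\cp(\clB_R,X)<\infty$; positivity follows from \p-hyperbolicity and Proposition~\ref{prop-hyp-char-1}, while finiteness is immediate from a Lipschitz cutoff of $\clB_R$ inside $B_{2R}$ combined with global doubling. Set $v:=\pot{X}{\clB_R}$. Proposition~\ref{prop-ex-pot-unbdd} then gives $v\in\Dp(X)$ with $\int_X g_v^p\,d\mu=\cp(\clB_R,X)$, \p-harmonicity of $v$ in $X\setm\clB_R$, and superharmonicity on~$X$. The constant function $1$ lies in $\PsiQ{X}{\clB_R}$, so $v\le 1$; and since $\Cp(\clB_R)>0$ and $v\ge 1$ q.e.\ on~$\clB_R$, the strong minimum principle on the connected space~$X$ forces $v>0$ everywhere. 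Next, $\inf_X v=0$: otherwise, if $m:=\inf_X v>0$, then $(v-m)/(1-m)$ would belong to $\PsiQ{X}{\clB_R}$, giving $v\le (v-m)/(1-m)$, which rearranges to $v\equiv 1$; this contradicts $\cp(\clB_R,X)>0$ via Proposition~\ref{prop-ex-pot-unbdd}.

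The main obstacle is upgrading $\inf_X v=0$ (equivalently $\liminf_{x\to\infty}v(x)=0$, by positivity of $v$ on compact sets) to the full limit $\lim_{x\to\infty} v(x)=0$. The plan is to exploit $g_v\in L^p(X)$ together with the fact that $\|g_v\|_{L^p(X\setm B_r)}\to0$ as $r\to\infty$. Under global doubling, each annulus $A_r:=B_{2r}\setm\clB_{r/2}$ can be covered by a uniformly bounded number of $\la$-Poincar\'e balls of radius comparable to~$r$, so chaining \p-Poincar\'e inequalities on these balls gives $\vint_{A_r}|v-v_{A_r}|\,d\mu\to0$. The Harnack inequality and H\"older estimates for bounded \p-harmonic functions (standard consequences of global doubling and global \p-Poincar\'e) then force the oscillation of $v$ on the set $\{d(\cdot,x_0)=r\}$ to tend to zero, and combined with $\liminf_{x\to\infty}v(x)=0$ this yields $\lim_{x\to\infty}v(x)=0$. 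With this in hand, $v$ is a barrier at $\infty$ for every unbounded open~$\Om$: it is superharmonic on~$\Om$, tends to zero along $\Om\ni y\to\infty$, and by lower semicontinuity and positivity satisfies $\liminf_{\Om\ni y\to z}v(y)\ge v(z)>0$ for every $z\in\bdy\Om$. Proposition~\ref{prop-barrier=>} then gives regularity of $\infty$ for $\Om$, completing the proof.
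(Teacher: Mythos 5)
Your overall architecture matches the paper's: construct a positive superharmonic function on $X$ that tends to zero at $\infty$, use it as a barrier for every unbounded $\Om$ via Proposition~\ref{prop-barrier=>}, and then derive the second statement from regularity of $\infty$ for $\Om\setm\clB_r$ together with \ref{dd-P} and Proposition~\ref{prop-reg5.1}. That last reduction, and the verification of the barrier properties at finite boundary points, are both fine.

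The substantive gap is in the claim $\lim_{x\to\infty}v(x)=0$. Your plan — chaining \p-Poincar\'e inequalities over the annulus $A_r=B_{2r}\setm\clB_{r/2}$ to show $\vint_{A_r}|v-v_{A_r}|\,d\mu\to 0$, then invoking Harnack and H\"older estimates to deduce that the oscillation of $v$ on $\{d(\cdot,x_0)=r\}$ tends to zero — does not go through as stated. Chaining Poincar\'e inequalities across an annulus is not an automatic consequence of global doubling and a global \p-Poincar\'e inequality: you would need a chain (or LLC-type) condition letting you connect any two points of $\{d=r\}$ by a bounded chain of $\lambda$-Poincar\'e balls of comparable radius \emph{staying inside a fixed dilation of the annulus}, and no such property is invoked or available from the hypotheses. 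Moreover, even granting the averaged bound, the passage to pointwise oscillation and then to $\lim_{x\to\infty}v=0$ (rather than just $\liminf=0$ plus some control of oscillation on each sphere) is not carried out: one also needs monotonicity of $\sup_{\{d=r\}}v$ in $r$, or a comparison argument on annuli, to fill in the radii between a $\liminf$-realizing sequence. The paper instead takes $\ut$ to be a \emph{singular function} in $X$ (not a capacitary potential of a ball) and cites Proposition~4.4 of Bj\"orn--Bj\"orn--Lehrb\"ack~\cite{BBLehGreen}, which is precisely a Harnack inequality on spheres, $\max_{d(x_0,\cdot)=R}\ut\le C\min_{d(x_0,\cdot)=R}\ut$ with $C$ uniform in $R$ under the global assumptions; combined with \eqref{eq-liminf-ut} that gives $\lim_{x\to\infty}\ut=0$. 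Note that because $v=\pot{X}{\clB_R}$ is \p-harmonic only in $X\setm\clB_R$ (not in $X$ punctured at a point), Proposition~4.4 of~\cite{BBLehGreen} is not directly applicable to your $v$ either; switching to a singular function in $X$ with singularity chosen in $X\setm\Om$ (as the paper does) avoids both the gap and this mismatch.
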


Example~\ref{ex-R-half-weighted} below shows that the global assumptions
cannot be replaced by local ones.

\begin{proof}
If $\Om=X$, then $\infty$ is always regular for
$\Om$, because all boundary data on $\bdystar \Om$ are constant and
so are their Perron solutions, see Example~\ref{ex-bdystar-one-pt}.

Let $\ut$ be a singular function in $X$ with singularity at $x_0$, which
exists by Theorem~\ref{thm-exist-singular}.
By the strong minimum principle \cite[Theorem~9.13]{BBbook} and~\ref{dd-inf}, 
\begin{equation}   \label{eq-liminf-ut}
\liminf_{x \to \infty} \ut(x) = 0.
\end{equation}
Applying Proposition~4.4 from 
Bj\"orn--Bj\"orn--Lehrb\"ack~\cite{BBLehGreen}
to balls $B_R$ shows that for all $R>0$,
\[
\max_{d(x_0,x)=R} \ut \le C \min_{d(x_0,x)=R} \ut,
\]
with $C$ independent of $R$. 
Letting $R\to\infty$, together with \eqref{eq-liminf-ut},
shows that \eqref{ex-lim-u=0} holds for $\ut$.

If $\Om \ne X$, then let $x_0 \in X \setm \Om$.
The singular function $\ut$ in $X$, considered above,
is \p-harmonic in $X\setm\{x_0\}$ and $\ut>0$ in $X$.
Thus
\[
   \liminf_{\Om \ni y \to x} \ut(y) = \ut(x) >0
   \quad \text{for every } x \in \bdy \Om.
\]
Since $\ut$ also satisfies~\eqref{ex-lim-u=0},
it is a barrier at $\infty$ for $\Om$,
and  hence $\infty$ is regular for $\Om$,
by Proposition~\ref{prop-barrier=>}.

If $\Om$ is connected and
$u$ is a singular function in $\Om$, then \ref{dd-P}
and the regularity of $\infty$ for $\Om\setm\clB$
imply~\eqref{ex-lim-u=0} and conclude the proof.
\end{proof}

Before continuing we need to take a closer look
at the case when $\Cp(X \setm\Om)=0$.
See Hei\-no\-nen--Kil\-pe\-l\"ai\-nen--Martio~\cite[p.~183]{HeKiMa}
for a similar discussion on weighted $\R^n$ with global assumptions.

If $X$ is bounded and $\Om=X$, then there is no boundary
and the Dirichlet problem does not make sense.
See Example~\ref{ex-bdystar-one-pt} for the case when
$\bdystar \Om=\{a\}$ consists of exactly one point
(with $a=\infty$ if $\Om=X$ is unbounded).
The case when  $\bdystar \Om$ contains at least two points
is a bit more interesting.

\begin{prop} \label{prop-Cp=0-para}
Assume that $X$ is bounded or \p-parabolic,
that $\Cp(X\setm \Om)=0$ and that
$\bdystar \Om \ne \emptyset$.
Then
\[
 \uP_{\Om} f \equiv \sup_{\bdystar \Om} f
 \quad \text{and} \quad
 \lP_{\Om} f \equiv \inf_{\bdystar \Om} f.
\]
In particular, no nonconstant $f \in C(\bdystar \Om)$ is resolutive.
Moreover, all boundary points are irregular if $\bdystar \Om$ contains at least
two points.
\end{prop}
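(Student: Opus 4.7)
The strategy is to combine Lemma~\ref{lem-parabolic-superh-X} with the definition of Perron solutions. Under the stated hypotheses, that lemma forces every superharmonic function in $\Om$ which is bounded from below to be constant on all of $\Om$. Dually, every subharmonic function in $\Om$ that is bounded from above is constant. This should collapse the Perron classes $\UU_f(\Om)$ and $\LL_f(\Om)$ to constants and produce the desired formulas essentially immediately.

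First, I would set $c=\sup_{\bdystar\Om}f\in[-\infty,\infty]$ and show that $\uP_\Om f\equiv c$. Any $u\in\UU_f(\Om)$ is superharmonic in $\Om$ and bounded from below, hence by Lemma~\ref{lem-parabolic-superh-X} equals a real constant~$c'$. The boundary condition $\liminf_{\Om\ni y\to x}u(y)\ge f(x)$ for every $x\in\bdystar\Om$ then simplifies to $c'\ge c$, so $\uP_\Om f\ge c$. When $c<\infty$, the constant function $u\equiv c$ is itself in $\UU_f(\Om)$ and gives the reverse inequality. When $c=\infty$ we have $\UU_f(\Om)=\emptyset$ and $\uP_\Om f\equiv\infty$ by the usual convention; when $c=-\infty$, every real constant lies in $\UU_f(\Om)$ and the pointwise infimum over $\R$ is $-\infty$. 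In every case $\uP_\Om f\equiv c$. The formula $\lP_\Om f\equiv\inf_{\bdystar\Om}f$ follows either by the same argument applied to subharmonic functions bounded from above, or directly from $\lP_\Om f=-\uP_\Om(-f)$.

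The two remaining claims are then immediate. If $f\in C(\bdystar\Om)$ is nonconstant, then $\inf_{\bdystar\Om}f$ and $\sup_{\bdystar\Om}f$ are distinct real numbers (finite because $\bdystar\Om$ is compact), so $\uP_\Om f\not\equiv\lP_\Om f$ and $f$ is nonresolutive. For the irregularity statement, suppose $\bdystar\Om$ contains distinct points $x_1,x_2$. Since $\Xstar$ is the one-point compactification of the proper metric space $X$ (when unbounded), it is compact and metrizable, and so is the closed subset $\bdystar\Om$. Urysohn's lemma therefore supplies $f\in C(\bdystar\Om)$ with $0\le f\le 1$, $f(x_1)=0$ and $f(x_2)=1$. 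By the formula just proved, $\uP_\Om f\equiv 1$, so
\[
\lim_{\Om\ni y\to x_1}\uP_\Om f(y)=1\ne 0=f(x_1),
\]
showing that $x_1$ is irregular. Since $x_1$ was arbitrary, every point of $\bdystar\Om$ is irregular.

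There is no serious obstacle here: the entire argument is driven by Lemma~\ref{lem-parabolic-superh-X}, which already packages the parabolic/bounded dichotomy together with the removability hypothesis $\Cp(X\setm\Om)=0$. The only points requiring a little care are the edge cases $c=\pm\infty$ in the definition of $\uP_\Om f$ and the verification that $\bdystar\Om$ is compact Hausdorff (indeed metrizable) so that Urysohn's lemma applies.
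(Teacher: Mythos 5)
Your proposal is correct and takes essentially the same route as the paper's proof: both rely on Lemma~\ref{lem-parabolic-superh-X} to conclude that every member of $\UU_f(\Om)$ (resp.\ $\LL_f(\Om)$) is a constant, which immediately collapses the Perron classes and yields $\uP f\equiv\sup_{\bdystar\Om}f$, $\lP f\equiv\inf_{\bdystar\Om}f$, and the nonresolutivity and irregularity claims. Your extra care with the $c=\pm\infty$ edge cases and the explicit Urysohn argument are fine but do not represent a different strategy.
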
  

\begin{proof}
  Let $f \in C(\bdystar \Om)$  and $\phi \in \UU_f(\Om)$.
Then $\phi$ is constant, by Lemma~\ref{lem-parabolic-superh-X},
and thus $\phi \ge \sup_{\bdystar \Om} f$.
Taking the infimum over all such $\phi$ gives
$\uP f \equiv \sup_{\bdystar \Om} f$.
Similarly $\lP f \equiv \inf_{\bdystar \Om} f$.
So $f$ is resolutive only if it is constant.
Choosing a nonconstant $f \in C(\bdystar \Om)$ shows that
no boundary point is regular if $\bdystar \Om$ contains at least
two points.
\end{proof}

\begin{prop} \label{prop-reg-par/hyp}
If $X$ is \p-parabolic and $K \ne \emptyset$ is compact,
then $\infty$ is irregular for $\Om:=X \setm K$.  
\end{prop}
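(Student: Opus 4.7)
The strategy is to exhibit continuous boundary data $f\in C(\bdystar\Om)$ whose upper Perron solution stays bounded away from $f(\infty)$, contradicting Definition~\ref{deff-reg-pt}. Note that $X$ being \p-parabolic forces $X$ unbounded, while connectedness of $X$ together with $\emptyset\ne K\ne X$ (from compactness of $K$ in unbounded $X$) gives $\bdy K\ne\emptyset$. Since $\bdy K$ is compact in $X$, the two-valued function $f:=\chione_{\bdy K}$ on $\bdy\Om$ together with $f(\infty):=0$ is continuous on $\bdystar\Om=\bdy K\cup\{\infty\}$ (both pieces are clopen). The plan is to prove $\uP_\Om f\equiv 1$ in $\Om$, which forces $\lim_{y\to\infty}\uP_\Om f(y)=1\ne 0=f(\infty)$ and hence the irregularity of $\infty$.

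If $\Cp(K)=0$, then $\Cp(X\setm\Om)=0$, and since $\bdystar\Om$ contains at least two points, Proposition~\ref{prop-Cp=0-para} gives $\uP_\Om f\equiv\sup_{\bdystar\Om}f=1$ at once. In the main case $\Cp(K)>0$, \p-parabolicity forces $\cp(K,X)=0$ by definition, so Lemma~\ref{lem-cp=0} yields $\pot{X}{K}\equiv 1$ on $X$. I then approximate this global potential by bounded ones: for $R$ large with $K\Subset B_R$, set $u_R:=\pot{B_R}{K}$. Lemma~\ref{lem-pot-conv} (with $\Om_j=B_{R_j}$ and the constant sets $E_j=K$) gives $u_R\nearrow 1$ as $R\to\infty$, and Proposition~\ref{prop-ex-pot-unbdd} identifies $u_R=\lP_{B_R\setm K}\chione_K$ on $B_R\setm K$.

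The decisive step is Lemma~\ref{lem-simple} applied with $G_1=B_R\setm K$, $G_2=\Om$, $f_1=\chione_K$ and $f_2=f$: its three hypotheses hold because $f_1=f_2=1$ on $\bdystar G_1\cap\bdystar G_2=\bdy K$; $f_1=0$ on $\bdy G_1\cap G_2=\bdy B_R$; and $\bdy G_2\cap G_1=\bdy K\cap(B_R\setm K)=\emptyset$ makes the third condition vacuous. The lemma then yields $u_R\le\lP_\Om f\le\uP_\Om f$ on $B_R\setm K$, and letting $R\to\infty$ produces $\uP_\Om f\ge 1$ on $\Om$; the reverse inequality is trivial since the constant $1$ lies in $\UU_f(\Om)$. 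The genuine content is that \p-parabolicity collapses the global capacitary potential to~$1$; the anticipated technical obstacle is bookkeeping the hypotheses of Lemma~\ref{lem-simple} to correctly align the ``obstacle'' potential on $\bdy K$ in $B_R\setm K$ with the Perron solution of $\chione_{\bdy K}$ in $\Om$, and splicing together the two cases according to $\Cp(K)$.
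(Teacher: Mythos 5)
Your proof is correct, but it takes a genuinely different route from the paper's in the main case $\Cp(K)>0$. There the paper simply quotes Hansevi's Theorem~7.8 (resolutivity with $Hf=Pf$ in \p-parabolic sets with $\Cp(X\setm\Om)>0$, applied to $f=\chione_K$, giving $P_\Om f\equiv1$), so the substantive content is outsourced. You instead stay entirely within the paper's own capacitary-potential machinery: \p-parabolicity gives $\cp(K,X)=0$, so by Lemma~\ref{lem-cp=0} the global capacitary potential $\pot{X}{K}\equiv1$; you then exhaust by the bounded potentials $u_R=\pot{B_R}{K}$ (Lemma~\ref{lem-pot-conv}), identify $u_R$ with $\lP_{B_R\setm K}\chione_K$ (Proposition~\ref{prop-ex-pot-unbdd}), and transfer the lower bound to $\Om$ via Lemma~\ref{lem-simple}; your three hypothesis checks for that lemma are exactly right, in particular the third one being vacuous because $\bdy K\cap(B_R\setm K)=\emptyset$. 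The upshot is the same: $u_R\le\lP_\Om f$, $u_R\nearrow1$, so $P_\Om f\equiv1$ and $\infty$ is irregular. The $\Cp(K)=0$ case is handled identically in both proofs via Proposition~\ref{prop-Cp=0-para}, and your observation that $K\ne\emptyset$, $K\ne X$ plus connectedness gives $\bdy K\ne\emptyset$ (so $\bdystar\Om$ has at least two points) is the right way to make that citation apply. What your approach buys is a self-contained argument avoiding the somewhat heavy external resolutivity theorem; what it costs is some care in aligning the obstacle data on $\bdy K$ in the compactly supported potentials with the Perron data on $\bdystar\Om$, which you navigate correctly.
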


Observe that this is false if 
$K=\emptyset$, see Example~\ref{ex-bdystar-one-pt}. 
To prove Proposition~\ref{prop-reg-par/hyp}, we will use the following definition from
Hansevi~\cite{Hansevi2}, which generalizes Definition~\ref{def-p-par}
to general open sets.
For $\Om=X$, these two definitions are clearly equivalent.

\begin{deff}   \label{def-p-par-set}
An unbounded open set $\Om\subset X$ is \emph{\p-parabolic} if for each 
compact set $K\subset\Om$ there exist functions $u_j\in\Np(\Om)$
such that $u_j\ge1$ on $K$ for all $j=1,2,\dots$ and
\begin{equation*} 
\int_\Om g_{u_j}^p\,d\mu \to 0 \quad \text{as } j\to\infty.
\end{equation*}
\end{deff}

\begin{proof}[Proof of Proposition~\ref{prop-reg-par/hyp}]
Clearly, $\Om$ is also \p-parabolic.
Let $f=\chione_K$.
Then $f \in C(\bdy^* \Om)$.
If $\Cp(K)>0$, then it follows from Theorem~7.8 in Hansevi~\cite{Hansevi2}
that 
$P_\Om f \equiv 1$,
and thus $\infty$ is irregular.
On the other hand, if $\Cp(K)=0$, then the irregularity follows from
Proposition~\ref{prop-Cp=0-para}. 
\end{proof}

We are now ready to prove Theorem~\ref{thm-char-hyp-intro}.

\begin{proof}[Proof of Theorem~\ref{thm-char-hyp-intro}]
\ref{i-hyp}\eqv\ref{i-cap(X,X)}
This follows from Proposition~\ref{prop-hyp-char-1}.

  \ref{i-hyp}\imp\ref{i-reg}
This follows from Lemma~\ref{lem-global-u-lim=0}.

\ref{i-reg}\imp\ref{i-reg-x0}
This is trivial.

$\neg$\ref{i-hyp}\imp$\neg$\ref{i-reg-x0}
This follows from Proposition~\ref{prop-reg-par/hyp} 
with $K=\{x_0\}$.

\ref{i-hyp}\eqv\ref{i-integral}
This was shown in 
Bj\"orn--Bj\"orn--Lehrb\"ack~\cite[Theorem~5.5]{BBLehIntGreen}.
\end{proof}

The following example shows that the global assumptions
  in Theorem~\ref{thm-char-hyp-intro} and
Lemma~\ref{lem-global-u-lim=0} cannot be replaced by local ones.

\begin{example} \label{ex-R-half-weighted}
Let $X=\R$ and $d\mu=w\,dx$ be as in Example~\ref{ex-R-half-weighted-cap},
where it was shown that $X$ is \p-hyperbolic.
Let $\Om=\R \setm \{0\}$ and $f=\chione_{\{0\}}$ with $f(\infty)=f(\pm\infty)=0$.
(Keeping with the earlier notation, $\infty=\pm\infty$ is the point added in the one-point 
compactification of $\R$.)
The function $u_0$ from~\eqref{eq-def-ur} is \p-harmonic in $(0,+\infty)$
and satisfies 
\[
\lim_{x \to 0\limplus} u_0(x)=1
\quad \text{and}  \quad \lim_{x \to +\infty} u_0(x)=0.
\]
Hence $Pf \equiv u_0$ in $(0,+\infty)$.

On the other hand, $(-\infty,0)$ is a \p-parabolic set and thus,
by Theorem~7.8 in Hansevi~\cite{Hansevi2},
$Pf \equiv 1$ in $(-\infty,0)$,
showing that $\infty$ is irregular.
Thus \ref{i-hyp}$\not\imp$\ref{i-reg} in 
Theorem~\ref{thm-char-hyp-intro}.
Since $\Cp(\{0\})>0$, it follows that 
$u=Pf$ (with $u(0)=1$ and $u(\infty)=0$)
is a singular
 function in $X$ with singularity at $x_0=0$.
Moreover, \eqref{ex-lim-u=0} in Lemma~\ref{lem-global-u-lim=0}
fails for the singular function  $u$.
\end{example}

Theorem~6.1 in
Bj\"orn--Bj\"orn--Shan\-mu\-ga\-lin\-gam~\cite{BBS2}
(i.e.\ \cite[Theorem~10.22] {BBbook}) shows
that if $\Om$ is bounded, $\Cp(X \setm\Om)>0$, $f \in C(\bdy \Om)$ and
$u$ is a bounded \p-harmonic function in $\Om$ such that
\eqref{eq-unique} below holds, then $u= Pf$.
This result was extended to unbounded \p-parabolic sets $\Om$
with $\Cp(X \setm \Om)>0$ (and $f\in C(\bdystar\Om)$)
by Hansevi~\cite[Corollary~7.9]{Hansevi2}.
We can now give another type of generalization to unbounded
sets.

\begin{prop}		\label{prop-unique}
  Let $\Om$ be an unbounded open set. 
Assume that  $f\in C(\bdystar\Om)$ is such that 
\[
\lim_{\Om \ni x \to \infty} \uP f(x) = f(\infty).
\]
Let $u$ be a bounded \p-harmonic function in $\Om$ such that 
\[
\lim_{\Om \ni x \to \infty} u(x)= f(\infty)
\]
and 
\begin{equation}   \label{eq-unique}
\lim_{\Om\ni y\to x}u(y)=f(x) \quad\text{for q.e.\ }x\in\bdy\Om.
\end{equation}
Then $u=\uP f$ in $\Om$.

In particular, if $\infty$ is regular
for $\Om$
then every $f \in C(\bdystar \Om)$ is resolutive.
\end{prop}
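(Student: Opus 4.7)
The plan is to apply the comparison principle between sub- and superharmonic functions (Hansevi~\cite[Theorem~6.2]{Hansevi2}) to $u$ and $\uP f$ in both directions, exploiting that the set of bad boundary points has Sobolev capacity zero.

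First I would isolate the negligible set. Let $I_1\subset\bdy\Om$ be the set where~\eqref{eq-unique} fails, so $\Cp(I_1)=0$ by hypothesis, and let $I_2=I_\Om\cap X$ be the set of finite irregular boundary points for $\Om$, so $\Cp(I_2)=0$ by the Kellogg property (Theorem~\ref{thm-Kellogg}). Note also that $\uP f$ is a bounded \p-harmonic function in $\Om$, since $f\in C(\bdystar\Om)$ is bounded on the compact set $\bdystar\Om$. At every $x\in\bdy\Om\setm(I_1\cup I_2)$, both $u$ and $\uP f$ have the pointwise limit $f(x)$: for $u$ by~\eqref{eq-unique}, for $\uP f$ by the regularity of $x$ via Proposition~\ref{prop-reg5.1}\ref{reg-reg}$\imp$\ref{reg-cont-x0}. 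At $\infty$ both limits equal $f(\infty)$ by the standing hypotheses on $u$ and on $\uP f$.

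Second I would invoke the comparison principle twice. Viewing $u$ as bounded subharmonic and $\uP f$ as bounded superharmonic, the inequality $\limsup u\le\liminf\uP f$ holds on $\bdystar\Om$ off the capacity-zero exceptional set $I_1\cup I_2$ and yields $u\le\uP f$ in $\Om$. Swapping the roles (both functions are \p-harmonic), the same boundary control gives $\uP f\le u$, and hence the desired equality $u=\uP f$.

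Finally, for the resolutivity statement, assume $\infty$ is regular and let $f\in C(\bdystar\Om)$. Then $v:=\lP f$ is bounded and \p-harmonic in $\Om$; regularity of $\infty$ applied to both $f$ and $-f$ gives $\lim_{\Om\ni x\to\infty}\lP f(x)=f(\infty)=\lim_{\Om\ni x\to\infty}\uP f(x)$, and the Kellogg property together with regularity of each $x\in\bdy\Om\setm I_\Om$ applied to $\pm f$ yields $\lim_{\Om\ni y\to x}\lP f(y)=f(x)$ for q.e.\ $x\in\bdy\Om$. Applying the first part with $u=\lP f$ produces $\lP f=\uP f$, so $f$ is resolutive. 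The main obstacle is that the cited comparison principle is typically stated requiring the boundary inequality at every boundary point; the substantive work is therefore to justify (via the standard removability argument for cap-zero exceptional boundary sets, as already used in the proof of Lemma~\ref{lem-Perron-chi}) that an exceptional set of Sobolev capacity zero on $\bdy\Om$ may be ignored, with the asymmetric behaviour at $\infty$ handled directly by the hypotheses.
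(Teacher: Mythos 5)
Your overall approach is the same as the paper's: identify a Sobolev-capacity-zero exceptional set on $\bdy\Om$ where the boundary behaviour of $u$ and $\uP f$ cannot be controlled, and then squeeze $u$ and $\uP f$ against each other by comparison, handling $\infty$ directly via the two limit hypotheses. The resolutivity corollary is also argued exactly as the paper does.

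The gap, which you yourself flag in the last paragraph, is real and it is precisely where the work is. The comparison principle you cite (Hansevi~\cite[Theorem~6.2]{Hansevi2}, i.e.~\eqref{eq-lP-uP}) requires the boundary inequality at \emph{every} point of $\bdystar\Om$; there is no off-the-shelf version for unbounded $\Om$ that tolerates a capacity-zero exceptional set on $\bdy\Om$. Saying the exceptional set ``may be ignored via the standard removability argument as in Lemma~\ref{lem-Perron-chi}'' gestures at the right tool but does not discharge it — in fact the proof of Lemma~\ref{lem-Perron-chi} itself goes through bounded restrictions $\Om\cap B_R$ precisely because the removability theorem that is actually available, Theorem~6.1 in Bj\"orn--Bj\"orn--Shan\-mu\-ga\-lin\-gam~\cite{BBS2} (i.e.~\cite[Theorem~10.22]{BBbook}), is a statement about \emph{bounded} open sets. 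The paper fills the gap by doing exactly that: after normalizing $f(\infty)=0$, it picks $r$ so large that $|u|,|f|,|\uP f|\le\eps$ outside $B_r$, verifies that on the bounded set $\Om_r=\Om\cap B_r$ one has $\uP f\in\LL_{f+2\eps+2\chione_E}(\Om_r)$ and $u\in\UU_{f-2\eps-2\chione_E}(\Om_r)$ (the $2\eps$ absorbs the error on $\Om\cap\bdy B_r$ created by truncation, the $2\chione_E$ dominates on the bad set), applies the bounded-case removability theorem to absorb $\chione_E$, and then lets $\eps\to0$ and $r\to\infty$. That exhaustion step and the two-sided $\eps$-perturbation of the boundary data are the substantive content of the proof, and they are absent from your write-up; without them the invocation of the comparison principle on the unbounded $\Om$ with an exceptional set is unjustified.
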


Proposition~\ref{prop-unique}
can be applicable even
when there are nonresolutive $f \in C(\bdystar \Om)$:
Consider $\Om=\R^n \setm \{0\}$ (unweighted), with $1 < p \le n$,
 and let $f(0)=0$ and $f(\infty)=1$.
Then $\lP f \equiv 0$ and  $\uP f \equiv 1$, by Proposition~\ref{prop-Cp=0-para}.

\begin{proof}[Proof of Proposition~\ref{prop-unique}]
By Tietze's extension theorem, we can extend $f$ so that $f \in C(X^*)$.
  We can assume that $f(\infty)=0$
and $|u|,|f|\le 1$.
Then also $|\uP f|\le 1$.
Fix $x_0 \in \Om$.
Let $\eps>0$ be arbitrary and find $r\ge1/\eps$ such that $|u|,|f|,|\uP f|\le \eps$
outside $B_r$.
By the Kellogg property (Theorem~\ref{thm-Kellogg}),
\eqref{eq-unique} holds also with $u$ replaced by $\uP f$.
Let
\[
   E = \{x\in\bdy\Om : \text{\eqref{eq-unique} fails for $u$ or $\uP f$}\}.
\]
As $u$ and $\uP f$ are bounded and \p-harmonic in $\Om_r:=\Om\cap B_r$, we have 
by the definition of Perron solutions that
\begin{equation}   \label{eq-u,Pf}
\uP f \le \lP_{\Om_r} (f+2\eps +2\cdot \chione_E)  
\quad \text{and} \quad 
u \ge \uP_{\Om_r} (f-2\eps -2\cdot \chione_E) 
\qquad \text{in } \Om_r. 
\end{equation}
Since $\Cp(E)=0$, applying 
Theorem~6.1 in Bj\"orn--Bj\"orn--Shan\-mu\-ga\-lin\-gam~\cite{BBS2}
(or \cite[Theorem~10.22]{BBbook}) to the bounded set $\Om_r$,
we get from \eqref{eq-u,Pf} that 
\[
\uP f \le 
P_{\Om_r} f+2\eps \le u+4\eps
\quad \text{in } \Om_r. 
\]
Letting $\eps\to0$ (and thus $r\to\infty$) gives $\uP f \le u$ in $\Om$.
The reverse inequality is proved similarly by switching the roles of
$\uP f$ and $u$ in \eqref{eq-u,Pf}.

If $\infty$ is regular, then by definition,
\[
\lim_{\Om \ni y \to \infty} \lP f(y)
=     \lim_{\Om \ni y \to \infty} \uP f(y) = f(\infty).
\]
The Kellogg property (Theorem~\ref{thm-Kellogg}) implies that 
$u=\lP f$ satisfies \eqref{eq-unique}.
Hence, $\lP f = \uP f$, i.e.\ $f$ is resolutive.
\end{proof}

\begin{proof}[Proof of Theorem~\ref{thm-main-resolutive}]
If $X$ is \p-hyperbolic and $\Om$ is unbounded, then $\infty$ is regular, by
Lemma~\ref{lem-global-u-lim=0}.
Thus $f$ is resolutive by Proposition~\ref{prop-unique}.

If $X$ is \p-parabolic, $\Om$ is unbounded and $\Cp(X \setm \Om)>0$, then $\Om$ is
a  \p-parabolic set, and the resolutivity was shown by
  Hansevi~\cite[Theorem~7.8]{Hansevi2}. 
If $\Om$ is bounded and $\Cp(X \setm \Om)>0$, then
the resolutivity was shown by
Bj\"orn--Bj\"orn--Shan\-mu\-ga\-lin\-gam~\cite[Theorem~6.1]{BBS2}.
\end{proof}  

As far as we know, the
following perturbation result is new even in unweighted $\R^n$ for 
$2\ne p<n$
(also when $\supp h$ is just a singleton).
In unweighted $\R^n$, the resolutivity of $f \in C(\bdystar \Om)$ was
obtained by 
Kilpel\"ainen~\cite[Theorem~1.10 and Remark~2.15]{Kilp89}.
For bounded~$\Om$ 
and bounded $h$ with $\Cp(\supp h)=0$, the equality $P(f+h)=Pf$
can be deduced from Kilpel\"ainen--Mal\'y~\cite{KilpMaly89},
see Remark~\ref{rmk-KilpMaly89} below.
However, the following situation cannot be treated by~\cite{KilpMaly89}:
bounded $\Om\subset\R^n$, 
$1< p \le n$ and $h=\chione_E$ for a 
dense countable set $E\subset\bdy\Om$.

When $\Om$ is bounded or \p-parabolic, and $\Cp(X\setm \Om)>0$,
there are stronger perturbation results in~\cite[Theorem~6.1]{BBS2}
and~\cite[Theorem~7.8]{Hansevi2}, covering e.g.\ the above mentioned
$h=\chione_E$ as well as unbounded $h$ like $h=\infty \chione_E$.
(By Proposition~\ref{prop-Cp=0-para} the assumption $\Cp(X\setm \Om)>0$
cannot be dropped in those results.)
We are now able to treat also unbounded sets $\Om$ in \p-hyperbolic
spaces (albeit only for bounded $h$).

\begin{prop} \label{prop-perturbation}
Assume that $X$ is \p-hyperbolic,  $\Om$ is
unbounded and  $\infty$ is regular for $\Om$.
Let $f \in C(\bdystar \Om)$ and let $h:\bdystar \Om \to \R$ be a bounded function
such that for all $j=1,2,\dots$\,,
\[
E_j:=\{x \in \bdystar \Om : |h(x)| >1/j\}
\]
is bounded and $\Cp(\clE_j)=0$.  
Then $P(f+h)=Pf$ in $\Om$.
\end{prop}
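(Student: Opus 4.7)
The plan is to use the resolutivity of $f$---which holds by Proposition~\ref{prop-unique} since $\infty$ is regular for $\Om$---and to reduce the proof to showing that perturbations on a bounded Sobolev-capacity-zero subset of $\bdy\Om$ do not affect the Perron solution. Fix $\eps>0$ and pick $j$ with $1/j<\eps$; set $M:=\sup|h|$ and $K:=\clE_j$. Since $E_j$ is bounded and $X$ is proper, $K$ is compact; since $\infty\notin E_j$ and $E_j\subset\bdystar\Om$, we have $K\subset\bdy\Om$; and by assumption $\Cp(K)=0$. On $\bdystar\Om$ one has
\[
   f-\eps-M\chione_K \,\le\, f+h \,\le\, f+\eps+M\chione_K,
\]
so monotonicity together with the subadditivity $\uP(\phi_1+\phi_2)\le\uP\phi_1+\uP\phi_2$ (from $u_1+u_2\in\UU_{\phi_1+\phi_2}$ for $u_i\in\UU_{\phi_i}$), the identity $\lP(-\phi)=-\uP\phi$, and $\uP(f+c)=Pf+c$ for constants $c$ yield
\[
   Pf-\eps-M\uP_\Om\chione_K \,\le\, \lP(f+h)\le\uP(f+h)\,\le\, Pf+\eps+M\uP_\Om\chione_K.
\]
Letting $\eps\to 0$ thus reduces the proposition to the key claim $\uP_\Om\chione_K\equiv 0$ in $\Om$.

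To prove the claim, fix an arbitrary $x_0\in\Om$. By the outer regularity of $\Cp$ there exist open sets $U_n\supset K$ with $\Cp(U_n)\to 0$ and $\phi_n\in\Np(X)$ with $\phi_n\equiv 1$ on $U_n$, $0\le\phi_n\le 1$, $\supp\phi_n$ contained in a fixed ball $B\supset K\cup\{x_0\}$, and $\|\phi_n\|_{\Np(X)}^p\to 0$. Let $\psi_n$ be the lsc-regularized solution of the obstacle problem $\K_{\phi_n,0}(2B)$; it is a nonnegative superminimizer (hence superharmonic) in $2B$, with $\psi_n\in\Np_0(2B)$, $0\le\psi_n\le 1$, and $\int_{2B}g_{\psi_n}^p\,d\mu\le\|\phi_n\|_{\Np}^p\to 0$. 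The lsc-regularity of $\psi_n$, combined with $\phi_n\equiv 1$ on the \emph{open} set $U_n$, forces $\psi_n\ge 1$ pointwise on $U_n\supset K$. The zero-extension of $\psi_n$ to $X$ lies in $\Np(X)$ and is a superminimizer there (by the standard extension property for $\Np_0$ superminimizers); its lsc-regularization is therefore superharmonic on $X$, restricts to an element of $\UU_{\chione_K}(\Om)$, and gives $\uP_\Om\chione_K(x_0)\le\psi_n(x_0)$ for every $n$.

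It remains to show $\psi_n(x_0)\to 0$. Since $x_0\notin K$, pick $r>0$ with $B(x_0,r)\cap U_n=\emptyset$ for all sufficiently large $n$; on $B(x_0,r)$ the obstacle vanishes, so $\psi_n$ is \p-harmonic there. The \p-Poincar\'e inequality applied to $\psi_n\in\Np_0(2B)$ gives $\|\psi_n\|_{L^p(2B)}^p\le C r_{2B}^p\int_{2B}g_{\psi_n}^p\,d\mu\to 0$. Harnack's inequality for nonnegative \p-harmonic functions on $B(x_0,r)$ then yields
\[
\psi_n(x_0)\,\le\, \max_{B(x_0,r/2)}\psi_n \,\le\, C\min_{B(x_0,r/2)}\psi_n \,\le\, C\Bigl(\vint_{B(x_0,r/2)}\psi_n^p\,d\mu\Bigr)^{1/p}\to 0,
\]
which proves the key claim and concludes the proof. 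The main obstacle is the simultaneous requirement that $\psi_n\ge 1$ pointwise on $K$ (not merely quasieverywhere), that $\psi_n$ be superharmonic globally rather than only on $2B$, and that $\psi_n(x_0)$ still tend to zero; these are handled respectively by approximating $K$ from outside by open neighborhoods $U_n$, by the standard zero-extension result for $\Np_0$ superminimizers, and by the energy decay of obstacle-problem solutions combined with Harnack away from $K$.
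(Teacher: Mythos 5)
Your proposal has a fatal gap: the claimed subadditivity $\uP(\phi_1+\phi_2)\le\uP\phi_1+\uP\phi_2$ is \emph{false} for $p\ne 2$. You justify it via ``$u_1+u_2\in\UU_{\phi_1+\phi_2}$ for $u_i\in\UU_{\phi_i}$,'' but a sum of two $p$-superharmonic functions is not in general $p$-superharmonic (the $p$-Laplacian is not additive); this is precisely the nonlinearity that separates this theory from the $p=2$ case. Adding a constant does stay within the superharmonic class, so $\uP(g+c)=\uP g+c$ is fine, but you invoke subadditivity to split off the \emph{nonconstant} term $M\chione_K$, and that step has no justification. Consequently, even if you knew $\uP_\Om\chione_K\equiv 0$, you could not deduce $\uP(f+M\chione_K)\le Pf$ by adding a small superharmonic majorant of $\chione_K$ onto an element of $\UU_f(\Om)$, because the sum fails to be superharmonic. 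Both your upper and lower bounds rest on this false premise.

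There is a second gap in the auxiliary claim $\uP_\Om\chione_K\equiv 0$: the zero extension of the obstacle-problem solution $\psi_n\in\Np_0(2B)$ is \emph{not} superharmonic on $X$, and in particular need not be superharmonic across $\bdy(2B)\cap\Om$, which is nonempty since $\Om$ is unbounded. Already in $\R^n$ with $p=2$, the capacitary potential of $\clB_1$ in $B_2$ extended by zero acquires a \emph{sub}harmonic corner along $\bdy B_2$ (its distributional Laplacian picks up a positive mass there, since the radial derivative jumps upward). The pasting Lemma~\ref{lem-pasting} produces $\min\{\psi_n,0\}\equiv 0$, not the zero extension, so it cannot rescue this; there is no ``standard extension property'' of the sort you invoke. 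Hence the extended $\psi_n$ need not restrict to an element of $\UU_{\chione_K}(\Om)$.

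The paper circumvents both obstructions by using the uniqueness theorem Proposition~\ref{prop-unique} directly. With $k_j:=f+(h-1/j)_\limplus$, one checks that $k_j$ is continuous at every point of $\bdystar\Om\setm\clE_j$; Proposition~\ref{prop-reg5.1} and the Kellogg property then show that the bounded $p$-harmonic function $\uP k_j$ attains the boundary values $f$ q.e.\ on $\bdy\Om$ and attains $f(\infty)$ at $\infty$ (using the assumed regularity of $\infty$), so Proposition~\ref{prop-unique} forces $\uP k_j=Pf$. The elementary comparison $\uP(f+h)-1/j=\uP(f+h-1/j)\le\uP k_j$ (valid because $h-1/j\le(h-1/j)_\limplus$ pointwise and $\uP$ commutes with adding constants) then gives the result after $j\to\infty$. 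No sums of superharmonic functions and no global extension of an obstacle-problem solution occur anywhere in that argument.
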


\begin{proof}
By Proposition~\ref{prop-unique}, $f$ is resolutive.

Let $k_j=f+(h-1/j)_\limplus$.
Then $k_j$ is continuous at every point in $A_j:=\bdystar \Om \setm \clE_j$.
It thus follows from Proposition~\ref{prop-reg5.1} that 
\[
    \lim_{\Om \ni y \to x} \uP k_j(y) = k_j(x) = f(x)
\quad \text{for every regular } x \in A_j,
\]
in particular for $x=\infty$ and for q.e.\ $x \in \bdy \Om$ 
by the Kellogg property (Theorem~\ref{thm-Kellogg}).
Thus, by a direct comparison of Perron solutions and Proposition~\ref{prop-unique},
\[
\uP (f+h)-1/j = \uP (f+h-1/j) \le \uP k_j = Pf \quad \text{in } \Om.
\]
Letting $j\to\infty$ shows that $\lP(f+h) \le \uP(f+h) \le Pf$.
Similarly, $\lP(f+h)\ge Pf$ in $\Om$.
\end{proof}

\begin{remark} \label{rmk-KilpMaly89}
For bounded $\Om$, the earliest perturbation result
for Perron solutions in the nonlinear case that we are aware of is
by Kilpel\"ainen--Mal\'y~\cite{KilpMaly89} on unweighted $\R^n$, $1<p \le n$.
It is somewhat hidden in that paper so we seize the opportunity to point
out how it follows from their results.
These arguments apply also to so-called $\mathcal{A}$-harmonic functions.

\medskip

\emph{Statement.
Let $\Om \subset \R^n$, $n \ge 2$, be a nonempty bounded open set and $1<p\le n$.
Let $f \in C(\bdy \Om)$ and let $h:\bdy \Om \to \R$ be a bounded function
with 
$\Cp(\supp h)=0$.  
Then $P(f+h)=Pf$ in $\Om$.
}

\medskip

\emph{Proof.}
Let $I_\Om$ be the set of irregular boundary points.
Then $\Cp(I_\Om)=0$, by the Kellogg property,
which had been obtained in this situation by 
Kilpel\"ainen~\cite[Corollary~5.6]{Kilp89}.
Hence $\Cp(E)=0$ for $E:=I_\Om \cup \supp h$.

Lemma~2.17 in~\cite{Kilp89} 
shows that
\[
    \lim_{\Om \ni y \to x} \uP(f+h)(y) = f(x)
    \quad \text{for every  } x \in \bdy \Om \setm E.
\]
Hence  $\uP(f+h)$ is a ``generalized solution'' 
of the Dirichlet problem with boundary data $f$ in the sense of 
\cite[p.~40]{KilpMaly89} for
\[ 
   \mathcal{V}=\{u:  u \text{ is superharmonic and bounded from below in } \Om \}
\quad \text{and} \quad
\mathcal{E}=\{E\}.
\] 
Moreover, it was shown in~\cite[Theorem~1.5]{Kilp89} that regular boundary 
points are ``$ \mathcal{V}$-exposed" in the sense of \cite[Section~4]{KilpMaly89}.
It then follows from the uniqueness theorem
\cite[Proposition~5.2]{KilpMaly89} that  $\uP(f+h)=P f$.
Similarly  $\lP(f+h)=P f$.
\qedsymbol

\medskip

Using uniform convergence, as in the proof of 
Proposition~\ref{prop-perturbation}, one can 
in the above statement allow $h:\bdy \Om \to \R$
to be any bounded function such that
\[
\Cp(\overline{\{x \in  \bdy \Om : |h(x)|> \eps\}})=0
\quad \text{for every } \eps >0.
\]
\end{remark}

The following result complements Proposition~\ref{prop-Cp=0-para}.

\begin{prop} \label{prop-resolutive-cp=0}
Assume that $X$ is unbounded,  $\Cp(X\setm \Om)=0$ and 
$\bdystar \Om$ contains at least two points.
Then all finite boundary points of $\Om$ are irregular
and the following statements are equivalent\/\textup{:}
\begin{enumerate}
\item \label{d-res}
  All $f \in C(\bdystar \Om)$ are resolutive.
\item \label{d-unbdd}
$\infty$ is regular for $\Om$.
\item \label{d-f-infty}
$Pf \equiv f(\infty)$ for all $f \in C(\bdystar \Om)$.
\setcounter{saveenumi}{\value{enumi}}
\end{enumerate}  
Moreover, if any of \ref{d-res}--\ref{d-f-infty} holds, then $X$ is \p-hyperbolic.
\end{prop}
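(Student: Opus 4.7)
The key to the proof will be the unconditional sandwich inequality
\[
    \lP f(y) \le f(\infty) \le \uP f(y)
    \quad \text{for every } y\in\Om \text{ and every } f\in C(\bdystar\Om),
\]
from which both the irregularity of every finite boundary point and the equivalence of \ref{d-res}--\ref{d-f-infty} will drop out. The idea is that the hypothesis $\Cp(X\setm\Om)=0$ lets us extend functions in $\UU_f(\Om)$ and $\LL_f(\Om)$ through the boundary and then use the minimum principle on large balls to control them at $\infty$.

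To prove the upper bound in the sandwich, I would take any $u\in\UU_f(\Om)$: it is superharmonic in $\Om$ and bounded below, with $\liminf_{\Om\ni y\to z}u(y)\ge f(z)$ for every $z\in\bdystar\Om$. Since $\Cp(X\setm\Om)=0$, Theorem~6.3 in Bj\"orn~\cite{ABremove} (equivalently Theorem~12.3 in~\cite{BBbook}) extends $u$ to a superharmonic function $\tilde u$ on all of $X$, with $\tilde u(z)\ge f(z)$ for $z\in\bdy\Om$. Combining this with $\liminf_{\Om\ni y\to\infty}u(y)\ge f(\infty)$ and the continuity of $f$ at $\infty$ (which forces $f(z)\to f(\infty)$ for $z\in\bdy\Om$ escaping to infinity) gives
\[
   \liminf_{y\to\infty,\, y\in X}\tilde u(y)\ge f(\infty).
\]
Applying the minimum principle to $\tilde u$ on each ball $B_R=B(x_0,R)$ (bounded with compact $\bdy B_R$, since $X$ is proper) yields $\tilde u\ge\inf_{\bdy B_R}\tilde u$ on $B_R$; letting $R\to\infty$ produces $\tilde u\ge f(\infty)$ on $X$, hence $u\ge f(\infty)$ on $\Om$ and $\uP f\ge f(\infty)$ after taking the infimum over $\UU_f(\Om)$. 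The dual argument for $v\in\LL_f(\Om)$ (applying Theorem~6.3 to the superharmonic function $-v$) produces $\lP f\le f(\infty)$. Applying the sandwich to $f=d_{x_1}$ from Proposition~\ref{prop-reg5.1} at a finite $x_1\in\bdy\Om$, where $d_{x_1}(\infty)=1$, gives $\uP d_{x_1}\ge 1$; combined with the trivial upper bound $\uP d_{x_1}\le 1$ (from $1\in\UU_{d_{x_1}}(\Om)$), this forces $\uP d_{x_1}\equiv 1$ on $\Om$, so $\lim_{\Om\ni y\to x_1}\uP d_{x_1}(y)=1\ne 0=d_{x_1}(x_1)$, and Proposition~\ref{prop-reg5.1} declares $x_1$ irregular. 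The equivalences are then short: \ref{d-f-infty}$\imp$\ref{d-res} and \ref{d-f-infty}$\imp$\ref{d-unbdd} are immediate (the latter since $Pf\equiv f(\infty)$ gives $\lim_{y\to\infty}\uP f(y)=f(\infty)$), \ref{d-unbdd}$\imp$\ref{d-res} is Proposition~\ref{prop-unique}, and \ref{d-res}$\imp$\ref{d-f-infty} follows by combining the sandwich with $\lP f=\uP f=Pf$ to pinch $Pf$ to $f(\infty)$.

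Finally, the closing assertion that any of \ref{d-res}--\ref{d-f-infty} implies $X$ is \p-hyperbolic is the contrapositive of an easy application of Proposition~\ref{prop-Cp=0-para}: if $X$ were \p-parabolic, then $\uP f\equiv\sup f$ and $\lP f\equiv\inf f$ for every $f\in C(\bdystar\Om)$, and since $\bdystar\Om$ is compact Hausdorff with at least two points, Urysohn's lemma provides a nonconstant $f\in C(\bdystar\Om)$ that is nonresolutive, contradicting \ref{d-res} and hence also \ref{d-unbdd} and \ref{d-f-infty}. The main obstacle I expect is the extension-and-minimum-principle step above: verifying that the superharmonic extension through $X\setm\Om$ cleanly transfers the $\UU_f$ inequality to values $\tilde u(z)\ge f(z)$ on $\bdy\Om$, and that the liminf at $\infty$ propagates uniformly from both approaches in $\Om$ and through points of $\bdy\Om$ escaping to infinity, all under only the local doubling and Poincar\'e hypotheses assumed here.
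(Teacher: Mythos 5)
Your proof is correct and takes essentially the same route as the paper's: both arguments hinge on the sandwich inequality $\lP f \le f(\infty) \le \uP f$, established by extending a superharmonic $u\in\UU_f(\Om)$ across the polar set $X\setm\Om$ via the removability theorem and then applying a minimum principle; the irregularity of finite boundary points and the equivalences then fall out, and the final claim is the contrapositive of Proposition~\ref{prop-Cp=0-para}. The only differences from the paper are cosmetic: you argue the liminf-at-infinity step via the weak minimum principle on exhausting balls where the paper invokes the strong minimum principle (Theorem~9.13 in~\cite{BBbook}) together with lsc-regularity, and you test irregularity at a finite $x_1$ directly with $d_{x_1}$ (showing $\uP d_{x_1}\equiv 1$) while the paper uses $d_\infty$ (showing $\lP d_\infty\equiv 0$); both variants are equally valid.
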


\begin{proof}
Note that $\Om$ is unbounded since $\Cp(X\setm \Om)=0$.
First, we show that 
\begin{equation} \label{eq-f-infty}
\lP f \le f(\infty) \le \uP f \quad \text{in } \Om
\qquad \text{if } 
f \in C(\bdystar \Om).
\end{equation}
Let $u \in \UU_f(\Om)$.
Since $\Cp(X\setm\Om)=0$ and $u$ is bounded from below,
Theorem~6.3 in Bj\"orn~\cite{ABremove} (or \cite[Theorem~12.3]{BBbook}) implies that
$u$ has an extension as a superharmonic function on $X$ (also called $u$).
By the lsc-regularity of $u$, we get that
\[
\liminf_{X \ni y \to \infty} u(y)
= \liminf_{\Om \ni y \to \infty} u(y)
\ge f(\infty),
\]
and it follows from the strong minimum principle for superharmonic functions
(\cite[Theorem~9.13]{BBbook}) that $u \ge f(\infty)$ in $X$. 
Taking the infimum over all $u\in \UU_f(\Om)$ shows the 
second inequality in \eqref{eq-f-infty}.
The first one is shown similarly.

Next, 
let
\[
        	d_{\infty}(x) 
	= \begin{cases}
             e^{-d(x,x_0)},  & \text{if }x\neq\infty, \\
		0, & \text{if }x=\infty.
	\end{cases}
\]
As $0 \in \LL_{d_\infty}(\Om)$ it follows from \eqref{eq-f-infty} that
$\lP d_\infty \equiv  0$,
and hence 
all finite boundary points
are irregular
(regardless of if \ref{d-res}--\ref{d-f-infty} hold or not).

Now we are ready to show that \ref{d-res}--\ref{d-f-infty} are equivalent.

\ref{d-res}\imp\ref{d-f-infty}
Let $f \in C(\bdystar \Om)$.
As $f$ is resolutive (by assumption), it follows from \eqref{eq-f-infty}
that $Pf \equiv f(\infty)$.

\ref{d-f-infty}\imp\ref{d-unbdd}
This is trivial.

\ref{d-unbdd}\imp\ref{d-res}
This follows from Proposition~\ref{prop-unique}.

Finally, if $X$ is \p-parabolic, then \ref{d-res}--\ref{d-f-infty} fail by 
Proposition~\ref{prop-Cp=0-para}.
\end{proof}

\end{document}